\documentclass[11pt]{amsart}
\usepackage{amsfonts,amssymb,amsthm,amsmath,amsxtra,amscd,verbatim,eucal}
\usepackage[all]{xy}
\usepackage[dvips]{graphics}
\usepackage{graphicx}

%\usepackage{showkeys}

%\input{psfig}

%%%%%%%%%%%%%%%%%%%%%%%%%%%%%%%%%%%%%%%%%%%%%%%%%%%%%%%%%%%%%%%%%%%%%%%%

%\setlength{\parindent}{.4 in}
\setlength{\textwidth}{5.9 in}
\setlength{\topmargin} {0 in}        % when at home
\setlength{\evensidemargin}{.3 in}
\setlength{\oddsidemargin}{.3 in}
\setlength{\footskip}{.3 in}
\setlength{\headheight}{.3 in}
\setlength{\textheight}{8.3 in}

\hfuzz50pc  % Don't bother to report overfull boxes if overage is < 50pc
\vfuzz50pc
\sloppy

\setcounter{section}{0}

%\pagestyle{plain}

%%%%%%%%%%%%%%%%%%%%%%%%%%%%%%%%%%%%%%%%%%%%%%%%%%%%%%%%%%%%%%%%%%%%%%%%

\theoremstyle{plain}
\newtheorem{thm}{Theorem}[section]

\newtheorem*{thmA}{Theorem~A}
\newtheorem*{thmB}{Theorem~B}
\newtheorem*{thmC}{Theorem~C}

\newtheorem{lem}[thm]{Lemma}
\newtheorem{prop}[thm]{Proposition}
\newtheorem{cor}[thm]{Corollary}

\theoremstyle{theorem}
\newtheorem{defi}[thm]{Definition}

\theoremstyle{remark}
\newtheorem{eg}[thm]{Example}

\newtheorem{rmk}[thm]{Remark}

%\numberwithin{equation}{section}

%%%%%%%%%%%%%%%%%%%%%%%%%%%%%%%%%%%%%%%%%%%%%%%%%%%%%%%%%%%%%%%%%%

\def\Z{{\mathbb Z}}
\def\N{{\mathbb N}}

\def\C{{\mathbb C}}

\def\R{{\mathbb R}}
\def\Q{{\mathbb Q}}
\def\P{{\mathbb P}}

\def\O{\mathcal{O}}

\def\M{\mathcal{M}}

\def\J{\mathcal{J}}

\def\fra{\mathfrak{a}}
\def\frb{\mathfrak{b}}

\def\frd{\mathfrak{d}}

\def\frm{\mathfrak{m}}

\def\X{\mathfrak{X}}
\def\Y{\mathfrak{Y}}

\def\a{\alpha}
\def\b{\beta}

\def\d{\delta}
\def\f{\phi}

\def\ep{\varepsilon}

\def\l{\lambda}
\def\n{\nu}
\def\m{\mu}

\def\p{\pi}

\def\D{\Delta}

\def\Om{\Omega}

\def\.{\cdot}
\def\~{\widetilde}
\def\^{\widehat}

\def\ov{\overline}
\def\rat{\dashrightarrow}

\def\inj{\hookrightarrow}

\def\lrd{\lfloor}
\def\rrd{\rfloor}
\def\lru{\lceil}
\def\rru{\rceil}

\renewcommand{\and}{ \quad \text{and} \quad }

\DeclareMathOperator{\Spec} {Spec}

\DeclareMathOperator{\Pic} {Pic}

\DeclareMathOperator{\ord} {ord}

\DeclareMathOperator{\Div} {Div}
\DeclareMathOperator{\CDiv} {CDiv}

\DeclareMathOperator{\dv} {div}
\DeclareMathOperator{\Nef} {Nef}

\DeclareMathOperator{\Vol} {Vol}
\DeclareMathOperator{\vol} {vol}

\DeclareMathOperator{\Cl} {Cl}

\DeclareMathOperator{\Fitt} {Fitt}

\DeclareMathOperator{\Jac} {Jac}

\DeclareMathOperator{\Mov} {Mov}

\DeclareMathOperator{\Env} {Env}
\DeclareMathOperator{\Exc} {Exc}

\DeclareMathOperator{\loc} {loc}

%This command creates a side note on the external margin
%(right margin on odd pages, left margin on even pages).

\setlength{\marginparwidth}{0.9in}

%%%%%%%%%%%%%%%%%%%%%%%%%%%%%%%%%%%%%%%%%%%%%%%%%%%%%%%%%%%%%%%%%%%%%%%%

%This command creates a box around text.
%To use type \note{  insert text here  }.

%\newcommand{\note}[1]{\vspace{3 mm}\par \noindent
%%\marginpar{\textsc{Info}}
%\framebox{\begin{minipage}[c]{0.95 \textwidth}
%\tt #1 \end{minipage}}\vspace{3 mm}\par}

%\newcommand{\ignore}[1]{}

%%%%%%%%%%%%%%%%%%%%%%%%%%%%%%%%%%%%%%%%%%%%%%%%%%%%%%%%%%%%%%%%%%%%%%%%

\title{The volume of an isolated singularity}

\date{\today}
%\dedicatory{Preliminary version}

\author[S. Boucksom]{Sebastien Boucksom}
\address{CNRS - Institut de Math\'ematiques de Jussieu, 4 place Jussieu, 75252 Paris Cedex, France}
\email{boucksom@math.jussieu.fr}

\author[T. de Fernex]{Tommaso de Fernex}
\address{Department of Mathematics, University of Utah, 155 South 1400 East,
Salt Lake City, UT 48112-0090, USA}
\email{defernex@math.utah.edu}

\author[C. Favre]{Charles Favre}
\address{CNRS - Centre de Math\'ematiques Laurent Schwartz, 
\'Ecole Polytechnique, 
91128 Palaiseau Cedex, France}
\email{favre@math.polytechnique.fr}

\thanks{The second author is partially supported by NSF CAREER Grant DMS-0847059.
The last author is supported by the ANR-project BERKO}

%\subjclass[2010]{}
%\keywords{}

\begin{document}

\begin{abstract}
We introduce a notion of volume of a normal isolated singularity 
that  generalizes Wahl's characteristic number of surface singularities to arbitrary dimensions. We prove a basic monotonicity property of this volume under finite morphisms.
We draw several consequences regarding the existence of non-invertible finite endomorphisms fixing an isolated singularity. Using a cone construction, we deduce that the anticanonical divisor of any smooth projective variety carrying a non-invertible polarized endomorphism is pseudoeffective. 

Our techniques build on Shokurov's $b$-divisors. We define
the notions of nef Weil $b$-divisors, and of nef envelopes of $b$-divisors. We relate the latter 
to the pull-back of Weil divisors introduced by de Fernex and Hacon.
Using the subadditivity theorem for multiplier ideals with respect to pairs
recently obtained by Takagi, we carry over to the isolated singularity case
the intersection theory of nef Weil $b$-divisors formerly developed by Boucksom, Favre, and Jonsson
in the smooth case.
\end{abstract}

\maketitle

\tableofcontents

%
%%%%%%%%%%%%%%%%%%%%%%
%

\section*{Introduction}

Wahl's characteristic number \cite{Wahl} is a topological invariant of the link of a 
normal surface singularity. Its simple behavior 
under finite morphisms enables one to characterize surface singularities that
carry finite non-invertible endomorphisms. Our main goal is to generalize Wahl's invariant  to higher dimensional isolated normal singularities, and to present a few applications to the description of singularities admitting non-trivial finite endomorphisms. 
Our main result can be stated as follows.

\begin{thmA} To any  normal isolated singularity $(X,0)$ there
is associated a non-negative real number  
$\Vol(X,0)$ that we call its \emph{volume}, satisfying the following properties:
\begin{enumerate}
\item[(i)] For every finite morphism $\phi\colon(X,0)\to(Y,0)$ of degree $e(\phi)$ we have 
$$
\Vol(X,0)\ge e(\phi)\Vol(Y,0),
$$ 
and equality holds when $\phi$ is \'etale in codimension one.
\item[(ii)] If $\dim X=2$ then $\Vol(X,0)$ coincides with Wahl's characteristic number.
\item[(iii)]  If $X$ is $\Q$-Gorenstein then 
$\Vol(X,0)=0$ if and only if $X$ has log-canonical (=lc) singularities.  
\end{enumerate}
\end{thmA}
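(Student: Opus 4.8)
Write $n=\dim X$. The first step is to construct the invariant. Fix a resolution $\pi\colon X'\to X$, isomorphic over $X\setminus 0$, and let $\pi^{\natural}K_X$ be the de~Fernex--Hacon natural pull-back of the Weil divisor $K_X$, so that $K_{X'/X}:=K_{X'}-\pi^{\natural}K_X$ is a divisor supported on the fibre over $0$; let $\mathbf{K}$ denote the resulting Weil $b$-divisor over the germ $(X,0)$. I would then set
$$\Vol(X,0)\ :=\ \limsup_{m\to\infty}\ \frac{n!}{m^{n}}\ \dim_{\C}\!\bigl(\mathfrak{a}_m/\OO_{X,0}\bigr),\qquad \mathfrak{a}_m:=\textstyle\bigcup_{X'}\pi_*\OO_{X'}\bigl(\lceil -m\,K_{X'/X}\rceil\bigr),$$
and show this equals the self-intersection of the nef envelope $\mathbf{P}:=\Env(-\mathbf{K})$ --- the largest nef Weil $b$-divisor $\le-\mathbf{K}$ --- in a suitable intersection theory of nef Weil $b$-divisors. \emph{This intersection theory, together with the construction of the envelope, is the real content and the main obstacle.} It has to be carried over from the smooth-ambient case of Boucksom--Favre--Jonsson, and the mechanism for doing so is Takagi's subadditivity theorem for multiplier ideals of pairs: it delivers the ``almost-multiplicativity'' of the graded systems $(\mathfrak{a}_m)$ that makes $\Vol$ finite, independent of $\pi$, and computable as an asymptotic intersection number; non-negativity is then immediate from the displayed formula.

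For (i), let $\phi\colon(X,0)\to(Y,0)$ be finite of degree $e=e(\phi)$, and enlarge the resolutions so that they fit into a commutative square over $\phi$ with a generically finite $\phi'\colon X'\to Y'$ of degree $e$. Riemann--Hurwitz yields the ramification comparison $K_{X'/X}\le(\phi')^{\natural}K_{Y'/Y}$, hence $-\mathbf{K}_X\ge(\phi')^{\natural}(-\mathbf{K}_Y)$ as Weil $b$-divisors, with equality exactly when $\phi$ is \'etale in codimension one. Then I would use three properties of the calculus: $\Env$ is order-preserving; $(\phi')^{\natural}$ sends nef Weil $b$-divisors to nef Weil $b$-divisors, so $(\phi')^{\natural}\mathbf{P}_Y\le-\mathbf{K}_X$ is a nef competitor and $\mathbf{P}_X\ge(\phi')^{\natural}\mathbf{P}_Y$; and the projection formula $\bigl((\phi')^{\natural}\mathbf{D}\bigr)^{n}=e\cdot(\mathbf{D}^{n})$ for nef $\mathbf{D}$ over $(Y,0)$. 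Combining, $\Vol(X,0)=(\mathbf{P}_X^{n})\ge\bigl((\phi')^{\natural}\mathbf{P}_Y\bigr)^{n}=e\cdot(\mathbf{P}_Y^{n})=e\cdot\Vol(Y,0)$. If $\phi$ is \'etale in codimension one the first inequality becomes the equality $\mathbf{P}_X=(\phi')^{\natural}\mathbf{P}_Y$ (the $b$-divisors $-\mathbf{K}_X$ and $(\phi')^{\natural}(-\mathbf{K}_Y)$ now coincide and $\Env$ commutes with $(\phi')^{\natural}$ on this class), so equality propagates. Since $\mathbf{K}$ is the de~Fernex--Hacon object rather than an honest discrepancy divisor, the assertions ``$(\phi')^{\natural}$ preserves nefness'' and ``$(\phi')^{\natural}$ commutes with $\Env$'' are not formal, and this is again where Takagi's theorem, controlling pull-backs of the relevant sheaves under finite maps, is used.

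Part (ii) is a finite-level computation. For $n=2$ the fibre over $0$ has negative-definite intersection form (Mumford), so a nef Weil $b$-divisor supported over $0$ is already determined on one model; concretely $\mathbf{P}=\Env(-\mathbf{K})$ is realized on the minimal good resolution $\widetilde{X}$ as the Cartier $b$-divisor of the positive part $P$ of a Zariski-type decomposition of $-K_{\widetilde{X}/X}$, whence $\Vol(X,0)=(P^{2})$. Unravelling Wahl's characteristic number --- defined on the very same minimal good resolution, via the ``numerical canonical cycle'' with the same defining equations --- one checks that the two numbers agree; this should be essentially bookkeeping once the envelope has been made explicit.

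For (iii), assume $X$ is $\Q$-Gorenstein. Then $K_X$ is $\Q$-Cartier, $\pi^{\natural}K_X=\pi^*K_X$, and $-K_{X'/X}=\sum_i\bigl(-a(E_i;X)\bigr)E_i$ with the usual discrepancies. If $X$ is lc then every $-a(E_i;X)\le1$, so $-m\,K_{X'/X}$ is bounded above by $m$ times a reduced exceptional divisor; a Nadel-vanishing argument (using the subadditivity already established) shows $\dim_{\C}(\mathfrak{a}_m/\OO_{X,0})=o(m^{n})$, i.e. $\Vol(X,0)=0$ --- equivalently $\mathbf{P}=0$, since a nef $b$-divisor with a strictly positive coefficient cannot lie below a $b$-divisor all of whose coefficients over the fibre are $\le1$. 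Conversely, if $X$ is not lc there is $E_0$ over $0$ with $a(E_0;X)<-1$; after extracting $E_0$ its coefficient in $-\mathbf{K}$ exceeds $1$, and the linear systems $|\lceil -m\,K_{X'/X}\rceil|$ produce $\gtrsim c\,m^{n}$ independent classes in $\mathfrak{a}_m/\OO_{X,0}$ for some $c>0$, so $\Vol(X,0)>0$. The only delicate implication here is lc $\Rightarrow\Vol(X,0)=0$; the reverse direction is a direct construction.
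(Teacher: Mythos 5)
\textbf{The central object in your construction is wrong: you have used the relative canonical $b$-divisor where the \emph{log}-discrepancy $b$-divisor is required.} You set $\mathbf{K}$ to be the $b$-divisor whose trace on $X'$ is $K_{X'/X}=K_{X'}-\pi^{\natural}K_X$, i.e.\ the discrepancy divisor, and then pass to the nef envelope $\mathbf{P}=\Env(-\mathbf{K})$. The paper instead works with $A_{\X/X}=K_{\X/X}+1_{\X/X}$, where $1_{\X/X}$ is the reduced exceptional $b$-divisor (its trace on any model is the reduced exceptional divisor $E$), and puts $\Vol(X,0)=-\Env_{\X}(A_{\X/X})^n$. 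The ``$+E$'' is not a cosmetic normalization: Wahl's characteristic number is built from $K_Y+E-\pi^{*}K_X$, and log-canonicity is the condition $K_{X'/X}+E\ge 0$, not $K_{X'/X}\ge 0$. Dropping $E$ turns the lc threshold into a ``canonical'' threshold, and the resulting invariant is a different (strictly coarser) one.

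To see this is not merely a bookkeeping issue, take $X$ the affine cone over a smooth curve $V$ of genus $g\ge 2$ polarized by $L$ of degree $d$. On the blow-up $Y\to X$ of the vertex, with exceptional $E\cong V$, one computes the discrepancy $a=-1-(2g-2)/d<-1$, so $X$ is not lc and the correct answer must be $\Vol(X,0)>0$. With the paper's definition, $A_{Y/X}=(a+1)E$ is a negative multiple of $E$, which is already $\pi$-nef, so $\Env_{\X}(A_{\X/X})=\overline{(a+1)E}$ and $\Vol(X,0)=-(a+1)^2E^2=(a+1)^2 d>0$. With your $\mathbf{K}$, however, $-K_{Y/X}=(-a)E$ is an \emph{effective} exceptional divisor; since any nef Weil $b$-divisor over the point is $\le 0$ by the negativity lemma, $0$ is a nef competitor $\le -\mathbf{K}$, so $\Env(-\mathbf{K})=0$ and your invariant vanishes. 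Likewise, your growth-rate formula with $\mathfrak{a}_m=\bigcup_{X'}\pi_*\OO_{X'}(\lceil -mK_{X'/X}\rceil)$ gives $\mathfrak{a}_m/\OO_{X}=0$ here. So both versions of your definition declare this non-lc singularity to have volume zero, which contradicts part~(iii); and since Wahl's characteristic number is positive for this example, part~(ii) also fails. The monotonicity argument you give in part~(i) (Riemann--Hurwitz, $\Env$ order-preserving, $\Env\circ\phi^{*}=\phi^{*}\circ\Env$, pull-back multiplies intersection numbers by $e(\phi)$) is structurally the same as the paper's, but it proves the wrong theorem because it is applied to the wrong $b$-divisor. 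There is also a minor sign inconsistency: you state $\Vol=(\mathbf{P}^n)$, which is $\le0$ for anti-effective nef $\mathbf{P}$, whereas your growth-rate formula is manifestly $\ge 0$.

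To repair the argument you should replace $-\mathbf{K}$ everywhere by $A_{\X/X}=K_{\X/X}+1_{\X/X}$ (equivalently, work with $K_{X'}+E-\pi^{\natural}K_X$ on models), and then verify -- and this is where Lemma~\ref{lem:increase} and Lemma~\ref{lem:jac} of the paper are used -- that $K_{\X}+1_{\X}$ is the right monotone object under pull-back and under passing to higher models. With the corrected $b$-divisor, the remainder of your outline (approximation via Takagi subadditivity, intersection theory of nef Weil $b$-divisors over the point, pull-back formula for envelopes under finite morphisms, and the reduction of part (ii) to the relative Zariski decomposition on a surface) does track the paper's proof.
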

Our result generalizes in particular the well-known fact that 
$\Q$-Gorenstein lc singularities are preserved under finite morphisms (see for instance~\cite[Proposition 3.16]{Kol}). 

Just as in dimension $2$, one infers restrictions on isolated singularities admitting finite endomorphisms.
\begin{thmB} 
Suppose $\f \colon (X,0) \to (X,0)$ is a finite non-invertible endomorphism  of an isolated singularity.
Then $\Vol(X,0) =0$. 

If $X$ is $\Q$-Gorenstein then $X$ has lc singularities, and it furthermore has klt singularities if $\f$ is not \'etale in codimension one.
\end{thmB}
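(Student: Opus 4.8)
The plan is to derive Theorem~B from Theorem~A together with the standard behaviour of log discrepancies under finite morphisms. A finite endomorphism $\f$ of the irreducible germ $(X,0)$ is automatically dominant, hence has a well-defined degree $e(\f)\ge 1$; moreover $e(\f)=1$ would make $\f$ a finite birational morphism onto the normal variety $X$, hence an isomorphism. So non-invertibility forces $e(\f)\ge 2$, and applying Theorem~A(i) to $\phi=\f\colon(X,0)\to(X,0)$ gives
$$
\Vol(X,0)\ \ge\ e(\f)\,\Vol(X,0)\ \ge\ 2\,\Vol(X,0).
$$
Since $\Vol(X,0)\ge 0$, this is only possible if $\Vol(X,0)=0$. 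When $X$ is $\Q$-Gorenstein, Theorem~A(iii) then immediately yields that $X$ has log canonical singularities.

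It remains to show that $X$ is klt once $\f$ is moreover not \'etale in codimension one, and here I would argue by contradiction. Write the ramification formula $K_X=\f^*K_X+R_\f$ with $R_\f\ge 0$ effective; in the germ setting, ``$\f$ not \'etale in codimension one'' means precisely that $0\in\Supp R_\f$. Assume $X$ is not klt. Since $X\setminus\{0\}$ is smooth, hence klt, the non-klt locus of $X$ is contained in $\{0\}$; as $X$ is log canonical it then equals $\{0\}$, so there is a divisor $E$ over $X$, exhibited on a log resolution, with $c_X(E)=\{0\}$ and log discrepancy $a_E(X)=0$. Because $\f$ is finite and surjective, some divisor $E'$ over the source copy of $X$ lies above $E$ under $\f$; from $\f\bigl(c_X(E')\bigr)\subseteq c_X(E)=\{0\}$ and $\f^{-1}(0)=\{0\}$ we get $c_X(E')=\{0\}$ as well. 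Now $\f$ is crepant for $(X,0)$ downstairs and for $(X,-R_\f)$ upstairs, so the standard transformation rule for log discrepancies under a finite morphism (see e.g. Koll\'ar--Mori, Prop.~5.20) gives $a_{E'}(X,-R_\f)=r_{E'}\,a_E(X,0)$ with $r_{E'}\ge 1$ the ramification index along $E'$; since $a_{E'}(X,-R_\f)=a_{E'}(X)+\ord_{E'}(R_\f)$, this reads
$$
a_{E'}(X)\ =\ r_{E'}\,a_E(X)\ -\ \ord_{E'}(R_\f)\ =\ -\,\ord_{E'}(R_\f).
$$
As $c_X(E')=\{0\}\subseteq\Supp R_\f$ we have $\ord_{E'}(R_\f)>0$, whence $a_{E'}(X)<0$, contradicting that $X$ is log canonical. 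Therefore $X$ is klt.

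All the conceptual weight sits in Theorem~A; the residual points that need care are the identification, for germ endomorphisms, of ``$\f$ not \'etale in codimension one'' with ``$R_\f$ passes through $0$'', the existence of a divisor $E'$ over the source dominating a prescribed lc place $E$ and having center again $\{0\}$, and the ramification formula $K_X=\f^*K_X+R_\f$ with $R_\f$ effective in the merely $\Q$-Gorenstein situation. I expect this last point to be the only one requiring genuine, though routine, attention.
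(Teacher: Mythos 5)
Your proof of the first two assertions is exactly the paper's: degree $\ge 2$ plus Theorem~A(i) forces $\Vol(X,0)=0$, and Theorem~A(iii) gives lc in the $\Q$-Gorenstein case.

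For the klt statement you take a genuinely different and more elementary route than the paper. The paper invokes Theorem~\ref{thm:klt}, whose proof iterates $\f$, uses the chain rule $R_{\f^k}=\sum_{j<k}(\f^j)^*R_\f$ together with the additivity of envelopes for numerically Cartier divisors (Corollary~\ref{cor:jacrel}, Proposition~\ref{prop:pull-env}), and feeds the result into the uniform Izumi-type bound of Theorem~\ref{thm:uniform}; the payoff is a quantitative estimate $A_{\X/X}\ge-\ep Z(\frm)$ valid in the wider \emph{numerically Gorenstein} setting, which Remark~\ref{rem:klt} then converts to klt in the $\Q$-Gorenstein case. You instead run a direct contradiction: any lc place $E$ must be centered at $\{0\}$ since $X$ is smooth off $0$; the surjectivity of $\f_*$ on divisorial valuations (the paper's Lemma~\ref{lem:surjdiv}) provides a divisor $E'$ upstairs dominating $E$, again centered at $\{0\}$; and the standard transformation law $A_{E'}(X,-R_\f)=r_{E'}A_E(X)$ under the crepant finite cover $\f\colon(X,-R_\f)\to(X,0)$ turns $A_E(X)=0$ into $A_{E'}(X)=-\ord_{E'}(f^*R_\f)<0$, contradicting lc. The one place requiring care, which you correctly flag, is that $R_\f$ is $\Q$-Cartier (since $K_X$ is) and passes through $0$ (since it is a nonzero effective divisor in the germ), so its pullback indeed has positive multiplicity along every divisor centered at $0$. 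Both arguments are correct in the $\Q$-Gorenstein case; the paper's is heavier but yields the stronger intermediate Theorem~\ref{thm:klt}, while yours is shorter and self-contained modulo classical facts.
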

To obtain a more precise classification of singularities carrying finite endomorphisms one would need to get deeper into the structure of singularities with $\Vol(X,0)=0$. This can be done in dimension $2$, see~\cite{Wahl,Fav}, but unfortunately, this task seems 
very difficult at the moment in arbitrary dimension.  To illustrate the previous result, we construct however several classes of (non-necessarily $\Q$-Gorenstein)  isolated normal singularities carrying finite endomorphisms, see \S\ref{sec:simple}-\ref{sec:cusp} below. 
Our examples include quotient singularities, Tsuchihashi's cusp 
singularities~\cite{oda,tsu}, toric singularities, and certain simple singularities obtained from cone or deformation constructions. 

\smallskip

In dimension $2$, the conclusion of Theorem~B plays a key role in the classification of projective surfaces admitting non-invertible endomorphisms, which is by now essentially complete, see~\cite{fuji-naka,naka}. In higher dimensions, classifying projective varieties carrying a non-invertible endomorphism has recently attracted quite a lot of attention, see~\cite{dqz} and the references therein, but the general problem remains largely open.

The assumption on the singularity being isolated in Theorem B is too strong to be directly useful in this perspective.  Nevertheless we observe that Theorem~B has some consequences in the more rigid case of so-called polarized endomorphisms. Recall that an endomorphism $\f \colon V \to V$ of a projective variety is said to be polarized if there exists an ample line bundle $L$ on $V$ such that $\phi^*L=dL$ in $\Pic(V)$ for some $d\ge 1$ (cf.~\cite{zhang} for a nice survey). By looking at the affine cone over $X$ induced by a large enough multiple of $L$, we obtain:

\begin{thmC} 
If $V$ is a smooth projective variety carrying a non-invertible polarized endomorphism $\f$ then $-K_V$ is pseudoeffective. 
\end{thmC}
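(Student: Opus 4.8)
The plan is to reduce Theorem~C to Theorem~B via a cone construction. Fix a polarized endomorphism $\f\colon V\to V$ with $\f^*L=dL$ for an ample line bundle $L$ and $d\ge 2$ (the case $d=1$ forces $\f$ to be invertible on a smooth projective variety, contrary to hypothesis). After replacing $L$ by a large multiple, we may assume $L$ is very ample and that the section ring $R=\bigoplus_{m\ge 0}H^0(V,mL)$ is generated in degree one, so that $X:=\Spec R$ is the affine cone over $V$ with respect to $L$, a normal variety with an isolated singularity at the vertex $0$ (isolatedness is where smoothness of $V$ enters). The relation $\f^*L=dL$ together with a choice of isomorphism $\f^*L\cong L^{\otimes d}$ induces a graded ring endomorphism of $R$ multiplying degrees by $d$, hence a finite endomorphism $\Phi\colon(X,0)\to(X,0)$ which is non-invertible because its degree is $d^{\dim V}>1$.

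Applying Theorem~B to $\Phi$ yields $\Vol(X,0)=0$. The second half of Theorem~A then comes into play, but one must be careful: the cone $X$ need not be $\Q$-Gorenstein. The key point is that the $\Q$-Gorenstein condition for a cone, and more precisely the (log) discrepancy of the exceptional divisor $E\cong V$ of the blow-up $\Bl_0 X\to X$, is governed by $K_V$ and $L$: one has $K_{\Bl_0 X}=\pi^*K_X+(\text{something})E$ precisely when $K_V$ is proportional to $L$, and in general the relevant positivity of $-K_X$ near $0$ in the sense measured by $\Vol$ is controlled by the pseudoeffectivity of $-K_V$. So the real content is to translate the vanishing $\Vol(X,0)=0$ into the statement ``$-K_V$ is pseudoeffective''. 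Concretely, I expect to use the description of $\Vol(X,0)$ in terms of nef envelopes of the canonical $b$-divisor: on the cone, the relevant intersection numbers on the model $\Bl_0 X$ reduce to intersection numbers on $V$ of the form $(-K_V+tL)^{\dim V}$-type expressions (or of the negative part in a Zariski-type decomposition thereof), and $\Vol(X,0)=0$ forces the negative part of $-K_V$ to be trivial, i.e.\ $-K_V$ pseudoeffective.

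The main obstacle is precisely this last translation: making rigorous the computation of $\Vol(X,0)$ for a (possibly non-$\Q$-Gorenstein) affine cone in terms of birational invariants of the pair $(V,L)$, and extracting from $\Vol(X,0)=0$ the pseudoeffectivity of $-K_V$ rather than some weaker numerical statement. This requires the full strength of the nef Weil $b$-divisor intersection theory set up in the body of the paper, applied to the chain of blow-ups of $X$ dominated by cones over birational models of $V$; one shows that if $-K_V$ were not pseudoeffective, its negative contribution would make the relevant decreasing limit defining $\Vol(X,0)$ strictly positive, a contradiction. I would organize the argument so that the cone computation is isolated as a lemma (``for $X$ the cone over $(V,L)$, $\Vol(X,0)=0$ iff $-K_V$ is pseudoeffective'', or at least the one implication needed), after which Theorem~C is immediate from Theorem~B.
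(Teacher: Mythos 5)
Your overall reduction matches the paper's: form the affine cone $X=C(V,L)$ (after replacing $L$ by a large multiple to ensure normality), observe that the polarized endomorphism $\phi$ induces a finite non-invertible endomorphism of $(X,0)$ fixing the vertex, apply Theorem~B to get $\Vol(X,0)=0$, and finish with a ``cone lemma'' asserting that $\Vol(X,0)=0$ implies $-K_V$ pseudoeffective. You have correctly isolated that lemma as the crux; it is exactly Proposition~\ref{prop:conevol}(b).

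However, your sketch of how to prove the cone lemma is not on the right track and would be considerably harder than the paper's argument. You envision tracking ``intersection numbers of the form $(-K_V+tL)^{\dim V}$'', a Zariski-type decomposition, and a ``chain of blow-ups of $X$ dominated by cones over birational models of $V$''. None of that is needed. The paper's proof is a one-step computation on the single blow-up $\pi\colon X_\pi\to X$ at the vertex, with exceptional divisor $E\cong V$. From $\Vol(X,0)=0$, Proposition~\ref{prop:volzero} gives $A_{\X/X}\ge 0$; since $E$ is the unique $\pi$-exceptional divisor and $A_{\X/X}$ is exceptional over $X$, one reads off
$$K_{X_\pi}+E+\Env_X(-K_X)_\pi=aE\quad\text{with }a=\ord_E(A_{\X/X})\ge 0.$$
Now use the key structural fact about nef Weil $b$-divisors (Lemma~\ref{lem:mov}): the trace $\Env_X(-K_X)_\pi$ of the nef envelope restricts to a pseudoeffective class on any prime divisor of $X_\pi$, in particular on $E$. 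Restricting the displayed equation to $E$ and using adjunction ($(K_{X_\pi}+E)|_E=K_E$) together with $E|_E\equiv -L$ gives $-K_E\equiv aL+\Env_X(-K_X)_\pi|_E$, which is pseudoeffective since $a\ge 0$ and $L$ is ample. So the crucial input is not any intersection-number computation but rather the ``restriction to a prime divisor is pseudoeffective'' property of traces of nef $b$-divisors, which follows directly from the description of $\Nef(\X/S)$ via movable cones.
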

Observe that the ramification formula implies $K_V \cdot L^{n-1} \le 0$. If $K_V$ is pseudoeffective then $K_V\equiv 0$ and $(V,\f)$ is then an endomorphism of an abelian variety up to finite \'etale cover (see \cite[Theorem 4.2]{Fakh}). If $K_V$ is not pseudoeffective then  $V$ is uniruled by \cite{BDPP}, and our result then puts further constraints on the geometry of $V$. 

Throughout the paper, we insist on working with arbitrary non $\Q$-Gorenstein singularities. 
This degree of generality is crucial to obtain Theorem~C since
the cone over $V$ is $\Q$-Gorenstein iff $\pm K_V$ is either $\Q$-linearly trivial or ample, see Example~\ref{eg:cone} below. 
$$
***
$$
In order to understand our construction, and the difficulties that one has to overcome
to define the volume above, let us recall briefly Wahl's definition for a normal surface singularity
$(X,0)$. 

Pick any \emph{log-resolution} $\p \colon Y \to X$ of $(X,0)$, i.e.~a birational morphism which is an isomorphism above $X\setminus\{0\}$, and such that $Y$ is smooth and the scheme-theoretic inverse image $\p^{-1}(0)$ is a divisor with simple normal crossing support $E$. Let $K_X$ be a canonical divisor on $X$ and let $K_Y$ be the induced canonical divisor on $Y$. 
Denote by $\p^*K_X$ Mumford's \emph{numerical pull-back} of $K_X$ to $Y$, which is uniquely determined as a $\Q$-divisor by the conditions $\p_* (\p^* K_X) = K_X$ and 
$\p^* K_X \cdot C =0$ for any $\p$-exceptional curve $C$.
The  \emph{log-discrepancy} divisor is then defined by the relation
$A_{Y/X} := K_Y+E -\p^*K_X$. Recall that $X$ is (numerically) lc iff $A_{Y/X}\ge 0$ while $X$ is (numerically) klt iff $A_{Y/X}>0$ on the whole of $E$. 

Wahl's invariant  measures the degree of positivity of the log-discrepancy divisor. 
The positivity is here relative to the contraction morphism $Y \to X$, and it is thus natural to consider the \emph{relative Zariski decomposition} $A_{Y/X}=P+N$ in the sense 
of~\cite[p.~408]{Sak84}, where $N$ is the smallest effective $\p$-exceptional $\Q$-divisor such that $P=A_{Y/X}-N$ is $\p$-nef. Finally one sets:
\begin{equation}\label{eq:rel-intro}
\Vol(X,0) :=  - P^2 \in \Q_{\ge 0}~.
\end{equation}

\medskip

Two (related) difficulties arise in generalizing Wahl's construction to higher dimensions: first, one needs to introduce a notion of pull-back for Weil divisors; and second, one needs to find 
a replacement for the relative Zariski decomposition.  These problems have already been addressed in~\cite{dFH}, and in~\cite{BFJ,KuMa} respectively.  
Building on these works our first objective is to explain how these difficulties can be conveniently addressed using Shokurov's language of \emph{$b$-divisors}. In \S\S\ref{sec:bdiv}-\ref{sec:mult-ideal}, we define and study
the notion of nef Weil $b$-divisor in the general setting of a normal variety $X$.
This leads to the notion of nef envelope and relative Zariski decomposition as follows.

Let us recall some terminology. 
A \emph{Weil $b$-divisor} $W$ over $X$ is the data of Weil divisors $W_\p$ on all birational models $\p\colon X_\p\to X$ of $X$ that are compatible under push-forward. A \emph{Cartier $b$-divisor} $C$ is a Weil $b$-divisor for which there is a model $\p$ such that for every other model
$\p'$ dominating $\p$ the trace $C_{\p'}$ of $C$ on $X_{\p'}$ is the pull-back
of the trace $C_\p$ on $X_\p$; any $\p$ as above is called a determination of $C$. 
All the divisors we consider for the time being have $\R$-coefficients.

Now, suppose we are given a projective morphism $f\colon X\to S$.
A Cartier $b$-divisor $C$ is said to be \emph{nef} (relatively to $f$) if $C_\p$ is nef for one (hence any) determination $\p$ of $C$. Generalizing \cite{BFJ,KuMa} we say that a Weil $b$-divisor $W$ is nef iff there exists a net of nef Cartier $b$-divisors $C_n$ such that
the net $[(C_n)_\p]$ converges to $[W_\p]$ in the space 
$N^1(X_\p/S)$ of numerical classes over $S$. This is 
equivalent to say that $W_\p$ lies in the closed movable cone $\overline{\Mov}(X_\p/S)$ for all smooth models $X_\p$ (cf.~Lemma~\ref{lem:mov} below).

In \S\ref{sec:nef}, we prove that the following definitions make sense (under suitable conditions), and introduce the following
two notions of {\it nef envelopes}.
\begin{itemize}
\item
The nef envelope  $\Env_X(D)$ of a Weil divisor $D$ on $X$ is the largest
nef Weil $b$-divisor $Z$ that is both relatively nef over $X$ and satisfies $Z_X\le D$.
\item
The nef envelope  $\Env_\X(W)$ of a Weil $b$-divisor $W$ is the largest
nef Weil $b$-divisor $Z$ that is both relatively nef over $X$ and satisfies $Z\le W$.
\end{itemize}
In dimension two, nef envelopes recover the notions of numerical pull-back and
relative Zariski decomposition. Specifically, if $D$ is a divisor on a normal surface $X$ then the trace $\Env_X(D)_\p$  on a given model $X_\p$ coincides with the numerical pull-back of $D$ by $\p$, while if  $D$ is a divisor on a smooth model $X_\p$ over $X$, then the nef part of $D$ in its relative Zariski decomposition is given by $\Env_\X(\overline D)_\p$ where $\overline{D}$ is the Cartier $b$-divisor induced by $D$.

In higher dimensions $D\mapsto\Env_X(D)$ is non-linear in general, and $\Env_X(D)_\p$ coincides up to sign with the pull-back $\p^*D$ defined in~\cite{dFH}. It is however this approach via $b$-divisors and nef envelopes that brings to light the crucial positivity properties of the pull-back of Weil divisors.

\medskip

We are now in a position to generalize the log-discrepancy divisor and its relative Zariski decomposition. Given a canonical divisor $K_X$ on $X$, there is a unique canonical divisor $K_{X_\p}$, for each model $\p\colon X_\p \to X$, with the property that $\p_*K_{X_\p} = K_X$. 
Thus a choice of $K_X$ determines a \emph{canonical $b$-divisor} $K_\X$ over $X$.
 The \emph{log-discrepancy $b$-divisor} is then defined as $$A_{\X/X}:=K_\X+1_{\X/X}+\Env_X(-K_X)~,$$ where the trace of $1_{\X/X}$ in any model is equal to the reduced exceptional divisor over $X$. The log-discrepancy $b$-divisor
is exceptional over $X$ and 
 does not depend on the choice of $K_X$. Its coefficients are given by the (usual) log-discrepancies of $X$ when the latter is $\Q$-Gorenstein. 
  The role of the nef part of $A_{\X/X}$ in its relative Zariski decomposition is in turn played by the nef envelope $$P:=\Env_\X(A_{\X/X})~.$$ 

To generalize~\eqref{eq:rel-intro}, we now face the problem of defining the intersection product 
of nef $b$-divisors. This step is non-trivial.  The intersection of Cartier $b$-divisors 
is defined as their intersection in a common determination. However it cannot be extended to a multilinear intersection product on the space of \emph{Weil} $b$-divisors having reasonable continuity properties. As it turns out, it is nevertheless possible to extend it to a multilinear intersection pairing on \emph{nef} Weil $b$-divisors lying over a point $0\in X$. This is done following the approach of~\cite{BFJ}, in which multiplier ideals appear as a prominent tool.

\medskip

Assume from now on that $(X,0)$ is an $n$-dimensional \emph{isolated} normal singularity. For all (relatively) nef $b$-divisors $W_1, ... , W_n$ above $0$, we set:
$$
W_1\cdot...\cdot W_n:=\inf\{C_1\cdot...\cdot C_n \mid C_j\text{ nef Cartier},\, C_j\ge W_j\}
\in  [-\infty,0]~.$$ 
To develop a reasonable calculus of these intersection numbers, 
\emph{additivity} in each variable is a  desirable property.
We obtain this result as a consequence of  the fact that any nef envelope of a Cartier $b$-divisor is the \emph{decreasing} limit of a sequence of nef Cartier $b$-divisors $C_k$.

Let us explain how to get this crucial approximation property. The first observation is that  the nef envelope of a Cartier $b$-divisor $C$ is a limit of the graded sequence of ideals $\fra_m:=\O_X(mC)$, $m\ge 0$ (see \S\ref{sec:nef-env}).
For any fixed $c>0$, we use the general notion of (asymptotic) multiplier ideal $\J(X;\fra_\bullet^c)$ introduced in \cite{dFH} for any ambient variety $X$ with normal singularities. 
As was shown in \cite{dFH} this multiplier ideal can also be computed using {\it compatible boundaries}: namely, there exist effective $\Q$-boundaries $\Delta$ such that $\J(X;\fra_\bullet^ c)$ coincides with the standard (asymptotic) multiplier ideal $\J((X,\D);\fra_\bullet^ c)$ with respect  to the pair $(X,\D)$.

This connection enables us to make use of a recent result of Takagi~\cite{Tak3}, which extends the usual subadditivity property of multiplier ideals~\cite{DEL} to multiplier ideals with respect to a pair $(X,\D)$, up to an (inevitable) error term involving $\D$ and the Jacobian ideal of $X$. The approximation we are looking for then follows by taking the nef Cartier $b$-divisor $C_k$ associated to $\J(X;\fra_\bullet^k)$.

Now that we have defined the intersection product of nef Weil $b$-divisors, we can come back to the definition of the volume. We set
$$
\Vol(X,0) := - \Env_\X(A_{\X/X})^n,
$$
which is shown to be finite (and non-negative). Once the volume is defined, the properties stated in Theorem~A follow smoothly
from transformation laws of envelopes under finite morphisms, see Proposition~\ref{prop:pull-env}.
$$
***
$$

The volume as defined above relates to other kind of invariants that were previously defined 
and are connected to growth rate of pluricanonical forms. 
 
In the $2$-dimensional case, we first note that the definition~\eqref{eq:rel-intro}
admits an equivalent formulation in terms of the growth rate of a certain quotient of sections. It was indeed shown in \cite{Wahl} that if $X$ is a surface then
$$
\dim\left(H^0(X\setminus \{0\},mK_X)/H^0(Y,m(K_Y+E))\right)=\frac{m^2}{2}\Vol(X,0)+o(m^2)~,
$$
where the left-hand side is independent of the choice of $Y$ and is equal by definition to the $m$-th \emph{log-plurigenus} $\lambda_m(X,0)$ in the sense of Morales \cite{Mora}, a notion which makes sense in all dimensions. 

In line with this point of view M.~Fulger~\cite{Fulg} has recently considered the following invariant of an isolated singularity $(X,0)$: 
$$
\Vol_F(X,0) : = \limsup_m \frac{n!}{m^n} \dim\left(H^0(X\setminus \{0\},mK_X)/H^0(Y,m(K_Y+E))\right).
$$
It measures by definition the growth rate of $\l_m(X,0)$, or equivalently that of Watanabe's \emph{$L^2$-plurigenera} $\d_m(X,0)$~\cite{Wat1,Wat2}, and yields a finite number since 
$$
\d_m(X,0)=\l_m(X,0)+O(m^{n-1})=O(m ^n)
$$ 
(see~\cite{Ish2}, which contains a thorough introduction to these notions, and \S\ref{sec:plurigenera} below). 

The notion of volume considered by Fulger also behaves well under finite morphisms, and the analog of Theorem~A holds true. Moreover, in contrast to our volume, $\Vol_F(X,0)$ is more accessible to explicit computations. On the other hand, our volume $\Vol(X,0)$ relates more closely
to lc singularities (see question~(b) below). 

Fulger explores in \cite{Fulg} how the two approaches compare to one another, proving
that $\Vol(X,0) \ge \Vol_F(X,0)$ for any isolated normal singularity $(X,0)$.
Equality holds when $X$ is $\Q$-Gorenstein, but can fail otherwise (cf.~Proposition~\ref{p:vol-Q-gor} and~Example~\ref{eg:counterfulger}). 

In general these volumes can take irrational values. 
In \cite{Urb} Urbinati constructs examples where the log-discrepancy takes irrational values, 
and in \cite{Fulg} Fulger shows that similar examples have irrational volumes $\Vol(X,0)$ and $\Vol_F(X,0)$. 
$$
***
$$
In the two dimensional case, we know by the work of Wahl \cite{Wahl} that the volume is a topological invariant of the link of the singularity and that its vanishing  characterizes log canonical singularities. Furthermore,  
Ganter~\cite{ganter} has shown that there is a uniform lower bound to
the volume of a normal Gorenstein surface singularity with positive volume. 
An example brought to our attention by Koll\'ar shows that the first property fails 
in higher dimensions: in general the volume of a normal isolated singularity is not a 
topological invariant of the singularity (cf. Example~\ref{eg:not-top-inv}).
The following questions remain open:
\begin{enumerate}
\item
Does there exists a \emph{positive} lower bound, only depending on the dimension, 
for the volume of isolated Gorenstein singularities with positive volume?
\item
Is it true that $\Vol(X,0) =0$ implies the existence of an effective $\Q$-boundary $\D$ such that the pair $(X,\D)$ is log-canonical? (the converse being easily shown). 
\end{enumerate}
It is to be noted that~(b) fails with $\Vol_F(X,0)$ in place of $\Vol(X,0)$ (cf. Example~\ref{eg:counterfulger}). 
$$
***
$$
The plan of our paper is the following.  In the first four sections, we work over 
a normal algebraic variety. \S\ref{sec:bdiv} contains basics on $b$-divisors. 
The notion of envelope
is analyzed in detail in \S\ref{sec:nef}. In this section we also formalize a measure of the failure of a Weil divisor to be Cartier in terms of certain {\it defect ideals}, which are related to the notion of compatible boundary.
In \S\ref{sec:mult-ideal} we turn to the definition of the log-discrepancy $b$-divisor and of multiplier ideals. The key result of this section is the subadditivity theorem (Theorem~\ref{thm:approx}) that we deduce from Takagi's work. 

The rest of the paper deals with normal isolated singularities. We define the volume of such a singularity and prove Theorem~A (i) and (iii) in \S\ref{sec:isolated}. In \S\ref{sec:compare} we  complete the proof of Theorem~A, and compare our notion with the approaches via plurigenera and Fulger's work.
Finally \S\ref{sec:endo} focuses on endomorphisms, and contains a proof of Theorem~B and~C.
$$
***
$$
\noindent {\bf Acknowledgements}
We would like to thank S. Takagi and M. Fulger for sharing with us preliminary versions of their work. We are also very grateful to Fulger for many precious comments, 
to J. Koll\'ar for bringing~\cite[Chapter~2, Example~55]{Kol11} to our attention, 
and to S.~Cacciola for pointing out a mistake in a previous formulation of
Theorem~\ref{Tak3}.
We would like to thank M.~Jonsson for allowing us to include a proof of 
Theorem~\ref{thm:increasing}, a result that will appear in a more general
form in \cite{BFJ11}.
This project started during the stay of the second author in Paris which was sponsored by the Fondation Sciences Math\'ematiques de Paris and the Institut de Math\'ematiques de Jussieu. It was completed when the first and last authors  visited the University of Utah in Salt Lake City.
We thank warmly all these institutions for their hospitality.
We would lake to thank the referee for a very careful reading and many precious comments
and suggestions.

%
%%%%%%%%%%%%%%%%%%%%%%
%

\section{Shokurov's $b$-divisors}\label{sec:bdiv}

In this section $X$ denotes a normal variety defined over an algebraically closed field of characteristic $0$ and we set $n:=\dim X$. The goal of this section is to gather general properties of Shokurov's $b$-divisors over $X$, for which \cite{Isk} and \cite{Corti} constitute general references. Proposition \ref{prop:pushcar} seems to be new.

\subsection{The Riemann-Zariski space}
The set of all proper birational morphisms $\p\colon X_\p\to X$ modulo isomorphism is (partially) ordered by $\p'\ge\p$ iff $\p'$ factors through $\p$, and the order is inductive (i.e.~any two proper birational morphisms to $X$ can be dominated by a third one). 
For short, we will refer to $X_\p$, or $\p$, as a \emph{model} over $X$.
The \emph{Riemann-Zariski space} of $X$ is defined as  the projective limit 
$$\X = \underleftarrow\lim_\p X_\p,$$ 
taken in the category of locally ringed topological spaces, each $X_\p$ being viewed as a scheme with its Zariski topology (note that $\X$ itself is not a scheme anymore). 

As a topological space $\X$ may alternatively be viewed as the set of all valuation subrings $V\subset k(X)$ with non-empty center on $X$, endowed with the Krull--Zariski topology. Indeed given a Krull valuation $V$ the center $c_\pi(V)$ of $V$ on $X_\pi$ is non-empty for each $\p$ by the valuative criterion for properness, and the collection of all scheme-theoretic points $c_\p(V)$ defines a point in $c(V)$ in $\X$. By \cite[p.122 Theorem 41]{ZS} the mapping $V\mapsto c(V)$ so defined is a homeomorphism. 

\subsection{Divisors on the Riemann-Zariski space}
\label{ss:1.2}
Following Shokurov we define the group of {\it Weil $b$-divisors} over $X$ (where $b$ stands for birational) as
$$
\Div(\X):= \underleftarrow{\lim}_\p \Div(X_\p)
$$
where $\Div(X_\p)$ denotes the group of Weil divisors of $X_\p$
and the limit is taken with respect to the push-forward maps
$\Div(X_{\p'}) \to \Div(X_\p)$, which are defined whenever $\p' \ge \p$. 
It can alternatively be thought of as the group of Weil divisors on the Riemann-Zariski space $\X$ (hence the notation). 

The group of {\it Cartier $b$-divisors} over $X$ is in turn defined as
$$
\CDiv(\X) := \underrightarrow{\lim}_\p \CDiv(X_\p)
$$
with $\CDiv(X_\p)$ denoting the group of Cartier divisors of $X_\p$. 
Here the limit is taken with respect to the pull-back maps
$\CDiv(X_{\p}) \to \CDiv(X_{\p'})$, which are defined whenever $\p' \ge \p$. 
One can easily check that 
$$\CDiv(\X)=H^0(\X,\M_\X^*/\O_\X^*)$$
is indeed the group of Cartier divisors of the locally ringed space $\X$. 

There is an injection $\CDiv(\X) \inj \Div(\X)$ determined by the cycle maps on birational models $X_\p$.

An element of $\Div_\R(\X):=\Div(\X)\otimes\R$ (resp.~$\CDiv_\R(\X):=\CDiv(\X)\otimes\R$) will be called an $\R$-Weil $b$-divisor (resp.~$\R$-Cartier $b$-divisor), and similarly with $\Q$ in place of $\R$. 
The space $\Div_\R(\X)$ is naturally isomorphic to the projective limit of the spaces
$\Div_\R(X_\p)$, and $\CDiv_\R(\X)$ is naturally isomorphic to the direct limit of the spaces
$\CDiv_\R(X_\p)$.

Let us now interpret  these definitions in more concrete terms.
A Weil divisor $W$ on $\X$ consists of a family of Weil divisors
$W_\p\in\Div(X_\p)$ that are compatible under push-forward, i.e.~such that $W_{\p} = \m_* W_{\p'}$ whenever $\p'$ factors through
a morphism $\m \colon X_{\p'} \to X_{\p}$. We say that
$W_\p$ (also denoted by $W_{X_\p}$) is the \emph{trace} (or {\it incarnation} as in \cite{BFJ}) 
of $W$ on the model $X_\p$.
By contrast, a Cartier divisor $C$ on $\X$ is determined
by its trace on a high enough model, i.e.~there exists $\p$ such that $C_{\p'} = \m^*C_\p$ for every $\p' \ge \p$,
where $\m \colon X_{\p'} \to X_{\p}$ is the induced morphism. We shall say that $C$ is \emph{determined on $X_\p$} (or \emph{by $\p$}).\newline

Weil $b$-divisors can also be interpreted as certain functions on the set of \emph{divisorial valuations} of $X$. Recall first that a divisorial valuation of $X$ is a rank 1 valuation of transcendence degree $\dim X-1$ of the function field $k(X)$, whose center on $X$ is non-empty. By a classical result of Zariski (see e.g.~\cite[Lemma~2.45]{KM}) the divisorial valuations on $X$ are exactly those of the form $\nu=t\ord_E$ where $t\in\R_+^*$ and $E$ is a prime divisor on some birational model $X_\p$ over $X$.  

Given an $\R$-Weil $b$-divisor $W$ over $X$ we can then define $(t\ord_E)(W)$ as $t$ times the coefficient of $E$ in $W_\p$. 

\begin{lem}
Setting $g_W(\nu):=\nu(W)$ yields an identification $W\mapsto g_W$ between $\Div_\R(\X)$ and the space of all real-valued $1$-homogeneous functions $g$ on the set of divisorial valuations of $X$ satisfying the following finiteness property: the set of prime divisors $E\subset X$ (or equivalently on $X_\p$ for any given $\p$) such that $g(\ord_E)\neq 0$ is finite. 
\end{lem}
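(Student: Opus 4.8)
The plan is to check directly that $W\mapsto g_W$ is a well-defined bijection onto the stated space; it is a purely formal statement about the projective limit $\Div_\R(\X)=\varprojlim_\p\Div_\R(X_\p)$, the only geometric input being the behaviour of push-forward of Weil divisors under a birational morphism.

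First I would verify that $g_W$ is well defined. By the result of Zariski recalled above, every divisorial valuation is of the form $\nu=t\ord_E$ with $t>0$ and $E$ a prime divisor on \emph{some} model $X_\p$, but this model is not unique, so I must show that $(t\ord_E)(W)=t\cdot(\text{coefficient of }E\text{ in }W_\p)$ does not depend on it. If $E$ also appears on $X_{\p'}$, I would dominate both models by a common $X_{\p''}$, with induced morphisms $\m\colon X_{\p''}\to X_\p$ and $\m'\colon X_{\p''}\to X_{\p'}$; the birational transform $\tilde E$ of $E$ on $X_{\p''}$ satisfies $\ord_{\tilde E}=\ord_E$, and since $\m$ is proper birational it pushes $\tilde E$ forward to $E$ with multiplicity $1$ and kills every $\m$-exceptional prime, so the coefficient of $E$ in $W_\p=\m_*W_{\p''}$ equals the coefficient of $\tilde E$ in $W_{\p''}$; arguing likewise with $\p'$ shows the two original coefficients agree. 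Homogeneity of $g_W$ is then immediate, and injectivity is clear since the coefficient of any prime $E$ on any model $X_\p$ is $g_W(\ord_E)$, so $g_W$ recovers all the traces $W_\p$, hence $W$.

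Next I would pin down the image. For a fixed $W$, each trace $W_\p$ is a finite $\R$-linear combination of primes of $X_\p$, so $g_W$ is supported on finitely many $\ord_E$ among the primes $E$ of $X_\p$, for \emph{every} model $\p$; this is the asserted finiteness property (which I read as holding on every model $X_\p$, this being exactly what is needed for the traces constructed below to make sense). Conversely, given a $1$-homogeneous $g$ with this property, I would set $W_\p:=\sum_E g(\ord_E)E$, the finite sum over the primes $E$ of $X_\p$ with $g(\ord_E)\neq0$, and check that $(W_\p)_\p$ is push-forward compatible: for $\p'\ge\p$ with induced $\m\colon X_{\p'}\to X_\p$, the primes of $X_{\p'}$ not contracted by $\m$ are exactly the birational transforms $\tilde E$ of the primes $E$ of $X_\p$, with $\ord_{\tilde E}=\ord_E$, while $\m_*\tilde E=E$ and $\m_*F=0$ for $\m$-exceptional $F$; hence $\m_*W_{\p'}=\sum_E g(\ord_{\tilde E})E=\sum_E g(\ord_E)E=W_\p$. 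So $(W_\p)_\p\in\Div_\R(\X)$ and $g_W=g$ by construction, and the map is onto.

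The one step that is not pure bookkeeping — and therefore the ``hard'' part, although it is entirely standard — is the description of proper push-forward of Weil divisors under a birational morphism of normal varieties, used in both directions above: the non-contracted primes upstairs are precisely the birational transforms of the primes downstairs, in a bijection preserving the associated divisorial valuation; each such prime pushes forward to the corresponding prime with multiplicity one, the relevant residue-field extension being trivial because the morphism is birational; and the contracted primes push forward to $0$. I would invoke the standard theory of proper push-forward of algebraic cycles for this, after which the entire statement is a formal verification inside the projective limit.
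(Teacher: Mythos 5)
The paper states this lemma without proof, treating it as a routine consequence of the definition of $\Div_\R(\X)$ as the projective limit $\varprojlim_\p \Div_\R(X_\p)$ under push-forward. Your argument supplies exactly the bookkeeping the authors omit, and it is correct. The two directions (well-definedness of $g_W$ plus injectivity, and surjectivity via reconstructing the traces $W_\p$ from $g$) both reduce, as you observe, to the one genuine geometric fact: under a proper birational morphism $\m\colon X_{\p'}\to X_\p$ of normal varieties, the non-$\m$-exceptional primes of $X_{\p'}$ are the strict transforms $\tilde E$ of the primes $E$ of $X_\p$, this correspondence preserves the associated divisorial valuation, $\m_*\tilde E = E$ (the residue field extension is trivial since $\m$ is birational), and $\m$-exceptional primes push forward to $0$. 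That is the standard behaviour of proper push-forward of cycles specialised to the birational case, and it makes the push-forward compatibility of $(W_\p)_\p$ and the reconstruction of $W$ from $g$ entirely formal. Your reading of the finiteness condition — that for each fixed model $X_\p$ only finitely many primes of $X_\p$ are charged — is the one that is actually needed and used, and it is the right interpretation of the parenthetical in the statement; in short, your proposal matches what the paper's implicit argument would be.
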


The topology of pointwise convergence therefore induces a \emph{topology of coefficient-wise} convergence on $\Div_\R(\X)$, for which $\lim_j W_j=W$ iff $\lim_j\ord_E(W_j)=\ord_E(W)$ for each prime divisor $E$ over $X$.

\subsection{Examples of $b$-divisors}
We introduce the main types of $b$-divisors we shall consider. 

\begin{eg} The choice of a non-zero rational form $\omega$ of top degree on $X$ induces a \emph{canonical $b$-divisor} $K_\X$ whose trace on $X_\p$ is equal to the canonical divisor determined by $\omega$ on $X_\p$.
\end{eg} 

\begin{eg} A Cartier divisor $D$ on a given model $X_\p$ induces a Cartier $b$-divisor $\ov D$, its \emph{pull-back} to $\X$. It is simply defined by pulling-back $D$ to all models dominating $X_\p$
and then by pushing-forward on all other models. By definition all Cartier $b$-divisors are actually obtained this way. 
\end{eg}

%\begin{eg} A Weil divisor $D$ on a given model $X_\p$ induces a Weil $b$-divisor on $\X$ %called
%the \emph{proper transform} of $D$. It is simply defined by considering on each model
%the proper transform of $D$. 
%\end{eg}

\begin{eg} Given a coherent fractional ideal sheaf $\fra$ on $X$ we denote by $Z(\fra)$ the Cartier $b$-divisor determined on the normalized blow-up $X_\p$ of $X$ along $\fra$ by
$$\fra\cdot\O_{X_\p}=\O_{X_\p}(Z(\fra)_\p).$$
In particular we have $Z(f)_\p=-\p^*\dv(f)$ when $f$ is a rational function on $X$. Note that with this convention $Z(\fra)$ is anti-effective when $\fra$ is an actual ideal sheaf. 
\end{eg}

For any Weil $b$-divisor we write $Z \ge 0$ if $Z_\p$ is an effective divisor for every $\p$.
We record the following easy properties. 

\begin{lem}\label{lem:Zfrac} Let $\fra,\frb$ be two coherent fractional ideal sheaves on $X$.
\begin{itemize}
\item $Z(\fra) \le Z(\frb)$ whenever $\fra \subset \frb$.
\item $Z(\fra\cdot\frb) = Z(\fra) + Z(\frb)$.  
\item $Z(\fra + \frb) = \max\{Z(\fra),Z(\frb)\}$, where the maximum is
defined coefficient-wise.
\item $Z(\fra)=Z(\frb)$ iff the integral closures of $\fra$ and $\frb$ are equal.
\end{itemize}
\end{lem}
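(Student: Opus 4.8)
The plan is to verify each of the four bullet points of Lemma~\ref{lem:Zfrac} by reducing to a single model on which all the relevant $b$-divisors are determined. First I would note that for any finite collection of coherent fractional ideals, say $\fra$ and $\frb$, one can find a model $\p\colon X_\p\to X$ that simultaneously resolves both, i.e.\ such that $\fra\cdot\O_{X_\p}=\O_{X_\p}(-F_\fra)$ and $\frb\cdot\O_{X_\p}=\O_{X_\p}(-F_\frb)$ for effective Cartier divisors $F_\fra,F_\frb$ (and on which products/sums of these ideals are also principal), $\p$ being dominated by the normalized blow-ups of $\fra$ and of $\frb$. On such a model the traces are $Z(\fra)_\p=-F_\fra$, $Z(\frb)_\p=-F_\frb$, and since all four $b$-divisors in question are Cartier, determined by $\p$, it suffices to check each identity/inequality of their traces on $X_\p$; compatibility on higher models is then automatic by pull-back, and on lower models by push-forward.

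With that reduction in hand, the individual verifications are local and essentially the usual dictionary between blow-ups and ideal containments. For the first bullet, $\fra\subset\frb$ gives $\fra\cdot\O_{X_\p}\subset\frb\cdot\O_{X_\p}$, hence $\O_{X_\p}(-F_\fra)\subset\O_{X_\p}(-F_\frb)$, which forces $F_\fra\ge F_\frb$ divisorwise, i.e.\ $Z(\fra)\le Z(\frb)$. For the second bullet, $(\fra\cdot\frb)\cdot\O_{X_\p}=(\fra\cdot\O_{X_\p})\cdot(\frb\cdot\O_{X_\p})=\O_{X_\p}(-F_\fra-F_\frb)$, giving $Z(\fra\cdot\frb)=Z(\fra)+Z(\frb)$. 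For the third bullet, $(\fra+\frb)\cdot\O_{X_\p}=\fra\cdot\O_{X_\p}+\frb\cdot\O_{X_\p}$, and the sum of two invertible fractional ideals $\O_{X_\p}(-F_\fra)$ and $\O_{X_\p}(-F_\frb)$ inside $\M_{X_\p}$ is $\O_{X_\p}(-\min\{F_\fra,F_\frb\})$ (checking stalk by stalk at each codimension-one point, where it is the statement that $(t^a)+(t^b)=(t^{\min(a,b)})$ in a DVR); translating signs, this is exactly $Z(\fra+\frb)=\max\{Z(\fra),Z(\frb)\}$. For the last bullet, I would invoke the standard fact that the normalized blow-up of $\fra$ computes its integral closure: $\fra\cdot\O_{X_\p}=\O_{X_\p}(-F_\fra)$ and then $\overline{\fra}=\p_*\O_{X_\p}(-F_\fra)$ (the Rees/Hironaka description of integral closure via normalized blow-ups). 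Thus $Z(\fra)=Z(\frb)$ means $F_\fra=F_\frb$ on the common model, which pushes forward to $\overline{\fra}=\overline{\frb}$; conversely $\overline\fra=\overline\frb$ gives $Z(\overline\fra)=Z(\overline\frb)$, and since an ideal and its integral closure define the same normalized blow-up, $Z(\fra)=Z(\overline\fra)$, $Z(\frb)=Z(\overline\frb)$.

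I do not expect a serious obstacle here; these are all essentially formal consequences of the definition of $Z(\cdot)$ via normalized blow-ups together with elementary DVR computations. The one point requiring a little care is the third bullet, where one must justify that $\max$ of the coefficients, rather than something subtler, is the right answer for $Z(\fra+\frb)$: the cleanest way is the stalk-wise computation at codimension-one points of $X_\p$ indicated above, using that each $Z(\fra)_\p$ is Cartier (hence locally principal) on the chosen model so that the sum of ideals is again locally monomial in a local equation. The other mildly non-formal input is the identification of $\overline{\fra}$ with $\p_*(\fra\cdot\O_{X_\p})$ for $\p$ the normalized blow-up, which is classical and which I would simply cite. Everything else is bookkeeping with push-forward and pull-back, already set up in \S\ref{sec:bdiv}.
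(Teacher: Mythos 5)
The paper states this lemma without proof (``We record the following easy properties''), so there is no argument in the text to compare against. Your strategy---pass to a common normal model $X_\p$ on which $\fra$, $\frb$, $\fra\cdot\frb$ and $\fra+\frb$ all extend to invertible sheaves, and read off the identities there---is the natural one, and the arguments you give for the first, second and fourth items are correct (modulo the slip noted below).

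The third item, however, is not quite watertight as written. You declare up front that ``compatibility on higher models is then automatic by pull-back.'' For the first, second and fourth items this is legitimate: inequalities and equalities of \emph{Cartier} $b$-divisors are detected on any common determination. But $\max\{Z(\fra),Z(\frb)\}$ is a \emph{Weil} $b$-divisor defined coefficient-wise, and coefficient-wise maximum does not commute with pull-back of Cartier divisors in general: if $A=-E_1$ and $B=-E_2$ are anti-effective Cartier divisors on $X_\p$ with $E_1\cap E_2\neq\emptyset$, then $\max(A,B)=0$, yet after blowing up a point of $E_1\cap E_2$ one finds $\max(\mu^*A,\mu^*B)=-E_3\neq 0=\mu^*\max(A,B)$. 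So verifying $Z(\fra+\frb)_\p=\max\bigl(Z(\fra)_\p,Z(\frb)_\p\bigr)$ and then invoking pull-back does not, by itself, give the asserted $b$-divisor identity. There are two quick ways to close the gap. The cleanest is to observe that your DVR computation is really a statement about divisorial valuations and applies at the generic point of every prime divisor $E$ on \emph{every} model: $\ord_E(\fra+\frb)=\min(\ord_E\fra,\ord_E\frb)$, hence $\ord_E Z(\fra+\frb)=\max\bigl(\ord_E Z(\fra),\ord_E Z(\frb)\bigr)$ for all $E$ over $X$, which is precisely the coefficient-wise identity. Alternatively, keeping the reduction to $X_\p$, note that invertibility of $(\fra+\frb)\cdot\O_{X_\p}$ means the ideal sheaves $\O_{X_\p}\bigl(-(F_\fra-\min(F_\fra,F_\frb))\bigr)$ and $\O_{X_\p}\bigl(-(F_\frb-\min(F_\fra,F_\frb))\bigr)$ generate $\O_{X_\p}$, so those two residual effective divisors are actually \emph{disjoint}; disjointness, unlike mere vanishing of the coefficient-wise minimum, survives pull-back, and that is what makes $\mu^*\max=\max\mu^*$ in this situation. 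Finally, you write that $\p$ is ``dominated by'' the normalized blow-ups of $\fra$ and $\frb$; you of course mean that $\p$ \emph{dominates} them (and also the normalized blow-ups of $\fra\cdot\frb$ and $\fra+\frb$)---domination in that direction is what makes the extended ideals invertible.
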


\begin{rmk} Given an ideal sheaf $\fra$ and a positive number $s>0$ we set $Z(\fra^s):=s Z(\fra)$. Then, by definition, we have $Z(\fra^s)=Z(\frb^t)$ iff the `$\R$-ideals' $\fra^s$ and $\frb^t$ are `valuatively' equivalent in the sense of Kawakita \cite{Kaw}. 
\end{rmk}

\begin{defi} Let $W$ be an $\R$-Weil $b$-divisor over $X$. We denote by $\O_X(W)$ the fractional ideal sheaf of $X$ whose sections on an open set $U\subset X$ are the rational functions $f$ such that $Z(f)\le W$ over $U$. 
\end{defi}
We emphasize that the sheaf of $\O_X$-modules $\O_X(W)$ is \emph{not}  coherent in general, since we are imposing infinitely many (even uncountably many) conditions on $f$ (compare \cite{Isk}). Note that $\p_*\O_{X_\p}(W_\p)\subset\tau_*\O_{X_\tau}(W_\tau)$ whenever $\pi\ge\tau$ and 
$$
\O_X(W)=\bigcap_\p\p_*\O_{X_\p}(W_\p).
$$ 
However if $C$ is an $\R$-Cartier $b$-divisor then we have $\O_X(C)=\p_*\O_{X_\p}(C_\p)$ for each determination $\p$ of $C$, and $\O_X(C)$ is in particular coherent in that case. 

Cartier $b$-divisors associated with coherent fractional ideal sheaves can be characterized as follows:

\begin{lem}\label{lem:difference}
A Cartier $b$-divisor $C\in\CDiv(\X)$ is of the form $Z(\fra)$ for some coherent fractional ideal sheaf $\fra$ on $X$ iff $C$ is relatively globally generated over $X$. 

In particular the Cartier divisors $Z(\fra)$ with $\fra$ ranging over all coherent (fractional) ideal sheaves of $X$ generate $\CDiv(\X)$ as a group.
\end{lem}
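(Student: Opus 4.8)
The plan is to prove both directions of the equivalence, starting with the easier one. First, I would show that $Z(\fra)$ is relatively globally generated over $X$: by definition $Z(\fra)$ is determined on the normalized blow-up $\p\colon X_\p \to X$ of $X$ along $\fra$, where $\fra\cdot\O_{X_\p} = \O_{X_\p}(Z(\fra)_\p)$. The identity $\fra\cdot\O_{X_\p} = \O_{X_\p}(Z(\fra)_\p)$ says precisely that the invertible sheaf $\O_{X_\p}(Z(\fra)_\p)$ is a quotient of $\p^*\fra$, and hence the natural map $\p^*\p_*\fra \to \O_{X_\p}(Z(\fra)_\p)$ is surjective (after noting $\fra$ is a quotient of $\O_X^{\oplus N}$ locally on $X$, or more precisely that $\fra\cdot\O_{X_\p}$ is the image of $\p^*\fra$). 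Thus $\O_{X_\p}(Z(\fra)_\p)$ is generated by its $\p$-pushforward, i.e.\ $Z(\fra)$ is relatively globally generated over $X$.

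For the converse, suppose $C$ is an honest Cartier $b$-divisor that is relatively globally generated over $X$: it is determined on some model $\p\colon X_\p \to X$, and the evaluation map $\p^*\p_*\O_{X_\p}(C_\p) \to \O_{X_\p}(C_\p)$ is surjective. Here I would need $C$ to be relatively globally generated in the sense that $\O_{X_\p}(C_\p)$ is globally generated as an $\O_{X_\p}$-module relatively over $X$. Set $\fra := \p_*\O_{X_\p}(C_\p)$, which is a coherent fractional ideal sheaf on $X$ since $C_\p$ has rational function-field sections (fix a rational function trivializing things, or work in $k(X)$). The surjectivity of $\p^*\fra \to \O_{X_\p}(C_\p)$ gives $\fra\cdot\O_{X_\p} = \O_{X_\p}(C_\p)$, and moreover $\O_{X_\p}(C_\p)$ being invertible means $X_\p$ dominates the normalized blow-up $X_\psi$ of $X$ along $\fra$ via a morphism $\mu\colon X_\p \to X_\psi$ with $\mu^* Z(\fra)_\psi = C_\p$ (universal property of blow-up, plus normality to pass to the normalized blow-up). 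Since $C$ is already determined by $\p$ and $Z(\fra)$ is determined by $\psi\le\p$, this equality on $X_\p$ propagates to all models, giving $C = Z(\fra)$.

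The last assertion follows formally: any $C\in\CDiv(\X)$ is determined on some model $X_\p$ by a Cartier divisor $C_\p$, and on $X_\p$ we can write $C_\p = D_1 - D_2$ with $D_1, D_2$ effective Cartier divisors that are relatively globally generated over $X$ (e.g.\ take $D_1 = C_\p + H$ and $D_2 = H$ for $H$ a sufficiently $\p$-ample—hence relatively globally generated—effective Cartier divisor, or more simply use that on a projective-over-affine model every Cartier divisor is a difference of ideal-sheaf divisors $Z(\fra)$). Pulling back to $\X$ and applying the equivalence just proved expresses $C$ as a difference of two $Z(\fra)$'s, so these generate $\CDiv(\X)$.

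The main obstacle I anticipate is bookkeeping the difference between "relatively globally generated" for the invertible sheaf $\O_{X_\p}(C_\p)$ versus global generation of $C$ as a $b$-divisor, and the passage between $X_\p$ and the (normalized) blow-up $X_\psi$: one must check that $\mu^* Z(\fra)_\psi = C_\p$ precisely (not just up to linear equivalence), which uses that both sides are the image of $\p^*\fra$ in $\O_{X_\p}$. Integral closure subtleties from Lemma~\ref{lem:Zfrac} (that $Z(\fra)$ only sees the integral closure of $\fra$) should be kept in mind but do not affect the statement, since we only claim \emph{some} $\fra$ works.
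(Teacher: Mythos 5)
Your proposal is correct and follows essentially the same route as the paper's own proof: identify relative global generation with surjectivity of the evaluation map $\p^*\p_*\O_{X_\p}(C_\p)\to\O_{X_\p}(C_\p)$, take $\fra:=\p_*\O_{X_\p}(C_\p)$, and for the last assertion write any Cartier divisor on $X_\p$ as a difference of $\p$-very ample (hence $\p$-globally generated) divisors. You spell out the forward direction and the passage through the universal property of the normalized blow-up, both of which the paper leaves implicit ("the converse direction is equally clear").
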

Here we say that $C$ is relatively globally generated over $X$ iff so is $C_\pi$ for one (hence any) determination $\p$ of $C$.

\begin{proof} Let $C$ be a Cartier $b$-divisor determined by $\p$. To say that $C$ is relatively globally generated over $X$ means by definition that the evaluation map
$$\p^*\p_*\O_{X_\p}(C_\p)\to\O_{X_\p}(C_\p)$$
is surjective. If this is the case we thus see that $C=Z(\fra)$ with $\fra:=\p_*\O_{X_\p}(C_\p)=\O_X(C)$, while the converse direction is equally clear. The second assertion now follows from the fact that any Cartier divisor on a given model $X_\p$ can be written as a difference of two $\p$-very ample (hence $\p$-globally generated) Cartier divisors. 
\end{proof}

\subsection{Numerical classes of $b$-divisors} 
Let $X\to S$ be a projective morphism. Recall that the space of codimension one
relative numerical classes $N^1(X/S)$ is the vector space of $\R$-Cartier divisors modulo
those divisors $D$ for which $D\.C = 0$ for every irreducible curve $C$
that is mapped to a point in $S$. One can put together these spaces and 
define the space of \emph{$1$-codimensional numerical classes} of $\X$ over $S$ by
$$
N^1(\X/S):=\underrightarrow{\lim}_\p N^1(X_\p/S)
$$
where the maps are given by pulling-back. We define in turn the space of \emph{$(n-1)$-dimensional numerical classes} of $\X$ over $S$ by
$$
N_{n-1}(\X/S):=\underleftarrow{\lim}_\p N^1(X_\p/S)
$$
where the maps are given by pushing-forward and $\p$ now runs over all \emph{smooth} (or at least $\Q$-factorial) birational models of $X$ -- so that the push-forward map $N^1(X_{\p'}/S)\to N^1(X_\p/S)$ is well-defined for $\p'\ge\p$. 
 
Each $N^1(X_\pi/S)$ is a finite dimensional $\R$-vector space and we endow $N^1(\X/S)$ and $N_{n-1}(\X/S)$ with their natural inductive and projective limit topologies respectively. 

\begin{lem}
The cycle maps induce a natural continuous injection $N^1(\X/S) \to N_{n-1}(\X/S)$
with dense image.
\end{lem}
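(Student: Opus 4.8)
The plan is to analyze the cycle maps $N^1(X_\p/S)\to N^1(X_\p/S)$ (the identity, or rather the inclusion of Cartier classes into $(n-1)$-dimensional classes via the intersection pairing) and how they interact with pull-back and push-forward. Concretely, for $\p'\ge\p$ with structure morphism $\m\colon X_{\p'}\to X_\p$, the projection formula $\m_*(\m^*\a\cdot\b)=\a\cdot\m_*\b$ shows that the naive cycle map commutes with the transition maps: pulling back in the source, pushing forward in the target. So there is a well-defined map $N^1(\X/S)=\varinjlim N^1(X_\p/S)\to \varprojlim N^1(X_\p/S)=N_{n-1}(\X/S)$ induced on the limits, and it is continuous by definition of the inductive/projective limit topologies (a map out of an inductive limit is continuous iff each structure map is, and a map into a projective limit is continuous iff each composition with a projection is).

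Next I would check injectivity. An element of $N^1(\X/S)$ is represented by some class $\a\in N^1(X_\p/S)$ on a fixed model, two such being identified if they agree after pull-back to a common higher model. Suppose the image in $N_{n-1}(\X/S)$ is zero; then the push-forward of (the pull-back of) $\a$ to $X_\p$ itself is zero, i.e. $\a=0$ already in $N^1(X_\p/S)$, since the model $\p$ appears in the projective system defining $N_{n-1}(\X/S)$ (here one uses that $\p$ may be taken smooth, or at least $\Q$-factorial, after passing to a resolution dominating it — pull-back of a Cartier class along a birational morphism between smooth varieties is injective on $N^1$, as $\m_*\m^*=\id$). Hence $\a$ represents the zero element of the inductive limit, proving injectivity.

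For density, I would argue that a general element $(\b_\p)_\p\in N_{n-1}(\X/S)=\varprojlim N^1(X_\p/S)$ is approximated, in the projective limit topology, by the images of Cartier $b$-divisor classes. The point is that the projective-limit topology on $N_{n-1}(\X/S)$ has a neighborhood basis of $0$ given by preimages of neighborhoods of $0$ in the finitely many coordinates $N^1(X_{\p_1}/S),\dots,N^1(X_{\p_r}/S)$; dominating $\p_1,\dots,\p_r$ by a single model $\p$ and taking $C$ to be the Cartier $b$-divisor determined by $\b_\p$ on $X_\p$, the image of $[C]$ in $N_{n-1}(\X/S)$ agrees with $(\b_\p)$ in the coordinates $N^1(X_{\p_i}/S)$ for all $i$ by compatibility of $(\b_\p)$ under push-forward and the projection formula. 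Thus $[C]$ lies in the prescribed neighborhood of $(\b_\p)$, giving density.

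I expect the main subtlety to be purely topological bookkeeping rather than geometric: making sure that the inductive-limit topology on $N^1(\X/S)$ and the projective-limit topology on $N_{n-1}(\X/S)$ are handled correctly, in particular that continuity and density are checked against the correct neighborhood bases, and that one consistently restricts to smooth (or $\Q$-factorial) models so that both the push-forward maps $N^1(X_{\p'}/S)\to N^1(X_\p/S)$ and the injectivity of pull-back are available. The projection formula is the only geometric input, and it is standard; everything else is a formal manipulation of limits of finite-dimensional vector spaces.
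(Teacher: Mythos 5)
Your proof is correct and follows essentially the same route as the paper's: identify each $N^1(X_\p/S)$ inside $N_{n-1}(\X/S)$ via pull-back to higher models, observe continuity and injectivity from the limit topologies and $\mu_*\mu^*=\mathrm{id}$, and obtain density by dominating any finite set of coordinate models and taking the Cartier $b$-divisor determined by the trace there, so that the images agree exactly in those coordinates. The paper phrases the density step as convergence of the net $(\overline{\a_\p})_\p$ to $\a$, which is the same observation.
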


\begin{proof}
Just as in the case of Cartier and Weil $b$-divisors described in Subsection~\ref{ss:1.2},
any class $\b$ in $N^1(X_\p/S)$ can be identified to the class in $N_{n-1}(\X/S)$
determined by pulling back $\b$ on all higher models. We thus 
have natural continuous maps $N^1(X_\p/S) \to N_{n-1}(\X/S)$
which induce a continuous injective map $N^1(\X/S)\to N_{n-1}(\X/S)$.
It follows by the definition of the projective limit topology that this map has dense image, 
since for any class $\a \in N_{n-1}(\X/S)$ the net determined by its traces
$\a_\p \in N^1(X_\p/S)$, viewed as elements of $N_{n-1}(\X/S)$ as described before, 
converges to $\a$.
\end{proof}

There are also natural surjections $\CDiv_\R(\X)\to N^1(\X/S)$ and $\Div_\R(\X)\to N_{n-1}(\X/S)$, but one should be careful that the latter map is \emph{not} continuous with respect to coefficient-wise convergence in general. 

\begin{eg} Consider an infinite sequence $C_j$ of $(-1)$-curves on $X=\P^2$ blown-up at 9 points. We then have $C_j\to 0$ coefficient-wise but the numerical classes $[C_j]\in N^1(X)$ do not tend to zero since $C_j^2=-1$ for each $j$. 
\end{eg}

\begin{lem} Let $\p\colon X_\p\to X$ be a birational model of $X$ and let $\a\in N^1(X_\p/X)$. Then there exists at most one $\p$-exceptional $\R$-Cartier divisor $D$ on $X_\p$ whose numerical class is equal to $\a$. 
\end{lem}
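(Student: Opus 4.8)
The plan is to rephrase the assertion as a vanishing statement. If $D_1$ and $D_2$ are $\p$-exceptional $\R$-Cartier divisors on $X_\p$ with $[D_1]=[D_2]=\a$ in $N^1(X_\p/X)$, then $D:=D_1-D_2$ is a $\p$-exceptional $\R$-Cartier divisor whose class in $N^1(X_\p/X)$ vanishes, i.e.\ $D\cdot C=0$ for every irreducible curve $C\subset X_\p$ contracted by $\p$. It is therefore enough to prove that such a $D$ must be $0$. Now $\p_*D=0$ because $D$ is $\p$-exceptional, and both $D$ and $-D$ are $\p$-nef since they are $\p$-numerically trivial. Applying the negativity lemma (see e.g.\ \cite[Lemma~3.39]{KM}) to the $\p$-nef divisor $-D$ shows that $D$ is effective, and applying it to $D$ shows that $-D$ is effective; hence $D=0$, which gives $D_1=D_2$.

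For a self-contained argument one would reprove the needed instance of the negativity lemma by reduction to surfaces. The statement is local on $X$, so one may assume $X$ (hence $X_\p$) quasi-projective. When $n=\dim X=2$, write $D=\sum_{x}D_x$ with $D_x$ supported on the one-dimensional fibre $\p^{-1}(x)$; the relation $D\cdot C=0$ forces $D_x\cdot C=0$ for every curve $C\subset\p^{-1}(x)$, and since the intersection form on the curves contained in a fibre of a proper birational morphism between normal surfaces is negative definite (Mumford, Zariski's lemma), we conclude $D_x=0$ for all $x$, so $D=0$. For $n>2$ one cuts $X$ and $X_\p$ by $n-2$ general very ample hyperplane sections to obtain normal surfaces $S\subset X$ and $\widetilde S=\p^{-1}(S)\subset X_\p$ with $\p|_{\widetilde S}$ birational; the restriction $D|_{\widetilde S}$ is again $\R$-Cartier, $\p|_{\widetilde S}$-exceptional, and numerically trivial over $S$, so $D|_{\widetilde S}=0$ by the surface case, and one deduces $D=0$ by letting the hyperplane sections vary.

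The only delicate point in the self-contained route is this last reduction: a generic complete intersection of codimension $n-2$ in $X$ need not meet the centre $\p(E)$ of a component $E$ of $D$ when $\dim\p(E)<n-2$, so the hyperplane sections must be chosen to pass through (general points of) these low-dimensional centres, and one must then check that the resulting surfaces remain normal and that $D$ restricts to a non-zero divisor on $\widetilde S$. Since all of this is precisely what the negativity lemma packages, the route I would actually take in the write-up is to quote that lemma and give the two-line argument of the first paragraph.
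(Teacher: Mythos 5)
Your proof is correct, and it takes a genuinely different route from the paper's. Both arguments begin with the same reduction: it suffices to show that a $\p$-exceptional, $\p$-numerically trivial $\R$-Cartier divisor $D$ on $X_\p$ vanishes. From there you invoke the negativity lemma \cite[Lemma~3.39]{KM} twice (applied to $D$ and to $-D$, both of which are $\p$-nef, and using $\p_*D = 0$), which is clean and logically unimpeachable. The paper instead gives a short self-contained argument: replace $D$ by its pull-back to a higher model so that $\p$ may be assumed to be the normalized blow-up of a subscheme of codimension at least two, pick an effective $\p$-exceptional divisor $F = \sum a_j E_j$ with $-F$ $\p$-ample, set $t = \max_j d_j/a_j$ (assuming, after possibly replacing $D$ by $-D$, that $t > 0$), and observe that $tF - D$ is effective with some $E_j$ absent from its support; intersecting with a general fibre curve in $E_j$ then contradicts $\p$-ampleness of $D - tF$. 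In effect the paper reproves exactly the instance of the negativity lemma it needs, rather than citing it. This is a deliberate choice: the paper's later Proposition~\ref{prop:neg} (the nef-envelope version of the negativity lemma) is advertised as giving an alternative proof of \cite[Lemma~3.39]{KM}, and its proof depends, through Lemma~\ref{lem:repres}, on the very statement under discussion; so appealing to the classical negativity lemma here would somewhat undercut the internal logic, even though it is not a formal circularity. Your secondary ``self-contained'' sketch via hyperplane cuts to surfaces is, as you note, just the standard textbook proof of the negativity lemma with the usual caveats about choosing the sections through the right centres, and is more involved than the paper's direct antiample-comparison argument.
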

\begin{proof} Let $D$ be a $\p$-exceptional and $\p$-numerically trivial $\R$-Cartier divisor. We are to show that $D=0$. Upon pulling-back $D$ to a higher birational model, we may assume that $\p$ is the normalized blow-up of $X$ along a subscheme of codimension at least two. If we denote by $E_j$ the $\p$-exceptional divisors we then have on the one hand $D=\sum_j d_j E_j$ and on the other hand there exists positive integers $a_j$ such that $F:=\sum_j a_j E_j$ is $\p$-antiample. Now set $t:=\max_j d_j/a_j$. If we assume by contradiction that $D\neq 0$ then upon possibly replacing $D$ by $-D$ we may assume that $t>0$. Now $tF-D$ is effective and there exists $j$ such that $E_j$ is not contained in its support. If $C\subset E_j$ is a general curve in a fiber of $\p$ we then have $(tF-D)\cdot C\ge 0$ since $C$ is not contained in the support of the effective divisor $tF-D$, which contradicts the fact that $D-tF$ is $\p$-ample.
\end{proof}

Even assuming that $X_\p$ is smooth, it is not true in general that any class $\a\in N^1(X_\p/X)$ can be represented by a $\p$-exceptional $\R$-divisor (since $\p$ might for instance be small, i.e.~without any $\p$-exceptional divisor). It is however true when $X$ is $\Q$-factorial, and for any normal $X$ when $\dim X=2$ thanks to Mumford's numerical pull-back. 

Using these remarks we may now prove the following simple lemma which enables to circumvent the discontinuity of the quotient map $\Div_\R(\X)\to N_{n-1}(\X/S)$.

\begin{lem}\label{lem:repres} ~
\begin{enumerate}
\item Let $W_j$ be a sequence (or net) of $\R$-Weil $b$-divisors which converges to an $\R$-Weil $b$-divisor $W$ coefficient-wise. If there exists a fixed finite dimensional vector space $V$ of $\R$-Weil divisors on $X$ such that $W_{j,X}\in V$ for all $j$ then $[W_j]\to[W]$ in $N_{n-1}(\X/S)$. 
\item Let conversely $\a_j\to \a$ be a convergent sequence (or net) in $N_{n-1}(\X/S)$. Then there exist representatives $W_j,W\in\Div_\R(\X)$ of $\a_j$ and $\a$ respectively and a finite dimensional vector space $V$ of $\R$-Weil divisors on $X$ such that 
\begin{itemize} 
\item $W_j\to W$ coefficient-wise. 
\item $W_{j,X}\in V$ for all $j$. 
\end{itemize}
If $\a_j\in N^1(\X/S)$ then $W_j$ can be chosen to be $\R$-Cartier. 
\end{enumerate}
\end{lem}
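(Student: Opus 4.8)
The plan is to prove part~(1) by a direct finite-dimensionality argument and part~(2) by an explicit construction attached to a fixed resolution; in both cases the idea is to circumvent the discontinuity of the quotient $\Div_\R(\X)\to N_{n-1}(\X/S)$ by confining all traces, model by model, to a fixed finite-dimensional subspace of divisors on which the numerical class map is automatically continuous. For part~(1), since $N_{n-1}(\X/S)$ carries the projective-limit topology it is enough to fix a smooth model $\pi$ and show $[W_{j,\pi}]\to[W_\pi]$ in $N^1(X_\pi/S)$. Let $D_1,\dots,D_r$ be the prime divisors of $X$ occurring in a finite spanning set of $V$ and $E_1,\dots,E_s$ the $\pi$-exceptional prime divisors of $X_\pi$. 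Since $W_{j,X}\in V$, every $W_{j,\pi}$ lies in the fixed finite-dimensional subspace $V_\pi\subset\Div_\R(X_\pi)$ spanned by the strict transforms $\widetilde D_i$ together with the $E_k$. Coefficient-wise convergence of a net contained in a fixed finite-dimensional (hence closed) subspace is convergence in that subspace, so $W_\pi\in V_\pi$ and $W_{j,\pi}\to W_\pi$ there; composing with the linear, hence continuous, class map $V_\pi\to N^1(X_\pi/S)$ settles part~(1).

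For part~(2), fix a resolution $\pi_0\colon X_{\pi_0}\to X$; the smooth models $\pi\ge\pi_0$ being cofinal, it suffices to produce compatible traces on these. For such $\pi$ write $\mu\colon X_\pi\to X_{\pi_0}$. As $X_{\pi_0}$ is smooth we have $\mu_*\mu^*=\id$ on numerical classes, and --- combining the uniqueness of $\mu$-exceptional representatives established above with the remark that over a $\Q$-factorial base every relative class is represented by an exceptional divisor --- the assignment $D\mapsto[D]$ identifies the finite-dimensional space of $\mu$-exceptional $\R$-divisors with $\ker\bigl(\mu_*\colon N^1(X_\pi/S)\to N^1(X_{\pi_0}/S)\bigr)$; thus $N^1(X_\pi/S)=\mu^*N^1(X_{\pi_0}/S)\oplus\ker\mu_*$. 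Choosing once and for all a linear section $s_0\colon N^1(X_{\pi_0}/S)\to\Div_\R(X_{\pi_0})$ (automatically with finite-dimensional image), I define a linear section $L_\pi$ of the class map $\Div_\R(X_\pi)\to N^1(X_\pi/S)$ by $L_\pi(\beta):=\mu^*s_0(\mu_*\beta)+R_\pi\bigl(\beta-\mu^*\mu_*\beta\bigr)$, where $R_\pi$ inverts the identification above. Using the projection formula, the fact that push-forwards and pull-backs of $\mu$-exceptional divisors remain exceptional, and uniqueness once more, one verifies the two compatibilities $\rho_*\circ L_{\pi'}=L_\pi\circ\rho_*$ for $\pi'\ge\pi\ge\pi_0$ and $L_\pi\circ\sigma^*=\sigma^*\circ L_{\pi_1}$ for $\pi\ge\pi_1\ge\pi_0$.

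Granting this, $W(\alpha):=(L_\pi(\alpha_\pi))_{\pi\ge\pi_0}$, extended to all models by push-forward, is a well-defined $\R$-Weil $b$-divisor with $[W(\alpha)]=\alpha$, and it is moreover $\R$-Cartier when $\alpha\in N^1(\X/S)$ --- say determined on $\pi_1$, which we may take $\ge\pi_0$ --- by the second commutation. Setting $W_j:=W(\alpha_j)$ and $W:=W(\alpha)$, one has on the base $W(\alpha)_X=\pi_{0*}s_0(\alpha_{\pi_0})$, which lies in the finite-dimensional space $V:=\pi_{0*}s_0\bigl(N^1(X_{\pi_0}/S)\bigr)$, independently of $\alpha$. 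Finally, for each $\pi\ge\pi_0$ the convergence $\alpha_{j,\pi}\to\alpha_\pi$ in the finite-dimensional space $N^1(X_\pi/S)$ together with continuity of $L_\pi$ gives $W_{j,\pi}\to W_\pi$ inside a fixed finite-dimensional subspace of $\Div_\R(X_\pi)$, i.e.\ coefficient-wise; and the coefficient-wise continuity of divisorial push-forward (the coefficient of a prime $E'$ on the target in $\mu_*D$ equals the coefficient in $D$ of its strict transform) then carries the convergence to all models, so $W_j\to W$ coefficient-wise.

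The main obstacle is the preparatory step for part~(2): the direct-sum decomposition of $N^1(X_\pi/S)$ and, above all, the two commutation identities for $L_\pi$, which is precisely where the negativity-type inputs (uniqueness and surjectivity of exceptional representations over a $\Q$-factorial base) and the careful bookkeeping of strict transforms under the various birational morphisms are required. The finite-dimensionality arguments in parts~(1) and~(2) are then routine.
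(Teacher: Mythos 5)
Your proof is correct and follows essentially the same strategy as the paper's: split each class on a fixed smooth model $\pi_0$ into a pull-back piece from $X_{\pi_0}$ plus a piece exceptional over $X_{\pi_0}$, use the uniqueness of exceptional representatives (the negativity-lemma-type statement preceding Lemma~2.7) to handle the exceptional part, and lift the pull-back part from $N^1(X_{\pi_0}/S)$. The one difference is presentational rather than structural: where the paper lifts the convergent net $\alpha_{j,\pi_0}$ by appealing to the openness of a surjective linear map from a finite-dimensional space of divisors onto $N^1(X_{\pi_0}/S)$, you instead fix a linear section $s_0$ of that map and build from it a globally compatible family of sections $L_\pi$, verifying the commutation identities $\rho_*L_{\pi'}=L_\pi\rho_*$ and $L_\pi\sigma^*=\sigma^*L_{\pi_1}$; those verifications are correct, but they are exactly the compatibilities that the paper's choice of the unique exceptional $b$-divisors $Z_j$ builds in for free, so your route does a bit more bookkeeping to reach the same representatives $W_j=Z_j+\overline{C_j}$.
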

\begin{proof} For each smooth model $\p$ the existence of $V$ yields a finite dimensional space $V_\p$ of $\R$-divisors on $X_\p$ such that 
$W_{j,\p}\in V_\p$ for all $j$. The natural linear map $V_\p\to N^1(X_\p/S)$ is of course continuous since both spaces are finite dimensional, and it follows that $[W_{j,\p}]\to[W_\p]$ in $N^1(X_\p/S)$ for each smooth model. Since smooth models are cofinal in the family of all models we conclude as desired that $[W_j]\to[W]$ in $N_{n-1}(\X/S)$. 

We now consider the converse. Let $X_\p$ be a fixed smooth model of $X$. For each $j$, $\a_j-\overline\a_{j,\p}$ (resp.~$\a-\overline\a_\p$) is exceptional over $X_\p$. By the above remarks it is thus \emph{uniquely} represented by an $\R$-Weil $b$-divisor $Z_j$ (resp.~$Z$) that is exceptional over $X_\p$. Since $(\a_j-\overline\a_{j,\p})_{\p'}$ converges to $(\a-\overline\a_\p)_{\p'}$ in $N^1(X_{\p'}/X_\p)$ for each $\p'\ge\p$ it follows by uniqueness of $Z_j$ that $Z_j\to Z$ coefficient-wise. 

On the other hand since $N^1(X_\p/S)$ is finite dimensional there exists a finite dimensional $\R$-vector space $V$ of $\R$-divisors on $X_\p$ such that $V\to N^1(X_\p/X)$ is surjective. This map is therefore open and we may thus find representatives $C_j\in V$ of $\a_{j,\p}$ converging to a representative $C\in V$ of $\a_\p$. Setting $W_j:=Z_j+\overline C_j$ concludes the proof. 
\end{proof}

\subsection{Functoriality}
If $\phi\colon X\to Y$ is any morphism between two normal varieties, then it is immediate to see that pulling back induces a homomorphism $\phi^*\colon \CDiv(\Y)\to\CDiv(\X)$ in a functorial way. 

Assume furthermore that $\phi\colon X\to Y$ is proper, surjective and generically finite. In this case pushing forward induces a homomorphism
$$
\phi_*\colon \Div(\X)\to\Div(\Y),
$$
and the homomorphism $\phi^*\colon \CDiv(\Y)\to\CDiv(\X)$ extends in a natural way to a homomorphism 
$$
\f^* \colon \Div(\Y)\to\Div(\X). 
$$

Before going through the constructions of these homomorphisms, we recall the following property.

\begin{lem}\label{lem:surjdiv} Let $\phi\colon X\to Y$ be a proper, surjective and generically finite morphism of normal varieties.
Every divisorial valuation $\nu$ on $X$ induces, by restriction via the
field extension $\f^*\colon \C(Y) \inj \C(X)$, a divisorial 
valuation $\phi_*\nu$ on $Y$ that defined by
$$
(\phi_*\nu)(f):=\nu(f\circ\phi).
$$ 
The correspondence $\nu\mapsto\phi_*\nu$ defines a surjective map with finite fibers 
from the set of divisorial valuations on $X$ to the set of divisorial valuations on $Y$.
\end{lem}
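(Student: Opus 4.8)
The plan is to establish the three assertions in order: (1) restriction of a divisorial valuation along $\f^*\colon \C(Y)\inj\C(X)$ is again divisorial; (2) the induced map $\nu\mapsto\f_*\nu$ on divisorial valuations is surjective; (3) its fibers are finite. First I would fix notation: since $\f$ is generically finite the extension $\C(X)/\C(Y)$ is finite, say of degree $e$. For (1), given a divisorial valuation $\nu$ on $X$, the restriction $\f_*\nu$ to $\C(Y)$ is a valuation; I would bound its rank and rational rank by those of $\nu$ (these can only drop under restriction to a subfield) and compute the transcendence degree of its residue field over the ground field. Since a finite extension $\C(X)/\C(Y)$ does not change transcendence degrees, Abhyankar's inequality together with the fact that $\nu$ is Abhyankar (rank $1$, rational rank $1$, residue transcendence degree $n-1$, where $n=\dim X=\dim Y$) forces $\f_*\nu$ to have rank $1$ and residue transcendence degree $n-1$ as well — i.e.\ it is divisorial. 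One also checks the center of $\f_*\nu$ on $Y$ is nonempty (it is the image of the center of $\nu$ under $\f$, using properness of $\f$). Concretely, one can realize this: pass to a model $\p\colon X_\p\to X$ on which $\nu = t\,\ord_E$ for a prime divisor $E$, take a model of $Y$ resolving the rational map so that $E$ dominates a prime divisor $F$ on it, and observe $\f_*\nu = t\,k\,\ord_F$ where $k$ is the ramification index of $E$ over $F$.

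For surjectivity (2): given a divisorial valuation $\mu$ on $Y$, realize it as $s\,\ord_F$ for a prime divisor $F$ on some model $Y_\s\to Y$. Form the normalization $X'$ of $X\times_Y Y_\s$ (or equivalently the normalization of $X$ in the function field, base-changed appropriately) so that we get a finite morphism $X'\to Y_\s$ and a birational model $X'\to X$; pick a prime divisor $E$ on $X'$ lying over $F$. Then $\ord_E$ restricts to a positive rational multiple of $\ord_F$, and rescaling by the appropriate positive constant produces a divisorial valuation $\nu$ on $X$ with $\f_*\nu = \mu$. The only point to be careful about is that $X'\to Y_\s$ is dominant with $E$ dominating $F$, which holds because $\f$ is surjective and $X'$ is normal.

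For finiteness of fibers (3): the divisorial valuations $\nu$ on $X$ with $\f_*\nu = \mu$ all have the same value group (up to the finite-index subtlety coming from $e$) and the same center behaviour; more usefully, fixing a single model $X'\to X$ finite over $Y_\s$ as above, every such $\nu$ is (a positive multiple of) $\ord_E$ for a prime divisor $E$ of $X'$ lying over the fixed prime divisor $F\subset Y_\s$, and there are only finitely many such $E$ since $X'\to Y_\s$ is finite. One must also argue the model $X'$ can be taken independent of $\nu$ within a fixed fiber — but this follows since any divisorial valuation over $X$ lying over $\mu=s\,\ord_F$ has center on $Y_\s$ contained in $F$, hence acquires a divisorial center on a fixed finite-type model dominating both $X$ and $Y_\s$. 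The main obstacle I anticipate is assertion (1): verifying cleanly that restriction of an Abhyankar valuation along a finite field extension stays Abhyankar of the right rank and residue dimension, rather than degenerating — this is where one genuinely uses the finiteness of $\C(X)/\C(Y)$ and the structure theory of valuations, and it is worth spelling out via the explicit $E\mapsto F$ and ramification-index description to keep things concrete.
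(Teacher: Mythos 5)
Your overall strategy coincides with the paper's: realize the target valuation as $s\,\ord_F$ on a model $Y_\sigma$, lift $\phi$ to a morphism $\phi'\colon X'\to Y_\sigma$ from a model of $X$, and read off the fiber of $\nu\mapsto\phi_*\nu$ from the prime divisors of $X'$ dominating $F$. Part (1) is fine (the paper simply cites Zariski--Samuel for the transcendence degree statement; your Abhyankar-inequality phrasing is an equivalent route). Surjectivity (2) is also essentially the paper's argument.

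There are, however, two real problems in your finiteness step. First, the intermediate claim that ``we get a finite morphism $X'\to Y_\sigma$'' is false in general: if $X'$ is (a component of) the normalization of $X\times_Y Y_\sigma$, then $X'\to Y_\sigma$ is only proper, surjective and generically finite, since $\phi\colon X\to Y$ itself need not be finite. (And if instead $X'$ denotes the normalization of $Y_\sigma$ inside $\C(X)$, that \emph{is} finite over $Y_\sigma$ but then $X'\dashrightarrow X$ need not extend to a morphism, so $X'$ is not a model over $X$.) Fortunately finiteness of $\phi'$ is not actually needed. Second, and more importantly, your justification that every $\nu$ in the fiber over $\mu$ ``acquires a divisorial center on $X'$'' is circular as written: having center on $Y_\sigma$ equal to $F$ does not by itself force the center on $X'$ to be a divisor. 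The paper's fix is a short dimension count that you should make explicit: if $Z$ is the center of $\nu$ on $X'$, then compatibility of centers gives $\overline{\phi'(Z)}=F$, so $Z$ dominates $F$ and hence $\dim Z\ge \dim F = n-1$; on the other hand $Z\subsetneq X'$ so $\dim Z\le n-1$; therefore $Z$ is a prime divisor of $X'$, necessarily one of the finitely many irreducible components of $(\phi')^{-1}(F)$ dominating $F$. (Equivalently, the paper argues contrapositively that a non-divisorial center on $X'$ would push forward to a non-divisorial center on $Y_\sigma$.) With that dimension argument inserted, and the false finiteness claim removed, your proof closes the gap and matches the paper's.
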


\begin{proof}Ê
If $\nu$ is a divisorial valuation on $X$ then $\f_*\n$ is a divisorial valuation on $Y$ since the restriction of the valuation ring of $\nu$ to $\C(Y)$ has transcendence degree $\dim Y-1$ by \cite[VI.6, Corollary~1]{ZS}.
The assertion is that, if $\n'$ is a divisorial valuation on $Y$, 
then there exists a nonzero finite number
of divisorial valuations $\n_1,\dots,\n_r$ on $X$ that restrict to $\n'$. 
Geometrically, if $\n' = t\ord_F$ where $F$ is a prime divisor on some model $Y'$ over $Y$ and $t > 0$, 
then the valuations $\n_i$ are constructed by picking model $X'$ over $X$ such that $\f$ lifts to a well-defined morphism $\f' \colon X' \to Y'$.
If $E_1,\dots,E_r$ are the irreducible components of $(\f')^*F$ such that $\f'(E_i)= F$, 
then the associated valuations $\ord_{E_i}$ restrict to a multiple of $\ord_F$ on $\C(Y)$.
Up to rescaling, these are the only divisorial valuations restricting to $\ord_F$ 
since any divisorial valuation on $X$ with non-divisorial center in $X'$ restricts 
to a divisorial valuation on $Y$ with non-divisorial center in $Y'$.
\end{proof} 

We then define $\phi_*\colon \Div(\X)\to\Div(\Y)$ and $\phi^*\colon \Div(\Y)\to\Div(\X)$ in the following way.
If $W \in\Div(\X)$, then $\f_* W$ is characterized by the condition that
$$
\ord_F(\f_*W) = \sum_i \ord_F((\f')_*E_i) \. \ord_{E_i}(W).
$$
for any prime divisor $F$ over $Y$.
Here we are using the notation as in the proof of Lemma~\ref{lem:surjdiv}, so that
$F$ is a divisor on a model $Y'$ over $Y$, $X'$ is a model over $X$ such that the map $\f' \colon X' \to X$ induced by $\f$ is a morphism, and the $E_i$ are the irreducible components of $(\f')^*F$ dominating $F$. 
It follows by the lemma that the sum is finite.
Note also that on any model $Y'$ the coefficient $\ord_F(\f_*W)$ can be nonzero only for finitely many prime divisors $F$ on a model $X'$, so that $\f_*W$ does define a Weil $b$-divisor over $Y$.  

Regarding the pull-back, if $W \in \Div(\Y)$, then $\f^*W$ is characterized by the condition that
$$
\ord_E(\f^*W) = (\f_*\ord_E)(W)
$$
for every prime divisor $E$ over $X$. 
This is indeed a Weil $b$-divisor since each prime divisor $E$ on $X$ such that $(\phi_*\ord_E)(W)\neq 0$ is either mapped to a prime divisor $F$ on $Y$ such that $\ord_F(W)\neq 0$ or is contracted by $\phi$, so that the set of all such prime divisors $E$ is appearing on any model $X'$ over $X$ finite by Lemma~\ref{lem:surjdiv}.

%Given a prime divisor $E$ over $X$ we thus have $\f_* \ord_E = b\ord_F$ for some positive number $b$ and some prime divisor $F$ over $Y$. The coefficient $b$ is actually an integer and can be described as follows: there exist models $X'\to X$ and $Y'\to Y$ such that $E$ (resp.~$F$) is a prime divisor on $X'$ (resp.~$Y'$) and such that the rational lift $\phi'\colon X'\dashrightarrow Y'$ of $\phi$ sends the generic point of $E$ to that of $F$. We then have $b=\ord_E((\phi')^*F)$ (the pull-back is well-defined since it is only considered at the generic point of $F$ and $Y'$ is regular in codimension $1$). 

\begin{prop}\label{prop:pushcar} Let $\phi\colon X\to Y$ be a proper, surjective, generically finite morphism. Then $\phi_*\CDiv(\X)\subset\CDiv(\Y)$.
\end{prop}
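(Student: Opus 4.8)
The plan is to prove something slightly more precise than the stated inclusion: for any $C\in\CDiv(\X)$ I will exhibit an explicit model $\tau\colon Y_\tau\to Y$ and a Cartier divisor $D$ on $Y_\tau$ such that $\phi_*C=\overline D$, where $D$ is the norm, taken along a suitable \emph{finite flat} lift of $\phi$, of the trace of $C$. Suppose $C$ is determined on a model $\pi\colon X_\pi\to X$. First I would choose an arbitrary model $\tau\colon Y_\tau\to Y$ and resolve the rational map $X_\pi\dashrightarrow Y_\tau$ induced by $\phi$, obtaining a model $\sigma\colon X_\sigma\to X$ dominating $\pi$ together with a lift $\psi\colon X_\sigma\to Y_\tau$ of $\phi$. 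By Raynaud--Gruson flattening, after replacing $Y_\tau$ by a further blow-up and $X_\sigma$ by the strict transform of $X_\sigma\to Y_\tau$ — which remains a model over $X$ dominating $\pi$, so that the trace $C_\sigma$ of $C$ on it is still the pull-back of $C_\pi$ and in particular Cartier — one may assume $\psi$ is flat. Being moreover proper, generically finite of degree $e(\phi)$, and a morphism between varieties of the same dimension, $\psi$ then has only $0$-dimensional fibres (flat dimension formula), hence is finite.

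Once $\psi\colon X_\sigma\to Y_\tau$ is finite and flat, I would use the classical fact that pushing Cartier divisors forward along such a morphism stays inside $\CDiv(Y_\tau)$: concretely, the norm $N_\psi\colon\CDiv(X_\sigma)\to\CDiv(Y_\tau)$ is the group homomorphism sending a local equation $\dv(g)$ of a divisor to $\dv\bigl(N_{\C(X)/\C(Y)}(g)\bigr)$ (agreeing on line bundles with $L\mapsto\det(\psi_*L)\otimes\det(\psi_*\O_{X_\sigma})^{-1}$), and it coincides with the cycle-theoretic push-forward $\psi_*$. Setting $D:=\psi_*C_\sigma\in\CDiv(Y_\tau)$ and letting $\overline D\in\CDiv(\Y)$ be the Cartier $b$-divisor it determines, the Proposition reduces to the identity $\phi_*C=\overline D$.

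To prove that identity it suffices to compare the coefficients $\nu(\,\cdot\,)$ on both sides for each divisorial valuation $\nu$ of $Y$. Fix $\nu=\ord_F$. By Lemma~\ref{lem:surjdiv} there are only finitely many divisorial valuations $\mu$ of $X$ restricting up to scaling to $\nu$ on $\C(Y)$, and after enlarging $X_\sigma$ each such $\mu$ is $\ord_{E_\mu}$ for a prime divisor $E_\mu$ with $\psi(E_\mu)=F$, so that $[\C(E_\mu):\C(F)]=\ord_F(\psi_*E_\mu)$ is exactly the residue degree $f(\mu/\nu)$. Reading off the definition of $\phi_*$ gives $\nu(\phi_*C)=\sum_\mu f(\mu/\nu)\,\mu(C)$. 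On the other hand, near the semilocal set $\psi^{-1}\bigl(c_{Y_\tau}(\nu)\bigr)$ the Cartier divisor $C_\sigma$ has a single local equation $\dv(g)$, whence $\nu(\overline D)=\nu\bigl(N_{\C(X)/\C(Y)}(g)\bigr)$; since $\C(X)/\C(Y)$ is separable, the classical formula for the value of a field norm gives $\nu\bigl(N_{\C(X)/\C(Y)}(g)\bigr)=\sum_\mu f(\mu/\nu)\,\mu(g)=\sum_\mu f(\mu/\nu)\,\mu(C_\sigma)=\sum_\mu f(\mu/\nu)\,\mu(C)$, the last equality because $C_\sigma$ is the trace of $C$. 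Thus $\nu(\phi_*C)=\nu(\overline D)$ for every $\nu$, so $\phi_*C=\overline D\in\CDiv(\Y)$.

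The two delicate points I expect are: (i) the reduction to a finite flat lift — one must run flattening while keeping $X_\sigma$ a blow-up of $X_\pi$, and then check that flat $+$ proper $+$ generically finite over an equidimensional target forces finiteness; and (ii) the matching in the last paragraph, i.e.\ that the combinatorial recipe defining $\phi_*$ (residue degrees of the components of $\psi^*F$ dominating $F$) is precisely the valuation-theoretic data in the norm formula — the standard bookkeeping of extensions of a discrete divisorial valuation to a finite separable extension and their residue degrees. As an alternative to (i), one can first reduce via Lemma~\ref{lem:difference} and the fact that $\phi_*$ is a group homomorphism to the case $C=Z(\fra)$ with $\fra$ a coherent fractional ideal, and take for $D$ the divisor $Z(\frb)$ of the norm ideal $\frb$, obtained by Galois descent of $\prod_\sigma\sigma(\fra\O_Z)$ along the normalisation $Z$ of $Y$ in a Galois closure of $\C(X)/\C(Y)$; the verification $\phi_*Z(\fra)=Z(\frb)$ is then the same valuation computation.
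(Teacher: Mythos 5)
Your proof is correct in its essentials, but it follows a genuinely different route from the paper's. The paper first invokes Stein factorization to reduce to the case where $\phi$ is finite, then uses Lemma~\ref{lem:difference} to reduce to the case $C=Z(\fra)$ for a coherent fractional ideal sheaf $\fra$, and finally establishes the ideal-level identity $\phi_*Z(\fra)=Z(N_{X/Y}(\fra))$ by a valuation computation. You instead resolve the rational map from a determination of $C$ to an arbitrary model of $Y$, then invoke Raynaud--Gruson flattening to manufacture a \emph{finite flat lift} $\psi$ of $\phi$ between models, and push $C$ forward directly as a Cartier divisor via the divisorial norm along $\psi$. The endgame in both proofs is the same bookkeeping: the valuation of a norm equals the residue-degree-weighted sum of the lifted valuations, which matches the combinatorial definition of $\phi_*$. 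What your approach buys is that $D$ is produced geometrically, without passing through ideal sheaves; what the paper's approach buys is that one never needs flattening (a nontrivial ingredient), and the intermediate objects automatically live over normal varieties. You even acknowledge the paper's route as an ``alternative'' in your final paragraph.

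One small gap worth closing in your main argument: after Raynaud--Gruson flattening, the strict transform $X_{\sigma'}$ need not be normal, so it may not literally be a model in the paper's sense (the paper works with normal $X$ and one usually wants its models normal, e.g.\ so that the cycle map $\CDiv\hookrightarrow\Div$ is injective). Normalizing $X_{\sigma'}$ restores this but can destroy flatness; however, it preserves finiteness, which is all that matters — the norm homomorphism $N_{\C(X)/\C(Y)}$ on local equations and the displayed valuation identity $\nu\bigl(N_{\C(X)/\C(Y)}(g)\bigr)=\sum_\mu f(\mu/\nu)\,\mu(g)$ require only that $\psi$ be a finite morphism of normal varieties, not that it be flat. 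So you can drop flatness from the conclusion of your reduction and keep only finiteness, which simplifies the argument and removes the normality concern. (Alternatively, observe that $\overline D\in\CDiv(\Y)$ is insensitive to whether the source model is normal, since the Cartier $b$-divisor is determined on $Y_{\tau'}$.)
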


\begin{proof} The assertion is obvious when $\phi$ is birational because we are just shifting models in that case. Using the Stein factorization of $\phi$ we may thus assume that $\phi$ is finite (and still proper and surjective). By Lemma \ref{lem:difference} it is then enough to show that for every coherent fractional ideal sheaf $\fra$ on $X$ there exists a coherent fractional ideal sheaf $\frb$ on $Y$ such that $\phi_*Z(\fra)=Z(\frb)$. In fact we claim that 
\begin{equation}\label{equ:norm}
\phi_*Z(\fra)=Z(N_{X/Y}(\fra))
\end{equation}
where $N_{X/Y}(\fra)$ denotes the image of $\fra$ under the \emph{norm homomorphism} (compare \cite[D\'efinition 21.5.5]{EGA4}). 

More precisely pick an affine chart $U\subset Y$. Since the restriction $\f^{-1}(U) \to U$ is finite,
$\f^{-1}(U)$ is affine and $\fra$ is thus generated by its global sections $g$ on $\f^{-1}(U)$.
For each such $g$ its norm is defined by setting
$$
N_{X/Y}(g)(x) = \prod_{\f(y) =x} g(y)
$$
for every smooth point $x \in U$ over which $\f$ is \'etale and by extending it to a regular function on $U$ by normality. We then define $N_{X/Y}(\fra)(U)$ 
as the $\O_U$-module generated by all $N_{X/Y}(g)$ with $g$ as above.

Let us now prove (\ref{equ:norm}). Pick a prime divisor $F$ on a model $Y'$ over $Y$ and choose a birational model $X'$ over $X$ such that $\f$ lifts to a morphism
$\f' \colon X' \to Y'$. Note that $\f'$ 
is proper and generically finite. Let $E_1,\dots,E_r$ be the prime divisors of $X'$ dominating $F$, so that $(\f')_* E_i = c_i F$ for some positive integer $c_i$. 
Then we have 
$$
\ord_F (\f_* Z(\fra)) = \sum_i c_i \ord_{E_i}(Z(\fra))=-\sum_i c_i\ord_{E_i}(\fra) 
$$
by definition of $\phi_*$.  On the other hand, let $V\subset Y'$ be an affine chart containing a point of $F$.
The ideal sheaf $N_{X/Y}(\fra)\cdot\O_{Y'}$ 
is generated, over $V$, by the functions $N_{X'/Y'}(g)$ where $g$ ranges 
over all global sections
of $\fra\cdot\O_{X'}$ on $(\f')^{-1}(V)$. We have 
$$
\ord_F (N_{X'/Y'}(g)) = \sum c_i\ord_{E_i}(g)
$$
hence
$$
\ord_F (N_{X/Y}(\fra))=\min \big\{ \ord_F (N_{X'/Y'}(g)),\,g\in H^0((\f')^{-1}(V),\fra\cdot\O_{X'}) \big\} 
$$
$$
=\min\big\{\sum c_i \ord_{E_i}(f),\,f\in\fra\big\}
$$
which proves the claim since we have $\ord_{E_i}(f)=\ord_{E_i}(\fra)$ for each $i$ if $f\in\fra$ is a general element. 
\end{proof}

\begin{prop}\label{prop:push-cartier}
Suppose $\f\colon  X \to Y$ is a proper, surjective, generically finite morphism of normal varieties, and let $e(\f) \in\N^*$ be its degree.
Then we have 
$$
\f_* \f^* W = e(\f)\, W
$$
for every $W\in\Div(\Y)$.
\end{prop}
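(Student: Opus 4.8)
The plan is to verify the identity coefficient-wise on divisorial valuations of $Y$, exploiting the explicit formulas for $\f_*$ and $\f^*$ given just above. Fix a prime divisor $F$ over $Y$, say $F$ on a model $Y'\to Y$, and choose a model $X'\to X$ such that $\f$ lifts to a morphism $\f'\colon X'\to Y'$. Let $E_1,\dots,E_r$ be the prime divisors on $X'$ dominating $F$, with $(\f')_*E_i=c_iF$, $c_i\in\N^*$. By the defining formula for $\f_*$, for any $W\in\Div(\Y)$ we have
$$
\ord_F(\f_*\f^*W)=\sum_{i=1}^r c_i\,\ord_{E_i}(\f^*W)=\sum_{i=1}^r c_i\,(\f_*\ord_{E_i})(W),
$$
using the defining formula for $\f^*$. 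So it suffices to prove the purely valuation-theoretic identity $\sum_i c_i(\f_*\ord_{E_i})=e(\f)\,\ord_F$ as functionals on rational functions on $Y$, i.e. for every $g\in\C(Y)^*$,
$$
\sum_{i=1}^r c_i\,\ord_{E_i}(g\circ\f)=e(\f)\,\ord_F(g).
$$

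First I would reduce to the finite case via Stein factorization: writing $\f=\f_{\mathrm{fin}}\circ\f_{\mathrm{bir}}$ with $\f_{\mathrm{bir}}$ birational and $\f_{\mathrm{fin}}$ finite, the birational part only shifts models (and $e(\f_{\mathrm{bir}})=1$), so by functoriality of $\f_*$ and $\f^*$ it is enough to treat $\f$ finite of degree $e=e(\f)$. Then the required identity is exactly the statement that for the finite map $\f'\colon X'\to Y'$ and the prime divisor $F\subset Y'$, the sum of the ramification-weighted multiplicities of the prime divisors $E_i$ over the generic point of $F$ equals $e$. Concretely, let $B=\O_{Y',F}$ be the (DVR) local ring at the generic point of $F$, with fraction field $\C(Y')=\C(Y)$, and let $A$ be the integral closure of $B$ in $\C(X')=\C(X)$; $A$ is a semilocal Dedekind domain whose maximal ideals correspond to the $E_i$, $c_i$ is the ramification index $e(E_i/F)$, and $\ord_{E_i}(g\circ\f)=c_i\,\ord_F(g)$ for $g\in\C(Y)^*$... wait — more precisely $\ord_{E_i}$ restricted to $\C(Y)$ equals $c_i\,\ord_F$, so $\ord_{E_i}(g\circ\f)=c_i\ord_F(g)$, and the left side becomes $\big(\sum_i c_i\cdot c_i\,?\big)$ — let me instead use the clean bookkeeping: $(\f_*\ord_{E_i})(g)=\ord_{E_i}(g\circ\f)$ and since $\ord_{E_i}|_{\C(Y)}=c_i\ord_F$ we get $(\f_*\ord_{E_i})(g)=c_i\ord_F(g)$, hence $\sum_i c_i(\f_*\ord_{E_i})(g)=\big(\sum_i c_i^2\big)\ord_F(g)$, which is the wrong answer. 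The resolution is that $\f'$ need not be \emph{finite} even when $\f$ is — but we may instead take $X'$ to be (a resolution of) the normalized base change $X\times_Y Y'$, reducing to the case where $\f'\colon X'\to Y'$ is finite; then there is a single formula: $(\f')_*E_i=c_iF$ forces $c_i=1$ if we normalize $X'$ so that the $E_i$ are reduced over $F$, no — rather, the classical fundamental identity $\sum_{i}e(E_i/F)f(E_i/F)=[\C(X):\C(Y)]=e$ holds for the finite extension $A/B$ of Dedekind domains, where $f(E_i/F)$ is the residue degree, and one checks $c_i=e(E_i/F)f(E_i/F)$ from the projection formula / the description of $(\f')_*$ on divisors. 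This is the main technical point, and I would isolate it as the one genuine computation.

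So the skeleton is: (1) reduce to $\f$ finite by Stein factorization and functoriality; (2) further reduce to $\f'\colon X'\to Y'$ finite by base-changing and normalizing; (3) localize at the generic point of $F$ to get a finite extension $B\subset A$ of Dedekind domains and identify $c_i=e(E_i/F)\,f(E_i/F)$ via the projection formula for the finite morphism $\Spec A\to\Spec B$; (4) invoke the fundamental identity $\sum_i e(E_i/F)f(E_i/F)=e$ and the relation $\ord_{E_i}|_{\C(Y)}=e(E_i/F)\,\ord_F$ to conclude $\ord_F(\f_*\f^*W)=e\cdot\ord_F(W)$ for every prime $F$ over $Y$, hence $\f_*\f^*W=e(\f)W$ as Weil $b$-divisors. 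The main obstacle is keeping the bookkeeping between the geometric multiplicities $c_i$ (coefficients of $(\f')_*E_i$) and the arithmetic invariants $e(E_i/F)$, $f(E_i/F)$ straight, so that the weighted sum collapses to $e(\f)$ rather than to $\sum c_i^2$ or $\sum c_i$; everything else is formal.
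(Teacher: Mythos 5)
The route you take — reduce to a coefficient-wise check at a fixed prime $F$ over $Y$ and then read off the Dedekind-type identity — is exactly the paper's route, which pushes the key numeric identity into the projection formula $(\f')_*(\f')^*F = e(\f')F$. But there is a genuine arithmetic error in your bookkeeping of the multiplicities, precisely at the point you flagged as delicate. You first try $c_i = e(E_i/F)$, correctly see that this yields $\sum c_i^2$ (wrong), and then overcorrect to $c_i = e(E_i/F)\,f(E_i/F)$. That is still wrong. Since $c_i = \ord_F\big((\f')_*E_i\big)$ and the cycle-theoretic push-forward of a prime divisor $E_i$ dominating $F$ is $[k(E_i):k(F)]\cdot F$, the correct identification is $c_i = f(E_i/F)$, the residue degree alone. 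With your proposed $c_i = e\,f$ the final sum in step (4) would be $\sum_i e(E_i/F)^2 f(E_i/F)\,\ord_F(W)$, which is not $e(\f)\,\ord_F(W)$ unless $\f'$ is unramified along every $E_i$.

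The fix is minimal: keep $\ord_{E_i}|_{\C(Y)} = e(E_i/F)\,\ord_F$ (this gives the ramification factor $e(E_i/F)$ coming out of $\f^*$) but use $c_i = f(E_i/F)$ (this is the factor coming out of $\f_*$). Then $\sum_i c_i\,e(E_i/F) = \sum_i e(E_i/F)f(E_i/F) = e(\f')=e(\f)$ by the fundamental identity, and everything collapses as intended. The paper avoids having to name $e$ and $f$ separately by packaging the whole sum as the $F$-coefficient of $(\f')_*(\f')^*F$, which is $e(\f')$ by projection formula; your reductions to the finite case (Stein factorization and normalized base change) are unnecessary for this, though harmless.
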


\begin{proof}
Let $F$ be an arbitrary prime divisor over $Y$, and let $E_1,\dots,E_r$ be the prime divisors
over $X$ such that $\ord_{E_i}$ restricts to a multiple of $\ord_F$. 
Let $X' \to X$ and $Y' \to Y$ be models so that each $E_i$ is on $X'$ and $E$ is on $Y'$. 
As before, we can assume that $\f$ lifts to a morphism $\f' \colon X' \to Y'$. 
Let $c_i = \ord_F((\f')_*E_i)$. 
By definition of $\f_*$ and $\f^*$, we have 
$$
\ord_F(\f_*\f^*W) = \sum_i c_i \ord_{E_i}(\f^*W) = \sum_i c_i \ord_{E_i}(\f^*F) \ord_F(W)
= e(\f') \ord_F(W),
$$
where the last equality follows by projection formula. 
One concludes by observing that $e(\f') = e(\f)$. 
\end{proof}

%
%%%%%%%%%%%%%%%%%%%%%%
%

\section{Nef envelopes} \label{sec:nef}
In this section $X$ still denotes an arbitrary normal variety (over an algebraically closed field of characteristic zero). We reinterpret the pull-back construction of \cite{dFH} as a \emph{nef envelope}, which shows in particular that it coincides with Mumford's numerical pull-back on surfaces. Section \ref{sec:defect} introduces the \emph{defect ideal} of a Weil divisor, measuring its failure to be Cartier, and a precise description of the defect ideal is obtained. 

\subsection{Graded sequences and nef envelopes}\label{sec:nef-env}
Recall that $\fra_\bullet=(\fra_m)_{m \ge 0}$ is a \emph{graded sequence of fractional ideal sheaves} if $\fra_0=\O_X$, each $\fra_m$ is a coherent fractional ideal sheaf of $X$ and $\fra_k\.\fra_m \subset \fra_{k+m}$ for every $k,m$ (see \cite[Section~2.4]{Laz1}). 
We shall say that $\fra_\bullet$ has \emph{linearly bounded denominators} if there exists a (fixed) Weil divisor $D$ on $X$ such that $\O_X(mD)\cdot\fra_m\subset\O_X$ for all $m$. 

Let us first attach an $\R$-Weil  $b$-divisor to any graded sequence of ideal sheaves with linearly bounded denominators:
\begin{prop}\label{prop:graded-sequence}
Suppose that $\fra_\bullet = (\fra_m)_{m \ge 0}$ is a graded sequence of
fractional ideals sheaves $\fra_m$ with linearly bounded denominators. Then we have 
$$\tfrac 1lZ(\fra_l)\le \tfrac 1mZ(\fra_m)$$
for every $m$ divisible by $l$ and the sequence $\tfrac 1m Z(\fra_m)$ converges coefficient-wise to an $\R$-Weil $b$-divisor. We shall write 
$$
Z(\fra_\bullet) := \lim_m \tfrac 1m Z(\fra_m).
$$
\end{prop}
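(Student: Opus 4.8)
The plan is to deduce everything from the elementary rules for $Z(\cdot)$ in Lemma~\ref{lem:Zfrac}, together with a squeezing argument and the subadditive (Fekete) lemma. First I would record the superadditivity of $m\mapsto Z(\fra_m)$: since $\fra_\bullet$ is a graded sequence we have $\fra_k\cdot\fra_m\subset\fra_{k+m}$, so Lemma~\ref{lem:Zfrac} gives
$$
Z(\fra_k)+Z(\fra_m)=Z(\fra_k\cdot\fra_m)\le Z(\fra_{k+m}).
$$
In particular, taking a $q$-fold product, $\fra_l\cdots\fra_l\subset\fra_{ql}$ gives $q\,Z(\fra_l)\le Z(\fra_{ql})$, i.e. $\tfrac1l Z(\fra_l)\le\tfrac1{ql}Z(\fra_{ql})$; since every $m$ divisible by $l$ has this form, this is precisely the first assertion.

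The key step is to squeeze $\tfrac1m Z(\fra_m)$ between two \emph{fixed} Cartier $b$-divisors. For the lower bound, the $m$-fold inclusion $\fra_1\cdots\fra_1\subset\fra_m$ gives $m\,Z(\fra_1)\le Z(\fra_m)$. For the upper bound I would invoke the linearly bounded denominators hypothesis: from $\O_X(mD)\cdot\fra_m\subset\O_X$ and Lemma~\ref{lem:Zfrac} one gets $Z(\O_X(mD))+Z(\fra_m)\le Z(\O_X)=0$, while the inclusion of coherent fractional ideals $\O_X(D)\cdots\O_X(D)\subset\O_X(mD)$ (a product of $m$ local sections of $\O_X(D)$ is a section of $\O_X(mD)$) gives $m\,Z(\O_X(D))\le Z(\O_X(mD))$. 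Combining, for every $m\ge1$,
$$
m\,Z(\fra_1)\ \le\ Z(\fra_m)\ \le\ -Z(\O_X(mD))\ \le\ -m\,Z(\O_X(D)),
$$
so that $Z(\fra_1)\le\tfrac1m Z(\fra_m)\le-Z(\O_X(D))$ for all $m$.

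The rest is then routine. Fix a prime divisor $E$ over $X$ and put $a_m:=\ord_E(Z(\fra_m))$. Superadditivity gives $a_{k+m}\ge a_k+a_m$, and the squeeze gives $a_m/m\in[\,\ord_E(Z(\fra_1)),\,-\ord_E(Z(\O_X(D)))\,]$, a fixed bounded interval; hence Fekete's lemma, applied to $-a_m$, shows that $a_m/m$ converges to the finite number $\sup_m a_m/m$. Thus $\tfrac1m Z(\fra_m)$ converges coefficient-wise to a family $Z(\fra_\bullet)$. On each model $X_\p$ its trace is squeezed between the finitely supported divisors $Z(\fra_1)_\p$ and $-Z(\O_X(D))_\p$, hence has finite support; and compatibility under push-forward passes to coefficient-wise limits. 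Therefore $Z(\fra_\bullet)$ is a genuine element of $\Div_\R(\X)$.

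The one genuinely substantive ingredient is the upper bound in the squeeze: it is exactly where the ``linearly bounded denominators'' hypothesis is used, and the auxiliary inclusion $\O_X(D)\cdots\O_X(D)\subset\O_X(mD)$ (with $m$ factors) is what yields an upper bound for $Z(\fra_m)$ that is \emph{linear} in $m$. Without a bound of this type the valuations of the $\fra_m$ could in principle decay faster than linearly, so that neither the convergence of $\tfrac1m Z(\fra_m)$ nor the finite support of its coefficient-wise limit would be available. Everything else --- the superadditivity and the Fekete limit --- is routine.
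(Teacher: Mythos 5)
Your proof is correct and takes essentially the same approach as the paper: deduce superadditivity of $m\mapsto Z(\fra_m)$ from Lemma~\ref{lem:Zfrac} and the defining property $\fra_k\cdot\fra_m\subset\fra_{k+m}$, then use the linearly-bounded-denominators hypothesis (via the inclusion $\O_X(D)^m\subset\O_X(mD)$) to produce a uniform bound on $\tfrac1m Z(\fra_m)$, so that Fekete's lemma gives coefficient-wise convergence, and the squeeze between the fixed finitely supported divisors $Z(\fra_1)$ and $-Z(\O_X(D))$ shows the limit has finite support on each model. The paper's proof is a terser version of exactly this argument; your write-up merely spells out the squeeze and the Fekete step explicitly, and also records an auxiliary lower bound $m\,Z(\fra_1)\le Z(\fra_m)$ that the paper leaves implicit.
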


\begin{proof} All this follows from the super-additivity property
$$Z(\fra_m) + Z(\fra_n) \le  Z(\fra_{m+n})$$
since the condition that $\fra_\bullet$ has linearly bounded denominators guarantees that the sequence $\tfrac 1m\ord_E Z(\fra_m)$ is bounded below for each prime divisor $E$ over $X$ and even identically zero for all but finitely many prime divisors $E$ on $X$. 
\end{proof}

\begin{lem}\label{lem:fingen} Let $\fra_\bullet$ be a graded sequence of fractional ideal sheaves on $X$ with linearly bounded denominators. Then we have $Z(\fra_\bullet)=\tfrac{1}{m_0} Z(\fra_{m_0})$ for some $m_0$ iff the graded $\O_X$-algebra $\bigoplus_{m\ge 0}\overline{\fra_m}$ of integral closures is finitely generated. 
\end{lem}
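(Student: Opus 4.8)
The claim is an ``$\fra_\bullet$ is asymptotically a single stage $\Leftrightarrow$ the integral-closure Rees-type algebra $\bigoplus_m\overline{\fra_m}$ is finitely generated'' statement, and the natural strategy is to pass through the Cartier $b$-divisor $Z(\fra_\bullet)$ and its associated graded algebra. I would first observe that $Z(\fra_\bullet)$ being $\R$-Cartier -- indeed equal to $\tfrac{1}{m_0}Z(\fra_{m_0})$ for some $m_0$ -- means that it is determined on a single model $X_\p$, and that by Lemma~\ref{lem:difference} (after possibly enlarging $m_0$) we may arrange $m_0\,Z(\fra_\bullet)=Z(\frc)$ for a genuine coherent fractional ideal $\frc$ on $X$ with $\frc\supset\overline{\fra_{m_0}}$ and in fact $\overline{\frc}=\overline{\fra_{m_0}}$, using the last bullet of Lemma~\ref{lem:Zfrac}. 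The key dictionary is: $\tfrac1m Z(\fra_m)=Z(\fra_\bullet)$ coefficient-wise (which holds for the limit anyway) gets \emph{upgraded}, under the finite-generation hypothesis, to the statement that $\overline{\fra_m}=\overline{\frc^m}$ for all $m$ divisible by $m_0$, i.e. the algebra $\bigoplus_m\overline{\fra_m}$ is, up to the $m_0$-th Veronese, the integral closure of a standard-graded algebra $\bigoplus_m \frc^m t^m$ in its field of fractions.

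\textbf{The two implications.} For the direction ``$Z(\fra_\bullet)=\tfrac1{m_0}Z(\fra_{m_0})\Rightarrow$ finite generation'', I would argue as follows. The equality $\tfrac1l Z(\fra_l)\le\tfrac1m Z(\fra_m)\le Z(\fra_\bullet)=\tfrac1{m_0}Z(\fra_{m_0})$ from Proposition~\ref{prop:graded-sequence} forces, for every $m$, the divisorial inequality $Z(\fra_m)\le \tfrac{m}{m_0}Z(\fra_{m_0})$, hence (taking $m=km_0$) $Z(\fra_{km_0})\le k\,Z(\fra_{m_0})$; combined with superadditivity $Z(\fra_{km_0})\ge k\,Z(\fra_{m_0})$ we get $Z(\fra_{km_0})=k\,Z(\fra_{m_0})=Z(\fra_{m_0}^k)$, so $\overline{\fra_{km_0}}=\overline{\fra_{m_0}^k}$ by the last bullet of Lemma~\ref{lem:Zfrac}. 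Thus the $m_0$-th Veronese subalgebra of $\bigoplus\overline{\fra_m}$ is the integral closure of the standard-graded finitely generated algebra $\bigoplus_k (\overline{\fra_{m_0}})^k$ inside $\bigoplus_k \overline{\fra_{m_0}^k}\subset k(X)[t]$; since $X$ is a variety over a field of characteristic zero (hence excellent), integral closure of a finitely generated graded algebra in its total ring of fractions is module-finite over it, so the Veronese is finitely generated, and therefore so is $\bigoplus_m\overline{\fra_m}$ itself (a graded algebra that is finite over a finitely generated Veronese is finitely generated). For the converse, suppose $A:=\bigoplus_{m\ge0}\overline{\fra_m}$ is finitely generated. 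Passing to a Veronese $A^{(d)}=\bigoplus_k\overline{\fra_{dk}}$ which is standard-graded (generated in degree $1$) and still finitely generated, integrally closed, we get $\overline{\fra_{dk}}=(\overline{\fra_{d}})^{\,k\,\widetilde{}}$-type relations; more precisely $A^{(d)}$ standard-graded and integrally closed gives $\overline{\fra_{dk}}=\overline{\fra_d^k}$ for all $k$, hence $Z(\fra_{dk})=k\,Z(\fra_d)$ and $\tfrac1{dk}Z(\fra_{dk})=\tfrac1d Z(\fra_d)$ is constant along the cofinal set $\{dk\}$, so the coefficient-wise limit $Z(\fra_\bullet)$ equals $\tfrac1d Z(\fra_d)$, which is the desired conclusion with $m_0=d$.

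\textbf{Where the work is.} The genuinely delicate point is the ``standard-graded Veronese'' reduction: one must know that a finitely generated $\N$-graded $\O_X$-algebra admits a Veronese that is generated in degree $1$, and that for such an algebra which is moreover integrally closed in its homogeneous total quotient ring one has $\overline{\fra_{d}^{\,k}}=\overline{\fra_{dk}}$ on the nose rather than merely up to a bounded gap. Both are standard (the first is the usual Veronese trick for finitely generated graded rings; the second is that an integrally closed standard-graded subalgebra of $k(X)[t]$ has its degree-$k$ piece equal to the integral closure of the $k$-th power of its degree-$1$ piece), but they are the steps where one is tempted to wave hands, so I would state them carefully, citing the excellence of $X$ (characteristic zero) to guarantee that ``integral closure'' stays coherent and module-finite. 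A secondary subtlety is bookkeeping with \emph{fractional} rather than honest ideals and with the linearly bounded denominators hypothesis, but this only shifts everything by a fixed Cartier divisor $-D$ and does not affect the finite-generation equivalence; I would handle it by remarking at the outset that replacing $\fra_m$ with $\fra_m\cdot\O_X(mD)\subset\O_X$ changes $Z(\fra_\bullet)$ by the fixed Cartier $b$-divisor $\overline{D}$ and changes $\bigoplus\overline{\fra_m}$ by an obvious isomorphism, so we may assume all $\fra_m$ are genuine ideals.
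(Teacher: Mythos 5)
Your proof is correct and follows the same skeleton as the paper's: reduce to the $m_0$-th Veronese, use the ``standard-graded Veronese'' trick to convert finite generation into $\overline{\fra_{km_0}}=\overline{\fra_{m_0}^k}$, and pass this equality through $Z(\cdot)$ via Lemma~\ref{lem:Zfrac} and Proposition~\ref{prop:graded-sequence}. The genuine difference is in the direction ``$Z(\fra_\bullet)=\tfrac1{m_0}Z(\fra_{m_0})\Rightarrow$ finite generation'': the paper pushes $k\,Z(\fra_{m_0})_\p$ forward along the normalized blow-up $\p$ of $\fra_{m_0}$, identifies $\overline{\fra_{km_0}}=\p_*\O_{X_\p}(kZ(\fra_{m_0})_\p)$ via \cite[Prop.~9.6.6]{Laz1}, and invokes finite generation of the section algebra of a $\p$-globally generated line bundle; you instead identify the Veronese $\bigoplus_k\overline{\fra_{km_0}}=\bigoplus_k\overline{\fra_{m_0}^k}$ with the integral closure of the Rees algebra of $\overline{\fra_{m_0}}$ inside $k(X)[t]$ and invoke excellence of $X$ for module-finiteness of integral closure. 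Both are standard; the paper's route is slightly more self-contained (no excellence needed), while yours makes the commutative-algebra content more visible. One point to flag: your closing assertion that ``a graded algebra that is finite over a finitely generated Veronese is finitely generated'' presupposes, without argument, that $\bigoplus_m\overline{\fra_m}$ \emph{is} module-finite over its Veronese -- this needs a word (e.g.\ that $\overline{\fra_m}\subset\p_*\O_{X_\p}(\lrd\tfrac{m}{m_0}Z(\fra_{m_0})_\p\rrd)$, so each residue class mod $m_0$ is a coherent submodule of a finite module over the Noetherian Veronese). The paper's ``hence so is its finite integral extension $\bigoplus_m\fra_m$'' is equally elliptical on this same point, so this is not a defect relative to the paper's standard of detail.
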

\begin{proof} Since $Z(\fra_m)$ only depends on $\overline{\fra_m}$ (cf.~Lemma \ref{lem:Zfrac}), we may assume to begin with that every $\fra_m$ is integrally closed. Assume first that the graded algebra is finitely generated, so that there exists $m_0\in\N$ such that $\fra_{km_0}=\fra_{m_0}^k$ for all $k\in\N$. Then $Z(\fra_{km_0})=kZ(\fra_{m_0})$, hence $Z(\fra_\bullet)=\tfrac{1}{m_0}Z(\fra_{m_0})$. Conversely, assume that $Z(\fra_\bullet)=\tfrac{1}{m_0} Z(\fra_{m_0})$ for a given $m_0$. By Proposition \ref{prop:graded-sequence} it follows that $Z(\fra_{km_0})=k Z(\fra_{m_0})$ for all $k$. Let $\p$ be the normalized blow-up of $X$ along $\fra_{m_0}$. We then have
$$
\fra_{km_0}=\overline{\fra_{km_0}}=\p_*\O_{X_\p}\left(k Z\left(\fra_{m_0}\right)_\p\right)
$$ 
for all $k$ (cf.~\cite[Proposition~9.6.6]{Laz1}). Since the graded algebra of (relative) global sections of multiples of any (relatively) globally generated line bundle is finitely generated, the fact that $Z(\fra_{m_0})_\p$ is $\p$-globally generated implies that the $\O_X$-algebra $\bigoplus_k\fra_{km_0}$
is finitely generated, hence so is its finite integral extension $\bigoplus_m\fra_m$. 
\end{proof}

\begin{defi} Let $D$ be an $\R$-Weil divisor on $X_\p$ for a given $\p$. The \emph{nef envelope} $\Env_\p(D)$ of $D$ is defined as the $\R$-Weil $b$-divisor associated with the graded sequence $\p_*\O_{X_\p}(mD)$, $m\ge 0$. 
When $\p$ is the identity we write $\Env_X$ for $\Env_\p$. 
\end{defi}

We shall see how this definition relates to relative Zariski decomposition and 
numerical pull-back in the surface case (see Theorem~\ref{thm:mumford}). 
A non-trivial toric example is worked out in Example~\ref{eg:counter}.

\begin{rmk} If $D$ is an $\R$-Weil divisor on $X$ then $-\Env_X(-D)_\p$ coincides by definition with $\p^*D$ in the sense of \cite[Definition 2.9]{dFH}. 
\end{rmk}

\begin{rmk}
We shall introduce later in Subsection~\ref{ss:2.3} a notion of nef envelope over $\X$
of a $b$-divisor $W$ (under some condition on $W$). 
The relation between the two notions of envelopes is explained in Remark~\ref{rmk:Env-comparison}.
\end{rmk}

\begin{prop}\label{prop:concave}
Let $D, D'$ be two $\R$-Weil divisors on a model $X_\p$. Then we have:\begin{itemize} 
\item $\Env_\p(D+D') \ge \Env_\p(D) + \Env_\p(D')$.
\item $\Env_\p(tD)=t\Env_\p(D)$ for each $t\in\R_+$
\end{itemize}
\end{prop}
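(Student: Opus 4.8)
The plan is to deduce both statements from the definition of $\Env_\p(D)$ as the $\R$-Weil $b$-divisor $Z(\fra_\bullet)$ attached to the graded sequence $\fra_m := \p_*\O_{X_\p}(mD)$, together with the formal properties of the operators $Z(-)$ and $\lim_m \tfrac1m(-)$ established in Lemma \ref{lem:Zfrac} and Proposition \ref{prop:graded-sequence}. Fix $D, D'$ on $X_\p$, and write $\fra_m = \p_*\O_{X_\p}(mD)$, $\frb_m = \p_*\O_{X_\p}(mD')$, $\frc_m = \p_*\O_{X_\p}(m(D+D'))$.

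First I would prove superadditivity. The key elementary observation is the multiplication-of-sections inclusion
$$
\p_*\O_{X_\p}(mD)\cdot\p_*\O_{X_\p}(mD') \ \subset\ \p_*\O_{X_\p}(m(D+D')),
$$
i.e.\ $\fra_m\cdot\frb_m\subset\frc_m$: given local sections $f\in\fra_m$, $g\in\frb_m$ over an open $U\subset X$, we have $\dv(f)+mD\ge 0$ and $\dv(g)+mD'\ge 0$ on $\p^{-1}(U)$, hence $\dv(fg)+m(D+D')\ge 0$, so $fg\in\frc_m$. Applying $Z(-)$ and using that $Z$ is monotone and additive under products (Lemma \ref{lem:Zfrac}) gives $Z(\fra_m)+Z(\frb_m)\le Z(\frc_m)$ for every $m$. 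Dividing by $m$ and passing to the coefficient-wise limit as $m\to\infty$ (legitimate by Proposition \ref{prop:graded-sequence}, since all three sequences have linearly bounded denominators as soon as $D,D'$ do, which holds automatically on a model over $X$) yields $\Env_\p(D)+\Env_\p(D')\le\Env_\p(D+D')$.

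For the homogeneity statement, the case $t=0$ is trivial, so fix $t>0$. Here the point is that $\Env_\p(D)=\lim_m\tfrac1m Z(\p_*\O_{X_\p}(mD))$ can be computed along any cofinal subsequence of $m$'s, and more precisely that rescaling $D$ by a positive rational just reindexes the graded sequence. Concretely, for a positive integer $q$ one has $\p_*\O_{X_\p}(m(qD)) = \p_*\O_{X_\p}((mq)D)$, so $Z(\fra^{(qD)}_\bullet) = \lim_m\tfrac1m Z(\fra_{mq}) = q\lim_m\tfrac1{mq}Z(\fra_{mq}) = q\,\Env_\p(D)$, using Proposition \ref{prop:graded-sequence} to identify the limit over the cofinal subsequence $\{mq\}$ with $\Env_\p(D)$. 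This proves $\Env_\p(qD)=q\Env_\p(D)$ for $q\in\N^*$, hence $\Env_\p(D/q)=\tfrac1q\Env_\p(D)$ by applying the identity to $D/q$, hence $\Env_\p(tD)=t\Env_\p(D)$ for all $t\in\Q_+$. Finally, for irrational $t$ one combines superadditivity (which forces the function $t\mapsto\ord_E\Env_\p(tD)$ to be superadditive in $t$) with the fact that it is visibly monotone in $t$ under, say, effective $D$, and is $\Q_+$-homogeneous; but in fact the cleanest route avoids any continuity argument: for any $t>0$ write $mt = \lfloor mt\rfloor + \{mt\}$ and note $\p_*\O_{X_\p}(\lfloor mt\rfloor D)\subset\p_*\O_{X_\p}(mtD)\subset\p_*\O_{X_\p}(\lceil mt\rceil D)$ up to a fixed bounded-denominator correction, so $\tfrac1m Z(\p_*\O_{X_\p}(mtD))$ is squeezed between $\tfrac1m Z(\p_*\O_{X_\p}(\lfloor mt\rfloor D))$ and $\tfrac1m Z(\p_*\O_{X_\p}(\lceil mt\rceil D))$, both of which converge to $t\,\Env_\p(D)$ by the rational case applied along $\lfloor mt\rfloor/m\to t$.

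The main obstacle I anticipate is the irrational-$t$ case: making the squeezing argument rigorous requires care that the "fixed bounded-denominator correction" relating $\O_{X_\p}(\lfloor mt\rfloor D)$ and $\O_{X_\p}(mtD)$ contributes $o(m)$ to each coefficient, which is exactly where the linearly bounded denominators hypothesis gets used, and one must check the limits of $\tfrac1m Z(\p_*\O_{X_\p}(\lfloor mt\rfloor D))$ agree with $t\,\Env_\p(D)$ coefficient-wise rather than merely along a sub-subsequence — this follows from Proposition \ref{prop:graded-sequence}'s monotone-along-divisible-indices structure but deserves an explicit line. Everything else is a formal manipulation of the operators $Z$ and $\lim\tfrac1m$.
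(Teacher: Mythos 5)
Your proof of the first bullet is exactly the paper's: the multiplication-of-sections inclusion $\p_*\O_{X_\p}(mD)\cdot\p_*\O_{X_\p}(mD')\subset\p_*\O_{X_\p}(m(D+D'))$, then apply $Z$, divide by $m$, pass to the limit.

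For the second bullet your overall strategy (positive integers, then $\Q_+$, then squeeze to reach $\R_+$) is also the paper's, but there is a genuine gap in the route you call ``cleanest.'' The inclusions
$\p_*\O_{X_\p}(\lfloor mt\rfloor D)\subset\p_*\O_{X_\p}(mtD)\subset\p_*\O_{X_\p}(\lceil mt\rceil D)$
require $\lfloor mt\rfloor D\le mtD\le \lceil mt\rceil D$ as divisors, which fails as soon as $D$ has a component with negative coefficient; the phrase ``up to a fixed bounded-denominator correction'' does not repair a sign problem that grows linearly in $m$. You flag, almost in passing, that monotonicity holds ``under, say, effective $D$,'' but you never actually reduce to that case. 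The paper handles this upfront and cleanly: since one may add to $D$ the pull-back of a suitable Cartier divisor from $X$ without changing $\Env_\p$ up to the obvious shift (this affects all $\fra_m=\p_*\O_{X_\p}(mD)$ by a common invertible twist), one can assume $D$ is effective. Then $D\mapsto\Env_\p(D)$ is manifestly non-decreasing, and a two-sided rational approximation $s_j\nearrow t\swarrow t_j$ gives $s_j\Env_\p(D)\le\Env_\p(tD)\le t_j\Env_\p(D)$ directly, with no need for the $\lfloor mt\rfloor$ machinery. I would recommend you adopt that reduction and drop the floor/ceiling squeeze: it eliminates precisely the ``main obstacle'' you identify.
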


\begin{proof}
For each $m\ge 0$ we have
$$
(\p_*\O_{X_\p}(mD))\cdot(\p_*\O_{X_\p}(mD')) \subset\p_*\O_{X_\p}(m (D+D'))
$$
whence the first point. 

In order to prove the second point we may assume that $D$ is effective (since we may add to $D$ the pull-back of an appropriate Cartier divisor of $X$ to make it effective). Now observe that $\Env_\p(mD)=m\Env_\p(D)$ for each positive integer $m$ since $\Env_\p(D)=\lim_k\tfrac 1k Z(\p_*\O_{X_\p}(kD))$, hence $\Env_\p(tD)=t\Env_\p(D)$ for each $t\in\Q_+^*$. On the other hand $D\mapsto\Env_\p(D)$ is obviously non-decreasing, so if we pick $t\in\R_+^*$ and approximate it from below and from above by rational numbers $s_j,t_j$ we get
$$
s_j\Env_\p(D)=\Env_\p(s_j D)\le \Env_\p(tD)\le\Env_\p(t_j D)=t_j\Env_\p(D)$$
hence the result.
\end{proof}

Linearity of nef envelopes fails in general. The obstruction to linearity will be studied in greater detail in Section \ref{sec:defect} (see also Example~\ref{eg:counter} and~\cite{dFH}).

\begin{cor}\label{cor:cont}
For every finite dimensional vector space $V$ of $\R$-Weil divisors on $X_\p$ and every divisorial valuation $\nu$ the map $D\mapsto\nu(\Env_\p(D))$ is continuous on $V$.  
\end{cor}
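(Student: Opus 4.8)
The goal is to show that for a fixed finite dimensional space $V$ of $\R$-Weil divisors on $X_\p$ and a fixed divisorial valuation $\nu$, the function $D\mapsto\nu(\Env_\p(D))$ is continuous on $V$. The plan is to combine the two properties of $\Env_\p$ already established in Proposition~\ref{prop:concave}: super-additivity (equivalently, concavity together with $1$-homogeneity) and positive homogeneity, plus monotonicity, to deduce that $\nu\circ\Env_\p$ is a concave, positively homogeneous function on $V$, and then invoke the elementary fact that such a function with finite values on a finite dimensional vector space is automatically continuous — indeed automatically locally Lipschitz on the interior of its domain, and here the domain is all of $V$.

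First I would record that $g(D):=\nu(\Env_\p(D))$ takes finite real values for every $D\in V$: the coefficient $\nu(\Env_\p(D))$ is $t$ times the coefficient of some prime divisor $E$ in $\Env_\p(D)$, and by Proposition~\ref{prop:graded-sequence} applied to the graded sequence $\fra_m:=\p_*\O_{X_\p}(mD)$ (which has linearly bounded denominators, e.g.\ using any $D_0$ with $D\le D_0$ for $D_0$ the pull-back of an ample Cartier divisor on $X$ — or more simply because $V$ is finite dimensional, so the denominators can be bounded uniformly over $V$), the limit defining $Z(\fra_\bullet)_\p$ exists coefficient-wise and is a genuine real number. Next, from Proposition~\ref{prop:concave} we get $g(D+D')\ge g(D)+g(D')$ (apply $\nu$, which is additive and order-preserving on coefficients, to the inequality $\Env_\p(D+D')\ge\Env_\p(D)+\Env_\p(D')$) and $g(tD)=t\,g(D)$ for $t\in\R_+$. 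Combining the two, for $D,D'\in V$ and $\lambda\in[0,1]$ we obtain $g(\lambda D+(1-\lambda)D')\ge \lambda g(D)+(1-\lambda)g(D')$, i.e.\ $g$ is concave on $V$; and $g$ is positively homogeneous of degree $1$.

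Finally I would invoke the standard fact from convex analysis: a concave (or convex) function that is finite on all of a finite dimensional real vector space is continuous (in fact locally Lipschitz). Since $V$ is finite dimensional and $g$ is finite-valued and concave on $V$, $g$ is continuous on $V$, which is exactly the assertion. (If one prefers to avoid quoting convexity theory, one can argue directly: fix a basis $D_1,\dots,D_r$ of $V$; concavity plus homogeneity give, for any $D=\sum t_i D_i$, two-sided bounds $g(D)$ sandwiched between finite linear combinations of the $g(\pm D_i)$, from which equicontinuity at $0$ and hence everywhere follows by the homogeneity.)

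**Main obstacle.** There is essentially no hard step here; the only thing requiring a moment's care is the finiteness of $g$ on all of $V$ — i.e.\ checking that the graded sequences $\p_*\O_{X_\p}(mD)$ have linearly bounded denominators uniformly as $D$ ranges over $V$, so that Proposition~\ref{prop:graded-sequence} applies and the coefficient $\nu(\Env_\p(D))$ never escapes to $-\infty$. Once that is in hand, concavity and homogeneity do all the work, and continuity on the finite dimensional space $V$ is immediate.
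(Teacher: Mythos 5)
Your argument is exactly the paper's: the proof there reads, in its entirety, "Proposition~\ref{prop:concave} implies that $D\mapsto\nu(\Env_\p(D))$ is a concave function on $V$ and the result follows." You have simply unpacked the same reasoning — super-additivity plus positive homogeneity give concavity, and a finite-valued concave function on a finite-dimensional real vector space is automatically continuous — and correctly flagged the one implicit hypothesis (finiteness of $\nu(\Env_\p(D))$, i.e.\ that the graded sequences have linearly bounded denominators so that Proposition~\ref{prop:graded-sequence} applies).
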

\begin{proof} Proposition \ref{prop:concave}  implies that $D\mapsto\nu(\Env_\p(D))$ is a concave function on $V$ and the result follows.
\end{proof}

\begin{prop}\label{prop:envx}
For every $\R$-Weil divisor $D$ on $X$ the trace $\left(\Env_X(D)\right)_X$ of $\Env_X(D)$ on $X$ coincides with $D$.
\end{prop}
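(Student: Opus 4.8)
The plan is to unwind the definitions. By construction $\Env_X(D) = Z(\fra_\bullet)$ where $\fra_m := \O_X(mD)$, so the trace on $X$ is $\Env_X(D)_X = \lim_m \tfrac 1m Z(\fra_m)_X$, the coefficient-wise limit. Thus it suffices to identify, for each prime divisor $E$ on $X$, the value $\ord_E(Z(\fra_m)_X)$ and check that $\tfrac 1m$ times it converges to $\ord_E(D)$. The natural first step is therefore to compute $Z(\fra_m)_X$ for a single coherent fractional ideal sheaf: I would show that for any coherent fractional ideal $\fra$ on $X$ the trace on $X$ itself of the Cartier $b$-divisor $Z(\fra)$ is the divisorial part of $\fra$, namely the Weil divisor $\sum_E \ord_E(\fra)\, E$ where the sum runs over prime divisors of $X$ and $\ord_E(\fra) := \min\{\ord_E(f) : f \in \fra_x\}$ at the generic point of $E$. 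This is essentially the statement that the blow-up morphism is an isomorphism in codimension one, so pulling back $\fra$ and reading off the exceptional divisor recovers, in codimension one on $X$, exactly the order of vanishing of $\fra$ along each prime divisor.

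Granting that, the computation becomes: $\ord_E(Z(\fra_m)_X) = -\ord_E(\fra_m) = -\ord_E(\O_X(mD))$ (the minus sign matching the convention that $Z(\fra)$ is anti-effective for an honest ideal, as in the paper's Example before Lemma~\ref{lem:Zfrac}; note $\Env_\p$ is built from $\p_*\O_{X_\p}(mD)$ which may be fractional). Now $\O_X(mD)$ by definition has sections $f$ with $\dv(f) + mD \ge 0$, i.e.\ $\ord_E(f) \ge -\ord_E(mD)$ for every prime divisor $E$ on $X$, and at the generic point of a fixed $E$ one can realize equality: there is a rational function with $\ord_E(f) = -m\,\ord_E(D)$ and nonnegative order along the finitely many other relevant divisors (this is the standard local computation at the DVR $\O_{X,E}$, using that the divisor is supported on finitely many components near the generic point of $E$). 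Hence $\ord_E(\O_X(mD)) = -m\,\ord_E(D)$ exactly — it is already linear in $m$, so no limit is even needed — and therefore $\ord_E(Z(\fra_m)_X) = m\,\ord_E(D)$, giving $\ord_E(\Env_X(D)_X) = \ord_E(D)$ for every prime divisor $E$ on $X$. Since a Weil divisor on $X$ is determined by these coefficients, $\Env_X(D)_X = D$.

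The main obstacle, such as it is, is the codimension-one identification $Z(\fra)_X = \operatorname{div}(\fra)$ and the sign bookkeeping: one must be careful that $\O_X(mD)$ is genuinely the fractional ideal cut out by the linear condition $Z(f) \le mD$ over all of $X$ — which, restricted to the generic point of a prime divisor $E$, collapses to the single DVR inequality $\ord_E(f) \ge -m\,\ord_E(D)$ — and that the normalized blow-up along $\fra$, being an isomorphism over the codimension-one points of $X$ (as $X$ is normal and $\fra$ is locally principal in codimension one after normalization), transports $\fra$ to a line bundle whose divisor agrees with $\operatorname{div}(\fra)$ outside a codimension-two locus. Once that is pinned down the rest is immediate. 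An alternative, even shorter route avoiding blow-ups entirely: observe directly from Definition~2.7 (the paper's \texttt{defi} defining $\O_X(W)$) and the displayed formula $\O_X(W) = \bigcap_\p \p_* \O_{X_\p}(W_\p)$ that for $W = \Env_X(D)$ the trace condition $Z(f) \le W$ on $X$ forces $\ord_E(f) \ge \ord_E(W_X)$, so that $\O_X(W) = \O_X(W_X)$ as fractional ideals on $X$; combined with $\O_X(W) = \varinjlim \tfrac 1m \O_X(mD) = \O_X(D)$ (again reading coefficients at each prime divisor of $X$) one gets $\O_X(W_X) = \O_X(D)$, whence $W_X = D$ since on a normal variety a Weil divisor is recovered from its associated divisorial fractional ideal. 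I would present whichever of these two is cleaner in the ambient notation; both hinge on the same elementary fact that reflexive/divisorial data on a normal variety is codimension-one data.
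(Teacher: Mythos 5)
Your first argument follows the same core computation as the paper's --- reading off $\ord_E(\O_X(mD))$ at the generic point of each prime divisor $E$ on $X$, where normality makes $\O_{X,E}$ a DVR --- but contains an integrality slip that the paper sidesteps. You assert that $\ord_E(\O_X(mD))=-m\,\ord_E(D)$ ``exactly --- it is already linear in $m$, so no limit is even needed,'' on the grounds that one can find $f$ with $\ord_E(f)=-m\,\ord_E(D)$. This is impossible unless $m\,\ord_E(D)\in\Z$, since $\ord_E(f)$ is an integer; the correct identity is $\ord_E(\O_X(mD))=-\lfloor m\,\ord_E(D)\rfloor$. Your conclusion nonetheless survives because $\tfrac1m\lfloor m\,\ord_E(D)\rfloor\to\ord_E(D)$, so the limit is in fact needed as soon as $D$ is a genuine $\R$-Weil divisor. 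The paper instead proves the integral (hence the $\Q$-) case exactly and extends to $\R$-divisors by density of $\Q$-Weil divisors together with the continuity statement of Corollary~\ref{cor:cont}; your elementary floor estimate is a perfectly acceptable substitute for that density argument, but the ``no limit is even needed'' claim is false and should be deleted.

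Your alternative route has a genuine gap and should not be used. Even granting $\O_X(W)=\O_X(W_X)=\O_X(D)$ for $W=\Env_X(D)$ (the identity $\O_X(W)=\O_X(W_X)$ itself needs justification, since $Z(f)\le W$ is a condition on all models and not only on $X$; it can be salvaged via $\O_X(D)\subset\O_X(W)\subset\O_X(W_X)$ together with the a priori inequality $W_X\le D$), the final step ``a Weil divisor on a normal variety is recovered from its associated divisorial fractional ideal'' fails for $\R$-Weil divisors: the sheaf $\O_X(D')$ only determines $\lfloor D'\rfloor$. So $\O_X(W_X)=\O_X(D)$ yields $\lfloor W_X\rfloor=\lfloor D\rfloor$, not $W_X=D$, and the irrational case is left unproved. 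The intermediate step involving ``$\varinjlim\tfrac1m\O_X(mD)$'' is also not a well-formed operation on fractional ideals. Stick with the first argument, corrected as above.
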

\begin{proof} If $D$ is a Weil divisor on $X$ then we have $Z(\O_X(D))_X=D$. Indeed this means that $\ord_E\O_X(D)=-\ord_E D$ for each prime divisor $E$ of $X$, which holds true since $X$, being normal, is regular at the generic point of $E$. 

As a consequence we get $D=\left(\Env_X(D)\right)_X$ when $D$ is a $\Q$-Weil divisor on $X$, and the general case follows by density, using Corollary \ref{cor:cont}.
\end{proof}

\subsection{Variational characterization of nef envelopes}
Let $X\to S$ be a projective morphism. In the usual theory of $b$-divisors one says that an $\R$-Cartier $b$-divisor $C$ is relatively nef over $S$ (or $S$-nef for short) if $C_\p$ is $S$-nef for one (hence any) determination $\p$ of $C$. Following \cite{BFJ,KuMa} we extend this definition to arbitrary $\R$-Weil $b$-divisors:

\begin{defi} Let $X\to S$ be a projective morphism. We define $\Nef(\X/S)\subset N_{n-1}(\X/S)$ as the closed convex cone generated by all $S$-nef classes $\b\in N^1(\X/S)$, i.e.~all classes of $S$-nef $\R$-Cartier $b$-divisors. 
\end{defi}
Since the usual notion of nefness is preserved by pull-back, it is immediate to check that $S$-nef classes in the sense of the above definition are also preserved by pull-back. On the other hand nefness is in general not preserved under push-forward when $\dim X>2$, and the traces $W_\p$ of an $S$-nef $\R$-Weil $b$-divisor are therefore \emph{not} $S$-nef in general. 

Given a projective morphism $Y\to S$, the \emph{$S$-movable cone} $\overline{\mathrm{Mov}}(Y/S)\subset N^1(Y/S)$ is the closed convex cone $\overline{\mathrm{Mov}}(Y/S)$ generated by the numerical classes of all Cartier divisors $D$ on $Y$ whose $S$-base locus has codimension at least two. Recall that the $S$-base locus of a Cartier divisor $D$ on $Y$ is the cosupport
of the ideal sheaf obtained as the image of the natural evaluation map
$f^*f_*\O_Y(D) \otimes \O_Y(-D) \to \O_Y$.

We now have the following alternative description of nef $b$-divisors:
\begin{lem}\label{lem:mov} Let $X\to S$ be a projective morphism. Then we have 
$$
\Nef(\X/S)=\projlim_\p\overline{\mathrm{Mov}}(X_\p/S)
$$
where the limit is taken over all smooth (or $\Q$-factorial) models $X_\p$. In other words an $\R$-Weil $b$-divisor $W$ is $S$-nef iff $W_\p$ is $S$-movable on each smooth (or $\Q$-factorial) model $X_\p$.  In particular the restriction of (the class of) $W_\p$ to any prime divisor of $X_\p$ is $S$-pseudoeffective. 
\end{lem}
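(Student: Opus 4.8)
The plan is to prove the two cone identities in tandem, reducing everything to finite-dimensional statements about one smooth (or $\Q$-factorial) model at a time.

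\medskip

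\noindent\textit{Step 1: $S$-nef implies $S$-movable on each smooth model.}
Let $W$ be an $S$-nef $\R$-Weil $b$-divisor, so that by definition $[W]$ lies in the closed convex cone generated by classes of $S$-nef $\R$-Cartier $b$-divisors $C$. Fix a smooth model $X_\p$. It suffices to treat the case $W=C$ is an $S$-nef $\R$-Cartier $b$-divisor determined on some model $X_{\p'}$ which we may take to dominate $X_\p$ via $\mu\colon X_{\p'}\to X_\p$, and then pass to limits using Lemma~\ref{lem:repres}(1) (the traces $C_{n,\p}$ all lie in a fixed finite-dimensional space, namely the image of $\Div_\R(X_{\p'})\to\Div_\R(X_\p)$ under push-forward, so coefficient-wise convergence on $X$ gives numerical convergence on $X_\p$). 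For such a $C$, we have $C_\p=\mu_*(C_{\p'})$ with $C_{\p'}$ an $S$-nef $\R$-Cartier divisor. The claim is that the push-forward of an $S$-nef divisor under a birational morphism of smooth varieties is $S$-movable; this is because for $D$ nef and globally generated relatively over $S$ (after scaling we reduce to this case by Fujita-type or Serre-vanishing approximation in $N^1$), $\mu_* D$ has $S$-base locus contained in the $\mu$-exceptional locus, hence of codimension $\ge 2$, so $[\mu_*D]\in\overline{\Mov}(X_\p/S)$; the general nef class is a limit of such, and the movable cone is closed. Hence $C_\p\in\overline{\Mov}(X_\p/S)$, as wanted.

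\medskip

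\noindent\textit{Step 2: $S$-movable on each smooth model implies $S$-nef.}
Conversely suppose $W_\p\in\overline{\Mov}(X_\p/S)$ for every smooth model. I want to exhibit $[W]$ as a limit of $S$-nef $\R$-Cartier $b$-divisor classes. The key point is that an $S$-movable divisor $D$ on a smooth model $X_\p$, being a limit of classes of divisors with $S$-base locus of codimension $\ge 2$, can be written (in the limit) as $\mu_*(\text{\rm mobile part})$ for suitable further blow-ups $\mu$: more precisely, if $D$ has $S$-base locus of codimension $\ge 2$, then on a resolution $\mu\colon X_{\p'}\to X_\p$ of the base ideal $\mathfrak b$ of $D$ one has $\mu^*D=M+F$ with $M$ the $S$-globally generated (hence $S$-nef) part and $F=-Z(\mathfrak b)_{\p'}$ effective and $\mu$-exceptional relative to the support of $D$; pushing $\overline M$ forward to $X_\p$ recovers $D$ on $X_\p$. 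Taking the associated $S$-nef Cartier $b$-divisor $\overline M$ over $\X$, its trace on $X_\p$ is $D$. Letting $D$ range over a sequence of movable divisors on larger and larger smooth models $X_\p$ whose traces approximate $W_\p$, and diagonalizing, we produce a net of $S$-nef Cartier $b$-divisors whose classes converge to $[W]$ in $N_{n-1}(\X/S)$ by the definition of the projective-limit topology. Hence $[W]\in\Nef(\X/S)$.

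\medskip

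\noindent\textit{Step 3: the restriction statement.}
For the last sentence, given an $S$-nef $W$ and a prime divisor $F\subset X_\p$ on a smooth model, restrict: for an $S$-nef $\R$-Cartier $b$-divisor $C$ determined on a model dominating $X_\p$, the restriction $C_\p|_F$ is a limit of nef classes, hence $S$-pseudoeffective on $F$; since $W_\p$ is a limit (in the finite-dimensional space $N^1(X_\p/S)$, using Step~1 and Lemma~\ref{lem:repres}(1)) of such $C_\p$ and the pseudoeffective cone of $F$ over $S$ is closed, $W_\p|_F$ is $S$-pseudoeffective.

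\medskip

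\noindent The main obstacle is Step~2, specifically making rigorous the passage from ``movable on every smooth model'' to a \emph{coherent} net of nef Cartier $b$-divisors over $\X$ — one must ensure the approximating Cartier $b$-divisors are genuinely compatible under push-forward and that the diagonalization over the directed set of models produces a net converging in the projective-limit topology rather than merely model-by-model; this is exactly where the definition of $N_{n-1}(\X/S)$ as an inverse limit and Lemma~\ref{lem:repres} do the work. Step~1 is comparatively routine once one invokes relative Serre vanishing to reduce $S$-nef to $S$-globally generated.
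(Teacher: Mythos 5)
Your proof follows essentially the same outline as the paper's, identifying the pushforward of globally generated Cartier $b$-divisors as the source of movability on each model (Step~1), and the resolution of base ideals as the mechanism for lifting a movable class on $X_\p$ to a nef Cartier $b$-divisor over $\X$ with the prescribed $\p$-trace (Step~2). The paper's write-up is terser because it works directly with the neighborhood basis $V_{\p,U}=\{\beta : \beta_\p\in U\}$ of the projective-limit topology; with this framing the ``diagonalization'' you worry about in Step~2 dissolves, since to prove $[W]\in\Nef(\X/S)$ one only needs, for each smooth $\p$ and each conical $U\ni W_\p$, \emph{one} $S$-nef Cartier class whose $\p$-trace lies in $U$ --- no compatibility across models needs to be arranged. (Similarly, the appeal to Lemma~\ref{lem:repres}(1) in Step~1 is unnecessary: convergence in $N_{n-1}(\X/S)$ already means convergence of each trace, by definition of the projective-limit topology.)

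The one genuine flaw is in Step~3, where you assert that $C_\p|_F$ is ``a limit of nef classes.'' This is false: $C_\p$ is the pushforward of a nef divisor, hence merely movable, and the restriction of a movable class to a prime divisor need not be nef, nor a limit of nef classes --- it is only pseudoeffective. The correct argument simply invokes the first part of the lemma: $W_\p\in\overline{\Mov}(X_\p/S)$, so $W_\p$ is a limit of nonnegative combinations $\sum a_i[D_i]$ with each $D_i$ having $S$-base locus of codimension $\ge 2$; since $F$ has codimension one it is not contained in the base locus of any $D_i$, so $D_i|_F$ is linearly equivalent to an effective divisor, and $W_\p|_F$ is $S$-pseudoeffective as a limit of effective classes.
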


\begin{proof} Let $\a\in N_{n-1}(\X/S)$. Since the latter is endowed with the inverse limit topology the sets
$$
V_{\p,U}:=\{\b\in N_{n-1}(\X/S),\,\b_\p\in U\}
$$
where $\p$ ranges over all smooth models of $X$ and $U\subset N^1(X_\p/S)$ ranges over all conical open neighborhoods of $\a_\p$ form a neighborhood basis of $\a$. 

We infer by definition that $\a$ is $S$-nef iff for every $\p$ and $U$ there exists an  $S$-nef class $\b\in N^1(\X/S)$ such that $\b_\p\in U$. On the other hand since $U$ is conical it is immediate to see that $\b$ may be assumed to be the class of an $S$-globally generated Cartier $b$-divisor, and the result follows. 
\end{proof}

The next result is a limiting case of Lemma \ref{lem:difference}. 
\begin{lem}\label{lem:env} Let $\fra_\bullet$ be a graded linearly bounded denominators. Then the $\R$-Weil $b$-divisor $Z(\fra_\bullet)$ is $X$-nef.
\end{lem}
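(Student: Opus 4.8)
The plan is to prove that $Z(\fra_\bullet)$ is nef relatively to the structure morphism $X\to X$, i.e.\ that it lies in $\Nef(\X/X)$. By Lemma~\ref{lem:mov}, it suffices to check that the trace $Z(\fra_\bullet)_\p$ is $X$-movable on every smooth (or $\Q$-factorial) model $X_\p$ of $X$. So I would first unwind the definitions: $Z(\fra_\bullet)=\lim_m \tfrac1m Z(\fra_m)$ coefficient-wise (Proposition~\ref{prop:graded-sequence}), and each $Z(\fra_m)$ is the Cartier $b$-divisor determined on the normalized blow-up of $X$ along $\fra_m$ by $\fra_m\cdot\O_{X_\p}=\O_{X_\p}(Z(\fra_m)_\p)$; in particular $Z(\fra_m)$ is a \emph{globally generated} Cartier $b$-divisor over $X$ by Lemma~\ref{lem:difference}.

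The key point is that globally generated Cartier $b$-divisors over $X$ have movable traces on every smooth model. Indeed, fix a smooth model $X_\p$; after passing to a higher model dominating both $\p$ and the normalized blow-up of $X$ along $\fra_m$ (this does not change $Z(\fra_m)$, being Cartier), we may assume $Z(\fra_m)$ is determined on $X_\p$ itself with $\fra_m\cdot\O_{X_\p}=\O_{X_\p}(Z(\fra_m)_\p)$. The natural surjection $\p^*\p_*\O_{X_\p}(Z(\fra_m)_\p)\twoheadrightarrow \O_{X_\p}(Z(\fra_m)_\p)$ then shows the $X$-base locus of $Z(\fra_m)_\p$ is empty, hence a fortiori of codimension at least two, so $[Z(\fra_m)_\p]\in\overline{\Mov}(X_\p/X)$. (This is the "limiting case of Lemma~\ref{lem:difference}" alluded to in the statement: the relevant closure in the movable cone is what allows non-Cartier limits.)

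Then I would conclude by a limiting argument. By Proposition~\ref{prop:graded-sequence}, $\tfrac1m Z(\fra_m)\to Z(\fra_\bullet)$ coefficient-wise, and the linearly-bounded-denominators hypothesis guarantees that all the divisors $\tfrac1m Z(\fra_m)_X$ on $X$ (equivalently the traces, suitably bounded) have support contained in a single fixed finite set of prime divisors, so they live in a fixed finite-dimensional space $V$ of $\R$-Weil divisors on $X$. By Lemma~\ref{lem:repres}(1), coefficient-wise convergence together with this uniform-support condition implies $[\tfrac1m Z(\fra_m)]\to[Z(\fra_\bullet)]$ in $N_{n-1}(\X/X)$. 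Since each $\tfrac1m [Z(\fra_m)]\in\Nef(\X/X)$ by the previous paragraph and $\Nef(\X/X)$ is closed, we get $[Z(\fra_\bullet)]\in\Nef(\X/X)$, i.e.\ $Z(\fra_\bullet)$ is $X$-nef, as claimed.

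I expect the main obstacle to be the bookkeeping in the limiting step: verifying carefully that the linearly-bounded-denominators assumption really does confine all the $\tfrac1m Z(\fra_m)$ to a common finite-dimensional space of divisors on $X$ (so that Lemma~\ref{lem:repres}(1) applies and we get genuine convergence of numerical classes on each smooth model, not merely coefficient-wise convergence, which would be insufficient because of the discontinuity of $\Div_\R(\X)\to N_{n-1}(\X/X)$ illustrated by the $(-1)$-curves example). Everything else — unwinding $Z(\fra_m)$ and invoking global generation — is essentially formal once Lemmas~\ref{lem:difference},~\ref{lem:mov} and~\ref{lem:repres} are in hand.
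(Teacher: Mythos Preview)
Your proposal is correct and follows essentially the same approach as the paper's proof: each $Z(\fra_m)$ is $X$-globally generated (hence $X$-nef) by Lemma~\ref{lem:difference}, the linearly-bounded-denominators hypothesis confines the traces $Z(\fra_m)_X$ to a fixed finite-dimensional space so that Lemma~\ref{lem:repres}(a) upgrades coefficient-wise convergence to convergence in $N_{n-1}(\X/X)$, and one concludes by closedness of $\Nef(\X/X)$. Your opening invocation of Lemma~\ref{lem:mov} is an unnecessary detour (you end up arguing via convergence in $N_{n-1}(\X/X)$ and closedness of the nef cone anyway, not by checking movability model-by-model), but it does no harm.
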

\begin{proof} Since $\fra_\bullet$ has linearly bounded denominators it is in particular clear that there exists a finite dimensional vector space $V$ of $\R$-Weil divisors on $X$ such that $Z(\fra_m)\in V$ for all $m$. By Lemma \ref{lem:repres} it thus follows that $[\tfrac 1mZ(\fra_m)]$ converges to $[Z(\fra_\bullet)]$ in $N_{n-1}(\X/X)$.  But each $Z(\fra_m)$ is $X$-globally generated by Lemma \ref{lem:difference}, and we thus conclude that $Z(\fra_\bullet)$ is $X$-nef
\end{proof}

\begin{prop}[Negativity Lemma]\label{prop:neg} Let $W$ be an $X$-nef $\R$-Weil $b$-divisor over $X$. Then for each $\p$ we have $W\le\Env_\p(W_\p)$.
\end{prop}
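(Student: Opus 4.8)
The plan is to reduce the statement to a concrete inequality between coefficients along prime divisors of $X_\p$ and then apply the classical Negativity Lemma on $X_\p$ itself. Fix a model $\p\colon X_\p\to X$. By definition $\Env_\p(W_\p)$ is the $\R$-Weil $b$-divisor $Z\bigl((\p_*\O_{X_\p}(mW_\p))_{m\ge 0}\bigr)$ attached to the graded sequence $\fra_m:=\p_*\O_{X_\p}(mW_\p)$, and this sequence has linearly bounded denominators since $W_\p$ is a fixed $\R$-divisor on $X_\p$, so Proposition~\ref{prop:graded-sequence} applies and $\Env_\p(W_\p)=\lim_m\frac1m Z(\fra_m)$. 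What has to be shown is that for every prime divisor $E$ over $X$ one has $\ord_E(W)\le\ord_E(\Env_\p(W_\p))$. It suffices to treat prime divisors $E$ lying on models $\p'\ge\p$, and after passing to such a $\p'$ and replacing $W$ by its trace (using $W_{\p'}$ is $X$-movable, hence in particular $\p$-pseudoeffective, by Lemma~\ref{lem:mov}) I may assume $E$ is a prime divisor on $X_\p$ itself, so the claim becomes $W_\p\le\Env_\p(W_\p)_\p$; but the right side is $W_\p$ again by the analogue of Proposition~\ref{prop:envx} on $X_\p$, so the genuine content is the inequality \emph{above} $X_\p$, i.e.\ on models $\p'$ dominating $\p$.

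So I would fix $\p'\ge\p$ with induced morphism $\m\colon X_{\p'}\to X_\p$ and show $W_{\p'}\le\bigl(\Env_\p(W_\p)\bigr)_{\p'}$. Write $Z_m:=Z(\fra_m)$; since $\fra_m=\p_*\O_{X_\p}(mW_\p)$, on the model $\p'$ we have $(Z_m)_{\p'}=\m^*\bigl(Z(\fra_m)_\p'\bigr)$-type control, but more usefully $Z(\fra_m)$ is $X$-globally generated (Lemma~\ref{lem:difference}) and its pushforward to $X_\p$ satisfies $\frac1m Z(\fra_m)_\p\le W_\p$ with equality of pushforwards to $X$. The key algebraic input is that $\O_{X_\p}(-mW_\p+mW_\p)\supset$ nothing new; instead I use: $\m_*\O_{X_{\p'}}(m\,\m^*W_\p) = \O_{X_\p}(mW_\p)$ is false in general, and this is exactly where $X$-nefness of $W$ enters. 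The clean route: set $P:=\Env_\p(W_\p)$ and $N:=P-\overline{W_\p}$ (difference of $b$-divisors, where $\overline{W_\p}$ is the Cartier $b$-divisor of $W_\p$), so $N_\p=0$ and $P$ is $\p$-nef by Lemma~\ref{lem:env}. Then $-N = \overline{W_\p}-P$ is a $b$-divisor with trivial trace on $X_\p$ whose trace on any $\p'\ge\p$ is $\m^*W_\p - P_{\p'}$. I want $N\ge 0$ above $X_\p$, i.e.\ $P_{\p'}\le\m^*W_\p$... but that is the \emph{opposite} of what the proposition claims, so I have the sign backwards: what is claimed is $W_{\p'}\le P_{\p'}$, i.e.\ comparing the \emph{nef $b$-divisor} $W$ (not its trace-Cartier model) with $P$.

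Here is the correct mechanism. Since $W$ is $X$-nef, by Lemma~\ref{lem:mov} there is a net of $X$-nef Cartier $b$-divisors $C_\alpha$ with $[(C_\alpha)_\sigma]\to[W_\sigma]$ in each $N^1(X_\sigma/X)$; arranging (after pushforward to $X$, which is a point-type constraint we can absorb, or using Lemma~\ref{lem:repres}) coefficient-wise control, I get $C_\alpha\to W$ coefficient-wise along a net of $X$-nef Cartier $b$-divisors. For each $\alpha$, its trace satisfies $(C_\alpha)_\p\le W_\p + \ep_\alpha$ (up to small error) on $X_\p$, hence $\p_*\O_{X_\p}((C_\alpha)_\p)\subset\fra_m$-type containment after taking integer multiples, giving $C_\alpha\le\Env_\p((C_\alpha)_\p)\le\Env_\p(W_\p+\ep_\alpha)$, and by continuity (Corollary~\ref{cor:cont}) and homogeneity (Proposition~\ref{prop:concave}) the right side tends to $\Env_\p(W_\p)=P$ coefficient-wise as $\alpha\to\infty$. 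Passing to the limit in $C_\alpha\le P$ coefficient-wise yields $W\le P$, which is the assertion. The main obstacle I anticipate is the bookkeeping in the first reduction: the nef Cartier approximants $C_\alpha$ converge only \emph{numerically} on each model, not coefficient-wise, so I must use the exceptional-divisor uniqueness machinery of Lemma~\ref{lem:repres}(2) to replace them by coefficient-wise-convergent representatives while keeping them $X$-nef Cartier (or at least keeping the inequality $C_\alpha\le P$, which only uses their \emph{traces} on finitely many models), and to check that the error terms $\ep_\alpha$ can genuinely be taken to go to zero in the coefficient-wise topology rather than just numerically — this is where the finite-dimensionality in Lemma~\ref{lem:repres} is essential.
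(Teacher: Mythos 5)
Your overall strategy matches the paper's: reduce to $X$-nef Cartier approximants $C_\alpha$ of $W$ via Lemma~\ref{lem:repres}, prove the inequality for each $C_\alpha$, and pass to the coefficient-wise limit using Corollary~\ref{cor:cont}. The reduction in your first paragraph and the sign-error digression are just warm-up and can be discarded; the content is in the final paragraph. But there is a genuine gap there, and it sits exactly where the paper's argument has its two hardest steps.

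The gap: you assert $C_\alpha \le \Env_\p((C_\alpha)_\p)$ for an $X$-nef Cartier $b$-divisor $C_\alpha$, but this is nothing other than the Cartier case of the proposition you are trying to prove, and it does not follow from the ``ideal containment'' you invoke. The containment $\p_*\O_{X_\p}((C_\alpha)_\p) \subset \p_*\O_{X_\p}(W_\p + \ep_\alpha)$ only gives you the \emph{second} inequality $\Env_\p((C_\alpha)_\p) \le \Env_\p(W_\p + \ep_\alpha)$ (by monotonicity of $\Env_\p$); it says nothing about how $C_\alpha$ itself compares with $\Env_\p((C_\alpha)_\p)$. To close this, you need two further sub-steps exactly as in the paper. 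First, handle the $X$-\emph{globally generated} Cartier case: there $C = Z(\O_X(C))$ by Lemma~\ref{lem:difference}, and for a determination $\tau \ge \p$ one has $\O_X(C) = \tau_*\O_{X_\tau}(C_\tau) \subset \p_*\O_{X_\p}(C_\p)$, whence $C = Z(\O_X(C)) \le Z(\p_*\O_{X_\p}(C_\p)) \le \Env_\p(C_\p)$ via Proposition~\ref{prop:graded-sequence}. Second, handle general $X$-nef Cartier $C_\alpha$ by further approximating $(C_\alpha)_\tau$ on a determination by $X$-very ample (hence globally generated) Cartier divisors $t_j A_j \to (C_\alpha)_\tau$ coefficient-wise in a fixed finite-dimensional space, applying the first sub-step to each $t_j\overline{A_j}$, and passing to the limit via Corollary~\ref{cor:cont} and Proposition~\ref{prop:concave}. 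Note also that Lemma~\ref{lem:repres}(2) gives Cartier but not globally-generated representatives, so you cannot collapse the two approximation layers into one; your instinct about the necessity of the finite-dimensionality control in Lemma~\ref{lem:repres} for the passage to the limit is correct, but the missing Cartier step is the prior and more substantial omission.
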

The following argument provides in particular an alternative proof of the well-known \emph{negativity lemma} \cite[Lemma 3.39]{KM}. 

\begin{proof} Let $X_\p$ be a fixed model of $X$. 

\smallskip
{\bf Step 1}. Let $C$ be an $X$-globally generated Cartier $b$-divisor, determined on some model $X_\tau$ that may be assumed to dominate $X_\p$. As in the proof of Lemma \ref{lem:difference} we have $C=Z(\O_X(C))$ since $C$ is $X$-globally generated, and we infer that $C\le\Env_\p(C_\p)$. Indeed $\tau\ge\p$ implies
$$
\O_X(C)=\tau_*\O_{X_\tau}(C_\tau)\subset\p_*\O_{X_\p}(C_\p),
$$
hence 
$$
C=Z(\O_X(C))\le Z\left(\p_*\O_{X_\p}(C_\p)\right)\le\Env_\p(C_\p)
$$
by Proposition \ref{prop:graded-sequence}.

\smallskip
{\bf Step 2}. Let $C$ be an $X$-nef $\R$-Cartier $b$-divisor, determined on a model $X_\tau$ that may again be assumed to be projective over $X$ and to dominate $X_\p$. The class of $C_\tau$ in $N^1(X_\tau/X)$ is $X$-nef, hence belongs to the closed convex cone spanned by the classes of $X$-very ample divisors of $X_\tau$. As in (ii) of Lemma~\ref{lem:repres}, we may then find a sequence of $X$-very ample Cartier divisors $A_j$ on $X_\tau$ and a sequence $t_j\in\R_+^*$ such that $t_j A_j\to C_\tau$ coefficient-wise, while staying in a fixed finite dimensional vector space of $\R$-divisors on $X_\tau$. By Step 1 and Proposition \ref{prop:concave} we have $t_j \overline{A_j}\le\Env_\p(t_j(\overline{A_j})_\p)$ for each $j$. By Corollary \ref{cor:cont} we infer 
$$
\nu(C)=\lim_j t_j\nu(\overline{A_j)})\le\nu\left(\Env_\p(t_j\overline{A_j})\right)=\nu(\Env_\p(C_\p))
$$ 
for each divisorial valuation $\nu$, hence $C\le\Env_\p(C_\p)$. This step recovers in particular the usual statement of the negativity lemma. 

{\bf Step 3}. Let $W$ be an arbitrary $X$-nef $\R$-Weil $b$-divisor. By Lemma \ref{lem:repres} there exists a net $W_j$ of $X$-nef $\R$-Cartier divisors such that $W_j\to W$ coefficient-wise and $W_{j,X}$ stays in a fixed finite dimensional space of $\R$-Weil divisors on $X$. The result now follows by another application of Corollary \ref{cor:cont}. 
\end{proof}
As a consequence we get the following variational characterization of nef envelopes.

\begin{cor}\label{cor:env} If $D$ is an $\R$-Weil divisor on $X_\p$ then $\Env_\p(D)$ is the largest $X$-nef $\R$-Weil $b$-divisor $W$ such that $W_\p\le D$. In particular we have:
\begin{itemize}
\item $\Env_\p(D)=\overline D$ if $D$ is $\R$-Cartier and $X$-nef.
\item The $b$-divisor $\Env_\p(D)$ is $\R$-Cartier, determined by a given $\tau\ge\p$, iff the trace of $\Env_\p(D)$ on $X_\tau$ is $\R$-Cartier and $X$-nef. 
\end{itemize}
\end{cor}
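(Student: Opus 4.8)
The plan is to extract this corollary directly from the Negativity Lemma (Proposition~\ref{prop:neg}) together with the basic properties of $\Env_\p$ already established. First I would verify the defining inequality: by Lemma~\ref{lem:env}, $\Env_\p(D)=Z(\fra_\bullet)$ with $\fra_\bullet=(\p_*\O_{X_\p}(mD))_m$ is $X$-nef, and by Proposition~\ref{prop:graded-sequence} we have $\tfrac1m Z(\fra_m)\le\Env_\p(D)$ for all $m$; taking $m=1$ and using that $Z(\p_*\O_{X_\p}(D))\le\Env_\p(D)$ on the model $X_\p$ gives $\Env_\p(D)_\p\le D$, since the trace of $Z(\p_*\O_{X_\p}(D))$ on $X_\p$ dominates... (here one uses that $\ord_E(\p_*\O_{X_\p}(D))=-\ord_E(D)$ for prime divisors $E$ on $X_\p$, exactly as in the proof of Proposition~\ref{prop:envx}). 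So $\Env_\p(D)$ is an $X$-nef $\R$-Weil $b$-divisor with trace $\le D$ on $X_\p$. For maximality, let $W$ be any $X$-nef $\R$-Weil $b$-divisor with $W_\p\le D$. By Proposition~\ref{prop:neg} applied on the model $X_\p$, we have $W\le\Env_\p(W_\p)$, and since $D\mapsto\Env_\p(D)$ is non-decreasing (clear from the definition, as $W_\p\le D$ implies $\p_*\O_{X_\p}(mW_\p)\subset\p_*\O_{X_\p}(mD)$ and hence $Z(\cdot)$ is monotone by Lemma~\ref{lem:Zfrac}), we conclude $W\le\Env_\p(W_\p)\le\Env_\p(D)$. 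This establishes the variational characterization.

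For the first bulleted consequence: if $D$ is $\R$-Cartier and $X$-nef, then $\ov D$ is an $X$-nef $\R$-Weil $b$-divisor (indeed an $\R$-Cartier one) with trace on $X_\p$ equal to $D$, so $\ov D\le\Env_\p(D)$ by maximality. Conversely $\Env_\p(D)_\p\le D$ and $\Env_\p(D)$ is $X$-nef hence, by Proposition~\ref{prop:neg} applied to $\ov D$ read backwards — more directly, any $X$-nef $W$ with $W_\p\le D=(\ov D)_\p$ satisfies $W\le\ov D$ whenever $\ov D$ is itself the largest such, which holds precisely because $\ov D$ is $X$-nef with the correct trace; so $\Env_\p(D)\le\ov D$. (Cleanly: $\Env_\p(D)\le\Env_\p((\ov D)_\p)$ and one checks $\Env_\p$ of the trace of an $X$-nef Cartier $b$-divisor returns that $b$-divisor, which is Step~2 of the proof of Proposition~\ref{prop:neg} combined with the already-proven inequality $\ov D\le\Env_\p((\ov D)_\p)$.) Hence equality $\Env_\p(D)=\ov D$.

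For the second bulleted consequence: if $\Env_\p(D)$ is $\R$-Cartier and determined by $\tau\ge\p$, then its trace on $X_\tau$ is the trace of an $X$-nef $\R$-Cartier $b$-divisor, hence is $\R$-Cartier and $X$-nef. Conversely, suppose the trace $D':=\Env_\p(D)_\tau$ on $X_\tau$ is $\R$-Cartier and $X$-nef. Then $\ov{D'}$ is an $X$-nef $\R$-Cartier $b$-divisor; I claim $\ov{D'}=\Env_\p(D)$. On one hand $\Env_\p(D)$ is $X$-nef with trace $D'$ on $X_\tau$, so by the first bullet applied on the model $X_\tau$ (with $\p$ there replaced by $\tau$), $\Env_\tau(D')=\ov{D'}$; and since $\tau\ge\p$, the $b$-divisor $\Env_\p(D)$ satisfies $\Env_\p(D)\le\Env_\tau(\Env_\p(D)_\tau)=\Env_\tau(D')=\ov{D'}$ by Proposition~\ref{prop:neg}. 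On the other hand $\ov{D'}$ is $X$-nef with $(\ov{D'})_\p\le D'_\p=\Env_\p(D)_\p\le D$ (using that $\tau\ge\p$ and pushing forward preserves the relevant inequality, or directly that $\ov{D'}\le\Env_\p(D)$ is not yet what we want — instead use maximality of $\Env_\p(D)$ among $X$-nef $b$-divisors with trace $\le D$ on $X_\p$, which $\ov{D'}$ satisfies). Thus $\ov{D'}\le\Env_\p(D)\le\ov{D'}$, forcing equality, so $\Env_\p(D)$ is $\R$-Cartier and determined by $\tau$. The main obstacle I anticipate is book-keeping the direction of inequalities when passing between the models $X_\p$ and $X_\tau$ and making sure monotonicity of $\Env$ and the Negativity Lemma are invoked on the correct model; the substantive content is entirely contained in Proposition~\ref{prop:neg}.
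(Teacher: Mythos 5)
Your proof of the main variational characterization is correct and follows the paper's own argument: nefness of $\Env_\p(D)$ comes from Lemma~\ref{lem:env}, $\Env_\p(D)_\p\le D$ comes from the defining graded sequence (your phrasing there is a bit garbled — the useful direction is $\tfrac1mZ(\fra_m)_\p\le D$, not $Z(\fra_1)\le\Env_\p(D)$ — but the parenthetical identifies the right mechanism), and maximality follows from the Negativity Lemma plus monotonicity of $\Env_\p$. The paper states the two bulleted consequences without proof.

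However, your argument for the first bullet has a genuine gap. The sentence ``any $X$-nef $W$ with $W_\p\le D$ satisfies $W\le\ov D$ whenever $\ov D$ is itself the largest such, which holds precisely because $\ov D$ is $X$-nef with the correct trace'' is circular: having the right trace does not make a $b$-divisor the largest one, and identifying the largest one with $\ov D$ is exactly what is to be proved. Your ``clean'' version invokes Step~2 of the proof of Proposition~\ref{prop:neg}, but that step only yields $\ov D\le\Env_\p\big((\ov D)_\p\big)$, which is the \emph{same} inequality as $\ov D\le\Env_\p(D)$; it does not give the reverse inequality $\Env_\p(D)\le\ov D$. The missing ingredient is a direct computation using that $D$ is $\R$-Cartier: for $f\in\p_*\O_{X_\p}(mD)(U)$ we have $\dv(f)+mD\ge 0$ on $\p^{-1}(U)$, and pulling back the effective $\R$-Cartier divisor $\dv(f)+mD$ to any $X_\tau\ge X_\p$ gives $\dv(\tau^*f)+m\mu^*D\ge 0$, i.e.\ $Z\big(\p_*\O_{X_\p}(mD)\big)\le m\ov D$ as $b$-divisors; passing to the limit yields $\Env_\p(D)\le\ov D$. (Note that $X$-nefness of $D$ is not used in this direction; it is only needed for $\ov D\le\Env_\p(D)$.) Once the first bullet is repaired, your argument for the second bullet — applying the Negativity Lemma to $\Env_\p(D)$ on $X_\tau$ to get $\Env_\p(D)\le\Env_\tau(D')=\ov{D'}$, and applying maximality with the observation $(\ov{D'})_\p=\Env_\p(D)_\p\le D$ for the reverse — goes through.
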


\begin{proof} The $\R$-Weil $b$-divisor $\Env_\p(D)$ is $X$-nef by Lemma \ref{lem:env}. We also clearly have $\tfrac 1 m Z(\p_*\O_{X_\p}(mD))_\p\le D$, hence $\Env(D)_\p\le D$ in the limit. Conversely if $Z$ is an $X$-nef $\R$-Weil $b$-divisor such that $Z_\p\le D$ then $Z\le\Env_\p(Z_\p)\le\Env_\p(D)$ by the negativity lemma.
\end{proof}

As an illustration we now prove:
\begin{prop} Assume that $X$ has klt singularities in the sense that there exists an effective $\Q$-Weil divisor $\D$ such that $K_X+\D$ is $\Q$-Cartier and $(X,\D)$ is klt (cf.~\cite{dFH}). Then $\Env_X(D)$ is an $\R$-Cartier $b$-divisor for every $\R$-Weil divisor $D$ on $X$. When $D$ has $\Q$-coefficients we even have $\Env_X(D)=\tfrac 1m Z(\O_X(mD))$ for some $m$. 
\end{prop}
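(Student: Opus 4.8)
The plan is to reduce the whole statement to the finite generation of divisorial algebras on klt varieties, which is available through the minimal model program.

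First, the $\Q$-coefficient assertion. Since $\Env_X(D)=Z(\fra_\bullet)$ with $\fra_m:=\O_X(mD)$, and since each $\O_X(mD)$ is cut out by the valuative inequalities $\ord_E(f)\ge-m\,\ord_E(D)$ and is therefore integrally closed, Lemma~\ref{lem:fingen} says that $\Env_X(D)=\tfrac1{m_0}Z(\O_X(m_0D))$ for some $m_0$ as soon as the graded $\O_X$-algebra $\bigoplus_{m\ge0}\O_X(mD)$ is finitely generated. This last property is local on $X$, so one may assume $X$ affine; and for a $\Q$-Weil divisor $D$ on a klt variety the algebra $\bigoplus_m\O_X(\lfloor mD\rfloor)$ is finitely generated by \cite{dFH} (which reduces it, via a $\Q$-factorial modification of $X$ and the observation that all the relevant birational morphisms to $X$ are projective, to the finite generation theorem of \cite{BCHM}). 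Since $Z$ of a coherent fractional ideal sheaf is by definition a Cartier $b$-divisor, $\tfrac1{m_0}Z(\O_X(m_0D))$ is $\Q$-Cartier, which proves the second assertion.

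For the first assertion, write $D=\sum_i t_iD_i$ with $D_i$ prime and $t_i\in\R$, and let $V:=\bigoplus_i\R D_i$, a finite dimensional space of $\R$-Weil divisors on $X$. For every $D'\in V$ with rational coefficients we have just seen that $\Env_X(D')$ is $\Q$-Cartier; by Corollary~\ref{cor:env} it is then determined on some smooth model $X_{\tau(D')}$, on which its trace is $\Q$-Cartier and $X$-nef. The crucial point is that, as $D'$ ranges over the rational points of $V$, only finitely many such models are needed — equivalently, the multigraded algebra indexed by the effective $\Q$-divisors of $V$ is finitely generated — which again follows from \cite{BCHM} after localizing on $X$. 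Hence there is a single smooth model $\tau$ dominating all the $\tau(D')$; the traces $\Env_X(D')_\tau$ all lie in the finite dimensional subspace $V_\tau\subset\Div_\R(X_\tau)$ spanned by the strict transforms of the $D_i$ and the $\tau$-exceptional primes, and each $\Env_X(D')_\tau$ is $X$-nef, being the pull-back to $X_\tau$ of an $X$-nef Cartier divisor.

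Now take an arbitrary $D\in V$ and rational $D'_j\in V$ with $D'_j\to D$. By Corollary~\ref{cor:cont}, $\Env_X(D'_j)\to\Env_X(D)$ coefficient-wise; restricting to $X_\tau$ and using that the classes $\Env_X(D'_j)_\tau$ stay in the fixed finite dimensional space $V_\tau$, we get $\Env_X(D'_j)_\tau\to\Env_X(D)_\tau$ in $V_\tau$, so $\Env_X(D)_\tau\in V_\tau$ is $\R$-Cartier ($X_\tau$ being smooth) and its class lies in the closed cone of $X$-nef classes. By the second item of Corollary~\ref{cor:env} this forces $\Env_X(D)$ to be $\R$-Cartier, determined by $\tau$, which completes the plan. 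The step I expect to be the main obstacle is exactly this finiteness of the models $X_{\tau(D')}$ as $D'$ varies over the rational points of $V$ — the Mori-chamber finiteness underlying the $\R$-coefficient case; extracting it cleanly from \cite{BCHM} in the present non-$\Q$-Gorenstein, possibly non-projective situation (passing to a $\Q$-factorialization, carrying along the boundary $\Delta$, and keeping track of projectivity of models over $X$) is the technical heart, whereas the $\Q$-case is essentially a citation combined with Lemma~\ref{lem:fingen}.
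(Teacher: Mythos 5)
Your proof is sound in outline but follows a genuinely different route from the paper, and the route you chose is heavier than necessary. You prove the $\Q$-case first (finite generation of $\bigoplus_m\O_X(mD)$ via BCHM, then Lemma~\ref{lem:fingen}), and then bootstrap to $\R$-coefficients by approximating with rational $D'_j$, invoking Mori-chamber finiteness to pin down one model $X_\tau$ that works uniformly, and passing to the limit in $V_\tau$ using Corollary~\ref{cor:cont} and Corollary~\ref{cor:env}. The paper inverts this: it handles arbitrary $\R$-coefficients in one pass. Starting from a $\Q$-factorialization $\p\colon X_\p\to X$ (small, from \cite[Corollary~1.4.3]{BCHM}, where $(X_\p,\hat\D_\p)$ stays klt), it applies BCHM again to the klt pair $(X_\p,\hat\D_\p+\ep\hat D_\p)$ to produce a second \emph{small} modification $\tau\colon X_\tau\to X$ on which the strict transform $\hat D_\tau$ is $X$-nef. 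Because $\tau$ is small, $\tau_*\O_{X_\tau}(m\hat D_\tau)=\O_X(mD)$ for all $m$, so $\Env_\tau(\hat D_\tau)=\Env_X(D)$, and Corollary~\ref{cor:env} then says $\Env_X(D)$ is the $\R$-Cartier $b$-divisor determined by $\hat D_\tau$. The $\Q$-case follows afterwards by the relative base-point-free theorem applied to $\hat D_\tau$. What the paper's order of attack buys you is exactly the avoidance of the step you flagged as the ``technical heart'': there is no need for a single model working for all rational points of $V$, hence no appeal to finiteness of ample models over an (a priori unbounded, and not purely effective) polytope of divisors.

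On that flagged step: as written it is not just technically delicate but has an unresolved gap. The finiteness theorems of \cite{BCHM} (Theorem~E, Corollary~1.1.5) are formulated for compact rational polytopes of \emph{boundaries}, i.e.\ effective $\R$-divisors $\D'$ with $\lfloor\D'\rfloor=0$ such that $K+\D'$ stays in the klt range. Your $V$ is the full $\R$-span of the components of $D$, so it is unbounded and contains non-effective divisors; homogeneity of $\Env_X$ only lets you pass to a compact cross-section, not to the effective cone. To make your argument precise you would need to carve out a suitable compact polytope of boundaries containing a scaled, shifted copy of $D$ (using the klt boundary $\D$ to pad, and local Cartier shifts to restore effectivity), prove finiteness there, and then argue that this suffices for the continuity step. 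None of this is impossible, but it is considerably more bookkeeping than the paper's one-shot construction of the small model $X_\tau$, and it is the reason the paper's proof does not go through the $\Q$-case at all in the $\R$-argument.
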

The result easily follows from \cite[Exercise 109]{kol-ex}, but we provide some details for the convenience of the reader.

Note that the analogous result for $\Env_\p(D)$, $D$ being a Weil divisor on a higher model $X_\p$, fails even when $X$ is smooth (cf. \cite{Cut,Kuro} for an explicit example). 

\begin{proof} Since $(X,\D)$ is klt it follows from \cite[Corollary~1.4.3]{BCHM} that there exists a \emph{$\Q$-factorialization} $\p\colon X_\p\to X$, i.e.~a small birational morphism $\p$ such that $X_\p$ is $\Q$-factorial. Denote by $\hat\D_\p$ and $\hat D_\p$ the strict transforms on $X_\p$ of $\D$ and $D$ respectively. Since $\p$ is small we have $\p^*(K_X+\D)=K_{X_\p}+\hat\D_\p$, which shows that $(X_\p,\hat\D_\p)$ is klt, hence so is $(X_\p,\hat\D_\p+\ep\hat D_\p)$ for $0<\ep\ll 1$. By applying \cite[Corollary~1.4.3]{BCHM} to $\ep\hat D_\p$, which is $\p$-numerically equivalent to $K_{X_\p}+\hat\D+\ep\hat D$ as well as $\p$-big (since $\p$ is birational) we infer the existence of a new $\Q$-factorialization $\tau\colon X_\tau\to X$ such that the strict transform $\hat D_\tau$ of $D$ on $X_\tau$ is furthermore $X$-nef. Since $\tau$ is small it is easily seen that $\tau_*\O_{X_\tau}(m\hat D_\tau)=\O_X(mD)$ for all $m$, hence $\Env_\tau(\hat D_\tau)=\Env_X(D)$, and it follows by Corollary \ref{cor:env} that $\Env_X(D)$ is the $\R$-Cartier $b$-divisor determined by $\hat D_\tau$. 

When $D$ has rational coefficients the base-point free theorem shows that $\hat D_\tau$ is $X$-globally generated, so that 
$$
\bigoplus_{m\ge 0}\O_X(mD)=\bigoplus_{m\ge 0}\tau_*\O_{X_\tau}(m\hat D_\tau)
$$ 
is finitely generated over $\O_X$. We thus have $\Env_X(D)=\tfrac 1m Z(\O_X(mD))$ for some $m$. 
\end{proof}

\subsection{Nef envelopes of Weil $b$-divisors}\label{ss:2.3}
The next result is a variant in the relative case of \cite[Proposition 2.13]{BFJ}  and \cite[Theorem D]{KuMa}:
\begin{prop}\label{prop:largest} Let $W$ be an $\R$-Weil $b$-divisor. If the set of $X$-nef $\R$-Weil $b$-divisors $Z$ such that $Z\le W$ is non-empty then it admits a largest element.
\end{prop}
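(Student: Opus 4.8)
The plan is to construct the largest element as a suitable supremum, taken coefficient-wise, of the non-empty family $\mathcal{F}$ of $X$-nef $\R$-Weil $b$-divisors dominated by $W$. First I would fix one element $Z_0\in\mathcal{F}$; then every $Z\in\mathcal{F}$ satisfies $Z_0\le Z\le W$, so on each fixed model $X_\p$ the traces $Z_\p$ range in a bounded subset of the finite-dimensional space of $\R$-divisors supported on $\Supp(W_\p-Z_{0,\p})$ (together with the finitely many prime divisors appearing in $W$ and $Z_0$). Coefficient-wise this lets me define $\widehat{W}$ by $\ord_E(\widehat{W}):=\sup_{Z\in\mathcal{F}}\ord_E(Z)$ for each prime divisor $E$ over $X$; the sup is finite because it is bounded above by $\ord_E(W)$, and $\ord_E(\widehat{W})\ge\ord_E(Z_0)>-\infty$. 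The finiteness property for $b$-divisors (Lemma in \S\ref{ss:1.2}) holds since all nonzero coefficients on a given model are confined to the finite set of prime divisors on which $W-Z_0$ is supported, so $\widehat{W}\in\Div_\R(\X)$ and by construction $Z_{0}\le\widehat W\le W$ and $Z\le\widehat{W}$ for all $Z\in\mathcal{F}$.

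The remaining and genuinely substantive point is to verify that $\widehat{W}$ is itself $X$-nef, for then it is the maximum of $\mathcal{F}$. The idea is to fix an arbitrary smooth (or $\Q$-factorial) model $X_\p$ and show $\widehat{W}_\p\in\overline{\Mov}(X_\p/X)$, which suffices by Lemma~\ref{lem:mov}. For this I would first observe that $\mathcal{F}$ is directed upward under the coefficient-wise maximum: if $Z,Z'\in\mathcal{F}$ then the $b$-divisor $\max\{Z,Z'\}$ (defined coefficient-wise) still lies between $Z_0$ and $W$, and it is $X$-nef — this is the analogue, for nef Weil $b$-divisors, of the fact that the maximum of two movable classes need not be movable, so it is exactly here that one uses that nefness of Weil $b$-divisors is tested on \emph{all} higher models; concretely, on any model $X_\tau$ dominating $X_\p$ the trace $\max\{Z,Z'\}_\tau$ dominates both $Z_\tau$ and $Z'_\tau$ and equals one of them generically along each prime divisor, and one checks movability valuation by valuation using Lemma~\ref{lem:mov} and the characterization of $\overline{\Mov}$ via base loci of codimension $\ge 2$. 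Granting directedness, the family $\{Z_\p:Z\in\mathcal{F}\}$ is a directed, bounded subset of the finite-dimensional space $N^1(X_\p/X)$ whose coefficient-wise supremum is $\widehat{W}_\p$; a directed bounded net in a finite-dimensional ordered vector space converges to its supremum, so I obtain a net $Z^{(j)}\in\mathcal{F}$ with $Z^{(j)}_\p\to\widehat{W}_\p$ coefficient-wise, hence (since everything stays in a fixed finite-dimensional space $V$ of $\R$-divisors on $X$, via Lemma~\ref{lem:repres}(1)) $[Z^{(j)}_\p]\to[\widehat{W}_\p]$ in $N^1(X_\p/X)$. As $\overline{\Mov}(X_\p/X)$ is closed and each $[Z^{(j)}_\p]$ lies in it, $[\widehat{W}_\p]\in\overline{\Mov}(X_\p/X)$.

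Running this argument over all smooth (or $\Q$-factorial) models $X_\p$, which are cofinal, gives $\widehat{W}\in\projlim_\p\overline{\Mov}(X_\p/X)=\Nef(\X/X)$ by Lemma~\ref{lem:mov}, i.e.~$\widehat{W}$ is $X$-nef; combined with $\widehat{W}\le W$ and $Z\le\widehat{W}$ for every $Z\in\mathcal{F}$, this exhibits $\widehat{W}$ as the largest element, completing the proof.

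I expect the main obstacle to be the directedness/upper-semicontinuity step: proving that the coefficient-wise maximum of two $X$-nef Weil $b$-divisors is again $X$-nef. This is not a formal consequence of nefness of Cartier $b$-divisors — for Cartier data the max need not be nef — so the proof must genuinely exploit that nef Weil $b$-divisors are detected as movable classes on every higher model, and that the movable cone is stable under the operation of passing to a divisor whose base locus has codimension $\ge 2$. Once this is in hand, the passage to the supremum over the whole family and the closedness of $\overline{\Mov}$ make the rest routine.
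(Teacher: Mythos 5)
Your strategy---take the coefficient-wise supremum $\widehat W$ of the non-empty family $\mathcal F$ of $X$-nef $b$-divisors below $W$ and verify it is $X$-nef---is sound in outline and ends up constructing the same object as the paper. The bookkeeping (boundedness of coefficients between a fixed $Z_0$ and $W$, the finiteness condition, the passage from a directed family to its supremum in a finite-dimensional ordered space, and the closedness of $\overline{\Mov}(X_\p/X)$ combined with Lemma~\ref{lem:repres}) is all correct. But there is a genuine gap at exactly the point you flag: the directedness of $\mathcal F$, i.e.~that $\max\{Z,Z'\}$ is again $X$-nef, is asserted but not proved, and the sketch you offer does not work. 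Movability is not a condition that can be checked ``valuation by valuation'': the fact that $\ord_E\max\{Z,Z'\}$ agrees with $\ord_E Z$ or $\ord_E Z'$ for each prime divisor $E$ on $X_\tau$ does not imply $\max\{Z,Z'\}_\tau\in\overline{\Mov}(X_\tau/X)$, and the coefficient-wise maximum of two movable classes is not movable in general. The claim is true, but it requires an approximation argument, roughly: reduce via Remark~\ref{rmk:Env-comparison} to the case $Z=\Env_\X(C)$, $Z'=\Env_\X(C')$ with $C,C'$ Cartier, set $\fra_m:=\O_X(mZ)$, $\fra'_m:=\O_X(mZ')$, note $\max\{Z(\fra_m),Z(\fra'_m)\}=Z(\fra_m+\fra'_m)$ by Lemma~\ref{lem:Zfrac} (globally generated, hence nef Cartier), and pass to the limit $\tfrac1m Z(\fra_m+\fra'_m)\to\max\{Z,Z'\}$ coefficient-wise in a fixed finite-dimensional space, then apply Lemma~\ref{lem:repres}; a second limiting pass handles general $Z,Z'$. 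This is not hard but it is real work that your proof omits.

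The paper's proof sidesteps the lattice question entirely by approximating $\widehat W$ \emph{from above} rather than from within $\mathcal F$. By Corollary~\ref{cor:env} (the negativity lemma), every $Z\in\mathcal F$ satisfies $Z\le\Env_\p(W_\p)$ for every model $\p$, and the map $\p\mapsto\Env_\p(W_\p)$ is non-increasing; one sets $\Env_\X(W):=\lim_\p\Env_\p(W_\p)$, a coefficient-wise \emph{decreasing} limit of $b$-divisors that are $X$-nef by Lemma~\ref{lem:env}. The existence of at least one $Z\in\mathcal F$ bounds the traces $\Env_\p(W_\p)_X$ in a fixed finite-dimensional space, so Lemma~\ref{lem:repres} gives nefness of the limit immediately, while $\Env_\X(W)\le W$ and $\Env_\X(W)\ge Z$ for all $Z\in\mathcal F$ are automatic. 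The key structural difference is that the paper's decreasing net consists of nef envelopes $\Env_\p(W_\p)$, whose nefness comes for free from the graded-sequence description (Lemma~\ref{lem:env}), whereas your increasing net lives inside $\mathcal F$ and its existence requires the non-trivial fact that nef Weil $b$-divisors form a sup-semilattice. If you want to pursue your route, you would need to supply the two-step approximation argument above in place of the hand-waving about base loci.
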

\begin{defi}
We shall say that \emph{the nef envelope of $W$ is well-defined} if the assumption of the proposition holds. We then denote the largest element in question by $\Env_\X(W)$ and call it the \emph{nef envelope} of $W$.
\end{defi}
\begin{proof}[Proof of Proposition~\ref{prop:largest}] 
Every $Z$ as in the proposition satisfies $Z\le\Env_\p(W_\p)$ for all $\p$ by Corollary \ref{cor:env}, which also implies that $\p\mapsto\Env_\p(W_\p)$ is non-increasing, i.e.
$$
\Env_{\p'}(W_{\p'})\le\Env_\p(W_\p)
$$
whenever $\p'\ge\p$. If there exists at least one $Z$ as above then it follows that $\Env_\X(W):=\lim_\p\Env_\p(W_\p)$ is well-defined as a $b$-divisor and satisfies $\Env_\X(W)\ge Z$ for every such $Z$. There remains to show that $\Env_\X(W)$ is $X$-nef and satisfies $\Env_\X(W)\le W$. But the existence of $Z$ guarantees the existence a finite dimensional vector space $V$ of $\R$-Weil divisors on $X$ such that $\Env_\p(W_\p)_X\in V$ for all $\p$. Since $\Env_\p(W_\p)$ converges to $\Env_\X(W)$ coefficient-wise, we conclude as before by Lemma \ref{lem:repres} that $\Env_\X(W)$ is $X$-nef, whereas $\Env_\X(W)\le W$ follows from $\Env_\p(W_\p)_\tau\le W_\tau$ for $\tau\le\p$ by letting $\p\to\infty$. 
\end{proof} 

\begin{rmk}\label{rmk:Env-comparison}
Note that the proof gives:
$$
\Env_\X(W)=\inf_\p \Env_\p(W_\p)~.
$$
If $W$ is an $\R$-Cartier $b$-divisor then we have
$$
\Env_\X(W)=\Env_\p(W_\p)
$$
for each determination $\p$.  
\end{rmk}
\begin{prop}\label{prop:continuity}
Let $(W_i)_{i\in I}$ be a net of   $b$-divisors decreasing to $W$ such that $\Env_\X(W)$ is well-defined. Then $\Env_\X(W_i)$ is well-defined for every $i$ and
the net decreases to $\Env_\X(W)$.
\end{prop}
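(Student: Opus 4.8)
The plan is to exploit the two explicit descriptions already available: the formula $\Env_\X(W)=\inf_\p\Env_\p(W_\p)$ from Remark~\ref{rmk:Env-comparison}, and the characterization of $\Env_\p(D)$ as the largest $X$-nef $\R$-Weil $b$-divisor with trace $\le D$ on $X_\p$ (Corollary~\ref{cor:env}). First I would observe that since $W_i\ge W$ and $\Env_\X(W)$ is an $X$-nef $b$-divisor with $\Env_\X(W)\le W\le W_i$, the nonemptiness hypothesis of Proposition~\ref{prop:largest} is satisfied for each $W_i$; hence $\Env_\X(W_i)$ is well-defined and $\Env_\X(W_i)\ge\Env_\X(W)$. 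Monotonicity of $\Env_\X$ (immediate from Proposition~\ref{prop:largest}: a larger target admits at least the same competitors) gives that $(\Env_\X(W_i))_{i\in I}$ is a net decreasing in $i$ and bounded below by $\Env_\X(W)$; since all the $\Env_\X(W_i)$ live over $X$ with traces on $X$ in a single finite-dimensional space $V$ (the one furnished by the well-definedness of $\Env_\X(W)$, since $\Env_\X(W_i)_X\le W_{i,X}$ and these decrease to $W_X$), the net converges coefficient-wise to a $b$-divisor $Z_\infty:=\inf_i\Env_\X(W_i)\ge\Env_\X(W)$, which is $X$-nef by Lemma~\ref{lem:repres}.

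It remains to show $Z_\infty\le\Env_\X(W)$, equivalently (by the maximality property) that $Z_\infty\le W$. Fix a model $\p$ and a prime divisor $E$ on $X_\p$; we must show $\ord_E(Z_\infty)\le\ord_E(W_\p)$. By definition $\ord_E(Z_\infty)=\inf_i\ord_E(\Env_\X(W_i))\le\inf_i\ord_E(\Env_\p((W_i)_\p))$ using Remark~\ref{rmk:Env-comparison} applied to each $W_i$. Now the key point is the \emph{continuity of the trace of the nef envelope on a fixed model} with respect to coefficient-wise convergence inside a finite-dimensional space: since $(W_i)_\p$ decreases coefficient-wise to $W_\p$ while staying in a fixed finite-dimensional space $V_\p$ of $\R$-divisors on $X_\p$, Corollary~\ref{cor:cont} gives that $i\mapsto\ord_E(\Env_\p((W_i)_\p))$ converges to $\ord_E(\Env_\p(W_\p))$. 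Therefore $\ord_E(Z_\infty)\le\ord_E(\Env_\p(W_\p))\le\ord_E(W_\p)$, the last inequality being Corollary~\ref{cor:env}. Since $\p$ and $E$ are arbitrary, $Z_\infty\le W$, hence $Z_\infty$ is an $X$-nef $b$-divisor bounded above by $W$, so $Z_\infty\le\Env_\X(W)$. Combined with the reverse inequality obtained above, $Z_\infty=\Env_\X(W)$, i.e.\ the net $\Env_\X(W_i)$ decreases to $\Env_\X(W)$.

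The main obstacle is the second inequality $Z_\infty\le W$: a priori swapping the two infima ($\inf_i$ over the net and $\inf_\p$ over models) is not automatic, and one genuinely needs the finite-dimensionality of the traces together with the concavity/continuity of $D\mapsto\nu(\Env_\p(D))$ from Corollary~\ref{cor:cont} to pass the limit through on each \emph{fixed} model before taking the infimum over models. The one subtlety to be careful about is ensuring the relevant traces stay in a single finite-dimensional space: this follows because $W_i\ge W\ge\Env_\X(W)$ forces $\Env_\p((W_i)_\p)_X\le W_{i,X}\le W_{i_0,X}$ for any fixed index $i_0$, and being also bounded below by $\Env_\X(W)_X$, these divisors lie in the finite-dimensional span of the components appearing in $W_{i_0,X}$ and $\Env_\X(W)_X$; pulling back to any model $X_\p$ preserves this finiteness. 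Everything else is a routine assembly of the cited lemmas.
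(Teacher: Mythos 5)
Your proof is correct and follows essentially the same route as the paper's: in both cases the key step is the estimate $\Env_\X(W_i)\le\Env_\p(W_{i,\p})$ together with the continuity of $D\mapsto\nu(\Env_\p(D))$ on a fixed finite-dimensional space (Corollary~\ref{cor:cont}), after which one concludes via the characterization of $\Env_\X$ from Proposition~\ref{prop:largest}/Remark~\ref{rmk:Env-comparison}. The only cosmetic difference is that you pass through the intermediate assertions ``$Z_\infty$ is nef'' and ``$Z_\infty\le W$'' before invoking maximality, whereas the paper compares the limit directly against $\inf_\p\Env_\p(W_\p)=\Env_\X(W)$, which sidesteps the need to verify nefness of the limit.
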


\begin{proof}
By assumption $\Env_\X(W)$ is well-defined, so that there exists an X-nef $\R$-Weil
$b$-divisor $Z \le W$. Since $W_i \ge W$ for all $i$, the envelopes $\Env_\X(W_i)$ 
are also well-defined, and form a net that
decreases to a $b$-divisor $Z'\ge \Env_\X(W)$. Pick any $\p$. Since $W_{i,\p} \to W_\p$, 
we have  $\Env_\X(W_i) \le \Env_\p(W_{i,\p}) \to \Env_\p(W_\p)$. Letting $i\to\infty$, we get
$Z' \le \Env_\p(W_\p)$. We conclude  using the preceding remark.
\end{proof}

\begin{prop}\label{prop:pull-env}
Suppose $\f\colon X\to Y$ is a finite dominant morphism of normal varieties. 
Let $W$ be any $\R$-Weil $b$-divisor over $Y$ whose nef envelope $\Env_\Y(W)$ is well-defined. Then $\Env_\X(\f^*W)$ is also well-defined and we have
$$
\Env_\X(\f^*W)=\f^* \Env_\Y(W). 
$$
We similarly have
$$
\Env_X(\f^*D)=\f^*\Env_Y(D)
$$
for every $\R$-Weil divisor $D$ on $Y$. 
\end{prop}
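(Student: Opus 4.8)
The plan is to prove the two compatibility statements by reducing to graded sequences of ideal sheaves and invoking the functoriality of the pull-back $\f^*$ on $b$-divisors together with the behaviour of the norm homomorphism. I would begin with the second (simpler) identity $\Env_X(\f^*D)=\f^*\Env_Y(D)$ for an $\R$-Weil divisor $D$ on $Y$, since the first identity will largely follow from it by passing to envelopes over $\X$ via Remark~\ref{rmk:Env-comparison} (i.e. $\Env_\Y(W)=\inf_\p\Env_\p(W_\p)$).

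\medskip

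\textbf{Step 1: the divisorial case.} By Proposition~\ref{prop:concave} both sides of $\Env_X(\f^*D)=\f^*\Env_Y(D)$ are $\R_+$-homogeneous in $D$, and by Corollary~\ref{cor:cont} the coefficient of any divisorial valuation in either side is continuous on a finite-dimensional space of $\R$-Weil divisors; hence it suffices to treat the case where $D$ is a $\Q$-Weil divisor, and after clearing denominators, an integral Weil divisor. Replacing $D$ by $D+\dv(f)$ changes $\Env_Y(D)$ by $\overline{\dv(f)}$ and $\f^*D$ by $\f^*D+\dv(f\circ\f)$, compatibly under $\f^*$, so I may assume $D\le 0$, i.e. $D=-\Supp(\fra)$-type, working with the fractional ideal $\fra:=\O_Y(D)$. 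Then $\Env_Y(D)=Z(\fra_\bullet)$ for the graded sequence $\fra_m:=\O_Y(mD)$, and the key point is the commutation
$$
\f^* Z(\frb) = Z(\frb\cdot\O_X)
$$
for a coherent fractional ideal $\frb$ on $Y$ (immediate from the definitions of $\f^*$ on Cartier $b$-divisors and of $Z(-)$, working on the normalized blow-up and its pull-back to $X$), together with the identity $\O_Y(mD)\cdot\O_X=\O_X(m\f^*D)$ which holds because $X,Y$ are normal and $\f$ is finite (both sides are reflexive and agree in codimension one, where $\f$ is flat). Combining these, $\f^*Z(\fra_m)=Z(\O_X(m\f^*D))$, and since $\f^*$ is continuous for coefficient-wise convergence on a fixed finite-dimensional space of traces (Lemma~\ref{lem:repres} is not even needed — $\f^*$ is linear and coefficient-wise continuous on such a space by its explicit formula $\ord_E(\f^*W)=(\f_*\ord_E)(W)$), passing to the limit in $m$ gives $\f^*\Env_Y(D)=\lim_m\tfrac1m Z(\O_X(m\f^*D))=\Env_X(\f^*D)$.

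\medskip

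\textbf{Step 2: the $b$-divisor case.} Given $W$ on $\Y$ with $\Env_\Y(W)$ well-defined, fix a model $\r\colon Y_\r\to Y$ and let $\s\colon X_\s\to X$ be a model with a lift $\f'\colon X_\s\to Y_\r$; then $\f'$ is again finite dominant in codimension one and $(\f')^*(W_\r)=(\f^*W)_{X_\s}$ essentially by construction of $\f^*$ on $b$-divisors. Applying Step~1 (with $\f'$ in place of $\f$, $Y_\r$ in place of $Y$) gives $\Env_{X_\s}((\f^*W)_{X_\s})=(\f')^*\Env_{Y_\r}(W_\r)$. Now take the infimum over all such $\s$ (cofinally over all $\r$): by Remark~\ref{rmk:Env-comparison}, $\Env_\Y(W)=\inf_\r\Env_\r(W_\r)$ and the right-hand side above is $(\f')^*$ of the $\r$-trace; since $\f^*$ is order-preserving and, on the relevant finite-dimensional spaces of traces, continuous for the (decreasing, coefficient-wise convergent) net, $\inf_\s(\f')^*\Env_\r(W_\r)=\f^*\inf_\r\Env_\r(W_\r)=\f^*\Env_\Y(W)$. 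It remains to check this common value equals $\Env_\X(\f^*W)$: it is $X$-nef (pull-back of an $X$-nef $b$-divisor is $X$-nef, as noted after the definition of $\Nef(\X/S)$) and $\le\f^*W$, hence $\le\Env_\X(\f^*W)$; conversely $\Env_\X(\f^*W)\le\Env_{X_\s}((\f^*W)_{X_\s})=(\f')^*\Env_{Y_\r}(W_\r)$ for every $\s$, so $\Env_\X(\f^*W)\le\inf_\s(\f')^*\Env_{Y_\r}(W_\r)=\f^*\Env_\Y(W)$, giving equality.

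\medskip

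\textbf{Main obstacle.} The delicate point is the interchange of $\f^*$ with the infimum (equivalently, the coefficient-wise limit over models) in Step~2: a priori $\f^*$ is only defined model-by-model and one must be sure the net $\Env_\r(W_\r)$ — or rather its pull-back — stays in a fixed finite-dimensional space of $\R$-Weil divisors on each model of $X$, so that coefficient-wise convergence is preserved. This is guaranteed by the well-definedness hypothesis on $\Env_\Y(W)$ (which, as in the proof of Proposition~\ref{prop:largest}, forces $\Env_\r(W_\r)_Y$ to lie in a fixed $V$, and then its $\f^*$-image lies in a fixed $V'$ on $X$), combined with the fact that $\f$ is \emph{finite} so that $\f_*\ord_E$ hits only finitely many divisors over $Y$. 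The identification $\O_Y(mD)\cdot\O_X=\O_X(m\f^*D)$ in Step~1 also requires a small argument using normality of $X$ and $Y$ and the codimension-one étaleness implicit in a finite dominant morphism of normal varieties in characteristic zero; everything else is formal bookkeeping with the definitions of $Z(-)$, $\Env$, and $\f^*$.
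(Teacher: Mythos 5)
The pivot of your Step~1 is the identity $\O_Y(mD)\cdot\O_X=\O_X(m\f^*D)$, justified by asserting that both sides are reflexive. This is false: the extension $\O_Y(mD)\cdot\O_X$ of a reflexive sheaf along a finite map need not be reflexive. Concretely, take $Y=\Spec k[x,y,z]/(xy-z^2)$ the quadric cone, $X=\A^2=\Spec k[u,v]$, and let $\f$ be the degree-$2$ quotient $x=u^2$, $y=v^2$, $z=uv$. With $D$ the anti-effective Weil divisor satisfying $\O_Y(D)=(x,z)$ (so $D$ is minus a ruling), one has $\f^*D=-\{u=0\}$ and $\O_X(\f^*D)=(u)$, while $\O_Y(D)\cdot\O_X=(u^2,uv)=u\cdot(u,v)$, which is strictly contained in its reflexive hull $(u)$ and has a different integral closure, so that $Z\bigl(\O_Y(D)\cdot\O_X\bigr)\neq Z\bigl(\O_X(\f^*D)\bigr)$. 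The inclusion $\O_Y(mD)\cdot\O_X\subseteq\O_X(m\f^*D)$ does hold and yields $\f^*\Env_Y(D)\le\Env_X(\f^*D)$, but that is exactly the \emph{easy} direction (pull-back of nef is nef, so $\f^*\Env_\Y(W)$ is a competitor for the envelope upstairs); it is also the first paragraph of the paper's proof.

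The reverse inequality $\Env_\X(\f^*W)\le\f^*\Env_\Y(W)$ is the substantive one, and your argument has no mechanism for it. The paper obtains it by a push-forward argument: $\f_*\Env_\X(\f^*W)$ is $Y$-nef by Lemma~\ref{lem:neffunc} (whose proof rests on the norm homomorphism $N_{X/Y}$ via Proposition~\ref{prop:pushcar}), hence $\f_*\Env_\X(\f^*W)\le\f_*\f^*W=e(\f)W$ gives $\f_*\Env_\X(\f^*W)\le e(\f)\Env_\Y(W)=\f_*\f^*\Env_\Y(W)$ by Proposition~\ref{prop:push-cartier}, and one concludes with Lemma~\ref{lem:eff-nul} applied to $Z=\Env_\X(\f^*W)-\f^*\Env_\Y(W)\ge0$. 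That push-forward ingredient, or a substitute for it, is what your proof is missing. A secondary issue in Step~2: a lift $\f'\colon X_\s\to Y_\r$ of the finite morphism $\f$ to arbitrary models is only \emph{generically} finite, not finite, so Step~1 cannot be applied to $\f'$ as stated; one would have to take $X_\s$ to be the normalization of $X\times_Y Y_\r$ and then check separately that the resulting model-by-model pull-backs agree with $\f^*$ on $b$-divisors.
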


\begin{proof} Since $\Env_\Y(W)$ is $Y$-nef, its pull-back $\phi^*\Env_\Y(W)$ is $Y$-nef as well, hence also $X$-nef. Since we have $\phi^*\Env_\Y(W)\le\phi^*W$ this shows that $\Env_\X(\phi^*W)$ is well-defined and satisfies $\phi^*\Env_\Y(W)\le\Env_\X(\phi^*W)$ by Proposition \ref{prop:largest}. 

Conversely, Lemma \ref{lem:neffunc} below shows that $\f_* \Env_\X(\f^* W)$ is $Y$-nef. Since $\f_*\Env_\X(\f^*W)\le\f_*\f^*W=e(\f)W$ by Proposition \ref{prop:push-cartier} it follows that
$$
\f_*\Env_\X(\f^*W)\le e(\f)\Env_\Y(W)=\phi_*\phi^*\Env_\Y(W)
$$
by Proposition \ref{prop:push-cartier} again, and we conclude by applying Lemma \ref{lem:eff-nul} below to 
$Z:= \Env_\X(\f^* W) - \f^* \Env_\Y(W)$.
\end{proof}

\begin{lem}\label{lem:neffunc} Let $\f\colon X\to Y$ be a \emph{finite} dominant morphism between normal varieties and let $W$ be an $X$-nef $\R$-Weil $b$-divisor over $X$. Then $\phi_*W$ is $Y$-nef. 
\end{lem}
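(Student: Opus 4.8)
I want to show that for a finite dominant morphism $\f\colon X\to Y$ of normal varieties and an $X$-nef $\R$-Weil $b$-divisor $W$ over $X$, the push-forward $\f_*W$ is $Y$-nef. By Lemma~\ref{lem:mov}, nefness of a Weil $b$-divisor is equivalent to its trace lying in the closed movable cone $\overline{\Mov}(Z_\tau/Y)$ on every smooth (or $\Q$-factorial) model $Z_\tau$ over $Y$; equivalently, by the definition of $\Nef(\X/S)$, $W$ is $X$-nef iff it lies in the closed convex cone generated by classes of $X$-nef $\R$-Cartier $b$-divisors in $N_{n-1}(\X/X)$. So the first step is to reduce to the Cartier case: by Lemma~\ref{lem:repres}(ii) there is a net $C_j$ of $X$-nef $\R$-Cartier $b$-divisors over $X$ with $C_j\to W$ coefficient-wise and $C_{j,X}$ staying in a fixed finite-dimensional vector space $V$ of $\R$-Weil divisors on $X$. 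Applying $\f_*$ (a homomorphism $\Div(\X)\to\Div(\Y)$ by the Functoriality subsection) we get $\f_*C_j\to\f_*W$ coefficient-wise, since the defining formula for $\f_*$ on coefficients, $\ord_F(\f_*W)=\sum_i\ord_F((\f')_*E_i)\cdot\ord_{E_i}(W)$, is coefficient-wise continuous with a uniformly finite index set once we restrict to a model. Moreover $\f_*C_{j,Y}$ stays in the finite-dimensional space $\f_*V$. Hence by Lemma~\ref{lem:repres}(i), $[\f_*C_j]\to[\f_*W]$ in $N_{n-1}(\Y/Y)$, so it suffices to prove that $\f_*C$ is $Y$-nef for each $X$-nef $\R$-Cartier $b$-divisor $C$.

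**The Cartier case.** For an $X$-nef $\R$-Cartier $b$-divisor $C$, I would first reduce to the globally generated case: by Step~2 of the proof of Proposition~\ref{prop:neg} (and Lemma~\ref{lem:repres} again), $C$ is a coefficient-wise limit, within a fixed finite-dimensional space, of $\R$-linear combinations $t_j\overline{A_j}$ of pull-backs of $X$-very ample — in particular $X$-globally generated — Cartier divisors $A_j$ on models $X_{\tau_j}$. Pushing forward and invoking the limiting argument of the previous paragraph one more time, it suffices to treat $C=\overline A$ for $A$ an $X$-globally generated Cartier divisor on some model $X_\tau$, and then by rescaling and Lemma~\ref{lem:difference} (which identifies $X$-globally generated Cartier $b$-divisors with $Z(\fra)$ for coherent fractional ideal sheaves $\fra$ on $X$) it suffices to show: for every coherent fractional ideal sheaf $\fra$ on $X$, the $b$-divisor $\f_*Z(\fra)$ is $Y$-nef. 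But this is exactly the content of formula~\eqref{equ:norm} in Proposition~\ref{prop:pushcar}: $\f_*Z(\fra)=Z(N_{X/Y}(\fra))$, where $N_{X/Y}(\fra)$ is the coherent fractional ideal sheaf on $Y$ obtained via the norm homomorphism. Since $Z(\frb)$ is $Y$-globally generated for any coherent fractional ideal sheaf $\frb$ on $Y$ (Lemma~\ref{lem:difference}), and $Y$-globally generated $\R$-Cartier $b$-divisors are $Y$-nef, we conclude that $\f_*Z(\fra)$ is $Y$-nef, completing this case.

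**Assembling and the main obstacle.** Chaining the three reductions — from arbitrary $X$-nef Weil $b$-divisors to $X$-nef Cartier $b$-divisors to $X$-globally generated Cartier $b$-divisors to $Z(\fra)$'s — and using that $\Nef(\Y/Y)$ is closed (by definition it is a closed convex cone in $N_{n-1}(\Y/Y)$), gives that $\f_*W\in\Nef(\Y/Y)$, i.e.\ $\f_*W$ is $Y$-nef. The step I expect to require the most care is the coefficient-wise continuity of $\f_*$ combined with the finite-dimensionality bookkeeping needed to pass to numerical limits via Lemma~\ref{lem:repres}: one has to check that as $W_j$ ranges over the approximating net, the divisors $\f_*W_{j}$ and their traces $\f_*W_{j,Y}$ genuinely live in a single fixed finite-dimensional space and converge coefficient-wise, so that numerical convergence in $N_{n-1}(\Y/Y)$ is legitimate and the closedness of $\Nef(\Y/Y)$ can be applied. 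The finiteness (rather than mere properness and generic finiteness) of $\f$ is what makes the norm construction of Proposition~\ref{prop:pushcar} available and is used essentially here. Everything else is a formal consequence of the results already established in Sections~\ref{sec:bdiv} and~\ref{sec:nef}.
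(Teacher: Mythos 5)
Your proof is correct and follows essentially the same strategy as the paper's: reduce to the Cartier case via Lemma~\ref{lem:repres}, then to $Z(\fra)$'s, and conclude with the norm formula $\f_*Z(\fra)=Z(N_{X/Y}(\fra))$ from Proposition~\ref{prop:pushcar} plus Lemma~\ref{lem:difference}. The only (immaterial) variation is in the middle step: the paper passes from an $X$-nef Cartier $b$-divisor $C$ to $Z(\fra)$'s by writing $C=\Env_\p(C_\p)=\lim_m \tfrac1m Z(\p_*\O(mC_\p))$, whereas you approximate $C_\p$ by $X$-very ample divisors on a fixed determination as in Step~2 of the proof of Proposition~\ref{prop:neg}; both yield the same reduction with the same finite-dimensionality bookkeeping.
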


\begin{proof}  By assumption the class of $W$ in $N_{n-1}(\X/X)$ is $X$-nef, hence can be written as the limit of a net of $X$-nef classes of $N^1(\X/X)$. By (b) of Lemma \ref{lem:repres} there exists a net $W_j$ of $X$-nef $\R$-Cartier $b$-divisors such that $W_j\to W$ coefficient-wise and $W_{j,X}$ stays in a fixed finite dimensional vector of $\R$-Weil divisors on $X$. It follows that the divisors $(\phi_*W_j)_Y$ also stay in a fixed finite dimensional vector space of $\R$-Weil divisors on $Y$. Using the definition of $\phi_*$ on Weil $b$-divisors, it is immediate to see that $\phi_*W_j\to\phi_*W$ coefficient-wise. Using (a) of Lemma \ref{lem:repres} it thus follows that $[\phi_*W_j]\to[\phi_* W]$ in $N_{n-1}(\Y/Y)$, and we are thus reduced to the case where $W$ is $\R$-Cartier. 

Now let $\p$ be a determination of $W$. By Corollary \ref{cor:env} we have in particular $W=\Env_\p(W_\p)$, so that the fractional ideals $\fra_m:=\p_*\O(mW_\p)$ satisfy $W=\lim\tfrac 1m Z(\fra_m)$ coefficient-wise, and it is clear that the $Z(\fra_m)_X$ stay in a fixed finite dimensional vector space by monotonicity. We are now reduced to the case where $W=Z(\fra)$ for some fractional ideal, in which case we have $\phi_*Z(\fra)=Z(N_{X/Y}(\fra))$ by (the proof of) Proposition \ref{prop:pushcar}. We conclude that $\phi_*Z(\fra)$ is $Y$-globally generated, hence in particular $Y$-nef, by Lemma \ref{lem:difference}. 
\end{proof}

\begin{lem}\label{lem:eff-nul}
Let $\f\colon X \to Y$ be a proper, surjective, generically finite morphism.
Suppose $Z \ge0$ is an $\R$-Weil $b$-divisor over $X$. Then $\f_*Z =0$ only if $Z=0$.
\end{lem}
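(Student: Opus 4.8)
The plan is to read the conclusion off directly from the explicit formula defining $\f_*$ on Weil $b$-divisors, combined with the surjectivity statement of Lemma~\ref{lem:surjdiv}. The decisive point is that, since $Z$ is effective, every summand in that formula is non-negative, so no cancellation can occur.

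First I would unwind the hypothesis $\f_*Z=0$. By definition, for every prime divisor $F$ over $Y$ one has
$$
0=\ord_F(\f_*Z)=\sum_i \ord_F\big((\f')_*E_i\big)\,\ord_{E_i}(Z),
$$
where $X'$ is a model over $X$, $Y'$ a model over $Y$ with $F$ a prime divisor on $Y'$, $\f'\colon X'\to Y'$ is the induced (generically finite) morphism, and the $E_i$ are the prime divisors on $X'$ dominating $F$. For each such $E_i$ we have $(\f')_*E_i=c_iF$ with $c_i=\ord_F((\f')_*E_i)=[\kappa(E_i):\kappa(F)]$ a positive integer, and since $Z\ge 0$ the coefficient $\ord_{E_i}(Z)$ of $E_i$ in the effective divisor $Z_{X'}$ is $\ge 0$. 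Thus every term of the sum is non-negative, and so $\ord_{E_i}(Z)=0$ for all $i$, and for all prime divisors $F$ over $Y$.

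It then remains to check that every prime divisor $E$ over $X$ occurs among these $E_i$ for a suitable $F$. Given such an $E$ (a prime divisor on some model of $X$), Lemma~\ref{lem:surjdiv} tells us that $\f_*\ord_E=t\,\ord_F$ is a divisorial valuation on $Y$, for some prime divisor $F$ over $Y$ and some $t>0$. Choose a model $Y'$ on which $F$ appears, and a model $X'$ over $X$ dominating the model carrying $E$ and large enough that $\f$ lifts to a morphism $\f'\colon X'\to Y'$; replacing $E$ by its strict transform on $X'$ (which defines the same valuation) we may assume $E$ is a prime divisor on $X'$. Since $\ord_E$ restricts to a positive multiple of $\ord_F$ on $\C(Y)$, the generic point of $E$ maps to the generic point of $F$, hence $\f'(E)=F$ and $E$ is one of the $E_i$ above. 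By the previous paragraph $\ord_E(Z)=0$. As $E$ was an arbitrary prime divisor over $X$, we conclude $Z=0$.

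There is no genuine obstacle here; the only care required is the bookkeeping of birational models needed to realize a given prime divisor $E$ over $X$ as a divisor dominating a prime divisor over $Y$, which is precisely what Lemma~\ref{lem:surjdiv} supplies. (Alternatively one can argue by contradiction: if $\ord_E(Z)=a>0$ then the displayed formula for the corresponding $F$ gives $\ord_F(\f_*Z)\ge c\,a>0$ with $c=[\kappa(E):\kappa(F)]\ge 1$, contradicting $\f_*Z=0$.)
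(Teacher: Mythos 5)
Your proof is correct and is essentially the same argument as the paper's, which proceeds by contrapositive: if $\ord_E Z>0$ for some prime divisor $E$ over $X$, choose a model $Y'$ so that $E$ dominates a prime divisor $F$ over $Y$, and then $\ord_F(\f_* Z)\ge \ord_E Z>0$ since $Z\ge0$ forbids cancellation. You spell out the same ingredients (non-negativity of all terms in the defining formula for $\f_*$, plus Lemma~\ref{lem:surjdiv} to guarantee every $E$ over $X$ occurs) in the forward direction; there is no substantive difference.
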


\begin{proof}
Suppose that there is a prime divisor $E$ lying in some model $X'$ over $X$ such that
$\ord_E Z >0$. Since $\f$ is generically finite, we can choose a model
$Y'$ over $Y$ such that $E$ maps to a prime divisor $F$ on $Y'$ via the rational map $\f'\colon  X' \rat Y'$ obtained by lifting $\f$. Then $\ord_F(\f_*Z) \ge \ord_E Z>0$, hence $\f_*Z$ cannot be zero.
\end{proof}

\subsection{The case of surfaces and toric varieties}

\begin{thm}\label{thm:mumford} Let $X$ be a normal surface and let $\p\colon X_\p\to X$ be a smooth (or at least $\Q$-factorial) model. 
\begin{enumerate}
\item[(i)] If $D$ is an $\R$-divisor on $X_\p$ then the $b$-divisor $\Env_\p(D)$ is $\R$-Cartier, determined on $X_\p$, and
$$
D=\Env_\p(D)_\p+\left(D-\Env_\p(D)_\p\right)
$$ 
coincides with the relative Zariski decomposition of $D$ with respect to 
$\p \colon X_\p \to X$.
\item[(ii)] If $D$ is an $\R$-Weil divisor on $X$ then $\Env_X(D)=\overline{\p^*D}$ where $\p^*D$ is the numerical pull-back of $D$ in the sense of Mumford.
\end{enumerate}
\end{thm}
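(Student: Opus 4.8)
\textbf{Plan for the proof of Theorem~\ref{thm:mumford}.}

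The plan is to treat the two items in order, using the machinery of nef envelopes already established. For (i), the key point is that on a surface every $\R$-Cartier divisor is automatically nef after subtracting its ``negative part'': this is exactly the content of the relative Zariski decomposition, which exists on surfaces for any $\p\colon X_\p\to X$ with $X_\p$ $\Q$-factorial, as a decomposition $D = P + N$ with $N\ge 0$ effective and $\p$-exceptional, $P$ $\p$-nef, and $P\cdot C = 0$ for every component $C$ of $N$. First I would recall this and observe that $\overline P$ is then an $X$-nef $\R$-Cartier $b$-divisor (being the pull-back of a $\p$-nef divisor) with $\overline P{}_\p = P \le D$. By Corollary~\ref{cor:env}, $\overline P \le \Env_\p(D)$. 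For the reverse inequality, I would use the other half of Corollary~\ref{cor:env}: $\Env_\p(D)$ is $X$-nef and $\Env_\p(D)_\p \le D$, so the $\p$-exceptional divisor $N' := D - \Env_\p(D)_\p \ge 0$ has the property that $D - N'$ is the trace on $X_\p$ of an $X$-nef $b$-divisor. Pulling $\Env_\p(D)$ back to any higher model $X_\tau$ and pushing forward to $X_\p$ can only increase coefficients over the exceptional locus (negativity, Proposition~\ref{prop:neg}, applied to $\Env_\p(D)$ itself), so in fact $\Env_\p(D)_\tau \le \p_\tau^*(\Env_\p(D)_\p)$; combined with $X$-nefness this forces $\Env_\p(D)$ to be $\R$-Cartier determined on $X_\p$, i.e.\ $\Env_\p(D) = \overline{\Env_\p(D)_\p}$, with $\Env_\p(D)_\p$ a $\p$-nef divisor $\le D$. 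By the uniqueness/minimality of $N$ in the Zariski decomposition (it is the smallest effective $\p$-exceptional divisor making the difference $\p$-nef, cf.~\cite[p.~408]{Sak84}), I get $N' \ge N$, hence $\Env_\p(D)_\p \le P$; together with the first inequality this gives $\Env_\p(D)_\p = P$ and hence the claimed decomposition.

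For (ii), the idea is to reduce to (i) via the trace on $X$. Given an $\R$-Weil divisor $D$ on $X$, pick a $\Q$-factorial model $\p\colon X_\p\to X$ and let $\hat D$ be the strict transform of $D$ on $X_\p$, so that $\p_*\O_{X_\p}(m\hat D) = \O_X(mD)$ for all $m$ (the blow-up is an isomorphism over the generic point of each component of $D$), whence $\Env_\p(\hat D) = \Env_X(D)$. By (i), $\Env_X(D) = \overline P$ is $\R$-Cartier determined on $X_\p$, with $P$ the $\p$-nef part of $\hat D$; and $P = \hat D - N$ with $N\ge 0$ effective $\p$-exceptional and $P\cdot C = 0$ for all $\p$-exceptional curves $C$. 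On the other hand Mumford's numerical pull-back $\p^* D$ is, by definition, the unique $\R$-divisor on $X_\p$ with $\p_*(\p^*D) = D$ and $\p^*D \cdot C = 0$ for all $\p$-exceptional $C$. Since $\p_* P = \p_*\hat D = D$ (as $N$ is $\p$-exceptional) and $P$ has zero intersection with every $\p$-exceptional curve, the uniqueness characterizing Mumford's pull-back gives $P = \p^*D$, hence $\Env_X(D) = \overline{\p^* D}$. Finally, by Proposition~\ref{prop:envx} the trace on $X$ is $D$ itself, which is consistent; and since the statement is independent of the model (any two $\Q$-factorial models are dominated by a common one, and $\p^*D$ behaves functorially under further blow-ups by Proposition~\ref{prop:neg}), the identification holds for all smooth or $\Q$-factorial $\p$.

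\textbf{Main obstacle.} The routine parts are the reductions to strict transforms and the bookkeeping of exceptional supports. The genuinely delicate step is showing that $\Env_\p(D)$ is $\R$-Cartier \emph{determined on $X_\p$} rather than merely $\R$-Cartier on some higher model, and that its trace is precisely the $\p$-nef part $P$ and not something strictly smaller. This is where one must use that we are on a surface: the negativity lemma (Proposition~\ref{prop:neg}) forces $\Env_\p(D)$ to sit between $P$ and $D$ on $X_\p$ and to be ``$X$-nef over $X_\p$'', and on a surface an $X$-nef $b$-divisor whose trace on $X_\p$ is already $\R$-Cartier must equal the pull-back of that trace (there is no room for further negative contributions, since any such contribution on a higher model would violate $X$-nefness by the negativity lemma on the surface $X_\tau$). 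Minimality of $N$ in Sakai's relative Zariski decomposition then pins down the trace exactly. I expect the cleanest route is to invoke the surface relative Zariski decomposition as a black box and let Corollary~\ref{cor:env} and Proposition~\ref{prop:neg} do the rest, rather than re-deriving positivity by hand.
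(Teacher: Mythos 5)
Your treatment of part~(i) is essentially the paper's argument, modulo one point you gesture at but do not make explicit: the reason $\Env_\p(D)_\p$ is $\p$-nef --- and hence, via Corollary~\ref{cor:env}, the reason $\Env_\p(D)$ is $\R$-Cartier determined already on $X_\p$ --- is that $\Env_\p(D)$ is an $X$-nef $b$-divisor, so by Lemma~\ref{lem:mov} its trace $\Env_\p(D)_\p$ lies in the movable cone of $X_\p$, and on a surface the movable cone coincides with the nef cone. Your invocation of Proposition~\ref{prop:neg} to ``force'' the determination on $X_\p$ is circular without this input, since $\Env_\p(\Env_\p(D)_\p)=\overline{\Env_\p(D)_\p}$ itself requires $\Env_\p(D)_\p$ to be $X$-nef. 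Once this is supplied, identifying the trace with the Zariski nef part $P$ via minimality of $N$ is a valid alternative to the paper's characterization (the paper checks $P'\cdot E=0$ for every $E$ in the support of $D-P'$ directly).

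Part~(ii) as you propose it is wrong. The reduction hinges on the claim that $\p_*\O_{X_\p}(m\widehat D)=\O_X(mD)$ for the strict transform $\widehat D$, and hence that $\Env_\p(\widehat D)=\Env_X(D)$. Both fail: the inclusion $\p_*\O_{X_\p}(m\widehat D)\subset\O_X(mD)$ holds, but the reverse does not, because sections of $\O_X(mD)$ may have poles along the $\p$-exceptional divisors. Concretely, take $X=\C^2$, $D=\{x=0\}$ and $\p$ the blow-up of the origin with exceptional divisor $E$. Then $1/x^m\in\O_X(mD)$ while $\ord_E(1/x^m)=-m<0$, so $1/x^m\notin\p_*\O_{X_\p}(m\widehat D)$. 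Accordingly $\Env_X(D)=\overline D$, whose trace on $X_\p$ is $\p^*D=\widehat D+E$, while $\Env_\p(\widehat D)=\overline{\widehat D}$ (here $\widehat D$ is already $\p$-nef, since $\widehat D\cdot E=-E^2=1$), whose trace is $\widehat D\neq\widehat D+E$. The same example shows that your further step --- that the $\p$-nef part $P$ of $\widehat D$ satisfies $P\cdot C=0$ for \emph{every} $\p$-exceptional $C$ --- is also false: the Zariski decomposition only gives $P\cdot C=0$ for components $C$ of the negative part $N$, and here $N=0$ while $P\cdot E=1\neq 0$. The paper instead proves~(ii) directly, without routing through~(i): since Mumford's $\p^*D$ is $\p$-nef, $\overline{\p^*D}\le\Env_X(D)$ by Corollary~\ref{cor:env}; conversely $D':=\Env_X(D)_\p$ is $\p$-nef by Lemma~\ref{lem:mov} and satisfies $\p_*D'=D$ by Proposition~\ref{prop:envx}, so it suffices to show $D'\cdot E=0$ for each $\p$-exceptional $E$, and this follows from the variational characterization of the envelope: if $D'\cdot E>0$, then $\overline{D'+\ep E}$ for $0<\ep\ll1$ would be an $X$-nef $b$-divisor with trace $D$ on $X$, hence $\le\Env_X(D)$ by Corollary~\ref{cor:env}, forcing $D'+\ep E\le D'$, a contradiction.
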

Recall that the {\it numerical pull-back} of $D$ is defined as 
the unique $\R$-divisor $D'$ on $X_\p$ such that $\p_*D' = D$ 
and $D'\cdot E =0$ for all $\p$-exceptional divisors $E$. 

\begin{proof} Let us prove (i). The first assertion follows from Corollary \ref{cor:env}, since each movable class is nef when $\dim X=2$. 

The divisor $P:=\Env_\p(D)_\p$ is an $X$-nef $\R$-divisor on $X_\p$ such that $D\ge P$ and $P\ge Q$ for every $X$-nef divisor $Q$ on $X_\p$ such that $D\ge Q$, by Corollary \ref{cor:env} again. 
Write $N := P- D$. Then we have $P\cdot E=0$ for any  prime divisor $E$
in the support of $N$, since otherwise
$P + s E$ is $X$-nef and $\le D$ for $s \ll 1$.
This is one the characterizations of the (relative) Zariski decomposition, see~\cite[p.~408]{Sak84}.
This concludes the proof of (i).

Let us now prove (ii). Let $\p^*D$ be the numerical pull-back of $D$ to $X_\p$. Since $\p^*D$ is $\p$-nef it follows that $C:=\overline{\p^*D}$ is $X$-nef and satisfies $C_X=D$, hence $C\le\Env_X(D)$ by Corollary \ref{cor:env}. Conversely set $D':=\Env_X(D)_\p$. We claim that $D'=\p^*D$. Taking this for granted for the moment we then get $\Env_X(D)\le C$ by the negativity lemma and the result follows. 

Since we have $\p_*D'=D$ by Proposition \ref{prop:envx}, the claim will follow if we show that $D'\cdot E=0$ for each $\p$-exceptional prime divisor $E$ on $X_\p$. This is a consequence of the variational characterization of $\Env_X(D)$. Indeed note that $D'\cdot E\ge 0$ since $D'$ is $\p$-nef by Lemma \ref{lem:mov}. If we assume by contradiction that $D'\cdot E>0$ then $D'+\ep E$ is still $\p$-nef for $0<\ep\ll 1$ and $C:=\overline{D'+\ep E}$ is then an $X$-nef $b$-divisor with $C_X=D$. It follows that $C\le\Env_X(D)$ by Corollary \ref{cor:env}, hence $D'+\ep E\le D'$, a contradiction. 
\end{proof}

Let us now decribe the case of toric varieties. We refer to~\cite{fulton,oda,CLS} for basics on toric varieties. Let $N$ be a free abelian group of rank $n$, and suppose we are given two rational polyhedral fans 
$\Delta, \Delta'$ in $N$ such that $\Delta \subset \Delta'$. For the sake of simplicity we assume 
$\Delta$ and $\Delta'$ have the same support $S$.
Denote by $X(\Delta)$ and $X(\Delta')$ the corresponding toric varieties. Since $\Delta$ is a 
subset of $\Delta'$, we have an induced birational map $\pi\colon  X(\Delta') \to X(\Delta)$.

Let $D$ be an $\R$-Weil toric divisor on $X(\Delta)$. It is given by a real valued function $h_D$ on the set of 
primitive vectors $\Delta(1)$ generating the $1$-dimensional faces of $\Delta$, and $D$ is 
$\R$-Cartier iff $h_D$ extends to a continuous function on $S$ that is linear on each face. In that case $D$ is $\p$-nef iff $h_D$ is convex on the union $S_0$ of all faces of $\Delta'$ that contain a ray in 
$\Delta'(1)\setminus \Delta(1)$. By Corollary \ref{cor:env} it follows that the function attached to $\Env_\p(D)_\p$ is the supremum of all $1$-homogeneous functions on the convex set $S$ such that 
$g \le h_D$ on $\Delta(1)$ and $g$ is convex on the subset $S_0$. 

\begin{eg}\label{eg:counter}
Take $\Delta$ in $\R^3$ the  fan having a single $3$-dimensional cone generated by the four rays 
$(1,0,0), (0,1,0), (0,0,1), (1,1,-1)$. Then $X(\Delta)$ is an affine variety having an isolated 
singularity at the origin and is locally isomorphic to a quadratic cone there.

Let $\Delta'$ be the regular fan having $(1,0,0), (0,1,0), (0,0,1), (1,1,-1), (1,1,0)$ as vertices.
The natural map $X(\Delta') \to X(\Delta)$ is a proper birational map which gives a (non-minimal) 
desingularization of $X(\Delta)$.  Denote by $E_v$ the divisor associated to the corresponding ray 
$v\in\R^3$
either in $X(\Delta)$ or $X(\Delta')$.

Now take $D_1 = E_{100} + E_{010} + E_{001}$, and $D_2 = E_{100} + E_{001} + E_{11-1}$. 
Then $D_1+ D_2$ is a Cartier divisor on $X(\Delta)$ whose support function is given by 
$2x_1+x_2+2x_3$
in the standard coordinates $(x_1, x_2, x_3) \in \R^3$. Hence $\ord_{E_{110}} \Env_X(D_1+D_2) = 3$.
On the other hand, for any convex function $g$ having value $1$ at $(0,0,1)$ and $0$ at $(1,1,-1)$, 
we have $g(1,1,0) \le 1$, hence $\ord_{E_{110}}\Env_X(D_1) \le 1$. The same argument shows that 
$\ord_{E_{110}}\Env_X(D_2) \le 1$, hence
$\ord_{E_{110}}\Env_X(D_1) + \ord_{E_{110}}\Env_X(D_2) < \ord_{E_{110}} (\Env_X(D_1+D_2))$.
\end{eg}

\subsection{Defect ideals}\label{sec:defect}

\begin{defi} The \emph{defect ideal} of an $\R$-Weil divisor $D$ on $X$ is defined as 
$$\frd(D):=\O_X(D)\cdot\O_X(-D).$$
\end{defi}
Note that $\frd(D)\subset\O_X(D-D)=\O_X$ is an ideal sheaf. The following proposition summarizes immediate properties of defect ideals.

\begin{prop}\label{prop:defect}
Let $D,D'$ be $\R$-Weil divisors on $X$. Then we have:
\begin{enumerate}
\item[(i)] $\frd(D + C) = \frd(D)$ for every Cartier divisor $C$.
\item[(ii)] 
$$
\frd(D)\cdot\O_X(D+D')\subset\O_X(D)\.\O_X(D')\subset\O_X(D+D').
$$ 
\item[(iii)] 
$$
\phi^{-1}\frd_X(D)\. \O_Y(\f^*D) \subset\phi^{-1}\O_X(D)\.\O_Y\subset\O_Y(\phi^*D)
$$ 
for every finite dominant morphism $\f\colon Y\to X$.
\item[(iv)]
The sequence 
$$
\frd_\bullet(D) :=(\frd(mD))_{m\ge 0}
$$
is a graded sequence of ideals, and
$$
Z(\frd_\bullet(D))=\Env_X(D)+\Env_X(-D).
$$
\end{enumerate}
\end{prop}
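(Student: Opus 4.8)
The plan is to settle (i)--(iii) by elementary manipulations of the fractional ideal sheaves $\O_X(D)$, the only non-formal ingredient being the containment
$$
\O_X(D)\cdot\O_X(D')\subseteq\O_X(D+D'),
$$
which is immediate from the multiplicativity $Z(fg)=Z(f)+Z(g)$ (equivalently, Lemma~\ref{lem:Zfrac}): if $Z(f)\le D$ and $Z(g)\le D'$ on an open set $U$, then $Z(fg)\le D+D'$ on $U$. Item~(iv) will then be reduced to the definition of the nef envelope, using in addition the identity $Z(\fra\cdot\frb)=Z(\fra)+Z(\frb)$ of Lemma~\ref{lem:Zfrac}. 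For (i): since $C$ is Cartier, $\O_X(C)$ is locally free, locally generated by $f^{-1}$ for a local equation $f$ of $C$, so that $\O_X(D+C)=f^{-1}\O_X(D)$ and $\O_X(-D-C)=f\cdot\O_X(-D)$ locally; multiplying, the factors $f^{\pm1}$ cancel and $\frd(D+C)=\O_X(D)\cdot\O_X(-D)=\frd(D)$.

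For (ii): applying the displayed containment with $(D,D')$ replaced by $(-D,\,D+D')$ gives $\O_X(-D)\cdot\O_X(D+D')\subseteq\O_X(D')$; multiplying by $\O_X(D)$ and using the containment once more yields
$$
\frd(D)\cdot\O_X(D+D')=\O_X(D)\cdot\bigl(\O_X(-D)\cdot\O_X(D+D')\bigr)\subseteq\O_X(D)\cdot\O_X(D')\subseteq\O_X(D+D').
$$
For (iii): as $\phi\colon Y\to X$ is finite and dominant between normal varieties, the pull-back $\phi^*$ of Weil divisors is defined and satisfies $\dv_Y(g\circ\phi)=\phi^*\dv_X(g)$ for every rational function $g$ on $X$; hence $f\in\O_X(D')$ forces $f\circ\phi\in\O_Y(\phi^*D')$, so that $\phi^{-1}\O_X(D')\cdot\O_Y\subseteq\O_Y(\phi^*D')$ for every Weil divisor $D'$ on $X$ (the left-hand side being the $\O_Y$-module generated by pull-backs of local sections of $\O_X(D')$), which is the right-hand inclusion of (iii). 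Taking $D'=-D$ and multiplying by $\O_Y(\phi^*D)$, the containment of (ii)-type gives $\phi^{-1}\O_X(-D)\cdot\O_Y(\phi^*D)\subseteq\O_Y(-\phi^*D)\cdot\O_Y(\phi^*D)\subseteq\O_Y$; since $\phi^{-1}\frd_X(D)\cdot\O_Y=\phi^{-1}\O_X(D)\cdot\phi^{-1}\O_X(-D)\cdot\O_Y$, this yields $\phi^{-1}\frd_X(D)\cdot\O_Y(\phi^*D)\subseteq\phi^{-1}\O_X(D)\cdot\O_Y$, the left-hand inclusion of (iii).

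For (iv): first, $\frd_\bullet(D)$ is a graded sequence of ideals — we have $\frd(0)=\O_X$, each $\frd(mD)\subseteq\O_X$ is a coherent ideal sheaf, and
$$
\frd(kD)\cdot\frd(mD)=\bigl(\O_X(kD)\cdot\O_X(mD)\bigr)\cdot\bigl(\O_X(-kD)\cdot\O_X(-mD)\bigr)\subseteq\O_X\bigl((k{+}m)D\bigr)\cdot\O_X\bigl({-}(k{+}m)D\bigr)=\frd\bigl((k{+}m)D\bigr)
$$
by the displayed containment. Second, Lemma~\ref{lem:Zfrac} gives $Z(\frd(mD))=Z(\O_X(mD))+Z(\O_X(-mD))$ for every $m$. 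Now $(\O_X(mD))_{m\ge0}$ and $(\O_X(-mD))_{m\ge0}$ are graded sequences of fractional ideal sheaves with linearly bounded denominators (e.g.\ $\O_X(-mD_+)\cdot\O_X(mD)\subseteq\O_X$, with $D_+$ the positive part of $D$), so by Proposition~\ref{prop:graded-sequence} the sequences $\tfrac1m Z(\O_X(\pm mD))$ converge coefficient-wise, and their limits are by definition $\Env_X(D)$ and $\Env_X(-D)$. Dividing the identity above by $m$ and passing to the limit, $\tfrac1m Z(\frd(mD))$ converges coefficient-wise to $\Env_X(D)+\Env_X(-D)$; by definition of $Z(\frd_\bullet(D))$ this proves (iv).

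The argument is entirely formal, so there is no real obstacle; the only points deserving care are the bookkeeping in (iii) with the inverse-image sheaves $\phi^{-1}(-)$ together with the compatibility $\dv_Y(g\circ\phi)=\phi^*\dv_X(g)$ for pull-back of Weil divisors along a finite morphism of normal varieties, and — in (iv) — checking that the relevant graded sequences have linearly bounded denominators, which is exactly what makes Proposition~\ref{prop:graded-sequence} applicable and guarantees that $Z(\frd_\bullet(D))$ is well defined.
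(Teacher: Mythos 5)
The paper states Proposition~\ref{prop:defect} with no proof, only the remark that these are ``immediate properties of defect ideals.'' Your proof supplies exactly the kind of elementary verification the authors had in mind, and it is correct. The key observation $\O_X(D)\cdot\O_X(D')\subseteq\O_X(D+D')$ does all the work in (ii)--(iv); in (i) the local cancellation of $f^{\pm1}$ is the right mechanism; in (iii) the identity $\phi^{-1}(\fra\fra')\cdot\O_Y=(\phi^{-1}\fra\cdot\O_Y)(\phi^{-1}\fra'\cdot\O_Y)$ for extended ideals, together with $\dv_Y(g\circ\phi)=\phi^*\dv_X(g)$, gives both inclusions cleanly; and in (iv) the reduction to Lemma~\ref{lem:Zfrac} and Proposition~\ref{prop:graded-sequence}, with the check of linearly bounded denominators, is precisely what is needed to invoke the definition of $\Env_X$. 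No gaps.
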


\begin{defi} We shall say that an $\R$-Weil divisor $D$ on $X$ is \emph{numerically Cartier} if $\Env_X(-D)=-\Env_X(D)$. In the special case where $D=K_X$ we shall say that $X$ is \emph{numerically Gorenstein} if $K_X$ is numerically Cartier. 
\end{defi}

\begin{rmk}
If $D$ is a $\Q$-Weil divisor, then 
the property of being numerically Cartier can be 
equivalently checked using valuations, so that $D$ is numerically Cartier
iff given a positive integer $k$ such that $kD$ is an integral divisor, 
for every divisorial valuation $\n$ the sequence $\n(\O_X(mkD)) - \n(\O_X(-mkD))$
is in $o(m)$. 
\end{rmk}

By Proposition \ref{prop:concave} it is straightforward to see that numerically Cartier divisors form an $\R$-vector space. We also have:

\begin{lem}\label{lem:additive} Let $D$ be an $\R$-Weil divisor on $X$. Then $D$ is numerically Cartier iff
$$
\Env_X(D+D')=\Env_X(D)+\Env_X(D')$$ 
for every $\R$-Weil divisor $D'$ on $X$. 
\end{lem}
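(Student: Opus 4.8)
The plan is to derive both implications purely formally from the superadditivity and positive homogeneity of $D\mapsto\Env_X(D)$ recorded in Proposition~\ref{prop:concave}, combined with the elementary fact that $\Env_X(0)=0$. The last point holds because the zero divisor is $\R$-Cartier and $X$-nef, so $\Env_X(0)=\overline 0=0$ by the first bullet of Corollary~\ref{cor:env}. I note that $\Env_X$ applied to an arbitrary $\R$-Weil divisor on $X$ is well-defined (exactly as is already used in Proposition~\ref{prop:envx}), so every term appearing below makes sense; no other preliminary is needed.

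For the implication $(\Rightarrow)$, I would assume $\Env_X(-D)=-\Env_X(D)$ and fix an arbitrary $\R$-Weil divisor $D'$ on $X$. Superadditivity applied to the pair $(D,D')$ gives
$$
\Env_X(D+D') \ge \Env_X(D)+\Env_X(D').
$$
For the opposite inequality I would apply superadditivity instead to the pair $(D+D',\,-D)$, whose sum is $D'$:
$$
\Env_X(D') \ge \Env_X(D+D')+\Env_X(-D) = \Env_X(D+D')-\Env_X(D),
$$
the last step using the hypothesis that $D$ is numerically Cartier. Rearranging yields $\Env_X(D+D')\le\Env_X(D)+\Env_X(D')$, and together with the previous display this gives the asserted additivity.

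For the implication $(\Leftarrow)$, I would assume $\Env_X(D+D')=\Env_X(D)+\Env_X(D')$ for every $\R$-Weil divisor $D'$ on $X$ and simply specialize to $D'=-D$. This gives
$$
0=\Env_X(0)=\Env_X\big(D+(-D)\big)=\Env_X(D)+\Env_X(-D),
$$
hence $\Env_X(-D)=-\Env_X(D)$, which is precisely the statement that $D$ is numerically Cartier.

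There is essentially no obstacle here: the whole argument is a two-line manipulation of the superadditivity inequality, and the only points requiring a moment's attention are the vanishing $\Env_X(0)=0$ and the well-definedness of the envelopes involved, both immediate from the material already developed.
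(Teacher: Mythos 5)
Your proof is correct and follows essentially the same route as the paper: both directions are obtained by two applications of the superadditivity of $\Env_X$ (once to $(D,D')$ and once to $(D+D',-D)$), together with the hypothesis $\Env_X(-D)=-\Env_X(D)$, and the converse by specializing $D'=-D$ and using $\Env_X(0)=0$. The paper only sketches the forward direction and leaves the converse to the reader, so your write-up matches and slightly fills in what is there.
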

\begin{proof} Assume that $D$ is numerically Cartier, so that  $\Env_X(-D)=-\Env_X(D)$. Then we have on the one hand $\Env_X(D+D')\ge\Env_X(D)+\Env_X(D')$ and on the other hand $\Env_X(-D)+\Env_X(D+D')\le\Env_X(D')$, and additivity follows. The converse is equally easy and left to the reader.
\end{proof}

\begin{eg}[Surfaces] Since Mumford's pull-back of Weil divisors on surfaces is linear, it follows from Theorem~\ref{thm:mumford} that all $\R$-Weil divisors on a normal surface $X$ are numerically Cartier. 
\end{eg}

\begin{eg}[Toric varieties] If $D$ is a toric $\R$-Weil divisor on a toric variety $X$ then it follows from the discussion from the last section that $D$ is numerically Cartier iff $D$ is already $\R$-Cartier. 
\end{eg}

\begin{eg}[Cone singularities]\label{eg:cone} Let $(V,L)$ be a smooth projective variety endowed with an ample line bundle $L$. Recall that the affine cone over $(V,L)$ is the algebraic variety defined by 
$$
X=C(V,L):=\Spec\left(\bigoplus_{m\ge 0}H^0(V,mL)\right).
$$
If $L$ is sufficiently positive, then $X$ has an isolated normal singularity at its vertex $0\in X $, and is obtained by blowing-down the zero section $E\simeq V$ in the total space $Y$ of the dual bundle $L^*$. We denote by $\p\colon Y\to X$ the contraction map, which is isomorphic to the blow-up of $X$ at $0$. Every divisor $D$ on $V$ induces a Weil divisor $C(D)$ on $X$, and the map $D\mapsto C(D)$ induces an isomorphism $\Pic(V)/\Z\, L\simeq\Cl(X)$ onto the divisor class group of $X$.

\begin{lem}\label{lem:simple} Let $(V,L)$ be a smooth polarized variety and let $D$ be an $\R$-Weil divisor on $V$. Assume that $L$ is sufficiently positive so that $C(V,L)$ is normal.
\begin{itemize}
\item[(1)] $C(D)$ is $\R$-Cartier iff $D$ and $L$ are $\R$-linearly proportional in $\Pic(X)\otimes\R$. 
\item[(2)] $C(D)$ is numerically Cartier iff $D$ and $L$ are numerically proportional in $N^1(V)$. 
\end{itemize}
\end{lem}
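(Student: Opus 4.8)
The plan is to work on the blow-up $\pi\colon Y\to X$ of the vertex, which realizes $Y$ as the total space of the dual bundle $L^{*}$, with zero section $E\cong V$ satisfying $\mathcal O_Y(E)|_E\cong\mathcal O_V(-L)$; write $p\colon Y\to V$ for the bundle projection, so that $X\setminus\{0\}$ is the $\mathbb{G}_m$-bundle over $V$ obtained by removing the zero section from $L^{*}$. Taking closures of preimages under this bundle defines an additive homomorphism $C\colon\Div_\R(V)\to\Div_\R(X)$ with $C(\dv_V(f))=\dv_X(f)$ for $f\in\C(V)^{\times}$ — this is the map already appearing in Example~\ref{eg:cone}, for which in particular $C(L)\sim 0$ — and $p^{*}D$ is the strict transform of $C(D)$ under $\pi$. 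Given this, one direction of (1) is immediate: if $D\sim_\R tL$ then $C(D)=tC(L)+C(D-tL)$ is a finite $\R$-combination of principal divisors, hence $\R$-Cartier. For the converse, if $C(D)$ is $\R$-Cartier then $\pi^{*}C(D)=p^{*}D+cE$ for a (unique) $c\in\R$; since $\pi$ contracts $E$ to the point $0$, the class $\pi^{*}C(D)|_E$ vanishes in $\Pic(E)\otimes\R$, and computing it from the right-hand side using $E|_E\cong\mathcal O_V(-L)$ gives $D-cL\sim_\R 0$, so $D$ and $L$ are $\R$-linearly proportional.

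For (2), by Proposition~\ref{prop:defect}(iv) and the definition of numerically Cartier, $C(D)$ is numerically Cartier if and only if $Z(\frd_\bullet(C(D)))=0$, i.e.\ $\nu(\frd_m(C(D)))=o(m)$ for every divisorial valuation $\nu$; as $\frd(C(D))$ is the trivial ideal away from $0$, only the $\nu$ centered at the vertex are relevant. The crucial instance is $\nu=\ord_E$. Using the standard graded description $\Gamma(X,\mathcal O_X(mC(D)))=\bigoplus_{k}H^{0}(V,\lfloor mD\rfloor+kL)$ with the summand of $\mathbb{G}_m$-weight $k$, the fact that a weight-$j$ element of $R=\bigoplus_k H^0(V,kL)$ has $\ord_E$-value exactly $j$ (valid since $R$ is generated in degree one when the cone is normal), and affineness of $X$ to identify $\frd_m(C(D))$ with the ideal of $R$ generated by the products of $H^0(V,\lfloor mD\rfloor+kL)$ and $H^0(V,\lfloor-mD\rfloor+k'L)$, I get
$$
\ord_E\bigl(\frd_m(C(D))\bigr)=k_0(m,D)+k_0(m,-D),\qquad k_0(m,D):=\min\{k\in\Z:\lfloor mD\rfloor+kL\text{ is effective}\}.
$$
A Fekete (subadditivity) argument then shows $\tfrac1m k_0(m,D)\to-\alpha(D)$, where $\alpha(D):=\sup\{s\in\R:D-sL\in\overline{\PEff}(V)\}$ is the pseudoeffective threshold. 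Hence if $C(D)$ is numerically Cartier then $\alpha(D)+\alpha(-D)=0$, and since $\overline{\PEff}(V)$ contains no line this forces $D-\alpha(D)L\equiv 0$, i.e.\ $D$ and $L$ are numerically proportional. (The same argument, carrying integer parts along, covers $\R$-divisors.)

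For the converse of (2), assume $D\equiv tL$. Since the numerically-Cartier condition is unchanged under adding an $\R$-principal divisor and $C(D)-tC(L)=C(D-tL)$ with $C(L)\sim 0$, it is enough to show that $C(B)$ is numerically Cartier for $B:=D-tL$, a numerically trivial divisor; the case $B=0$ is trivial, and one reduces to $B$ integral using homogeneity and the $\R$-linearity of the condition. Fix a divisorial valuation $\nu$ centered at $0$. Since the classes $mB+L$ and $-mB+L$ are numerically equal to the fixed ample class $L$, they are base-point-free for all $m$ provided $L$ is sufficiently positive (effective global generation / Castelnuovo--Mumford regularity), which we may assume. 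Then the subideal $\frc_m\subseteq\frd_m(C(B))$ generated by the products of $H^0(V,mB+L)$ and $H^0(V,-mB+L)$ (which land in $R_2=H^0(V,2L)$) has vanishing locus contained in $\{0\}$, and a \emph{general} such product, viewed as a function on $Y$, has divisor of the form $2E+p^{*}\Delta$ with $\Delta$ moving in a base-point-free subsystem of $|2L|$, so for general $s,t$ it misses the center of $\nu$ on $V$; hence $\nu(\frc_m)=2\,\nu(E)$, independent of $m$, so $\nu(\frd_m(C(B)))\le\nu(\frc_m)=O(1)=o(m)$. As $\nu$ was arbitrary, $Z(\frd_\bullet(C(B)))=0$. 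This last step — passing from $\ord_E$, which already detects the relation $D\equiv tL$, to \emph{all} divisorial valuations centered at the vertex — is the one I expect to be the main obstacle, and it is precisely what forces the use of effective base-point-freeness (and a slight strengthening of the positivity of $L$).
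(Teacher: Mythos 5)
Your argument for part (1) is essentially the one the paper intends (it follows from $\Cl(X)\cong\Pic(V)/\Z L$ together with the vanishing of $\Pic(X)$), and your restriction-to-$E$ computation is a useful unpacking of this. For part (2) you take a genuinely different route from the paper. The paper's proof works entirely inside the $b$-divisor calculus: for the forward implication it uses the characterization ``numerically Cartier $\Leftrightarrow\Env_X(-C(D))=-\Env_X(C(D))$'' directly and invokes Lemma~\ref{lem:mov} to see that the trace $\Env_X(C(D))_Y|_E$ is both pseudoeffective and anti-pseudoeffective, hence numerically trivial; for the converse it shows $\Env_X(C(D))_Y$ is $\pi$-nef and hence the envelope is a Cartier $b$-divisor determined on $Y$, from which the desired antisymmetry drops out. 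You instead pass through the equivalent characterization $Z(\frd_\bullet(C(D)))=0$ of Proposition~\ref{prop:defect}(iv), compute $\ord_E$ of the defect ideals explicitly via the $\Z$-grading of the section ring, and phrase the forward direction in terms of the pseudoeffective threshold $\alpha(D)$; this is a clean, hands-on reformulation of the same pseudoeffectivity constraint, obtained without passing through the movable cone.

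For the converse direction your approach is more delicate, and there is one real (though repairable) gap. You need $mB+L$ and $-mB+L$ to be base-point free for \emph{all} $m$, and you are right that this forces a strengthening of the hypothesis ``$L$ sufficiently positive so that $C(V,L)$ is normal'' into an effective global-generation statement uniform over $\Pic^\tau(V)$. The lemma as stated does not grant you this. The fix is mild but should be made explicit: by Fujita-- or Koll\'ar-type effective base-point-freeness the bound is numerical, so there is a fixed $k_0$ (depending only on $V$ and the numerical class of $L$, not on $m$) with $mB+k_0L$ and $-mB+k_0L$ base-point free for all $m$; running your argument with $k_0L$ in place of $L$ gives $\nu(\frd_m(C(B)))\le 2k_0\,\nu(\frm)=O(1)$, which is all you need, and requires no extra hypothesis on $L$. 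A smaller quibble: the assertion that every weight-$j$ element of $R$ has $\ord_E$ exactly $j$ does not really rest on $R$ being generated in degree one (which normality alone does not guarantee); it comes directly from the description of $Y$ as the total space of $L^{*}$ with the fiber coordinate of weight one, and is cleaner stated that way. With these two points addressed your proof is correct, and it is instructive as a more computational alternative to the paper's argument, which buys uniformity over all valuations centered at $0$ via the movable-cone formalism rather than by producing explicit small elements of the defect ideals.
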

\begin{proof} (1) follows from the description of the divisor class group of $X=C(V)$ recalled above. Let us prove (2). Let $\p\colon Y\to X$ be the blow-up of $X$ at its vertex $0$. The restriction to $E\simeq V$ of the strict transform $C(D)'$ is linearly equivalent to $D$. If $D$ is numerically Cartier then the restriction to $E$ of $\Env_X(-C(D))_Y=-\Env_X(C(D))_Y$ is both pseudoeffective and anti-pseudoeffective by Lemma \ref{lem:mov}, so $\Env_X(C(D))_Y$ is numerically equivalent to $0$ in $N^1(Y/X)$. But $\Env_X(C(D))_Y-C(D)'$ is $\p$-exceptional, hence proportional to $E$, and we conclude as desired that $D\equiv C(D)'|_E$ is proportional to $L\equiv -E|E$ in $N^1(V)$. 

Conversely assume that $D\equiv a L$ are proportional in $N^1(V)$. Then $C(D)'$ and $E$ are proportional in $N^1(Y/X)$, hence there exists $t\in\R$ such that $\Env_X(C(D))_Y\equiv -t E$ in $N^1(Y/X)$. Since $-E$ is $X$-ample and the numerical class of $\Env_X(C(D))_Y$ is in the $X$-movable cone it follows that $t\ge 0$, which implies that $\Env_X(C(D))_Y$ is $X$-nef. This in turn shows as in the proof of Theorem \ref{thm:mumford} that the $b$-divisor $\Env_X(C(D))$ is $\R$-Cartier, determined on $Y$ by $C(D)'-aE$. If we replace $D$ by $-D$ then we get that $\Env_X(C(D))$ is determined on $Y$ by $C(-D)'+aE=-\left(C(D)'-aE\right)$, i.e.~$\Env_X(-C(D))=-\Env_X(C(D))$ holds as desired. 
\end{proof}

\end{eg}

We now give a more precise description of defect ideals, which is basically an elaboration of \cite[Theorem 5.4]{dFH}. As a matter of terminology we introduce:
\begin{defi} We say that a determination $\p$ of an $\R$-Cartier $b$-divisor $C$ is a \emph{log-resolution} of $C$ if $X_\p$ is smooth, the exceptional locus $\Exc(\p)$ has codimension one and $\Exc(\p)+C_\p$ has SNC support. 

Another $\R$-Cartier $b$-divisor $C'$ is then said to be \emph{transverse} to $\p$ and $C$ if $\p$ is also a log-resolution of $C+C'$ and $C'_\p$ has no common component with $\Exc(\p)+C_\p$. 
\end{defi}
Every $\R$-Cartier $b$-divisor admits a log-resolution by Hironaka's theorem.

\begin{prop} \label{prop:defect2} Let $D$ be a Weil divisor on $X$ and assume that $X$ is quasi-projective. Then we have
$$
\frd(D) = \sum_E\O_X(-E)
$$
where the sum is taken over the set of all prime divisors $E$ of $X$ such that $D-E$ is Cartier (and this set is in particular non-empty). 

Given a Cartier $b$-divisor $C$ and a joint log-resolution $\p$ of $C$ and $\O_X(D)$ the sum can be further restricted to those $E$ such that $Z(\O_X(E))$ is transverse to $\p$ and $C$.
\end{prop}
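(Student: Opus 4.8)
The inclusion $\sum_E\O_X(-E)\subseteq\frd(D)$ is elementary and I would dispose of it first. If $E$ is a prime divisor with $C:=D-E$ Cartier, then $E\ge 0$ forces $\O_X(C)=\O_X(D-E)\subseteq\O_X(D)$, and, $C$ being Cartier, $\O_X(-E)=\O_X(C-D)=\O_X(C)\.\O_X(-D)\subseteq\O_X(D)\.\O_X(-D)=\frd(D)$; summing over such $E$ gives the inclusion.

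For the reverse inclusion the plan is to manufacture enough prime divisors using quasi-projectivity. The case $\dim X\le 1$ is trivial ($X$ is then a smooth curve, every divisor is Cartier, $\frd(D)=\O_X$, and $\sum_p\O_X(-p)=\O_X$), so assume $\dim X\ge 2$. Fix an ample divisor $A$ on $X$. By Serre vanishing I can choose $m\gg 0$ and a finite-dimensional subspace $V\subseteq H^0\bigl(X,\O_X(D+mA)\bigr)$ which generates the coherent sheaf $\O_X(D+mA)=\O_X(D)\otimes\O_X(mA)$ and restricts to a very ample linear system on the smooth locus $X_{\mathrm{reg}}$ (using $H^0(X,\O_X(D+mA))=H^0(X_{\mathrm{reg}},\O_X(D+mA))$ by reflexivity). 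For $s\in V$ put $E_s:=\dv(s)+D+mA$, an effective Weil divisor for which $D-E_s=-mA-\dv(s)$ is Cartier. By Bertini (characteristic zero) a general $s\in V$ has $E_s|_{X_{\mathrm{reg}}}$ smooth and irreducible, hence $E_s$ is a prime divisor.

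The crucial point is a sheaf computation. First, for any $s\in V$, multiplication by $s$ identifies the image of the map $\O_X(-(D+mA))\to\O_X$, $f\mapsto sf$, with the divisorial ideal $\O_X(-E_s)$ (since $\dv(f)\ge D+mA$ iff $\dv(sf)\ge E_s$). Now let $s_1,\dots,s_r$ be a basis of $V$ consisting of \emph{general} sections — possible since the locus of $s\in V$ with $E_s$ prime is dense open — so that each $E_{s_i}$ is a prime divisor with $D-E_{s_i}$ Cartier. As $V$ generates $\O_X(D+mA)$, so does this basis, giving a surjection $\O_X^{\oplus r}\twoheadrightarrow\O_X(D+mA)$; tensoring it with $\O_X(-(D+mA))$ and composing with the multiplication map $\O_X(D+mA)\otimes\O_X(-(D+mA))\to\O_X$, whose image is $\frd(D+mA)=\frd(D)$ by Proposition~\ref{prop:defect}(i), yields
$$
\frd(D)=\sum_{i=1}^r\O_X(-E_{s_i}).
$$
Since each $E_{s_i}$ lies in the family indexing the right-hand side of the proposition, this shows $\frd(D)\subseteq\sum_E\O_X(-E)$, and with the first paragraph proves the equality (the index set being in particular non-empty). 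The delicate step is precisely this packaging — working with a \emph{basis} of general (hence prime) sections rather than the complete, generally non-prime, members of $|V|$, and checking that such a basis still generates $\O_X(D+mA)$, so that the sheaf-level surjection genuinely presents $\frd(D)$ as a finite sum of the ideals $\O_X(-E_{s_i})$.

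For the final assertion I would only sharpen the genericity. Given a joint log-resolution $\p$ of $C$ and $\O_X(D)$, a general member $E_s$ of the base-point-free system $|V|$ contains no fixed proper closed subset of $X$; in particular it does not contain the image in $X$ of any component of $\Exc(\p)$, which is exactly the condition making $Z(\O_X(E_s))_\p$ free of exceptional components, with support the strict transform $\tilde E_s$. Moreover the pull-back of $|V|$ to $X_\p$ is again base-point-free, so by Bertini its general member — which, having no fixed component, is $\tilde E_s$ — is smooth and meets the SNC divisor $\Exc(\p)+C_\p$ transversally; thus $Z(\O_X(E_s))$ is transverse to $\p$ and $C$. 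These too are dense open conditions on $s\in V$, so the basis $s_1,\dots,s_r$ above can be taken to consist of such sections, and the displayed identity then exhibits $\frd(D)$ as a finite sum over transverse prime divisors, as required. (The Bertini and transversality inputs are routine once one observes that the relevant linear systems are base-point-free.)
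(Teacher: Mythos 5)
Your proof is correct and follows essentially the same route as the paper: twist $D$ by a sufficiently positive line bundle to get a globally generated reflexive sheaf, produce prime divisors $E_s$ congruent to $D$ modulo Cartier divisors from general sections $s$ via Bertini, and show these divisorial ideals $\O_X(-E_s)$ generate $\frd(D)$; for the transversality refinement, both arguments pass to the log-resolution $\p$ where the incarnation $M_s$ moves in a base-point-free system. The only notable difference is cosmetic packaging: the paper writes $\frd(D)$ as the sum over \emph{all} $s$ in the Bertini-good open set $U\subset V$ via the computation $L\otimes\frd(D)=\sum_{s\in U}s\cdot\O_X(-D)$, whereas you observe that it suffices to choose a finite \emph{basis} of $V$ inside $U$ and run the same multiplication map through the surjection $\O_X^{\oplus r}\twoheadrightarrow\O_X(D+mA)$, which yields a cleaner finite-sum presentation $\frd(D)=\sum_{i}\O_X(-E_{s_i})$; this is a pleasant sharpening but uses the same Bertini input and the same structural identity $\frd(D)=\frd(D+mA)$.
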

\begin{proof} Observe first that 
$$
\O_X(-E)\subset\O_X(-E)\cdot\O_X(E)=\frd(E)=\frd(D)
$$ 
for all effective Weil divisors $E$ such that $D-E$ is Cartier. 

Since $X$ is quasi-projective there exists a line bundle $L$ on $X$ such that $L\otimes\O_X(D)$ is generated by a finite dimensional vector space of global sections $V$, which we view as rational sections of $L$. For each $s\in V$ set $E_s:=D+\dv(s)$, which is an effective Weil divisor congruent to $D$ modulo Cartier divisors. 

We claim that there exists a (non-empty) Zariski open subset $U$ of $V$ such that 
\begin{equation}\label{equ:frd}
\frd(D)=\sum_{s\in U}\O_X(-E_s)
\end{equation}
and
\begin{itemize}
\item $E_s$ is a prime divisor on $X$,
\item $Z(\O_X(E_s))$ is transverse to $\p$ and $C$,
\end{itemize}
for each $s\in U$, which will conclude the proof of Proposition \ref{prop:defect2}. 

Since $\p$ dominates the blow-up of $\O_X(D)$ it is easily seen that the effective divisors 
$$
M_s:=Z(\O_X(E_s))_\p=Z(\O_X(D))_\p+\p^*\dv(s)
$$
move in a base-point free linear system on $X_\p$ as $s$ moves in $V$. We may thus find a non-empty Zariski open subset $U$ of $V$ such that for each $s\in U$ we have

\begin{itemize}
\item $M_s$ has no common component with $\Exc(\p)+C_\p$,
\item $M_s$ is smooth and irreducible,
\item $M_s+\Exc(\p)+C_\p$ has SNC support,
\end{itemize}
where the last two points follow from Bertini's theorem. Since $\p_*M_s=Z(\O_X(D))_X+\dv(s)=E_s$ by Proposition \ref{prop:envx}, we see in particular that $E_s$ is a prime divisor for each $s\in U$ and $Z(\O_X(E_s))$ is transverse to $\p$ and $C$. There remains to show (\ref{equ:frd}). Observe that 
$$s\cdot\O_X(-D)\subset L\otimes\O_X(-\dv(s))\cdot\O_X(-D)=L\otimes\O_X(-E_s)
$$
for each $s\in V$. Since $V$ generates $L\otimes\O_X(D)$ and $U$ is open in $V$ we obtain
$$
L\otimes\frd(D)=L\otimes\O_X(D)\cdot\O_X(-D)
$$
$$
=\sum_{s\in U} s\cdot\O_X(-D)\subset L\otimes\sum_{s\in U}\O_X(-E_s)
$$
and the result follows since $L$ is invertible. 
\end{proof}

%
%%%%%%%%%%%%%%%%%%%%%%
%

\section{Multiplier ideals and approximation}\label{sec:mult-ideal}
In this section $X$ still denotes a normal variety. Our main goal here is to show how to obtain from Takagi's subadditivity theorem for multiplier ideals of pairs a similar statement for the general multiplier ideals defined in \cite{dFH}. This result will in turn enable us to approximate nef envelopes of Cartier divisors from above by nef Cartier divisors, in the spirit of \cite{BFJ}. 

\subsection{Log-discrepancies}
We shall say that an $\R$-Weil divisor $\D$ on $X$ is an \emph{$\R$-boundary} (resp. a $\Q$-boundary, resp. an $m$-boundary) if $K_X+\D$ is $\R$-Cartier (resp.~$K_X+\D$ is $\Q$-Cartier, resp.~$m(K_X+\D)$ is Cartier). 

Let $\omega$ be a rational top-degree form on $X$ and consider the associated canonical $b$-divisor $K_\X$. Given an $\R$-boundary $\D$ on $X$ we define the \emph{relative canonical $b$-divisor} of $(X,\D)$ by
$$K_{\X/(X,\D)}=K_\X-\overline{K_X+\D},$$
which is independent of the choice of $\omega$. If $E$ is a prime divisor above $X$ then $\ord_E K_{\X/(X,\D)}$ is nothing but the \emph{discrepancy} of the pair $(X,\D)$ along $E$. Following \cite{dFH} we introduce on the other hand:

\begin{defi} The \emph{$m$-limiting relative canonical $b$-divisor} is defined by
$$
K_{m,\X/X}:=K_\X+\tfrac 1m Z(\O_X(-mK_X))
$$
and the \emph{relative canonical $b$-divisor} is
$$
K_{\X/X}=K_\X+\Env_X(-K_X).
$$
\end{defi}

They are both independent of the choice of $\omega$ and are exceptional over $X$ by Proposition \ref{prop:envx}. Note that 
$K_{m,\X/X}\to K_{\X/X}$ coefficient-wise as $m\to\infty$. 

Recall that the \emph{log-discrepancy} of a pair $(X,\D)$ along a prime divisor $E$ above $X$ is defined by adding $1$ to the discrepancy. Let us reformulate this by introducing the 'pseudo $b$-divisor' $1_\X$, i.e.~the homogeneous function on the set of divisorial valuations of $X$ such that 
$$
(t\ord_E)(1_\X)=t
$$
for each divisorial valuation $t\ord_E$, so that $\ord_E(K_{\X/(X,\D)}+1_\X)$ is now equal to the log-discrepancy of $(X,\D)$ along $E$. 
We also consider the reduced exceptional $b$-divisor $1_{\X/X}$, 
which takes value 1 on the prime divisors that are exceptional over $X$, and value zero on the prime divisors contained in $X$. 

The following well-known properties show that $K_\X+1_\X$ is better behaved than $K_\X$. 
\begin{lem}\label{lem:increase} Assume that $X$ is smooth and let $E$ be a reduced SNC divisor on $X$. Then we have $K_\X+1_\X\ge\overline{K_X+E}$. 
\end{lem}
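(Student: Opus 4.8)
The plan is to read the claimed inequality of $\R$-Weil $b$-divisors coefficient-wise and to identify the resulting quantities with the log-discrepancies of the log smooth pair $(X,E)$, so that the statement becomes the assertion that $(X,E)$ is log canonical.

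Since $X$ is smooth, $K_X$ is Cartier and $E$ is an effective Cartier divisor, so $E$ is an $\R$-boundary and the relative canonical $b$-divisor $K_{\X/(X,E)}=K_\X-\overline{K_X+E}$ is defined. For a prime divisor $F$ on a model $\p\colon X_\p\to X$ one has, by definition, $\ord_F(K_\X+1_\X)-\ord_F(\overline{K_X+E})=\ord_F\big(K_{\X/(X,E)}+1_\X\big)$, which is precisely the log-discrepancy of the pair $(X,E)$ along $F$. Since an inequality between $\R$-Weil $b$-divisors over $X$ is equivalent to the corresponding inequality between all coefficients $\ord_F(\cdot)$, $F$ ranging over prime divisors over $X$, the desired inequality $K_\X+1_\X\ge\overline{K_X+E}$ is therefore equivalent to the statement that every log-discrepancy of $(X,E)$ is $\ge 0$, i.e.\ that $(X,E)$ is log canonical.

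I would then conclude by recalling that $(X,E)$ is indeed log canonical when $X$ is smooth and $E$ is reduced with simple normal crossing support. If $F$ is not $\p$-exceptional, it is the strict transform of a prime divisor $F_0\subset X$, and $\p$ is a local isomorphism at the generic point of $F$; hence $\ord_F(K_\X)=\ord_{F_0}(K_X)$, while $\ord_F(\overline{K_X+E})=\ord_{F_0}(K_X)+\ord_{F_0}(E)$ with $\ord_{F_0}(E)\in\{0,1\}$ because $E$ is reduced, so the log-discrepancy along $F$ equals $1-\ord_{F_0}(E)\ge 0$. If $F$ is $\p$-exceptional, the inequality is exactly log-canonicity along exceptional divisors; since $(X,E)$ is already log smooth the identity morphism $X\to X$ is a log resolution of $(X,E)$ with no exceptional divisor, and as the boundary coefficient of $E$ equals $1$ the standard criterion for checking log-canonicity on a single log resolution (cf.\ \cite[Corollary~2.31]{KM}) applies and shows that $(X,E)$ is log canonical.

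The substantive input is precisely the fact that a log smooth pair with reduced boundary is log canonical, and the simple normal crossing hypothesis on $E$ is essential here: the inequality already fails for, say, a cuspidal plane curve $E\subset X=\A^2$. If one prefers to avoid quoting this fact, the $\p$-exceptional case can instead be settled by a direct local computation: working near a point $p\in X$ with coordinates $x_1,\dots,x_n$ such that $E=\{x_1\cdots x_r=0\}$ and taking $dx_1\wedge\cdots\wedge dx_n$ as a local generator of $\O_X(K_X)$, one reduces the inequality for a divisorial valuation $\nu=\ord_F$ centered at $p$ to $\nu(\Jac)+1\ge\sum_{i=1}^r\nu(x_i)$, which follows (since $\nu(x_i)\ge 0$) from the classical estimate $\nu(\Jac)+1\ge\sum_{i=1}^n\nu(x_i)$ obtained by choosing local coordinates on $X_\p$ in which $F=\{y_1=0\}$, expanding the Jacobian determinant, and bounding the $\nu$-order of each of its monomial terms.
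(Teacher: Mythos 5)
Your proof is correct. The main route you take---observing that $\ord_F\bigl(K_\X+1_\X-\overline{K_X+E}\bigr)$ is exactly the log-discrepancy of $F$ for the pair $(X,E)$, and then invoking the standard fact that a log smooth pair with reduced boundary is log canonical (checked on the trivial log resolution $\id_X$)---is a mild repackaging of what the lemma says rather than a genuinely longer road, and it does settle the claim. It trades the paper's hands-on argument for a citation: the paper (following Koll\'ar's Lemma~3.11) reproves this fact from scratch by choosing local coordinates $x_1,\dots,x_n$ near the generic point of $\p(F)$ with $E=\{x_1\cdots x_p=0\}$, writing $\p^*x_i=z^{b_i}u_i$, and bounding the order of $\p^*(dx_1\wedge\cdots\wedge dx_n)$ along $F$ from below by $-1+\sum_i b_i\ge -1+\ord_F\,\overline E$. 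Your ``alternative direct computation'' at the end, namely the estimate $\nu(\Jac)+1\ge\sum_{i=1}^n\nu(x_i)$ followed by dropping the terms with $i>r$, is essentially that same calculation, so you have in fact recovered the paper's argument as well. What your primary route buys is conceptual clarity (the statement is identified as log-canonicity of a log smooth pair); what the paper's route buys is self-containment and the avoidance of citing a criterion for lc pairs that the paper is partly in the business of generalizing.
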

This result is \cite[Lemma 3.11]{Kol}, whose proof we reproduce for the convenience of the reader.
\begin{proof} Let $F$ be a smooth irreducible divisor in some model $\p\colon X_\p\to X$. 
We may choose local coordinates $(x_1,...,x_n)$ near the generic point of $\p(F)$ such that the local equation of $E$ writes $x_1\dots x_p=0$ for some $p=0,...,n$, and we let $z$ be a local equation of $F$ at its generic point. We then have $\p^*x_i=z^{b_i}u_i$ where $u_i$ is a unit at the generic point of $F$ and $b_i\in\N$ vanishes for $i>p$.  It follows that $\p^*dx_i=b_i z^{b_i-1} u_i dz+z^{b_i}du_i$, hence 
$$
\ord_F(K_\X-\p^*K_X)=\ord_F(K_{X_\p/X})
$$
$$
=\ord_F(\p^*(dx_1\wedge...\wedge dx_n))\ge -1+\sum_i b_i=-1+\ord_F\overline E.
$$
\end{proof}

\begin{lem}\label{lem:jac} Let $\phi\colon X\to Y$ be a generically finite dominant morphism between normal varieties. Let $\omega_Y$ be a rational top-degree form on $Y$, $\omega_X$ be its pull-back to $X$ and $K_\Y$, $K_\X$ be the associated canonical $b$-divisors. Then we have 
$$
K_\X+1_\X=\phi^*(K_\Y+1_\Y).
$$
\end{lem}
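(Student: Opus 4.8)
The plan is to verify the identity coefficient-wise. By the characterization $\ord_E(\phi^*W)=(\phi_*\ord_E)(W)$ of the pull-back on Weil $b$-divisors, it suffices to show, for every prime divisor $E$ over $X$, that
$$
\ord_E(K_\X)+1=(\phi_*\ord_E)(K_\Y+1_\Y).
$$
By Lemma~\ref{lem:surjdiv} the valuation $\phi_*\ord_E$ is divisorial over $Y$, so $\phi_*\ord_E=c\cdot\ord_F$ for some prime divisor $F$ over $Y$ and some $c\in\R_+^*$; since $1_\Y$ takes the value $t$ on $t\ord_F$, the right-hand side equals $c\,(\ord_F(K_\Y)+1)$. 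Thus the content of the lemma is the identity $\ord_E(K_\X)+1=c\,(\ord_F(K_\Y)+1)$.

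First I would place $E$ and $F$ on a common geometric footing. Pick a model $Y'$ of $Y$ carrying $F$ and a model $X'$ of $X$ carrying $E$; after resolving the indeterminacy of the induced rational map $X'\dashrightarrow Y'$ and then resolving singularities of both models (which does not affect $\ord_E$ or $\ord_F$), I may assume that $X'$ and $Y'$ are smooth and that $\phi$ lifts to a morphism $\phi'\colon X'\to Y'$. The center of $\phi_*\ord_E=c\,\ord_F$ on $Y'$ is the generic point $\eta_F$ of $F$, and this center is $\phi'(\eta_E)$; hence $\phi'$ maps $E$ dominantly onto $F$ and induces an extension of discrete valuation rings $\O_{Y',\eta_F}\hookrightarrow\O_{X',\eta_E}$, which is tamely ramified since the residue extension is separable in characteristic zero, with some ramification index $e_E$. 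Evaluating $\phi_*\ord_E=c\,\ord_F$ on a local equation of $F$ at $\eta_F$ yields $c=\ord_E(\phi'^*f)=e_E$, so the scalar $c$ is exactly this ramification index.

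The last step is the ramification formula for the dominant morphism $\phi'$ of smooth varieties. Since $\omega_X=\phi'^*\omega_Y$ on $X'$, one has $\divisor_{X'}(\phi'^*\omega_Y)=\phi'^*\divisor_{Y'}(\omega_Y)+K_{X'/Y'}$; tameness gives $\ord_E(K_{X'/Y'})=e_E-1$ (the local piece of $K_{X'/Y'}$ at $\eta_E$ being the different of the extension), while $\ord_E(\phi'^*\divisor_{Y'}(\omega_Y))=e_E\cdot\ord_F(\divisor_{Y'}(\omega_Y))$ because near $\eta_E$ the pulled-back divisor only involves $F$. Using $\ord_E(K_\X)=\ord_E(\divisor_{X'}(\phi'^*\omega_Y))$ and $\ord_F(K_\Y)=\ord_F(\divisor_{Y'}(\omega_Y))$, we obtain
$$
\ord_E(K_\X)+1=c\,\ord_F(K_\Y)+(e_E-1)+1=c\,(\ord_F(K_\Y)+1),
$$
which is precisely what was required, and the proof is complete once this holds for every $E$. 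The only real friction is in the middle step: arranging the models so that $E$ dominates $F$ and matching the scalar $c$ in $\phi_*\ord_E=c\,\ord_F$ with the ramification index $e_E$, together with the appeal to tameness that upgrades the a priori inequality $\ord_E(K_{X'/Y'})\ge e_E-1$ to an equality. Everything else is the classical ramification formula applied at the generic point of $E$.
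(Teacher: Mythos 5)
Your proposal is correct and follows essentially the same line as the paper's own proof: choose smooth models $X'$ and $Y'$ with a lifted morphism $\phi'\colon X'\to Y'$, identify the scalar in $\phi_*\ord_E=c\,\ord_F$ with the ramification index $b=\ord_E(\phi'^*F)$, and compute $\ord_E(K_{X'}-\phi'^*K_{Y'})=b-1$ to conclude. The only cosmetic difference is that you phrase the last step in terms of the different of a tame extension of DVRs, whereas the paper invokes the same local Jacobian calculation already carried out in the proof of Lemma~\ref{lem:increase}; the two are the same computation.
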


\begin{proof} 
Let $F$ be a prime divisor on a smooth model $Y'$ over $Y$, and pick a smooth model $X'$ over $X$ such that $\phi$ lifts to a morphism $\phi'\colon X'\to Y'$. 
The model $X'$ can be constructed by taking a desingularization of the graph of the rational
map $X \rat Y'$. 
Let $E$ be a prime divisor on $X'$ with $\phi'(E)=F$. We then have $\phi_*\ord_E=b\ord_F$ with $b:=\ord_E(\phi'^*F)$. The same computation as above shows that the ramification order of $\phi'$ at the generic point of $E$ is equal to $b-1$, so that we have 
$$\ord_E(K_{X'}-(\phi')^*K_{Y'})=b-1.$$
It follows that
$$\ord_E(K_{X'})=b\ord_F(K_{Y'})+b-1,$$
i.e.
$$\ord_E(K_\X+1_\X)=(b\ord_F)(K_\Y+1_\Y)$$
as was to be shown. 
\end{proof}

\begin{defi} The \emph{$m$-limiting log-discrepancy $b$-divisor} $A_{m,\X/X}$ and the \emph{log-discrepancy $b$-divisor} $A_{\X/X}$ are the Weil $b$-divisors defined by
$$A_{m,\X/X}:=K_{m,\X/X}+1_{\X/X}$$
and
$$A_{\X/X}:=K_{\X/X}+1_{\X/X}.$$
\end{defi}
Note that $\lim_{m\to\infty}A_{m,\X/X}=A_{\X/X}$ coefficient-wise.  

If $\phi\colon X\to Y$ is a finite dominant morphism recall that the \emph{ramification divisor} $R_\phi$ is the effective Weil divisor on $X$ such that 
$$
K_X=\phi^*K_Y+R_\phi,
$$
where $K_Y$ and $K_X$ are defined by $\omega_Y$ and $\phi^*\omega_Y$ respectively, the divisor $R_\phi$ being again independent of the choice of $\omega_Y$. 
 
\begin{cor}\label{cor:jacrel} Let $\phi:X\to Y$ be a finite dominant morphism between normal varieties. Then we have 
$$
0\le\Env_X(R_\phi)\le\phi^*A_{\Y/Y}-A_{\X/X}\le -\Env_X(-R_\phi)
$$
and the second (resp. third) inequality is an equality when $X$ (resp.~$Y$) is numerically Gorenstein. 
\end{cor}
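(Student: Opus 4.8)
The plan is to rewrite $\phi^*A_{\Y/Y}-A_{\X/X}$ in terms of $\Env_X(-K_X)$ and $\Env_X(-K_X+R_\phi)$ and then to bracket the result between $\Env_X(R_\phi)$ and $-\Env_X(-R_\phi)$ using the superadditivity of nef envelopes, reading off the equality cases from the additivity criterion for numerically Cartier divisors. Concretely, write $A_{\Y/Y}=K_\Y+\Env_Y(-K_Y)+1_{\Y/Y}$ and $A_{\X/X}=K_\X+\Env_X(-K_X)+1_{\X/X}$. Lemma~\ref{lem:jac} gives $\phi^*K_\Y-K_\X=1_\X-\phi^*1_\Y$, while Proposition~\ref{prop:pull-env} together with the ramification formula $K_X=\phi^*K_Y+R_\phi$ gives $\phi^*\Env_Y(-K_Y)=\Env_X(-\phi^*K_Y)=\Env_X(-K_X+R_\phi)$. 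Substituting both and collecting the remaining reduced-exceptional pieces $1_\X,\phi^*1_\Y,1_{\X/X},\phi^*1_{\Y/Y}$ -- whose coefficients are all computed from the ramification indices of $\phi$ via $\ord_E(\phi^*W)=(\phi_*\ord_E)(W)$ -- one identifies $\phi^*A_{\Y/Y}-A_{\X/X}$ with $\Env_X(-K_X+R_\phi)-\Env_X(-K_X)$ up to a correction $b$-divisor supported over the ramification locus of $\phi$.

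The two inequalities are then immediate from Proposition~\ref{prop:concave}. Superadditivity of $\Env_X$ applied to $-K_X+R_\phi=(-K_X)+R_\phi$ gives $\Env_X(-K_X+R_\phi)-\Env_X(-K_X)\ge\Env_X(R_\phi)\ge0$, the last inequality holding since $0$ is an $X$-nef $b$-divisor with trace $0\le R_\phi$. Superadditivity applied instead to $-K_X=(-K_X+R_\phi)+(-R_\phi)$ gives $\Env_X(-K_X)\ge\Env_X(-K_X+R_\phi)+\Env_X(-R_\phi)$, hence $\Env_X(-K_X+R_\phi)-\Env_X(-K_X)\le-\Env_X(-R_\phi)$.

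For the equality statements I would use Lemma~\ref{lem:additive}. If $X$ is numerically Gorenstein then $-K_X$ is numerically Cartier, so $\Env_X((-K_X)+R_\phi)=\Env_X(-K_X)+\Env_X(R_\phi)$ and the first superadditivity above is an equality, giving equality in the second inequality of the statement. If instead $Y$ is numerically Gorenstein then $K_Y$ is numerically Cartier, so $\Env_Y(-K_Y)=-\Env_Y(K_Y)$; pulling this back along $\phi$ via Proposition~\ref{prop:pull-env} gives $\Env_X(-\phi^*K_Y)=-\Env_X(\phi^*K_Y)$, i.e. $\phi^*K_Y=K_X-R_\phi$ is numerically Cartier on $X$. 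Applying Lemma~\ref{lem:additive} to $-K_X=(-\phi^*K_Y)+(-R_\phi)$ then makes the second superadditivity an equality, giving equality in the third inequality of the statement.

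The step I expect to be the main obstacle is the initial identification: one has to be careful about the action of $\phi^*$ on non-Cartier Weil $b$-divisors and about the interplay between the full reduced $b$-divisors $1_\X,1_\Y$ appearing in Lemma~\ref{lem:jac} and the relative ones $1_{\X/X},1_{\Y/Y}$ in the definition of $A_{\X/X},A_{\Y/Y}$, in order to pin down the ramification correction precisely and confirm its compatibility with the two-sided bound. Once that is in place, everything is formal, resting on superadditivity of envelopes (Proposition~\ref{prop:concave}), their functoriality under finite morphisms (Proposition~\ref{prop:pull-env}), and the additivity criterion of Lemma~\ref{lem:additive}.
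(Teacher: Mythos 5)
Your approach is the paper's own: expand $A_{\X/X}$ and $A_{\Y/Y}$ from their definitions, cancel the canonical parts using Lemma~\ref{lem:jac}, transfer $\phi^*\Env_Y$ to an envelope on $X$ via Proposition~\ref{prop:pull-env}, and conclude by superadditivity (Proposition~\ref{prop:concave}) and the additivity criterion of Lemma~\ref{lem:additive}; your handling of the two equality cases is also the intended one.

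The ``obstacle'' you flag is genuine, and in fact the paper's own proof and the stated Corollary suffer from it. Carrying out the bookkeeping yields
$$
\phi^*A_{\Y/Y}-A_{\X/X}=\Env_X(-\phi^*K_Y)-\Env_X(-K_X)-R_\phi^{\mathrm{str}},
$$
where $R_\phi^{\mathrm{str}}$ denotes the Weil $b$-divisor whose trace on every model is the strict transform of $R_\phi$: after substituting $\phi^*K_\Y-K_\X=1_\X-\phi^*1_\Y$ from Lemma~\ref{lem:jac}, the surviving piece $(1_\X-1_{\X/X})-\phi^*(1_\Y-1_{\Y/Y})$ equals $1-b$ on a prime divisor $E\subset X$ of ramification index $b$ and vanishes on all divisors exceptional over $X$. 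The first displayed line of the paper's proof silently drops this $-R_\phi^{\mathrm{str}}$.

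This is not harmless. Since $\phi$ is finite, both $A_{\X/X}$ and $\phi^*A_{\Y/Y}$ are exceptional over $X$, so $\phi^*A_{\Y/Y}-A_{\X/X}$ has trace $0$ on $X$, whereas $\Env_X(R_\phi)$ has trace $R_\phi$ by Proposition~\ref{prop:envx}. Read on prime divisors of $X$, the middle inequality of the Corollary would therefore force $R_\phi\le 0$, so the statement as written fails whenever $\phi$ ramifies in codimension one. The two-sided bound does hold on every divisor exceptional over $X$ (where $R_\phi^{\mathrm{str}}=0$), which is all the paper's applications use, since Theorem~\ref{thm:volume} and Theorem~\ref{thm:klt} only evaluate at valuations centered at the singular point. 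So you should not try to ``confirm compatibility'' of the correction with the inequality as written -- it is incompatible on $X$ -- but rather either restrict the inequalities to exceptional valuations or add $R_\phi^{\mathrm{str}}$ to the middle term. With that amendment your sketch becomes a complete proof, and also a correction of the paper's.
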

\begin{proof} Since $\f$ is finite, we have
$$
\phi^*A_{\Y/Y}-A_{\X/X} = \phi^*(K_{\Y/Y}+1_\Y)-(K_{\X/X}+1_\X)
$$
$$=\phi^*\Env_Y(-K_Y)-\Env_X(-K_X)=\Env_X(-\phi^*K_Y)-\Env_X(-K_X)
$$
by Lemma \ref{lem:jac} and Proposition \ref{prop:pull-env}. Now we have on the one hand 
$$
\Env_X(-\phi^*K_Y)=\Env_X(-K_X+R_\phi)\ge\Env_X(-K_X)+\Env_X(R_\phi)
$$
and this is an equality when $X$ is numerically Gorenstein by Lemma \ref{lem:additive}. On the other hand
$$
\Env_X(-K_X)=\Env_X(-\phi^*K_Y-R_\phi)\ge\Env_X(-\phi^*K_Y)+\Env_X(-R_\phi)$$
which is an equality if $Y$ is numerically Gorenstein by Proposition \ref{prop:pull-env} and Lemma \ref{lem:additive}. The result follows, noting that $\Env(R_\phi)\ge 0$ since $R_\phi\ge 0$. 
\end{proof}

\subsection{Multiplier ideals} 
The following definition is a straightforward extension of the usual notion of multiplier ideal with respect to a pair. 

\begin{defi} Let $\D$ be an effective $\R$-boundary on $X$ and let $C$ be an $\R$-Cartier $b$-divisor. We define the \emph{multiplier ideal sheaf} of $C$ with respect to $(X,\D)$ as the fractional ideal sheaf
$$
\J((X,\D);C):=\O_X\left(\lru K_{\X/(X,\D)}+C\rru\right).
$$
\end{defi}

We have in particular
$$
\J((X,\D);C)\subset\O_X(\lru C_X-\D_X\rru),
$$ 
which shows that the (fractional) multiplier ideal is an actual ideal as soon as $C_X\le 0$. By Lemma \ref{lem:increase} we have 
$$
\J((X,\D);C)=\p_*\O_{X_\p}\left(\lru K_{X_\p}-\p^*(K_X+\D)+C_\p)\rru\right)
$$
for each joint log-resolution $\p$ of $(X,\D)$ and $C$. This shows in particular that $\J((X,\D);C)$ is coherent, and in case $C=Z(\fra^c)$ for a coherent ideal sheaf $\fra$ and $c>0$ we recover
$$
\J((X,\D);Z(\fra^c))=\J((X,\D);\fra^c)
$$
where the right-hand side is defined in \cite[Definition 9.3.56]{Laz1}. 

We similarly introduce the following straightforward generalization of the notion of multiplier ideal defined in \cite{dFH}:

\begin{defi} Let $C$ be an $\R$-Cartier $b$-divisor over $X$. 
\begin{itemize}
\item For each positive integer $m$ the \emph{$m$-limiting multiplier ideal sheaf} of $C$ is the fractional ideal sheaf
$$
\J_m(C) :=\O_X\left(\lru K_{m,\X/X}+ C\rru\right).
$$
\item The \emph{multiplier ideal sheaf}  $\J(C)$ is the unique maximal element in the family of fractional ideal sheaves $\J_m(C)$, $m\ge 1$. 
\end{itemize}
\end{defi}

Here again Lemma \ref{lem:increase} implies that 
 $$
\J_m(C) =\p_*\O_{X_\p}\left(\lru K_{X_\p} + \tfrac1m \,Z (\O_X(-mK_X))_\p+ C_\p\rru\right)
$$
for each joint log-resolution $\p$ of $\O_X(-mK_X)$ and $C$, which shows in particular that $\J_m(C)$ is coherent. We also have 
$$
\J_m(C)\subset\O_X(\lru C_X\rru),
$$ 
which implies the existence of a unique maximal element in the set of fractional ideals $\{\J_m(C),\,m\ge 1\}$, by using as usual
$$
\tfrac{1}{lm} \,Z (\O_X(-lmK_X))\ge\max\left(\tfrac1m \,Z (\O_X(-mK_X)),\tfrac1l \,Z (\O_X(-lK_X))\right).
$$

As in \cite{dFH} we now relate the above notions of multiplier ideals, obtaining in particular a more precise version of~\cite[Theorem 5.4]{dFH}.

\begin{thm}\label{thm:compat} Assume that $X$ is quasi-projective, let $C$ be an $\R$-Cartier $b$-divisor and let $m\ge 2$. Then we have 
$$\frd(mK_X)=\sum_\D\O_X(-m\D)$$
where $\D$ ranges over the set of all effective $m$-boundaries such that 
$$
\J_m(C)=\J((X,\D);C),
$$
(so that this set is in particular non-empty).
\end{thm}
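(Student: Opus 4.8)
The plan is to deduce the statement from Proposition~\ref{prop:defect2} applied to the Weil divisor $-mK_X$, supplemented by a comparison of multiplier ideals for the ``general'' compatible boundaries that it produces.

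The inclusion $\sum_\D\O_X(-m\D)\subset\frd(mK_X)$ (over the subset appearing in the statement, or even over \emph{all} effective $m$-boundaries $\D$) is formal. If $\D\ge0$ and $m(K_X+\D)$ is Cartier then $m\D\equiv-mK_X$ modulo Cartier divisors, so $\frd(m\D)=\frd(-mK_X)=\frd(mK_X)$ by Proposition~\ref{prop:defect}~(i) and the symmetry $\frd(D)=\frd(-D)$, while $\O_X\subset\O_X(m\D)$ since $\D\ge0$; hence $\O_X(-m\D)\subset\O_X(m\D)\cdot\O_X(-m\D)=\frd(m\D)=\frd(mK_X)$.

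For the reverse inclusion I would apply Proposition~\ref{prop:defect2} to $D:=-mK_X$ relative to $C$ and a fixed joint log-resolution $\p$ of $C$, $\O_X(-mK_X)$ and $\O_X(mK_X)$, obtaining
$$
\frd(mK_X)=\frd(-mK_X)=\sum_E\O_X(-E),
$$
the sum being over the non-empty set of prime divisors $E$ on $X$ with $mK_X+E$ Cartier and $Z(\O_X(E))$ transverse to $\p$ and $C$. For each such $E$ the divisor $\D:=\tfrac1mE$ is an effective $m$-boundary with $\O_X(-m\D)=\O_X(-E)$, so it is enough to prove $\J_m(C)=\J((X,\D);C)$ for every such $\D$; granting this, $\frd(mK_X)=\sum_E\O_X(-E)$ lies in $\sum_\D\O_X(-m\D)$ taken over the set in the statement, which is then non-empty.

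Proving $\J_m(C)=\J((X,\D);C)$ for these general compatible boundaries is the main obstacle. Following the proof of Proposition~\ref{prop:defect2}, $E=E_s=-mK_X+\dv(s)$ for a general section $s$ of a suitable line bundle, so that $m(K_X+\D)=\dv(s)$, the model $\p$ is also a log-resolution of $(X,\D)$, and $M:=Z(\O_X(E))_\p$ is a smooth prime divisor transverse to $\Exc(\p)+C_\p$. From $\O_X(E)=\O_X(-mK_X)\cdot\O_X(\dv(s))$ (tensoring the reflexive sheaf $\O_X(-mK_X)$ by the invertible sheaf $\O_X(\dv(s))$), Lemma~\ref{lem:Zfrac}, and $\overline{K_X+\D}=\tfrac1m\overline{\dv(s)}=\tfrac1mZ(\O_X(\dv(s)))$, one obtains the $b$-divisor identity $K_{m,\X/X}+C=(K_{\X/(X,\D)}+C)+\tfrac1mZ(\O_X(E))$. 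By Lemma~\ref{lem:increase} both multiplier ideals are computed on $\p$, so
$$
\J((X,\D);C)=\p_*\O_{X_\p}\!\left(\left\lceil(K_{m,\X/X}+C)_\p-\tfrac1mM\right\rceil\right),\qquad \J_m(C)=\p_*\O_{X_\p}\!\left(\left\lceil(K_{m,\X/X}+C)_\p\right\rceil\right).
$$
Since $M$ is general it has coefficient $0$ in the fixed divisor $(K_{m,\X/X}+C)_\p=K_{X_\p}+\tfrac1mZ(\O_X(-mK_X))_\p+C_\p$ (it lies in the support of none of $K_{X_\p}$, $C_\p$, $Z(\O_X(-mK_X))_\p$), and $\lceil-\tfrac1m\rceil=0$ precisely because $m\ge2$; hence the two round-ups coincide and $\J((X,\D);C)=\J_m(C)$. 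The genuine work here is the Bertini bookkeeping underlying the generic choice of $s$ — that $M$ is prime, occurs with multiplicity $1$ in $\p^*\dv(s)$ (hence with coefficient $0$ in $Z(\O_X(-mK_X))_\p=M-\p^*\dv(s)$), and avoids the supports of the relevant fixed divisors — and the hypothesis $m\ge2$ is essential: for $m=1$ the coefficient $-\tfrac1m=-1$ rounds up to $-1$ rather than $0$, so $M$ survives in the round-up and $\J((X,\D);C)$ is in general strictly smaller than $\J_1(C)$.
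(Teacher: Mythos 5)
Your proof is correct and follows essentially the same route as the paper's: apply Proposition~\ref{prop:defect2} to $-mK_X$, set $\D=\tfrac1m E$, and verify $\J_m(C)=\J((X,\D);C)$ by decomposing $K_{m,\X/X}+C=(K_{\X/(X,\D)}+C)+\tfrac1mZ(\O_X(E))$ and comparing round-ups, with $m\ge2$ exactly ensuring the $-\tfrac1m M$ term is absorbed; the paper isolates this last step as Lemma~\ref{lem:compat}, which you re-derive inline. The one small difference is that you justify the vanishing of the $M$-coefficient in $(K_{m,\X/X}+C)_\p$ by arguing that $M$ avoids $\Supp K_{X_\p}$ and $\Supp Z(\O_X(-mK_X))_\p$ separately --- which holds only for generic $E_s$ (one must also arrange $E_s\not\subset\Supp K_X$), but is harmless since the proof of Proposition~\ref{prop:defect2} shows the generic $E_s$'s already exhaust $\frd(mK_X)$ --- whereas the paper's Lemma~\ref{lem:compat} instead uses the cleaner observation that $K_{X_\p}+\tfrac1m Z(\O_X(-mK_X))_\p$ is $\p$-exceptional, so that any transverse non-exceptional prime $M$ automatically has zero coefficient there.
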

\begin{proof} Let $\p$ be a joint log-resolution of $\fra$ and $\O_X(-mK_X)$. By Proposition \ref{prop:defect2} applied to $-mK_X$ we have 
$$
\frd(mK_X)=\sum_E\O_X(-E)
$$
where $E$ ranges over all prime divisors such that $mK_X+E$ is Cartier and $Z(\O_X(E))$ is transverse to $\p$ and $C$. There remains to set $\D:=\tfrac 1m E$ and to observe that $\lfloor\D\rfloor=0$, so that $\J_m(C)=\J((X,\D);C)$ by Lemma \ref{lem:compat} below.
\end{proof}

\begin{lem}\label{lem:compat} Let $C$ be an $\R$-Cartier $b$-divisor, let $\p$ be a joint log-resolution of $C$ and $\O_X(-mK_X)$ and let $\D$ be an effective $m$-boundary. 
\begin{itemize}
\item We have 
$$
\J((X,\D);C)\subset\J_m(C).
$$
\item If $\lfloor\D\rfloor=0$ and $Z(\O_X(m\D))$ is transverse to $\p$ and $C$ then
$$
\J((X,\D);C)=\J_m(C).
$$
\end{itemize}
\end{lem}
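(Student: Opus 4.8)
The plan is to derive both inclusions from a single $b$-divisor identity, namely
$$
K_{m,\X/X}=K_{\X/(X,\D)}+\tfrac1m Z(\O_X(m\D)),
$$
valid for any effective $m$-boundary $\D$. To get it, observe that $-mK_X+m(K_X+\D)=m\D$ with $m(K_X+\D)$ Cartier, so the elementary rule $\O_X(A)\cdot\O_X(B)=\O_X(A+B)$ for $B$ Cartier yields $\O_X(m\D)=\O_X(-mK_X)\cdot\O_X(m(K_X+\D))$; applying $Z(-)$, using $Z(\fra\cdot\frb)=Z(\fra)+Z(\frb)$ from Lemma~\ref{lem:Zfrac} and $Z(\O_X(E))=\overline E$ for $E$ Cartier, one obtains $\tfrac1m Z(\O_X(-mK_X))=\tfrac1m Z(\O_X(m\D))-\overline{K_X+\D}$, and adding $K_\X$ gives the identity. (In particular $m\D=m(K_X+\D)-mK_X$ is an integral divisor.) Since $\D\ge0$ forces $\O_X\subset\O_X(m\D)$ and hence $Z(\O_X(m\D))\ge0$, the identity immediately gives $K_{\X/(X,\D)}\le K_{m,\X/X}$ coefficient-wise; rounding up and applying $\O_X(-)$, both monotone, this proves the first bullet $\J((X,\D);C)\subset\J_m(C)$.

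For the second bullet I would compute both ideals on the model $\p$. The transversality hypothesis says exactly that $\p$ is a log-resolution of $C+Z(\O_X(m\D))$, and since the strict transform of $\D$ is supported inside $Z(\O_X(m\D))_\p$ the morphism $\p$ is in particular a joint log-resolution of $(X,\D)$ and $C$; therefore $\J((X,\D);C)=\p_*\O_{X_\p}(\lceil\Gamma\rceil)$ with $\Gamma:=K_{X_\p}-\p^*(K_X+\D)+C_\p$, while, reading the identity above on $X_\p$ and using the description of $\J_m$ on a joint log-resolution of $\O_X(-mK_X)$ and $C$, we get $\J_m(C)=\p_*\O_{X_\p}(\lceil\Gamma+\tfrac1m F\rceil)$ with $F:=Z(\O_X(m\D))_\p\ge0$. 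It then suffices to check $\lceil\Gamma+\tfrac1m F\rceil=\lceil\Gamma\rceil$, which is obvious away from $\Supp F$. If $E'$ is a component of $F$, transversality forces $E'$ to be neither $\p$-exceptional nor a component of $C_\p$; pushing forward and using $\p_*F=m\D$, the image $\p(E')$ is a component of $\D$ with coefficient $\d'$, which lies in $(0,1)$ since $\lfloor\D\rfloor=0$, and $\ord_{E'}F=m\d'$. As $E'$ is not $\p$-exceptional we have $\ord_{E'}K_{X_\p}=\ord_{\p(E')}K_X$ and $\ord_{E'}\p^*(K_X+\D)=\ord_{\p(E')}(K_X+\D)$, so $\ord_{E'}\Gamma=-\d'$ and $\ord_{E'}(\Gamma+\tfrac1m F)=0$; since $-\d'\in(-1,0)$, both round up to $0$ along $E'$. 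Hence the two ceilings coincide and $\J_m(C)=\J((X,\D);C)$.

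The delicate step is the second bullet, and within it the verification that the transversality hypothesis truly does all the work: it must rule out the $\p$-exceptional components of $Z(\O_X(m\D))_\p$ — those encode the failure of $mK_X$ to be Cartier and do contribute a genuine extra term in the general situation of the first bullet — and it must ensure, via $\p_*Z(\O_X(m\D))_\p=m\D$, that each surviving horizontal component of $F$ exactly cancels the fractional part of $\Gamma$ that it meets. Everything else is formal bookkeeping with orders of vanishing.
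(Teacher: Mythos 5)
Your proof is correct and follows essentially the same route as the paper: both bullets hinge on the identity $\tfrac1m Z(\O_X(-mK_X)) = \tfrac1m Z(\O_X(m\D)) - \overline{K_X+\D}$ (the paper's equation \eqref{equ:mbound}), with the first bullet coming from $Z(\O_X(m\D))\ge 0$ and the second from a coefficient-wise comparison on $X_\p$ using transversality. The only cosmetic difference is that the paper phrases the ceiling comparison via the observation that $\tfrac1m Z(\O_X(m\D))_\p=\widehat\D_\p$ has no common component with $K_{m,X_\p/X}+C_\p$ and satisfies $\lfloor\widehat\D_\p\rfloor=0$, whereas you spell out the arithmetic $\ord_{E'}\Gamma=-\d'\in(-1,0)$ on each surviving component $E'$; the two calculations are equivalent.
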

\begin{proof} Since $m(K_X+\D)$ is Cartier we have 
$$
\O_X(-mK_X))=\O_X(m\D)\cdot\O_X(-m(K_X+\D))
$$
hence
\begin{equation}\label{equ:mbound}
\tfrac 1m Z(\O_X(-mK_X))=\tfrac 1m Z(\O_X(m\D))-\overline{K_X+\D}
\end{equation}
and the first point follows because $Z(\O_X(m\D))\ge 0$. 

Assume now that $\lfloor\D\rfloor=0$ and that $Z(\O_X(m\D))$ is transverse to $\p$ and $C$. By (\ref{equ:mbound}) we have 
$$
\lru K_{X_\p}-\p^*(K_X+\D)+ C_\p\rru=\lru K_{X_\p}+\tfrac1m \,Z (\O_X(-mK_X))_\p+ C_\p\rru-\lfloor \tfrac 1m Z(\O_X(m\D))_\p\rfloor.
$$
Indeed, by the transversality assumption $\tfrac 1m Z(\O_X(m\D)_\p$ has no common component with $C_\p$ and no common component with $K_{X_\p}+\frac 1m Z(\O_X(-mK_X))_\p$, the latter being $\p$-exceptional by Proposition \ref{prop:envx}. But by transversality we also have $\tfrac 1m Z(\O_X(m\D))_\p=\widehat\D_\p$, the strict transform of $\D$ on $X_\p$, and the result follows since $\lrd\widehat\D_\p\rrd=0$.
\end{proof}

As a consequence we get the following extension of \cite[Corollary 5.5]{dFH} to $b$-divisors. 
\begin{cor}\label{cor:classical}
Let $X$ be a normal quasi-projective variety and let $C$ be an $\R$-Cartier $b$-divisor. 
\begin{itemize}
\item The $m$-limiting multiplier ideal $\J_m(C)$ is the largest element of the set of multiplier ideals $\J((X,\D);C)$ where $\D$ ranges over all effective $m$-boundaries on $X$.
\item The multiplier ideal $\J(C)$ is the largest element of the set of multiplier ideals $\J((X,\D);C)$ where $\D$ ranges over all effective $\Q$-boundaries on $X$.
\end{itemize}
\end{cor}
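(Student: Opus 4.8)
The plan is to read off both assertions from the conjunction of Theorem~\ref{thm:compat} and the first point of Lemma~\ref{lem:compat}. Together these say that for each $m\ge 2$ the ideal $\J_m(C)$ both dominates every multiplier ideal $\J((X,\D);C)$ with $\D$ an effective $m$-boundary, and is itself realized as such a multiplier ideal; so $\J_m(C)$ is the maximum of that family, which is the first bullet. The second bullet then comes from letting $m$ vary, using that $\J(C)$ is by construction the top of the family $\{\J_m(C)\}_{m\ge 1}$.

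In detail, for the first statement I would fix $m\ge 2$ (the range covered by Theorem~\ref{thm:compat}). The first point of Lemma~\ref{lem:compat} gives $\J((X,\D);C)\subset\J_m(C)$ for \emph{every} effective $m$-boundary $\D$, so $\J_m(C)$ is an upper bound for the family. By Theorem~\ref{thm:compat} there exists at least one effective $m$-boundary $\D$ with $\J((X,\D);C)=\J_m(C)$, so this upper bound lies in the family. Hence $\J_m(C)$ is its largest element.

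For the second statement, recall that $\J(C)$ is defined as the unique maximal member of $\{\J_m(C):m\ge 1\}$, and that the inequality for $Z(\O_X(-mK_X))$ recalled in its construction yields in particular $\J_m(C)\subset\J_{lm}(C)$, and $\J_1(C)\subset\J_2(C)$. Consequently $\J(C)=\J_{m_0}(C)$ for some $m_0\ge 2$, and applying the first statement to $m_0$ furnishes an effective $m_0$-boundary --- in particular an effective $\Q$-boundary --- $\D$ with $\J((X,\D);C)=\J(C)$. Conversely, any effective $\Q$-boundary $\D'$ is an effective $m$-boundary for a suitable $m$, so the first point of Lemma~\ref{lem:compat} gives $\J((X,\D');C)\subset\J_m(C)\subset\J(C)$, the last inclusion by maximality of $\J(C)$. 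Hence $\J(C)$ is the largest element of the family of multiplier ideals $\J((X,\D');C)$ with $\D'$ an effective $\Q$-boundary.

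The substantive inputs --- the description of $\frd(mK_X)$ in Proposition~\ref{prop:defect2}, its translation into the existence of good compatible boundaries in Theorem~\ref{thm:compat}, and the inclusion in Lemma~\ref{lem:compat} --- are already in place, so I do not anticipate any real obstacle. The only bookkeeping left is the passage from a fixed $m$ to the supremum over $m$ in the second bullet, for which one needs the monotonicity $\J_m(C)\subset\J_{lm}(C)$ and the observation that the maximal $\J_m(C)$ may always be taken with $m\ge 2$ so that Theorem~\ref{thm:compat} applies.
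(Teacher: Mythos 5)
Your argument is correct and matches the derivation that the paper intends, since the corollary is stated without a separate proof precisely because it falls out of Theorem~\ref{thm:compat} and Lemma~\ref{lem:compat} exactly as you describe: the first point of Lemma~\ref{lem:compat} gives the upper bound $\J((X,\D);C)\subset\J_m(C)$ for all effective $m$-boundaries $\D$, Theorem~\ref{thm:compat} shows this bound is attained for $m\ge 2$, and the second bullet follows by taking $m_0\ge 2$ with $\J(C)=\J_{m_0}(C)$ (using $\J_m(C)\subset\J_{lm}(C)$ to bump $m_0=1$ up to $2$) together with the observation that every effective $\Q$-boundary is an $m$-boundary for suitable $m$. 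You were also right to flag that the first bullet, as you prove it, requires $m\ge 2$, since Theorem~\ref{thm:compat} needs to produce a prime-divisor boundary $\D=\tfrac1mE$ with $\lfloor\D\rfloor=0$; for $m=1$ this fails unless $K_X$ is itself Cartier, so the paper's first bullet is really meant for $m\ge 2$ (or for trivial reasons when $K_X$ is Cartier).
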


We will need the following variant of Lemma~\ref{lem:compat}.

\begin{cor}\label{lem:compat-var}
With the same assumption as in Lemma~\ref{lem:compat}, if $m \ge 3$ then  
we can find an effective $m$-compatible 
boundary $\D$ such that 
$$
\J\left((X,\D); C + \tfrac 1m Z(\O_X(-m\D))\right) = 
\J_m\left(C + \tfrac 1m Z(\frd(mK_X))\right).
$$
\end{cor}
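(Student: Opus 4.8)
The plan is to produce $\D$ by hand and then to compare the two multiplier ideals on one fixed model, reducing the claim to an elementary rounding estimate in which $m\ge 3$ is exactly the threshold.

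First I would apply Theorem~\ref{thm:compat} to the Cartier $b$-divisor $-mK_X$ (equivalently, run the Bertini construction behind Proposition~\ref{prop:defect2}): this yields a prime divisor $E$ on $X$ with $mK_X+E$ Cartier and with $Z(\O_X(E))$ transverse to $\p$ and $C$, where $\p$ may moreover be chosen to be a joint log-resolution of $C$, $\O_X(-mK_X)$, $\frd(mK_X)$, $Z(\O_X(E))$ and $Z(\O_X(-E))$. Taking $E$ general I would also arrange that its strict transform $\widehat E$ lies in none of $\Supp(K_{X_\p})$, $\Supp(C_\p)$, $\Supp(Z(\O_X(mK_X))_\p)$ or $\Supp(Z(\frd(mK_X))_\p)$, and that $\p$ is a log-resolution of $(X,\tfrac1m E)$. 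I then set $\D:=\tfrac1m E$: it is an effective $m$-boundary with $\lfloor\D\rfloor=0$ (this is the only place $m\ge 2$ is used), hence an $m$-compatible boundary, and by the argument in the proof of Lemma~\ref{lem:compat} transversality gives $Z(\O_X(m\D))_\p=\widehat E$, while $Z(\O_X(-m\D))_\p$ has coefficient $-1$ along $\widehat E$.

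The key point is a $b$-divisor identity. Because $m(K_X+\D)$ is Cartier we have the two factorizations $\O_X(-mK_X)=\O_X(m\D)\cdot\O_X(-m(K_X+\D))$ and $\O_X(mK_X)=\O_X(-m\D)\cdot\O_X(m(K_X+\D))$; combining them with $\frd(mK_X)=\O_X(mK_X)\cdot\O_X(-mK_X)$ and the additivity of $Z(\cdot)$ from Lemma~\ref{lem:Zfrac}, a short computation in which the Cartier contributions cancel gives
$$
K_{m,\X/X}+C+\tfrac1m Z(\frd(mK_X))=\Big[K_{\X/(X,\D)}+C+\tfrac1m Z(\O_X(-m\D))\Big]+\tfrac2m Z(\O_X(m\D)).
$$
By Lemma~\ref{lem:increase} the left-hand side of the desired equality is $\p_*\O_{X_\p}$ of the round-up of the trace on $X_\p$ of $K_{\X/(X,\D)}+C+\tfrac1m Z(\O_X(-m\D))$, and its right-hand side is $\p_*\O_{X_\p}$ of the round-up of the trace of $K_{m,\X/X}+C+\tfrac1m Z(\frd(mK_X))$; by the identity these two traces differ precisely by $\tfrac2m Z(\O_X(m\D))_\p=\tfrac2m\widehat E$, which is supported on the single prime divisor $\widehat E$. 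So the round-ups agree away from $\widehat E$, and along $\widehat E$ one uses $\ord_{\widehat E}\big(\p^*(K_X+\D)\big)=\ord_E(K_X)+\tfrac1m$, $\ord_{\widehat E}(K_{X_\p})=\ord_E(K_X)$, $\ord_{\widehat E}(C_\p)=0$ and $\ord_{\widehat E}\big(Z(\O_X(-m\D))_\p\big)=-1$ to compute that the coefficient of $\widehat E$ is $-\tfrac2m$ in the first trace and $0$ in the second. Since $m\ge 3$ forces $-1<-\tfrac2m<0$, we get $\lceil-\tfrac2m\rceil=0=\lceil 0\rceil$, the two round-ups coincide, and the two multiplier ideals are equal.

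The step I expect to be the main obstacle is the last one, and it hinges on two things that must be checked with care: the transversality statement $\tfrac1m Z(\O_X(m\D))_\p=\widehat\D_\p$, so that the two $b$-divisors differ only along the single non-exceptional divisor $\widehat E$ and no correction terms over the non-Gorenstein locus of $X$ need to be tracked; and the fact that the discrepancy-type coefficient $-\tfrac2m$ lies strictly between $-1$ and $0$, which is exactly the content of the hypothesis $m\ge 3$ and which fails for $m=2$ (there the coefficient is the integer $-1$ and the asserted equality breaks down).
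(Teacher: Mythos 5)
Your proof is correct, and it follows the same overall template as the paper's proof (Bertini construction from Proposition~\ref{prop:defect2}/Theorem~\ref{thm:compat} to produce $\D=\tfrac1m E$; transversality to pin down $Z(\O_X(m\D))_\p=\widehat E$; then a round-up comparison on $X_\p$ in which $m\ge3$ is exactly the threshold making the coefficient $-\tfrac2m$ round up to $0$). The interesting difference is how you relate $Z(\O_X(-m\D))$ to $Z(\frd(mK_X))$. The paper fixes a general element $f\in\frd(mK_X)$, arranges $f\in\O_X(-m\D)\subset\frd(mK_X)$, deduces $\ord_F(\O_X(-m\D))=\ord_F(\frd(mK_X))$ for all $\p$-exceptional $F$, and then splits the argument into two steps: first it invokes Theorem~\ref{thm:compat} applied to $C+\tfrac1m Z(\frd(mK_X))$ to pass from $\J_m$ to $\J((X,\D);\cdot)$, then it does a second round-up comparison to replace $Z(\frd(mK_X))$ by $Z(\O_X(-m\D))$ inside $\J((X,\D);\cdot)$. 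You instead observe the tautological $b$-divisor identity $Z(\frd(mK_X))=Z(\O_X(m\D))+Z(\O_X(-m\D))$ (a consequence of $\frd(D+C)=\frd(D)$ for $C$ Cartier, applied with $D=m\D$, $C=-m(K_X+\D)$, plus the symmetry $\frd(D)=\frd(-D)$), which immediately yields $Z(\O_X(-m\D))_\p=Z(\frd(mK_X))_\p-m\widehat\D_\p$ once transversality gives $Z(\O_X(m\D))_\p=\widehat E$, with no need for the auxiliary general element $f$. Combined with $K_{m,\X/X}=K_{\X/(X,\D)}+\tfrac1m Z(\O_X(m\D))$, this packages the two round-up comparisons of the paper into a single one via the identity you display. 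This is a genuine, if modest, simplification; both routes use the same Bertini input and the same threshold $m\ge3$, and both fail for $m=2$ for the reason you point out.
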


\begin{proof}
The problem is local, so we can assume that $X$ is affine.
Let $\p$ be as in the statement of Lemma~\ref{lem:compat}.
If $f \in \frd(mK_X)$ is a general element, then
$\ord_F(f) = \ord_F(\frd(mK_X))$ for every $\p$-exceptional prime divisor $F$. 
By Theorem~\ref{thm:compat} and its proof, we can find an effective $m$-boundary of the form
$\D = \tfrac 1m E$
where $E$ is a prime divisor, such that $f \in \O_X(-m\D) \subset \frd(mK_X)$ and
$$
\J\left((X,\D); C + \tfrac 1m Z(\frd(mK_X))\right) = 
\J_m\left(C + \tfrac 1m Z(\frd(mK_X))\right).
$$
Note that $\ord_F(\O_X(-m\D)) = \ord_F(\frd(mK_X))$ 
for every $\p$-exceptional prime divisor $F$. Thus, bearing in mind that
$Z(\frd(mK_X))$ is exceptional as $X$ is regular in codimenion one, we have
$$
Z(\O_X(-mD))_\p = Z(\frd(mK_X))_\p - m\^\D_\p.
$$
Since $\^\D_\p$ does not share any component with $C_\p$, and
$\lrd 2\^\D_\p \rrd = 0$, we see that
\begin{multline*}
\lru K_{X_\p}-\p^*(K_X+\D)+ C_\p + \tfrac 1m Z(\O_X(-m\D))\rru = \\
=\lru  K_{X_\p}-\p^*(K_X+\D) + C_\p + \tfrac 1m Z(\frd(mK_X))\rru ,
\end{multline*}
which gives
$$
\J\left((X,\D); C + \tfrac 1m Z(\O_X(-m\D))\right) = 
\J\left((X,\D); C + \tfrac 1m Z(\frd(mK_X))\right).
$$
This completes the proof of the corollary.
\end{proof}

Asymptotic multiplier ideals can also be generalized to this setting. 
For short, we say that a sequence of $\R$-Cartier $b$-divisors
$Z_\bullet = (Z_m)_{m \ge 1}$ is a \emph{bounded graded sequence}
if there is a $\R$-Cartier $b$-divisor $B$ such that
$B \ge \tfrac{1}{km}Z_{km}\ge\max\{\tfrac 1kZ_k,\tfrac 1mZ_m)$
for all $m,k \ge 0$. The following definition relies on
the Noetherian property. 

\begin{defi}
Let $\D$ be an effective $\R$-boundary on $X$, let $C$ be an $\R$-Cartier $b$-divisor,
and let $Z_\bullet = (Z_m)_{m \ge 1}$ be a bounded graded sequence of
$\R$-Cartier $b$-divisors. 
\begin{itemize}
\item
The \emph{asymptotic multiplier ideal sheaf}  $\J((X,\D);C + Z_\bullet)$ 
with respect to $(X,\D)$ 
is the unique maximal element in the family of 
multiplier ideal sheaves $\J((X,\D); C + \tfrac 1k Z_k)$, $k\ge 1$. 
\item
The \emph{asymptotic multiplier ideal sheaf} $\J(C + Z_\bullet)$ 
is the unique maximal element in the family of 
multiplier ideal sheaves $\J(C + \tfrac 1k Z_k)$, $k\ge 1$. 
\end{itemize}
\end{defi}

\begin{lem}\label{rmk:asymp-mult-ideal}
$\J(C + Z_\bullet) = \J_m(C + \tfrac 1m Z_m)$ for every sufficiently divisible $m$.
\end{lem}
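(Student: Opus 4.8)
The plan is to reduce the statement to the same Noetherianity mechanism that makes the asymptotic multiplier ideals $\J(\,\cdot\,)$ and $\J((X,\D);\,\cdot\,)$ well-defined, applied now to the doubly-indexed family of fractional ideal sheaves $\J_p\bigl(C+\tfrac1k Z_k\bigr)$, $(p,k)\in(\N^*)^2$. Once one checks that this family is directed and uniformly bounded inside a fixed coherent sheaf, its largest element exists, is attained at all sufficiently divisible $(p,k)$, and is easily identified with $\J(C+Z_\bullet)$.

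First I would record the relevant monotonicities of $W\mapsto\O_X(\lru W\rru)$, which is non-decreasing on $\R$-Cartier $b$-divisors (compute on a common log-resolution). For $k$ fixed and $p\mid q$, the super-additivity $Z(\O_X(-pK_X))+Z(\O_X(-p'K_X))\le Z(\O_X(-(p+p')K_X))$ used in the proof of Proposition~\ref{prop:graded-sequence} yields $\tfrac1p Z(\O_X(-pK_X))\le\tfrac1q Z(\O_X(-qK_X))$, hence $K_{p,\X/X}\le K_{q,\X/X}$ and $\J_p(C+\tfrac1k Z_k)\subseteq\J_q(C+\tfrac1k Z_k)$. For $p$ fixed and $k\mid l$, the defining property of a bounded graded sequence gives $\tfrac1k Z_k\le\tfrac1l Z_l$, hence $\J_p(C+\tfrac1k Z_k)\subseteq\J_p(C+\tfrac1l Z_l)$. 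Combining the two, any two members $\J_p(C+\tfrac1k Z_k)$ and $\J_{p'}(C+\tfrac1{k'}Z_{k'})$ are both contained in $\J_{p''}(C+\tfrac1{k''}Z_{k''})$ with $p''=\mathrm{lcm}(p,p')$, $k''=\mathrm{lcm}(k,k')$, so the family is directed. Each member is contained in the fixed coherent fractional ideal sheaf $\O_X(\lru (C+B)_X\rru)$, since $K_{p,\X/X}$ is exceptional over $X$ — so $\J_p(C')\subseteq\O_X(\lru C'_X\rru)$ — and $\tfrac1k(Z_k)_X\le B_X$ for every $k$, where $B$ is the $\R$-Cartier $b$-divisor bounding $Z_\bullet$.

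Since $X$ is Noetherian, a directed family of coherent subsheaves of a fixed coherent sheaf has a largest element $\J^\ast$, and by directedness there is $m_0\in\N^*$ with $\J^\ast=\J_p(C+\tfrac1k Z_k)$ whenever $m_0\mid p$ and $m_0\mid k$; taking $p=k=m$ this gives $\J^\ast=\J_m(C+\tfrac1m Z_m)$ for every sufficiently divisible $m$. Finally I would identify $\J^\ast$ with $\J(C+Z_\bullet)$. By definition $\J(C+Z_\bullet)$ is the largest of the ideals $\J(C+\tfrac1k Z_k)$, each of which is the largest of the $\J_p(C+\tfrac1k Z_k)$; all of these belong to our family, so $\J(C+Z_\bullet)\subseteq\J^\ast$. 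Conversely, for $m$ sufficiently divisible, $\J^\ast=\J_m(C+\tfrac1m Z_m)\subseteq\J(C+\tfrac1m Z_m)\subseteq\J(C+Z_\bullet)$. This gives $\J(C+Z_\bullet)=\J_m(C+\tfrac1m Z_m)$ for all sufficiently divisible $m$, as claimed. The argument is purely formal; the only points needing care are that the family must be directed in \emph{both} indices simultaneously (so that a single $m$ handles both the roundoff coming from $K_X$ and the truncation coming from $Z_\bullet$), and the passage from the a priori local stabilization of the multiplier ideal sheaves to one global $m$, handled as usual by quasi-compactness of $X$.
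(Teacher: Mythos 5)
Your proof is correct. The underlying mechanism is the same as the paper's — Noetherian stabilization combined with the monotonicities $K_{p,\X/X}\le K_{q,\X/X}$ (for $p\mid q$) and $\tfrac1k Z_k\le\tfrac1l Z_l$ (for $k\mid l$) — but the organization differs. The paper stabilizes the two indices \emph{sequentially}: it first notes $\J(C+Z_\bullet)=\J(C+\tfrac1p Z_p)$ for $p$ sufficiently divisible, then $\J(C+\tfrac1p Z_p)=\J_m(C+\tfrac1p Z_p)$ for $m$ sufficiently divisible, and then concatenates a chain of inclusions with $m$ a multiple of $p$ to pass to the diagonal. You instead set up a single doubly-indexed directed family $\J_p(C+\tfrac1k Z_k)$, establish directedness in both indices simultaneously, bound everything inside the fixed coherent sheaf $\O_X(\lru(C+B)_X\rru)$, and extract a largest element $\J^\ast$ which you then identify with $\J(C+Z_\bullet)$ on the one side and with a diagonal term on the other. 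Your route is a bit more systematic and makes the "one $m$ handles both roundoffs" point explicit; the paper's route is shorter because it re-uses verbatim the single-index stabilizations already built into the definitions of $\J(\cdot)$ and $\J(\cdot + Z_\bullet)$. Both are valid. One small quibble: the final sentence mentions quasi-compactness of $X$, but this isn't a separate step here — the coherence of $\J^\ast$ (already global) handles it, and indeed the paper's proof never invokes quasi-compactness explicitly.
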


\begin{proof}
We have $\J(C + Z_\bullet) = \J(C + \tfrac 1p Z_p)$ for every sufficiently divisible $p$. 
If we fix any such $p$, then we have $\J(C + \tfrac 1pZ_p) = \J_m(C + \tfrac 1p Z_p)$
for every sufficiently divisible $m$. In particular, if we pick $m$ to be a multiple of $p$,
then we have
$$
\J(C + Z_\bullet) = \J(C + \tfrac 1p Z_p) = \J_m(C + \tfrac 1p Z_p)
\subset \J_m(C + \tfrac 1m Z_m) \subset \J(C + \tfrac 1m Z_m) \subset \J(C + Z_\bullet). 
$$
The lemma follows.
\end{proof}

In the case $C = cZ(\fra)$ for some $c \ge 0$ and some nonzero ideal sheaf 
$\fra \subseteq\O_X$, and $Z_k = d Z(\frb_k)$ for some $d \ge 0$ and some
graded sequence of ideal sheaves $\frb_\bullet = (\frb_m)_{m \ge 0}$, then 
we also use the notation
$$
\J((X,\D);\fra^c\.\frb_\bullet^d),
\qquad\J_m(\fra^c\.\frb_\bullet^d),
\qquad \J(\fra^c\.\frb_\bullet^d),
$$
to denote $\J((X,\D);C + Z_\bullet)$,
$\J_m(C + Z_\bullet)$, and
$\J(C + Z_\bullet)$, respectively.

\begin{prop}\label{prop:extract-a}
For every nonzero ideal sheaf $\fra \subset \O_X$, we have
$\fra\.\J(\O_X) \subset \J(\fra)$.
\end{prop}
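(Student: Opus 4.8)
The plan is to reduce the statement to a single birational model and then apply the elementary push--pull inclusion for ideal sheaves. First I would fix an integer $m$ that is sufficiently divisible so that $\J(\O_X)=\J_m(0)$ and $\J(\fra)=\J_m(Z(\fra))$ hold simultaneously (recall $Z(\O_X)=0$); this is legitimate because for a fixed $\R$-Cartier $b$-divisor $C$ the ideals $\J_m(C)$ increase along divisibility of $m$ and stabilize by the Noetherian property, so it suffices to take $m$ divisible by both stabilization thresholds. Next I would choose a model $\p\colon X_\p\to X$ which is a joint log-resolution of $\O_X(-mK_X)$ and of the Cartier $b$-divisor $Z(\fra)$. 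Then $X_\p$ is smooth and $\p$ is in particular a determination of $Z(\fra)$, so $\fra\cdot\O_{X_\p}=\O_{X_\p}(Z(\fra)_\p)$ with $Z(\fra)_\p$ an integral Cartier divisor. Writing $A_m:=K_{X_\p}+\tfrac1m Z(\O_X(-mK_X))_\p$, the log-resolution formula for $m$-limiting multiplier ideals (a consequence of Lemma~\ref{lem:increase}) gives $\J(\O_X)=\p_*\O_{X_\p}(\lceil A_m\rceil)$ and $\J(\fra)=\p_*\O_{X_\p}(\lceil A_m+Z(\fra)_\p\rceil)$.

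The key step is the general inclusion $\fra\cdot\p_*\mathcal{F}\subseteq\p_*\bigl((\fra\cdot\O_{X_\p})\cdot\mathcal{F}\bigr)$, valid for any $\O_{X_\p}$-submodule $\mathcal{F}$ of the constant sheaf $k(X_\p)=k(X)$: a local product $fg$, with $f$ a section of $\fra$ and $g$ a section of $\p_*\mathcal{F}$, equals $(f\circ\p)\cdot g$ inside $k(X_\p)$, and $f\circ\p$ is a section of $\fra\cdot\O_{X_\p}$, so $fg$ is a section of $\p_*\bigl((\fra\cdot\O_{X_\p})\cdot\mathcal{F}\bigr)$. Applying this with $\mathcal{F}=\O_{X_\p}(\lceil A_m\rceil)$ and using $\fra\cdot\O_{X_\p}=\O_{X_\p}(Z(\fra)_\p)$ yields
$$
\fra\cdot\J(\O_X)\subseteq\p_*\bigl(\O_{X_\p}(Z(\fra)_\p)\cdot\O_{X_\p}(\lceil A_m\rceil)\bigr).
$$
Since $X_\p$ is smooth, $Z(\fra)_\p$ and $\lceil A_m\rceil$ are both Cartier, so the product of the two invertible subsheaves of $k(X_\p)$ equals $\O_{X_\p}(Z(\fra)_\p+\lceil A_m\rceil)$; and since $Z(\fra)_\p$ is integral, $Z(\fra)_\p+\lceil A_m\rceil=\lceil A_m+Z(\fra)_\p\rceil$. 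Pushing forward gives $\fra\cdot\J(\O_X)\subseteq\p_*\O_{X_\p}(\lceil A_m+Z(\fra)_\p\rceil)=\J(\fra)$, as desired.

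I do not expect a genuine obstacle here: the geometric content is just the push--pull inclusion together with the fact that rounding up commutes with adding an integral divisor, and when $X$ is smooth the statement degenerates to the classical $\fra\subseteq\J(\fra)$ (there $\J(\O_X)=\O_X$). The only points requiring care are bookkeeping ones: selecting one $m$ that works for both multiplier ideals at once, and arranging that $\p$ is simultaneously a determination of $Z(\fra)$ and a log-resolution in the precise sense under which the identity $\J_m(C)=\p_*\O_{X_\p}(\lceil K_{X_\p}+\tfrac1m Z(\O_X(-mK_X))_\p+C_\p\rceil)$ is available.
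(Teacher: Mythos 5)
Your proof is correct, and the key fact it relies on — that $Z(\fra)_\p$ is an integral divisor, so rounding up commutes with adding it — is exactly the heart of the matter. However, the route is different from the paper's. The paper argues entirely valuatively: it suffices to show that a product $f=gh$ with $g\in\fra$, $h\in\J(\O_X)$ lies in $\J(\fra)$, and this follows immediately from $Z(f)=Z(g)+Z(h)\le Z(\fra)+\lceil K_{m,\X/X}\rceil=\lceil Z(\fra)+K_{m,\X/X}\rceil$ for $m$ sufficiently divisible, by the defining property of $\O_X(W)$ (sections are functions $f$ with $Z(f)\le W$) and integrality of $Z(\fra)$. No model, no log-resolution, no push-forward formula is needed. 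Your argument instead stabilizes $m$, passes to a joint log-resolution $\p$, invokes the push-forward description $\J_m(C)=\p_*\O_{X_\p}(\lceil K_{X_\p}+\tfrac1m Z(\O_X(-mK_X))_\p+C_\p\rceil)$, and then applies the elementary push--pull inclusion $\fra\cdot\p_*\mathcal F\subset\p_*\bigl((\fra\cdot\O_{X_\p})\cdot\mathcal F\bigr)$. Both proofs are valid; yours is heavier on machinery (Hironaka, Lemma~\ref{lem:increase}, choice of a common determination), whereas the paper's valuative phrasing shows the statement is really a one-line consequence of the $\O_X(W)$ formalism and requires no geometry at all. The small bookkeeping points you flagged (choosing a single $m$, arranging a joint log-resolution) are genuine but routine, and you handled them correctly.
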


\begin{proof}
Let $f = gh$, with $g \in \fra$ and $h \in \J(\O_X)$.
Then $Z(f) = Z(g) + Z(h) \le Z(g) + K_{m,X_m/X}$
for every $m \ge 1$, which implies the statement.
\end{proof}

\subsection{Subadditivity and approximation}

Recall that the Jacobian ideal sheaf $\Jac_X \subset \O_X$ of $X$ is defined as the
$n$-th Fitting ideal $\Fitt^n(\Om^1_X)$ with $n=\dim X$. 

Takagi obtained in \cite{Tak3} the following general subadditivity result for multiplier ideals with respect to a pair: 

\begin{thm}\label{Tak3}\cite{Tak3} 
Let $X$ be a normal variety and let $\D$ be an effective $\Q$-Weil divisor such that $m(K_X+\D)$ is Cartier for some integer $m > 0$. If $\fra,\frb$ are two nonzero coherent ideal sheaves on $X$ and $c,d\ge 0$ then we have  
$$
\Jac_X \.\, \J\big((X,\D);\fra^c\.\frb^d\.\O_X(-m\D))^{1/m}\big) \subset
\J((X,\D);\fra^c)\.\,\J((X,\D);\frb^d).
$$
\end{thm}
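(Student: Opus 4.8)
The plan is to adapt the diagonal argument of Demailly--Ein--Lazarsfeld~\cite{DEL} to the pair $(X,\D)$, the Jacobian ideal entering precisely because the diagonal of a singular variety is itself singular. First I would set $W:=X\times X$ with projections $\pr_1,\pr_2\colon W\to X$ and boundary $\D_W:=\pr_1^*\D+\pr_2^*\D$; since $m(K_W+\D_W)=\pr_1^*\bigl(m(K_X+\D)\bigr)+\pr_2^*\bigl(m(K_X+\D)\bigr)$ is Cartier, $\D_W$ is again an $m$-boundary. Let $\delta\colon X\hookrightarrow W$ be the diagonal embedding, with image $\D_X$; then $\pr_i\circ\delta=\id_X$, so $\delta^{-1}\bigl(\pr_1^{-1}\fra\cdot\pr_2^{-1}\frb\bigr)=\fra\cdot\frb$, while $\D_W$ restricts along $\delta$ to $2\D$. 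The theorem should then follow by combining a product formula on $W$ with a restriction theorem along $\delta$ that carries a $\Jac_X$-correction.

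The product formula asserts that
\[
\J\bigl((W,\D_W);\pr_1^{-1}\fra^c\cdot\pr_2^{-1}\frb^d\bigr)=\J\bigl((X,\D);\fra^c\bigr)\boxtimes\J\bigl((X,\D);\frb^d\bigr).
\]
To prove it, take a common log resolution $f\colon X'\to X$ of $(X,\D)$, $\fra$ and $\frb$; then $f\times f\colon X'\times X'\to W$ is a log resolution of $(W,\D_W)$ and of $\pr_1^{-1}\fra\cdot\pr_2^{-1}\frb$, because $K_{X'\times X'}-(f\times f)^*(K_W+\D_W)$ together with the fixed divisor of $\pr_1^{-1}\fra\cdot\pr_2^{-1}\frb$ is a sum of divisors pulled back from the two factors, which therefore meet transversally. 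Since the round-up of a sum of $\R$-divisors each pulled back from a single factor is the sum of the round-ups, the identity follows from the K\"unneth formula for pushforward along $f\times f$.

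The key input is the restriction theorem along the singular diagonal: for an ideal $\frc$ on $W$ not vanishing identically along $\D_X$, and identifying $\D_X$ with $X$ via $\delta$, one needs
\[
\Jac_X\cdot\J\Bigl((X,\D);\,(\delta^{-1}\frc)\cdot\O_X(-m\D)^{1/m}\Bigr)\ \subset\ \delta^{-1}\J\bigl((W,\D_W);\frc\bigr).
\]
Here the factor $\O_X(-m\D)^{1/m}$ converts the restricted boundary $\D_W|_{\D_X}=2\D$ back into $\D$, and $\Jac_X=\Fitt^n(\Om^1_X)$ appears through adjunction for $\delta$: the conormal sheaf of the diagonal is $\Om^1_X$, so the failure of $X$ to be smooth is exactly the defect between the restriction of $K_W$ and $K_X$. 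Granting this, one applies it with $\frc=\pr_1^{-1}\fra^c\cdot\pr_2^{-1}\frb^d$, so that $\delta^{-1}\frc=\fra^c\cdot\frb^d$, and combines it with the product formula and the elementary identity $\delta^{-1}\bigl(\J((X,\D);\fra^c)\boxtimes\J((X,\D);\frb^d)\bigr)=\J((X,\D);\fra^c)\cdot\J((X,\D);\frb^d)$ to obtain the stated inclusion.

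The main obstacle is the Jacobian restriction formula itself. Proving it would require passing to a simultaneous log resolution of $W$, of the diagonal $\D_X$ (equivalently of $X$), and of $\frc$, $\fra$, $\frb$ and $\D$, comparing $K_{X'\times X'}-(f\times f)^*(K_W+\D_W)$ with $K_{X'}-f^*(K_X+\D)$ along the strict transform of $\D_X$, and bounding the resulting adjunction defect by $\Jac_X$ via its description as the $n$-th Fitting ideal of $\Om^1_X$ --- in effect a higher-codimension, singular-ambient form of inversion of adjunction, which is the technical heart of Takagi's argument. A secondary, bookkeeping point will be to keep the $\Q$-boundary hypothesis under control throughout, moving freely between $\D$ and the $\R$-ideal $\O_X(-m\D)^{1/m}$ and checking that all the round-ups stay compatible with the transversality arrangements on the resolution; by the coherence statements in~\S\ref{sec:mult-ideal} this part is routine once a resolution is fixed.
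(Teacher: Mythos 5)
This statement is quoted in the paper as Theorem~\ref{Tak3} with a citation to~\cite{Tak3}; the paper offers no proof of its own, only the remark that Takagi gave two independent arguments, one by reduction to positive characteristic and test ideals, the other building on Eisenstein's generalized restriction theorem~\cite{eisenstein} and Hironaka's resolution. Your outline is precisely the skeleton of the second of these: replace the smooth-ambient diagonal argument of Demailly--Ein--Lazarsfeld by a restriction theorem along the singular diagonal $\delta\colon X\hookrightarrow X\times X$, with the Jacobian ideal $\Jac_X=\Fitt^n(\Om^1_X)$ entering because the conormal sheaf of $\delta$ is $\Om^1_X$. The product formula on $W=X\times X$, the identification $\delta^{-1}(\pr_1^{-1}\fra\cdot\pr_2^{-1}\frb)=\fra\cdot\frb$, and the bookkeeping trading the restricted boundary $\D_W|_{\D_X}=2\D$ against the fractional ideal $\O_X(-m\D)^{1/m}$ are all handled correctly.

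There is nonetheless a genuine gap, and you have already named it: the restriction formula
\[
\Jac_X\cdot\J\Bigl((X,\D);(\delta^{-1}\frc)\cdot\O_X(-m\D)^{1/m}\Bigr)\ \subset\ \delta^{-1}\J\bigl((W,\D_W);\frc\bigr)
\]
is asserted but not proven, and it is not a routine lemma --- it is essentially the whole content of the theorem. In the smooth case restriction for multiplier ideals rests on local vanishing plus the fact that the diagonal is a regularly embedded complete intersection. Here $X$, hence the diagonal, is singular, so neither complete-intersection adjunction nor the needed vanishing is available directly; one must compare, on a log resolution of $(W,\D_W)$, the relative canonical divisor of $W$ restricted to the strict transform of the diagonal with that of a resolution of $(X,\D)$, and bound the adjunction defect by $\Jac_X$ --- this is precisely Eisenstein's theorem and the technical heart of Takagi's second proof --- or else bypass the difficulty entirely via reduction modulo $p$ and test ideals, as in Takagi's first proof. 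Until that lemma is supplied, what you have is a correct and informative roadmap of the argument in the cited reference, not a proof.
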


Note that when $X$ is smooth and $\D=0$ the statement
reduces to the original subadditivity theorem of \cite{DEL}. 
Takagi gives two independent proofs of this result. The first one is based on  positive characteristic technics and relies on the corresponding statement for test ideals.
The other one builds on the work of Eisenstein~\cite{eisenstein} and relies on Hironaka's desingularization theorem.

We now show how to deduce from Takagi's result a subadditivity theorem for multiplier ideals in the sense of \cite{dFH}. 

\begin{thm}[Subadditivity]\label{thm:subadditivity}
Let $X$ be a normal variety. If $\fra,\frb$ are two nonzero coherent ideal sheaves on $X$ and $c,d \ge 0$ then we have 
$$
\Jac_X \.\, \J(\fra^c\.\frb^d\.\frd_\bullet(K_X)) \subset \J(\fra^c)\.\,\J(\frb^d). 
$$
\end{thm}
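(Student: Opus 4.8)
The plan is to derive the statement from Takagi's Theorem~\ref{Tak3} by transporting the intrinsic asymptotic multiplier ideal $\J(\fra^c\.\frb^d\.\frd_\bullet(K_X))$ into an ordinary asymptotic multiplier ideal relative to a well-chosen compatible pair $(X,\D)$, using the comparison machinery developed above. Since the Jacobian ideal, all the multiplier ideals involved, and the asserted inclusion of sheaves are local on $X$, the first step is to restrict to an affine open subset and thereby assume $X$ quasi-projective; this is what makes Theorem~\ref{thm:compat} and its Corollary~\ref{lem:compat-var} available.

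Concretely, set $C:=cZ(\fra)+dZ(\frb)$ and $Z_k:=Z(\frd(kK_X))$. Then $Z_\bullet=(Z_k)_{k\ge 1}$ is a bounded graded sequence of $\R$-Cartier $b$-divisors: each $\frd(kK_X)\subset\O_X$, so $Z_k\le 0$ and one may take $B=0$ in the definition, while the graded property is Proposition~\ref{prop:defect}(iv). By Lemma~\ref{rmk:asymp-mult-ideal} we may then fix a sufficiently divisible $m\ge 3$ with
$$
\J\left(\fra^c\.\frb^d\.\frd_\bullet(K_X)\right)=\J_m\left(C+\tfrac1m Z(\frd(mK_X))\right).
$$
Corollary~\ref{lem:compat-var}, applied to this $C$ and $m$, yields an effective $m$-boundary $\D$ for which
$$
\J_m\left(C+\tfrac1m Z(\frd(mK_X))\right)=\J\left((X,\D);\,C+\tfrac1m Z(\O_X(-m\D))\right)=\J\left((X,\D);\fra^c\.\frb^d\.\O_X(-m\D)^{1/m}\right).
$$
Since $\D$ is effective with $m(K_X+\D)$ Cartier, Theorem~\ref{Tak3} gives
$$
\Jac_X\.\J\left((X,\D);\fra^c\.\frb^d\.\O_X(-m\D)^{1/m}\right)\subset\J((X,\D);\fra^c)\.\J((X,\D);\frb^d).
$$
Finally, applying the first assertion of Lemma~\ref{lem:compat} to the $\R$-Cartier $b$-divisors $cZ(\fra)$ and $dZ(\frb)$ with the $m$-boundary $\D$, together with the fact that $\J_m$ is dominated by $\J$, gives $\J((X,\D);\fra^c)\subset\J_m(\fra^c)\subset\J(\fra^c)$ and likewise $\J((X,\D);\frb^d)\subset\J(\frb^d)$. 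Concatenating the displayed relations proves the theorem.

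The only genuinely delicate ingredient here is Corollary~\ref{lem:compat-var}, which is exactly tailored to this application: Takagi's subadditivity comes with an unavoidable correction term $\O_X(-m\D)$ attached to a \emph{specific} compatible boundary, whereas the left-hand side of our statement must feature the \emph{intrinsic} defect ideal $\frd(mK_X)$. The corollary reconciles the two by producing a compatible $\D$ whose divisor $\O_X(-m\D)$ realizes $\frd(mK_X)$ on a fixed joint log-resolution up to a transverse strict transform that is invisible to the round-down; the bookkeeping with round-downs is carried out there, building on Theorem~\ref{thm:compat} and Proposition~\ref{prop:defect2}. Once that is granted, the remainder is a purely formal chain of inclusions, the only points requiring care being the choice of a sufficiently divisible $m\ge 3$ and the initial reduction to the quasi-projective case.
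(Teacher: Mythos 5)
Your proof is correct and follows essentially the same route as the paper: reduce via Lemma~\ref{rmk:asymp-mult-ideal} to an $m$-limiting multiplier ideal with correction term $\frd(mK_X)^{1/m}$, invoke Corollary~\ref{lem:compat-var} to replace it by a genuine pair multiplier ideal for a compatible $m$-boundary $\D$, apply Takagi's Theorem~\ref{Tak3}, and finish with the inclusions $\J((X,\D);\cdot)\subset\J_m(\cdot)\subset\J(\cdot)$ from Lemma~\ref{lem:compat}. The extra remarks you add (the reduction to the quasi-projective case and the bounded-graded-sequence check) are implicit in the paper's argument and handled inside the cited Corollary~\ref{lem:compat-var}.
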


The results in \cite{Tak,Sch}, combined, suggest the possibility that the correction
term $\frd_\bullet(K_X)$ in the left-hand side might be unnecessary.

\begin{proof} 
%The result is local so we may assume that $X$ is affine.
By Lemma~\ref{rmk:asymp-mult-ideal} we have
$$
\J(\fra^c\.\frb^d\.\frd_\bullet(K_X)) = \J_m(\fra^c\.\frb^d\. \frd(mK_X)^{1/m})
$$ 
for every sufficiently divisible $m$. 
Fix any such $m$; we can assume that $m \ge 3$. 
By Corollary~\ref{lem:compat-var}, we can 
find an effective $m$-compatible boundary $\D$ such that
$$
\J_m(\fra^c\cdot\frb^d\.\frd(mK_X)^{1/m}) = 
\J((X,\D);\fra^c\cdot\frb^d \.\O_X(-m\D)^{1/m}).
$$
Now we apply Theorem~\ref{Tak3} to get the inclusion 
$$
\J((X,\D);\fra^c\cdot\frb^d \.\O_X(-m\D)^{1/m}) \subset 
\J((X,\D);\fra^c)\.\,\J((X,\D);\frb^d).
$$
We conclude by observing that $\J((X,\D);\fra^c)\subset\J_m(\fra^c)\subset\J(\fra^c)$, and the similar statement for $\frb^d$ hold at any rate, by Lemma \ref{lem:compat}.
\end{proof}

\begin{thm}\label{thm:approx}
Let $X$ be a normal variety and let $\fra_\bullet$ be a graded sequence of ideal sheaves on $X$. Then we have
$$
Z(\Jac_X) + Z(\frd_\bullet(K_X))\le Z (\J(\fra_\bullet)) -Z(\fra_\bullet)\le A_{\X/X}.
$$
In particular $\tfrac 1k Z(\J(\fra_\bullet^k))\to Z(\fra_\bullet)$ coefficient-wise as $k\to\infty$, uniformly with respect to $\fra_\bullet$. 
\end{thm}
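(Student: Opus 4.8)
The plan is to establish the two displayed inequalities and then extract the limit statement by a scaling argument. For the scaling step, apply the displayed inequalities to the graded sequence $\fra_\bullet^k=(\fra_m^k)_m$, for which $Z(\fra_\bullet^k)=k\,Z(\fra_\bullet)$ by Proposition~\ref{prop:graded-sequence} and Lemma~\ref{lem:Zfrac}; this gives
$$
Z(\Jac_X)+Z(\frd_\bullet(K_X))\ \le\ Z\bigl(\J(\fra_\bullet^k)\bigr)-k\,Z(\fra_\bullet)\ \le\ A_{\X/X}
$$
for every $k\ge 1$. Dividing by $k$ and letting $k\to\infty$, the two outer $b$-divisors are fixed with finite coefficients, so along each prime divisor $E$ over $X$ the outer quantities tend to $0$ at a rate depending only on $\ord_E Z(\Jac_X)$, $\ord_E Z(\frd_\bullet(K_X))$ and $\ord_E A_{\X/X}$, hence not on $\fra_\bullet$; this yields the coefficient-wise convergence, uniformly in $\fra_\bullet$. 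So everything comes down to the displayed inequalities.

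For the upper bound, I would unwind the definition of the asymptotic multiplier ideal. By Lemma~\ref{rmk:asymp-mult-ideal} one has $\J(\fra_\bullet)=\J_m\bigl(\tfrac1m Z(\fra_m)\bigr)=\O_X\bigl(\lru K_{m,\X/X}+\tfrac1m Z(\fra_m)\rru\bigr)$ for $m$ sufficiently divisible, and since any fractional ideal contained in $\O_X(W)$ satisfies $Z(\cdot)\le W$, this gives $Z(\J(\fra_\bullet))\le\lru K_{m,\X/X}+\tfrac1m Z(\fra_m)\rru$. Now $K_{m,\X/X}$ is exceptional over $X$ and rounding up costs at most the reduced exceptional $b$-divisor $1_{\X/X}$; moreover $K_{m,\X/X}\le K_{\X/X}$ because $\tfrac1m Z(\O_X(-mK_X))$ increases to $\Env_X(-K_X)$, and $\tfrac1m Z(\fra_m)\le Z(\fra_\bullet)$ by Proposition~\ref{prop:graded-sequence}. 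Combining these, $Z(\J(\fra_\bullet))\le K_{\X/X}+1_{\X/X}+Z(\fra_\bullet)=A_{\X/X}+Z(\fra_\bullet)$.

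The lower bound is the heart of the proof, and is exactly where Takagi's subadditivity theorem (Theorem~\ref{thm:subadditivity}) enters: heuristically the factor $\Jac_X$ compensates for the singularities of $X$ and the graded sequence $\frd_\bullet(K_X)$ compensates for the failure of $K_X$ to be Cartier, so that together they reduce matters to the smooth Gorenstein case, where the classical subadditivity theorem and the containment of a graded sequence in its own asymptotic multiplier ideal hold. Concretely, write $\frc_\bullet:=\fra_\bullet\cdot\frd_\bullet(K_X)$, so that $Z(\frc_\bullet)=Z(\fra_\bullet)+Z(\frd_\bullet(K_X))$ by Lemma~\ref{lem:Zfrac}; the inequality to prove then becomes $Z(\Jac_X)+Z(\frc_\bullet)\le Z(\J(\fra_\bullet))$. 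Applying Theorem~\ref{thm:subadditivity} with $\frb=\O_X$, exponent $c=1/m$ on $\fra_m$ and $d=0$, and letting $m\to\infty$ along sufficiently divisible values so that the asymptotic multiplier ideals involved stabilize, one obtains the inclusion $\Jac_X\cdot\J(\frc_\bullet)\subseteq\J(\fra_\bullet)\cdot\J(\O_X)\subseteq\J(\fra_\bullet)$, that is, $Z(\Jac_X)+Z(\J(\frc_\bullet))\le Z(\J(\fra_\bullet))$. It then remains to compare $Z(\frc_\bullet)$ with $Z(\J(\frc_\bullet))$, and this is precisely where the defect-ideal twist pays off: multiplying $\fra_\bullet$ by $\frd_\bullet(K_X)$ is what is needed to push $\frc_m$ inside $\J(\frc_\bullet)$, using Proposition~\ref{prop:extract-a} together with the explicit shape of $K_{m,\X/X}$.

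The main obstacle I foresee is contained in this last step: translating Theorem~\ref{thm:subadditivity}, stated for ordinary ideals, into its asymptotic incarnation for graded sequences, and keeping track of the exponents and of the position of the $\Jac_X$ factor so that it can be absorbed on the right-hand side. By contrast the upper bound and the scaling argument should be routine.
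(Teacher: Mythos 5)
Your upper bound and your scaling argument for the limit statement are fine (the upper bound is a slightly more elaborate version of the paper's one-line observation that $Z(\J(\fra_k^{1/k}))\le\tfrac1k Z(\fra_k)+A_{\X/X}$ by definition of multiplier ideals). The gap is in the lower bound, and it is exactly where you flag your uncertainty.

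Applying Theorem~\ref{thm:subadditivity} once with $\frb=\O_X$, $d=0$ and letting $m\to\infty$ gives $\Jac_X\cdot\J(\frc_\bullet)\subset\J(\fra_\bullet)$, i.e.\ $Z(\Jac_X)+Z(\J(\frc_\bullet))\le Z(\J(\fra_\bullet))$. But the remaining step you need, $Z(\frc_\bullet)\le Z(\J(\frc_\bullet))$, is \emph{strictly stronger} than the very inequality you are trying to prove when specialized to the graded sequence $\frc_\bullet$: the theorem applied to $\frc_\bullet$ only gives $Z(\frc_\bullet)\le Z(\J(\frc_\bullet))-Z(\Jac_X)-Z(\frd_\bullet(K_X))$, and since $Z(\Jac_X),Z(\frd_\bullet(K_X))\le 0$ this is weaker by a fixed non-negative error term. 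In fact $Z(\frc_\bullet)\le Z(\J(\frc_\bullet))$ is the "smooth-variety" form of the inequality, which is precisely what fails on singular $X$ and what the $\Jac_X$ and $\frd_\bullet(K_X)$ corrections are there to remedy. Proposition~\ref{prop:extract-a} cannot rescue this: it only yields $\frc_m\cdot\J(\O_X)\subset\J(\frc_\bullet)$, i.e.\ $Z(\frc_m)+Z(\J(\O_X))\le Z(\J(\frc_\bullet))$, and after dividing by $m$ the right-hand side tends to $0$, giving only the trivial bound $Z(\frc_\bullet)\le 0$.

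The missing idea is the \emph{recursive} application of subadditivity, splitting off the factor $\fra_k^{1/k}$ repeatedly rather than all at once with $\frb=\O_X$. This yields $\Jac_X^{k-1}\cdot\J(\fra_k\cdot\frd_\bullet(K_X)^{k-1})\subset\J(\fra_k^{1/k})^k$, with a $k$-th power on the right. Combined with Proposition~\ref{prop:extract-a} in the form $\fra_k\cdot\frd_{k-1}\cdot\J(\O_X)\subset\J(\fra_k\cdot\frd_\bullet(K_X)^{k-1})$, one gets the $b$-divisor inequality $(k-1)Z(\Jac_X)+Z(\fra_k)+Z(\frd_{k-1})+Z(\J(\O_X))\le k\,Z(\J(\fra_k^{1/k}))$. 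It is the factor $k$ on the right that makes the proof close: dividing by $k$ sends $\tfrac1k Z(\fra_k)\to Z(\fra_\bullet)$, $\tfrac1k Z(\frd_{k-1})\to Z(\frd_\bullet(K_X))$, $\tfrac{k-1}{k}Z(\Jac_X)\to Z(\Jac_X)$, $\tfrac1k Z(\J(\O_X))\to 0$, while the right side stabilizes at $Z(\J(\fra_\bullet))$. Your single application of subadditivity with $\frb=\O_X$ produces no such power of $k$, so nothing tends to the correct limits, and the argument cannot be completed along the route you propose.
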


This result is an extension to the singular case of~\cite[Proposition~3.18]{BFJ}, which was in turn a direct elaboration of the main result of~\cite{ELS}. 

\begin{proof}
For each $k\ge 1$ we have 
$$
Z(\J(\fra_k^{1/k}))\le\tfrac 1k Z(\fra_k) + A_{\X/X}
$$
by definition of multiplier ideals, and the right-hand inequality follows. 

Regarding the other inequality, let for short 
$\frd_\bullet = (\frd_m)_{m \ge 0} := \frd_\bullet(K_X)$.
A recursive application of Theorem \ref{thm:subadditivity} yields 
$$
\Jac_X^{k-1}\.\,\J(\fra_k\.\frd_\bullet^{k-1}) \subset \J(\fra_k^{1/k})^k.
$$
On the other hand, by Proposition~\ref{prop:extract-a} and the definition of 
asymptotic multiplier ideal, we have
$$
\fra_k\.\frd_{k-1}\.\,\J(\O_X) \subset
\J(\fra_k\.\frd_{k-1}) \subset
\J(\fra_k\.\frd_\bullet^{k-1})).
$$
In terms of $b$-diviors, this gives
$$
(k-1)Z(\Jac_X) + Z(\fra_k) + Z(\frd_{k-1}) + Z(\J(\O_X))\le k Z (\J(\fra_k^{1/k}).
$$
We conclude by dividing by $k$ and letting $k\to\infty$. 
\end{proof}

%
%%%%%%%%%%%%%%%%%%%%%%
%

\section{Normal isolated singularities}\label{sec:isolated}

From now on $X$ has an \emph{isolated} normal singularity at a given point $0\in X$, and $\frm\subset\O_X$ denotes the maximal ideal of $0$. We first show how to extend to this setting the intersection theory of nef $b$-divisors introduced in the smooth case in \cite{BFJ}. The main ingredient to do so is the approximation theorem from the previous section. We next define the \emph{volume} of $(X,0)$ as the self-intersection of the nef envelope of the log-canonical $b$-divisor. 

\subsection{$b$-divisors over $0$}
Observe that every Weil $b$-divisor $W$ over $X$ decomposes in a unique way as a sum
$$
W = W^0 + W^{X \smallsetminus 0},
$$
where all irreducible components of $W^0$ have center $0$,
and none of $W^{X\smallsetminus 0}$ have center $0$. If $W = W^0$, then we say that $W$ {\it lies over} $0$ and we denote by 
$$
\Div(\X,0) \subset \Div(\X)
$$
the subspace of all Weil $b$-divisors over $0 \in X$. An element of $\Div_\R(\X,0)$ is the same thing as a real-valued homogeneous function on the set of divisorial valuations on $X$ centered at $0$.

\begin{eg} For every coherent ideal sheaf $\fra$ on $X$ we have 
$$
Z(\fra)^0 = \lim_{k \to \infty}Z(\fra + \frm^k).
$$
\end{eg}

On the other hand we say that a Cartier $b$-divisor $C\in\CDiv(\X)$ is \emph{determined} over $0$ if it admits a determination $\p$ which is an \emph{isomorphism away from $0$}, and we say that $C$ is a \emph{Cartier $b$-divisor over $0$} if $C$ furthermore lies over $0$.
We denote by $\CDiv(\X,0)$ the space of Cartier $b$-divisors over $0$.
There is an inclusion 
$$\CDiv(\X,0) \subset \CDiv(\X)\cap\Div(\X,0)$$
but this is in general not an equality. The following example was kindly suggested to us by Fulger.

\begin{eg}
Consider $(X,0) = (\C^3,0)$. 
Let $f\colon Y \to X$ be the morphism given by first taking the blow-up $f_1\colon Y_1 \to X$ along a line $L$ passing through $0$, and then taking the blow-up $f_2 \colon Y \to Y_1$ at a point $p$ on the fiber of $f_1$ over $0$. Let $E$ be the exceptional divisor of $f_1$ and $D$ be the exceptional divisor of $f_2$. Note that $D$ lies over $0$.
We claim that the Cartier $b$-divisor $\ov{D}$ cannot be determined over $0$.
If that were the case, then there would exist a model $X' \to X$ that is an isomorphism outside $0$, and a divisor $D'$ on $X'$ such that $\ov{D} = \ov{D'}$ as $b$-divisors over $X$. 
In order to show that this is impossible, consider two sections of the $\P^1$-bundle 
$E \to L$ induced by $f_1$, the second one passing through $p$ but not the first, 
and let $C_0$ and $C_1$ be their respective proper transforms on $Y$, so that $D\.C_i = i$. If $L'$ is the proper transform of $L$ on $X'$, then projection formula yields $D\. C_i = D'\.L'$, and thus $D\.C_0 = D\.C_1$. This gives a contradiction.  
\end{eg}

\begin{rmk}
The previous example can be understood torically. Consider in general 
$(X,0) = (\C^n,0)$. It is a toric variety defined by the regular fan $\D_0$ in $\R^n$
having the canonical basis as vertices. Any proper birational toric modification
$\pi \colon  X(\D) \to \C^n$ is determined by  a refinement $\D$ of $\D_0$. We assume $X(\D)$ to be smooth. Denote by $V(\sigma)$ the torus invariant subvariety of $X(\D)$
associated to a face $\sigma$ of $\D$.
For any vertex $v$ of $\D$,  let $D(v)$ be the Cartier $b$-divisor determined in $X(\D)$ by the divisor  $V(\R_+ v)$.
Observe that for any face $\sigma$ of $\D$, we have $\pi(V(\sigma)) =0$ iff $\sigma$  is included in  the open cone $(\R^*)^n_+$.
Whence $D(v)$ lies over $0$ iff $v \in (\R^*)^n_+$.
And $D(v)$ is determined over $0$ iff any face of $\D$ containing $v$ is included in 
$(\R^*)^n_+$.
\end{rmk}

\begin{eg}\label{eg:ppal} Let $\fra\subset\O_X$ be an ideal. Then $Z(\fra)$ is determined over $0$ as soon as $\fra$ is locally principal outside $0$ since the normalized blow-up of $X$ along $\fra$ is then an isomorphism away from $0$. If $\fra$ is furthermore $\frm$-primary then 
$Z(\fra)$ is a Cartier $b$-divisor over $0$. 
\end{eg}

\begin{defi}
We shall say than an $\R$-Weil $b$-divisor $W$ over $0$ is \emph{bounded below} if there exists $c>0$ such that $W\ge c Z(\frm)$.  
\end{defi}
Recall that $Z(\frm)\le 0$, so that the condition means that the function 
$\nu\mapsto\nu(W)/\nu(\frm)$ is bounded below on the set of divisorial valuations centered at $0$. 

\begin{prop}\label{prop:bounded} $(A_{\X/X})^0$ is bounded below. 
\end{prop}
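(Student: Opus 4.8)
The plan is to reduce the statement to an effectivity assertion for a single Cartier divisor on a log resolution, after first observing that for the purpose of a lower bound one may replace $\Env_X(-K_X)$ by its bottom term $Z(\O_X(-K_X))$.

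Since the statement is local at $0$, I would take $X$ affine and replace $\omega$ by $g\omega$ for a nonzero section $g$ of $\O_X(K_X)$, so as to arrange $K_X:=\dv(\omega)\ge 0$; this changes neither $A_{\X/X}$ (independent of the choice of canonical divisor) nor $Z(\frm)$. Then $\frb:=\O_X(-K_X)$ is a genuine ideal sheaf, and since $\frb^m\subseteq\O_X(-mK_X)$ for all $m$ we get $Z(\frb)\le\tfrac1m Z(\O_X(-mK_X))$, hence $Z(\frb)\le\Env_X(-K_X)$ by Proposition~\ref{prop:graded-sequence}. Consequently
\[
A_{\X/X}=K_\X+1_{\X/X}+\Env_X(-K_X)\ \ge\ K_\X+1_{\X/X}+Z(\frb)=:B,
\]
so it suffices to produce $c>0$ with $B^0\ge c\,Z(\frm)$.

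Next I would fix a log resolution $\p\colon Y\to X$ of the ideal $\frm\cdot\frb$ dominating $\Bl_\frm X$ and $\Bl_\frb X$, with reduced (SNC) exceptional divisor $E$. Writing $\frm\.\O_Y=\O_Y(-F)$ and $\frb\.\O_Y=\O_Y(-G)$ with $F,G\ge0$, one has $Z(\frm)=\ov{-F}$, $Z(\frb)=\ov{-G}$, and $\Supp F$ is exactly the union of the prime divisors of $Y$ over $0$ (because $V(\frm)=\{0\}$). For a divisorial valuation $\nu$ centered at $0$, Lemma~\ref{lem:increase}, applied on the smooth variety $Y$ with the reduced SNC divisor $E$, gives $\nu(K_\Y+1_\Y)\ge\nu(\ov{K_Y+E})$ for the canonical $b$-divisor $K_\Y$ attached to $\p^*\omega$; together with $\nu(K_\X)=\nu(K_\Y)$ and $\nu(1_{\X/X})=\nu(1_\Y)$ (valid since such $\nu$ is exceptional over $X$), this yields $B^0\ge(\ov{K_Y+E-G})^0$. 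Thus the problem reduces to bounding from below the pull-back of the fixed Cartier divisor $\Theta:=K_Y+E-G$ on $Y$ by a multiple of $\ov{-F}$.

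The crux — and the step I expect to need the most care — is the following. First one checks that $\Theta$ is supported on $E$: along a prime divisor of $Y$ not contracted by $\p$, the coefficients of $K_Y$ and of $G=-Z(\frb)_Y$ both equal the coefficient of $K_X$ there (using only that $X$ is regular in codimension one), and they cancel. On a component $E_i$ of $E$ lying over a smooth point of $X$, the coefficient of $\Theta$ is $1$ plus the discrepancy of $X$ along $E_i$, hence $\ge 1$; on a component $E_i$ over $0$ the coefficient of $F$ is strictly positive, so taking $c$ larger than the finitely many ratios $-\ord_{E_i}(\Theta)/\ord_{E_i}(F)$ makes $\Theta+cF\ge 0$ on $Y$, whence $\ov\Theta\ge-c\,\ov F=c\,Z(\frm)$ and finally $(A_{\X/X})^0\ge B^0\ge(\ov\Theta)^0\ge c\,Z(\frm)$. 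The subtlety is that when $X$ is far from log canonical the coefficients of $K_Y$ along exceptional divisors can be arbitrarily negative; what rescues the bound is precisely that $\frm$, and hence $F$, vanishes to positive order along every prime divisor over $0$ — which is exactly why the estimate is relative to $Z(\frm)$ rather than absolute.
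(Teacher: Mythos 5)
Your proof is correct and follows the same overall strategy as the paper's: reduce $A_{\X/X}$ to $A_{1,\X/X}$ (your $B$), peel off a nonnegative log-discrepancy term, and bound the remaining Cartier $b$-divisor below by a multiple of $Z(\frm)$ using the fact that $F=-Z(\frm)_Y$ has strictly positive order along every prime over $0$. The one genuine packaging difference is in the middle step: the paper subtracts $\overline{K_{X_\p}}$ on a resolution $\p$ chosen to be an isomorphism away from $0$, which makes the residual Cartier $b$-divisor $\overline{K_{X_\p}}+Z(\O_X(-K_X))$ lie over $0$ so that Lemma~\ref{lem:bounded} applies verbatim; you instead subtract $\overline{K_Y+E}$ via Lemma~\ref{lem:increase} on a log resolution of $\frm\cdot\frb$, and the resulting $\Theta=K_Y+E-G$ is supported on the whole exceptional locus, including possible components over smooth points of $X$. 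You handle this correctly by observing that on those components the coefficient of $\Theta$ is a log-discrepancy of a smooth germ, hence $\ge 1$, so adding $cF$ only needs to cure the components over $0$; your inline coefficient analysis there is exactly the proof of Lemma~\ref{lem:bounded}. The preliminary normalization $K_X\ge 0$ is a convenience rather than a necessity, since the paper treats $\O_X(-K_X)$ directly as a fractional ideal, but it does no harm.
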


\begin{proof} 
Since $Z(\O_X(-K_X)) \le \Env_X(K_X)$ by the definition of nef envelope, 
it follows
that $A_{\X/X}\ge A_{1,\X/X}$, and hence it suffices to check that $(A_{1,\X/X})^0$ is bounded below.
Let $\p$ be a resolution of the singularity of $X$, chosen to be an isomorphism away from $0$. For each divisorial valuation $\nu$ centered at $0$ we have
$$
\nu(A_{1,\X/X})=\nu\left((K_\X+1_\X)-\overline{K_{X_\p}}\right)+
\nu\left(\overline{K_{X_\p}}+Z(\O_X(-K_X))\right).
$$
The first term in the right-hand side is non-negative since it is equal to the log-discrepancy of the smooth variety $X_\p$ along $\nu$. On the other hand the Cartier $b$-divisor $\left(\overline{K_{X_\p}}+Z(\O_X(-K_X))\right)$ is determined over $0$ since $\O_X(-K_X)$ is locally principal outside $0$ by assumption (cf. Example \ref{eg:ppal}) and it also lies over $0$ by Proposition \ref{prop:envx}. We thus see that 
$$
\left(\overline{K_{X_\p}}+Z(\O_X(-K_X))\right)\in\CDiv(\X,0)
$$
and we conclude by Lemma \ref{lem:bounded} below.
\end{proof}

\begin{lem}\label{lem:bounded} Every $C\in\CDiv(\X,0)$ is bounded below.
\end{lem}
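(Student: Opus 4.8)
The plan is to compare $C$ directly with $Z(\frm)$ on a single high enough model. First I would fix a determination $\p$ of $C$ and a determination of the Cartier $b$-divisor $Z(\frm)$, and let $\p'$ be a model dominating both; then $C$, $Z(\frm)$, and hence the $\R$-Cartier $b$-divisor $D:=C-c\,Z(\frm)$ for any $c\in\R$, are all determined on $X_{\p'}$. The point of passing to such a $\p'$ is the elementary remark that effectivity of a Cartier $b$-divisor determined on $X_{\p'}$ can be tested on $X_{\p'}$ alone: if $D_{\p'}\ge 0$, then $D_\sigma\ge 0$ for every $\sigma\ge\p'$, since $D_\sigma$ is the pull-back to $X_\sigma$ of the effective Cartier divisor $D_{\p'}$; and then $D_\tau\ge 0$ for an arbitrary model $\tau$, by pushing forward $D_\sigma$ from any common refinement $\sigma$ of $\tau$ and $\p'$. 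Thus it suffices to produce $c>0$ with $C_{\p'}\ge c\,Z(\frm)_{\p'}$ as divisors on $X_{\p'}$.

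The second step is to pin down the supports of $C_{\p'}$ and $Z(\frm)_{\p'}$. Since $C$ lies over $0$, every prime divisor $E$ of $X_{\p'}$ with $\ord_E(C)\neq 0$ has center $0$ on $X$, hence is one of the finitely many prime divisors $G_1,\dots,G_r$ of $X_{\p'}$ contained in $(\p')^{-1}(0)$; write $C_{\p'}=\sum_j c_j G_j$ with $c_j\in\Z$. On the other hand, for any prime divisor $E$ over $X$ one has $\ord_E(-Z(\frm))=\min_{f\in\frm}\ord_E(f)$, a non-negative integer which is $>0$ precisely when the center of $E$ on $X$ lies in $V(\frm)=\{0\}$, i.e.\ precisely for $E\in\{G_1,\dots,G_r\}$; hence $-Z(\frm)_{\p'}=\sum_j m_j G_j$ with each $m_j\ge 1$. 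Then I would simply take $c:=\max\{1,|c_1|,\dots,|c_r|\}>0$, so that $c_j+c\,m_j\ge c_j+c\ge 0$ for every $j$, whence $C_{\p'}-c\,Z(\frm)_{\p'}=\sum_j(c_j+c\,m_j)G_j\ge 0$. By the first step this gives $C\ge c\,Z(\frm)$, which is exactly the assertion.

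I do not expect a genuine obstacle here: morally this is just the statement that $-Z(\frm)$, restricted to any high model, is an effective divisor whose support is the entire exceptional fibre over $0$ (because $\frm$ is $\frm$-primary, so its cosupport is $\{0\}$), and therefore dominates a suitable multiple of any other divisor supported over $0$. The only points requiring a little care are the two bookkeeping facts used above, namely that effectivity of a Cartier $b$-divisor can be checked on one sufficiently high model, and the identification of $\ord_E(Z(\frm))$ with $-\min_{f\in\frm}\ord_E(f)$ together with its vanishing locus; both are immediate from the definitions.
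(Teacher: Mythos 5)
Your proof is correct and takes essentially the same approach as the paper: the paper's one-line argument also rests on the observation that $Z(\frm)_\pi$ has every prime over $0$ in its support with (strictly) negative coefficient, so that a suitable $c>0$ makes $C_\pi - cZ(\frm)_\pi$ effective. You merely spell out the bookkeeping (passing to a common determination $\pi'$ of $C$ and $Z(\frm)$, and verifying that effectivity of the Cartier $b$-divisor $C - cZ(\frm)$ can be tested on the single model $\pi'$), which the paper leaves implicit.
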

\begin{proof} Let $\p$ be a determination of $C$ which is an isomorphism away from $0$. The result follows directly from the fact that $Z(\frm)_\p$ contains every $\p$-exceptional prime divisor $E$ in its support (since $\ord_E$ is centered at $0$).
\end{proof}

\subsection{Nef $b$-divisors over $0$}
We shall that an $\R$-Weil $b$-divisor over $0$ is \emph{nef} if its class in $N^1(\X/X)$ is $X$-nef. If $W$ is an $\R$-Weil $b$-divisor over $0$ that is bounded below then $\Env_\X(W)$ is well-defined, nef, and it lies over $0$. 

By a result of Izumi~\cite{izumi} for every two divisorial valuations $\nu,\nu'$ on $X$ centered at $0$ there is a constant $c = c(\n,\n') >0$ such that 
$$
c^{-1} \nu (f) \le \nu'(f) \le c \nu(f)
$$
for every $f\in \O_X$. This result extends to nef $b$-divisors by approximation:

\begin{thm}\label{thm:izumi} Given two divisorial valuations $\nu,\nu'$ centered at $0$ there exists $c>0$ such that 
$$c\nu(W)\le\nu'(W)\le c^{-1}\nu(W)$$
for every $X$-nef $\R$-Weil $b$-divisor $W$ such that $W\le 0$ (which amounts to $W_X\le 0$ by the negativity lemma). 
\end{thm}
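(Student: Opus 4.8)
The plan is to reduce the statement to Izumi's inequality for regular functions at $0$ recalled above: I will write $W$ as an infimum over models of nef envelopes, each of which is the $b$-divisor $Z(\fra_\bullet)$ attached to an honest graded sequence of ideals of $\O_X$, and then apply Izumi's inequality to these ideals with a constant that is \emph{uniform} in the models.

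In detail, I would argue as follows. Since $W$ is $X$-nef and $W\le W$, the set of $X$-nef $b$-divisors dominated by $W$ is nonempty, so $\Env_\X(W)$ is well-defined (Proposition~\ref{prop:largest}), it equals $W$, and by Remark~\ref{rmk:Env-comparison} we have $W=\inf_\p\Env_\p(W_\p)$; moreover $\Env_\p(W_\p)\ge W$ for every model $\p$ by the negativity lemma (Proposition~\ref{prop:neg}). Because $W\le 0$, each trace $W_\p$ is $\le 0$, so the sheaves $\fra^{(\p)}_m:=\p_*\O_{X_\p}(mW_\p)$ are honest ideal sheaves of $\O_X$, and by definition $\Env_\p(W_\p)=Z(\fra^{(\p)}_\bullet)=\lim_m\tfrac1m Z(\fra^{(\p)}_m)$ coefficient-wise (Proposition~\ref{prop:graded-sequence}). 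Next, Izumi's theorem~\cite{izumi} provides a constant $c=c(\nu,\nu')\ge 1$, depending only on $\nu,\nu'$ (and on $X$), with $c^{-1}\nu(f)\le\nu'(f)\le c\,\nu(f)$ for every $f\in\O_X$; taking the minimum over the sections of an ideal, the \emph{same} $c$ gives $c^{-1}\nu(\fra)\le\nu'(\fra)\le c\,\nu(\fra)$ for every nonzero ideal $\fra\subset\O_X$, in particular for all the $\fra^{(\p)}_m$. Using $\nu(Z(\fra))=-\nu(\fra)$, dividing by $m$ and letting $m\to\infty$, this turns into
$$
c\,\nu\big(\Env_\p(W_\p)\big)\le\nu'\big(\Env_\p(W_\p)\big)\le c^{-1}\nu\big(\Env_\p(W_\p)\big)
$$
for every $\p$, with $c$ independent of $\p$. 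Finally I would pass to the infimum over $\p$: the right-hand inequality gives $\nu'(W)=\inf_\p\nu'(\Env_\p(W_\p))\le c^{-1}\inf_\p\nu(\Env_\p(W_\p))=c^{-1}\nu(W)$, while $\Env_\p(W_\p)\ge W$ together with the left-hand inequality gives $\nu'(\Env_\p(W_\p))\ge c\,\nu(\Env_\p(W_\p))\ge c\,\nu(W)$ for all $\p$, whence $\nu'(W)\ge c\,\nu(W)$ after taking the infimum.

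The key point that makes this work --- and the only place where genuine input is used --- is the uniformity of Izumi's constant over the ideals $\fra^{(\p)}_m$, which is exactly what Izumi's statement supplies (one $c$ serves all $f\in\O_X$). The remaining steps are bookkeeping: keeping straight the sign reversal built into $\nu(Z(\fra))=-\nu(\fra)$ combined with $W\le 0$, and checking that the infimum over models may be interchanged with the two-sided estimate, which it can since each $\nu$ is simply the evaluation of a coefficient and is therefore monotone and continuous for coefficient-wise convergence.
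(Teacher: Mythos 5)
Your proof is correct and follows essentially the same route as the paper: both reduce to the case of honest ideal sheaves via the fact that $W=\inf_\p\Env_\p(W_\p)$ with each envelope a coefficient-wise limit of $\tfrac1m Z(\fra_m)$, and both exploit that Izumi's constant is uniform over all ideals. Your version merely spells out the sign-reversal and the interchange of infima more explicitly, whereas the paper compresses this into a two-step reduction.
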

\begin{proof} Since $\Env_\p(W_\p)$ decreases coefficient-wise to $W$ as $\p\to\infty$ by Proposition \ref{prop:largest}, it is enough to treat the case where $W=\Env_\X(C)$ for some $\R$-Cartier $b$-divisor $C\le 0$. But we then have 
$$W=\lim_{m\to\infty}\tfrac 1m Z(\O_X(mC))$$
with $\O_X(mC)\subset\O_X$ so we are reduced to the case of an ideal, for which the result directly follows from Izumi's theorem.
\end{proof}

\begin{cor}\label{cor:izumi}
For each $X$-nef $\R$-Weil $b$-divisor $W$ such that $W\le 0$ and $W^0\neq 0$ there exists $\ep>0$ such that
$$
W\le\ep Z(\frm). 
$$
\end{cor}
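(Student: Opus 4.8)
The plan is to establish the desired inequality on one particular model of $X$ and then transfer it to all of $\X$ by means of the negativity lemma, Proposition~\ref{prop:neg}.

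First I would use Theorem~\ref{thm:izumi} to upgrade the hypothesis $W^0\neq0$ into the statement that $\nu(W)<0$ for \emph{every} divisorial valuation $\nu$ centered at $0$. Indeed, $W^0\neq0$ provides one such valuation $\nu_0$ with $\nu_0(W)\neq0$, hence $\nu_0(W)<0$ since $W\le0$; feeding the pair $(\nu_0,\nu)$ into Theorem~\ref{thm:izumi} gives a constant $c>0$ with $\nu(W)\le c^{-1}\nu_0(W)<0$. (Here only the component $W^0$ matters, as $\nu(W^{X\smallsetminus0})=0$ for $\nu$ centered at $0$, while $W^{X\smallsetminus0}$ is anyway $\le0$ and has no component centered at $0$, so it causes no trouble below.)

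Next I would pass to the normalized blow-up $\pi_0\colon X_{\pi_0}\to X$ of $\frm$. Since $\frm$ is cosupported at the isolated point $0$, the morphism $\pi_0$ is an isomorphism over $X\smallsetminus\{0\}$, so its exceptional prime divisors $F_1,\dots,F_s$ are all centered at $0$, and $\pi_0$ determines $Z(\frm)$ with $Z(\frm)_{\pi_0}=-\sum_j m_jF_j$ for integers $m_j\ge1$. By the previous step the coefficient of $F_j$ in $W_{\pi_0}$ is $-c_j$ with $c_j>0$ for each $j$. Put $\ep:=\min_j c_j/m_j>0$. Comparing coefficients of $W_{\pi_0}$ with those of $\ep\,Z(\frm)_{\pi_0}$ one prime divisor at a time on $X_{\pi_0}$ — on the $F_j$ by the choice of $\ep$, and on the remaining prime divisors (strict transforms of divisors of $X$) because there $W_{\pi_0}\le0$ while $Z(\frm)_{\pi_0}$ has coefficient $0$ — yields $W_{\pi_0}\le\ep\,Z(\frm)_{\pi_0}$.

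Finally I would invoke the negativity lemma (Proposition~\ref{prop:neg}) to get $W\le\Env_{\pi_0}(W_{\pi_0})$, use that $D\mapsto\Env_{\pi_0}(D)$ is nondecreasing and positively homogeneous (Proposition~\ref{prop:concave}), and use that $Z(\frm)_{\pi_0}$ is relatively globally generated over $X$ (Lemma~\ref{lem:difference}), hence $X$-nef, so that $\Env_{\pi_0}(Z(\frm)_{\pi_0})=\overline{Z(\frm)_{\pi_0}}=Z(\frm)$ by Corollary~\ref{cor:env}. Combining,
\[
W\ \le\ \Env_{\pi_0}(W_{\pi_0})\ \le\ \Env_{\pi_0}\!\bigl(\ep\,Z(\frm)_{\pi_0}\bigr)\ =\ \ep\,Z(\frm),
\]
which is the assertion. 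The only step that is not pure bookkeeping is the first one: without it, $\ep$ could be forced to be $0$, and it is precisely there that Izumi's inequality for nef $b$-divisors (Theorem~\ref{thm:izumi}) is needed to rule this out. Everything else is elementary manipulation of coefficients on the fixed model $X_{\pi_0}$ together with the negativity lemma.
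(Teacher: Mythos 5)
Your proof is correct and takes essentially the same route as the paper: upgrade $W^0\neq0$ to $\nu(W)<0$ for all $\nu$ centered at $0$ via Theorem~\ref{thm:izumi}, then establish $W_{\pi_0}\le\ep Z(\frm)_{\pi_0}$ on the normalized blow-up of $\frm$ and descend by the negativity lemma. You merely unpack the paper's closing phrase ``the result follows by the negativity lemma'' into its constituent steps (monotonicity and homogeneity of $\Env_{\pi_0}$ and $X$-nefness of $Z(\frm)_{\pi_0}$), which is sound.
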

\begin{proof} Since $W^0\neq 0$ there exists a divisorial valuation $\nu_0$ centered at $0$ such that $\nu_0(W)<0$, and it follows that $\nu(W)<0$ for \emph{all} divisorial valuations centered at $0$ by Theorem \ref{thm:izumi}.

Now let $\p$ be the normalized blow-up of $\frm$. Since $W_\p$ contains each $\p$-exceptional prime in its support there exists $\ep>0$ such that $W_\p\le\ep Z(\frm)_\p$ and the result follows by the negativity lemma.
\end{proof}

For nef envelopes of Weil divisors with integer coefficients this result can be made uniform as follows:
\begin{thm}\label{thm:uniform}
There exists $\ep>0$ only depending on $X$ such that
$$
\Env_X(-D)\le\ep Z(\frm)
$$
for all effective Weil divisors (with integer coefficients) $D$ on $X$ containing $0$. 
\end{thm}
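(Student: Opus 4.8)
The plan is to reduce the assertion, which is an inequality of $b$-divisors, to a uniform numerical estimate on a single fixed birational model, and then to settle that estimate by a ``diffusion'' argument on the exceptional fibre.

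\emph{Set-up and reduction to one model.} Fix once and for all a projective log-resolution $\pi\colon Y\to X$ of the point $0$ which is an isomorphism over $X\setminus\{0\}$ and which dominates the normalized blow-up of $\frm$; write $\pi^{-1}(0)=E=E_1\cup\cdots\cup E_r$, with the $E_i$ the (smooth, projective, and $\pi$-contracted) prime components of the SNC divisor $E$. Since $\frm\cdot\O_Y=\O_Y(-F)$ with $F=\sum_i f_iE_i$, $f_i:=\ord_{E_i}(\frm)\ge1$, the $\R$-Cartier $b$-divisor $Z(\frm)$ is determined on $Y$ with $Z(\frm)_Y=-F$, and $-F$ is $X$-nef. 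Let now $D$ be any effective Weil divisor with integer coefficients passing through $0$. As $-D\le0$, $D\mapsto\Env_X(D)$ is non-decreasing, and $\Env_X(0)=0$, we get $\Env_X(-D)\le0$; being $X$-nef, it satisfies $\Env_X(-D)\le\Env_Y(\Env_X(-D)_Y)$ by Proposition~\ref{prop:neg}, and by monotonicity of $\Env_Y$ together with the first bullet of Corollary~\ref{cor:env} (applied to the $X$-nef $\R$-Cartier divisor $\epsilon Z(\frm)_Y$) it therefore suffices to produce $\epsilon>0$, depending only on $X$ and the fixed $Y$, with
$$
\Env_X(-D)_Y\ \le\ \epsilon\,Z(\frm)_Y=-\epsilon F .
$$
Writing $\Env_X(-D)_Y=-\widetilde D-\sum_i n_i(D)E_i$ — where $\widetilde D$ is the strict transform of $D$, which carries all of the non-exceptional part because $\pi_*\Env_X(-D)_Y=-D$ by Proposition~\ref{prop:envx}, and $n_i(D)\ge0$ because $\Env_X(-D)\le0$ — the displayed inequality is, since the $f_i$ are fixed, equivalent to a uniform lower bound $n_i(D)\ge\delta>0$ valid for all $i$ and all such $D$.

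\emph{The uniform lower bound.} Because $\Env_X(-D)$ is $X$-nef, Lemma~\ref{lem:mov} says $\Env_X(-D)_Y$ is $X$-movable, so its restriction to each $E_j$ is pseudoeffective on the projective variety $E_j$: the class $-\widetilde D|_{E_j}-\sum_i n_i(D)(E_i|_{E_j})$ is pseudoeffective for each $j$. Fix very ample classes $h_j$ on $E_j$ and set $a_{ij}:=-(E_i|_{E_j})\cdot h_j^{\dim E_j-1}$ and $b_j(D):=\widetilde D|_{E_j}\cdot h_j^{\dim E_j-1}\ge0$; intersecting the pseudoeffective class above with $h_j^{\dim E_j-1}$ yields $\sum_i a_{ij}n_i(D)\ge b_j(D)$ for all $j$. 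Here $a_{ij}\le 0$ for $i\ne j$, with $a_{ij}\le-1$ precisely when $E_i\cap E_j\ne\emptyset$ (choosing the $h_j$ suitably), whereas $a_{jj}>0$ for every $j$ — this is a fixed finite verification using that $\pi^{-1}(0)$ is connected and $-F$ is $\pi$-nef and $\pi$-big, so that $f_j(-E_j|_{E_j})=(-F|_{E_j})+\sum_{i\ne j}f_i(E_i|_{E_j})$ is a non-zero pseudoeffective class on $E_j$. Since $D$ passes through $0$, its strict transform meets $E$, hence $\widetilde D|_{E_{j_0}}$ is a non-zero effective divisor class on some $E_{j_0}$ and $b_{j_0}(D)\ge1$. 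Plugging this into the $j_0$-th inequality gives $n_{j_0}(D)\ge a_{j_0j_0}^{-1}\ge\delta_1:=(\max_j a_{jj})^{-1}>0$; then, walking outward along the connected dual graph of $E$ and using at each vertex $j$ the inequality $a_{jj}n_j(D)\ge\sum_{i\ne j}|a_{ij}|\,n_i(D)$ with $|a_{ij}|\ge1$ for neighbours $i,j$, one propagates the bound and obtains $n_i(D)\ge\delta_1^{\,r}=:\delta$ for all $i$. As $\delta$ and the $f_i$ depend only on $Y$, taking $\epsilon:=\delta/\max_j f_j$ completes the proof.

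\emph{Main obstacle.} The routine parts here (monotonicity and push-forward compatibility of nef envelopes, $X$-nefness of $-F$, and the linear-algebra diffusion once the inequalities $\sum_i a_{ij}n_i(D)\ge b_j(D)$ are in hand) are harmless; the genuinely delicate step — where I expect the real work to lie — is the passage in higher dimension from ``$\Env_X(-D)_Y$ is $X$-movable'' to a \emph{usable} numerical inequality on each component $E_j$. In the surface case one has an honest negative-definite intersection matrix and the $n_i(D)$ are computed by solving $\Env_X(-D)_Y\cdot E_j=0$, so positivity of the relevant quantities is automatic; here one only has pseudoeffectivity of restricted classes, and one must verify carefully that the ``self-terms'' $a_{jj}$ are strictly positive (not merely non-negative), and that the diffusion along $E$ does not degenerate on components on which $-F|_{E_j}$ fails to be big — both of which should follow from connectedness of $\pi^{-1}(0)$, but require a close look at the geometry of the contraction.
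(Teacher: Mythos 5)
Your proof is correct and rests on the same core ingredients as the paper's — the reduction to a single model via the negativity lemma (Proposition~\ref{prop:neg}) and Corollary~\ref{cor:env}, the decomposition $\Env_X(-D)_Y=-\widehat D-\sum_i n_i(D)E_i$ coming from Proposition~\ref{prop:envx}, the pseudoeffectivity of the restrictions $\Env_X(-D)_Y|_{E_j}$ from Lemma~\ref{lem:mov}, and the degree-$\geq 1$ positivity supplied by the strict transform of $D$ meeting the exceptional fibre — but the final step is handled by a genuinely different device. The paper chooses a single $\pi$-ample $\pi$-exceptional Cartier divisor $A=-\sum a_jE_j$ and intersects the identity with $-A^{n-1}$; this collapses all the per-component constraints into the one inequality $\sum_jc_j(D)\,(E_j\cdot A^{n-1})\geq 1$, which a priori yields only a lower bound on $\max_jc_j(D)$. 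The paper then promotes this to a lower bound on \emph{every} $c_j(D)$ by appealing to Izumi's theorem in the $b$-divisor form of Theorem~\ref{thm:izumi}. Your argument instead keeps the constraints $\sum_i a_{ij}n_i(D)\geq b_j(D)$ on each $E_j$ separate and propagates the bound along the connected dual graph of $E$, thereby bypassing Izumi's theorem entirely: in effect the diffusion step proves by hand the specific Izumi-type comparability of the finitely many $\ord_{E_j}$ that is needed. The delicate point you flag — strict positivity of $a_{jj}$ — does go through: $f_j(-E_j|_{E_j})$ is pseudoeffective and nonzero (nonzero because $E$ is connected, so either $E_j$ meets another $E_i$, giving a nonzero effective summand, or $E=E_j$ alone and $-F|_{E_j}$ is the pullback of an ample class under a birational map, hence big and nef), and a nonzero pseudoeffective class has strictly positive degree against $h_j^{\dim E_j-1}$ since that class lies in the interior of the cone dual to $\overline{\mathrm{Eff}}(E_j)$. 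The paper's route is shorter and gives a cleaner constant; yours is more self-contained since it does not use Izumi's theorem.
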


\begin{proof}
By Hironaka's resolution of singularities we may choose a smooth birational model $X_\p$ which dominates the blow-up of $\frm$ and is isomorphic to $X$ away from $0$, and such that there exists a $\p$-ample and $\p$-exceptional Cartier divisor $A$ on $X_\p$. If we denote by $E_1,...,E_r$ the $\p$-exceptional prime divisors then $A=-\sum_j a_j E_j$ with $a_j\ge 1$ by the negativity lemma. 

By the negativity lemma the desired result means that there exists $\ep>0$ such that 
for each effective Weil divisor $D$ through $0$ on $X$ we have
$$
\Env_X(-D)_\p\le\ep Z(\frm)_\p.
$$
If we set $c_j(D):=-\ord_{E_j}\Env_X(-D)$ then in view of Theorem \ref{thm:izumi} this amounts to proving the existence of $\ep>0$ such that 
$$
\max_{1\le j\le r} c_j(D)\ge\ep 
$$
for each $D$. 
Note that 
\begin{equation}\label{equ:cji}
\sum_jc_j(D) E_j=-\Env_X(-D)_\p-\widehat D_\p
\end{equation}
by Proposition \ref{prop:envx}. Now we have on the one hand 
$$
-A^{n-1}\cdot\Env_X(-D)_\p=\sum a_j E_j\cdot A^{n-2}\cdot\Env_X(-D)_\p
$$
$$
=\sum_j a_j (A|_{E_j})^{n-2}\cdot(\Env_X(-D)_\p|_{E_j})\ge 0
$$
since $A|_{E_j}$ is ample and $\Env_X(-D)_\p|_{E_j}$ is pseudo-effective by Lemma \ref{lem:mov}. On the other hand 
$$
-A^{n-1}\cdot\widehat D_\p=\sum_j a_j (A|_{E_j})^{n-2}\cdot(\widehat D_\p|_{E_j})\ge 1
$$ 
since $\widehat D_\p|_{E_j}$ is an effective Cartier divisor on $E_j$, and is non-zero for at least one $j$. We thus get
$\sum_jc_j(D)(E_j\cdot A^{n-1})\ge 1$ from (\ref{equ:cji}) and we infer that 
$$
\max_j c_j(D)\ge\ep:=1/\max_j(E_j\cdot A^{n-1}).
$$ 
\end{proof}

We conclude this section by the following crucial consequence of Theorem \ref{thm:approx}.
 
\begin{thm}\label{thm:approxisol} Let $C\in\CDiv(\X,0)$ and set $W:=\Env_\X(C)$. Then there exists a sequence of $\frm$-primary ideals $\frb_k$ and a sequence of positive rational numbers $c_k\to 0$ such that:
\begin{itemize}
\item $c_k Z(\frb_k)\ge W$ for all $k$.
\item $\lim_{k\to\infty} c_k Z(\frb_k)=W$ coefficient-wise. 
\end{itemize}
\end{thm}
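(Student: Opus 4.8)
The plan is to represent $W$ as the $b$-divisor of an explicit graded sequence of $\frm$-primary ideals, to feed the rescalings of that sequence into the approximation Theorem~\ref{thm:approx}, and then to adjust the normalizing constants so as to make the approximation one-sided. We may assume $W\neq 0$, the remaining case being trivial.

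First I would identify the graded sequence. Since $C$ lies over $0$, its trace on $X$ vanishes (a prime divisor of $X$ has codimension one, so it cannot have center $0$, as $\dim X\ge 2$); hence $\fra_m:=\O_X(mC)$ is an honest ideal sheaf for every $m$, equal to $\O_X$ away from $0$ because $C$ is determined over $0$. Choosing a determination $\p$ of $C$ that is an isomorphism away from $0$, the definition of $\Env_\p$ together with Remark~\ref{rmk:Env-comparison} give $W=\Env_\X(C)=\Env_\p(C_\p)=Z(\fra_\bullet)$ with $\fra_\bullet=(\fra_m)_{m\ge 0}$. As $W\le 0$ is $X$-nef with $W^0=W\neq 0$, Corollary~\ref{cor:izumi} yields $\ep'>0$ with $W\le\ep'\,Z(\frm)$; since $\tfrac1m Z(\fra_m)\le Z(\fra_\bullet)=W$ and the right-hand side is $<0$ on every divisorial valuation centered at $0$, this forces $\fra_m\subsetneq\O_X$, hence $\fra_m$ to be $\frm$-primary, for every $m\ge 1$.

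For each $k\ge 1$ I would then set $\frc^{(k)}_\bullet:=(\fra_{km})_{m\ge 0}$, a graded sequence of $\frm$-primary ideals whose associated $b$-divisor is $Z(\frc^{(k)}_\bullet)=kW$, and define $\frb_k:=\J(\frc^{(k)}_\bullet)$. Applying Theorem~\ref{thm:approx} to $\frc^{(k)}_\bullet$ gives, with the fixed exceptional $b$-divisor $T:=Z(\Jac_X)+Z(\frd_\bullet(K_X))$,
$$
kW+T\ \le\ Z(\frb_k)\ \le\ kW+A_{\X/X}.
$$
In particular $\tfrac1k Z(\frb_k)\to W\neq 0$ coefficient-wise, so $\frb_k$ is a proper ideal for $k\gg 0$; being cosupported at $0$ it is then $\frm$-primary, and $Z(\frb_k)$ is a nef Cartier $b$-divisor by Lemma~\ref{lem:difference}.

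It remains to pick the rational numbers $c_k$ so that $c_kZ(\frb_k)\ge W$ notwithstanding the defect term $T$; this is the crux, and it is handled via Izumi's inequality. Since $\Jac_X$ and $\frd(K_X)$ are $\frm$-primary (or the unit ideal --- the latter because $K_X$ is Cartier on $X\setminus 0$) and $\frd_\bullet(K_X)$ is a graded sequence of ideals, super-additivity gives $Z(\frd_\bullet(K_X))\ge Z(\frd(K_X))$, whence $T\ge Z\big(\Jac_X\cdot\frd(K_X)\big)\ge c_1\,Z(\frm)$ for some $c_1>0$ (Example~\ref{eg:ppal}, Lemma~\ref{lem:bounded}). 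Comparing $T\ge c_1Z(\frm)$ with $W\le\ep'Z(\frm)$, and using that $T$ and $W$ both lie over $0$, one checks that $T-\ep W\ge 0$ for $\ep:=c_1/\ep'$ (which may be taken rational): on valuations not centered at $0$ both sides vanish, and on the others one divides through by the negative number $Z(\frm)$. Setting $c_k:=1/(k+\ep)$, the lower bound above yields
$$
c_kZ(\frb_k)\ \ge\ \tfrac1{k+\ep}\big(kW+T\big)\ =\ W+\tfrac1{k+\ep}\big(T-\ep W\big)\ \ge\ W,
$$
while the upper bound yields $c_kZ(\frb_k)\le\tfrac{k}{k+\ep}W+\tfrac1{k+\ep}A_{\X/X}$, which converges to $W$ coefficient-wise; squeezing, $c_kZ(\frb_k)\to W$ coefficient-wise, and $c_k\to 0$. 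The main obstacle is precisely this last paragraph: turning the two-sided estimate coming from Theorem~\ref{thm:approx} into an approximation strictly from above requires the uniform comparability of both $W$ and the singular correction $T$ with $Z(\frm)$, which is exactly what Corollary~\ref{cor:izumi} (together with Lemma~\ref{lem:bounded}) provides.
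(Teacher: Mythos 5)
Your argument is correct and matches the paper's proof essentially step for step: both write $W=Z(\fra_\bullet)$ with $\fra_m=\O_X(mC)$, take $\frb_k$ to be the asymptotic multiplier ideal of the $k$-th rescaled graded sequence, invoke Theorem~\ref{thm:approx} to obtain $Z(\frb_k)\ge kW+Z(\Jac_X)+Z(\frd_\bullet(K_X))$, and then use Lemma~\ref{lem:bounded} together with Corollary~\ref{cor:izumi} to absorb the Jacobian/defect correction term into a multiple $\ep W$, finishing with $c_k=1/(k+\ep)$. The only cosmetic deviations are your use of the literal graded sequence $\frc^{(k)}_\bullet=(\fra_{km})_m$ in place of the paper's shorthand $\fra_\bullet^k$, and your explicit (though slightly mislabeled as ``trivial'') treatment of the degenerate case $W=0$.
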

\begin{proof} Consider the graded sequence of $\frm$-primary ideals $\fra_m:=\O_X(mW)=\O_X(mC)$ and set $\frb_k:=\J(\fra_\bullet^k)$. By Theorem \ref{thm:approx} we have in particular 
$$
Z(\frb_k)\ge k W+Z(\frd(K_X))+Z(\Jac_X)
$$
and $\tfrac 1k Z(\frb_k)\to W$ coefficient-wise. Since $0\in X$ is an isolated singularity we see that both $\frd(K_X)$ and $\Jac_X$ are $\frm$-primary ideals and Lemma \ref{lem:bounded} yields $c>0$ such that
$$Z(\frd(K_X))+Z(\Jac_X)\ge c Z(\frm).$$
On the other hand there exists $\ep>0$ such that $W\le\ep Z(\frm)$ by Corollary \ref{cor:izumi} and we conclude that there exists $c>0$ such that 
$$
Z(\frb_k)\ge k W+c W
$$
for all $k$. There remains to set $c_k:=1/(k+c)$. 
\end{proof}

\subsection{Intersection numbers of nef $b$-divisors}

We indicate in this subsection how to extend to the singular case the local intersection theory of nef $b$-divisors introduced in~\cite[\S 4]{BFJ} in the smooth case. The main point is to replace the approximation result \cite[Proposition 3.13]{BFJ} by Theorem \ref{thm:approxisol}. 

Let $C_1,...,C_n$ be $\R$-Cartier $b$-divisors over $0$. 
Pick a common determination $\p$ which is an isomorphism away from $0$ and set
$$
C_1\cdot ...\cdot C_n:= C_{1,\p}\cdot ...\cdot  C_{n,\p}.
$$ 
The right-hand side is well-defined since $C_{1,\p}$ has compact support and it does not depend on the choice of $\p$ by projection formula, since $C_{i,\p'} = \m^*C_{i,\p}$
for any higher model $\m \colon X_{\p'} \to X_\p$.

The following property is a direct consequence of the definition of $Z(\fra_i)$
and the formula displayed in \cite[p.~92]{Laz1}.

\begin{prop}\label{prop:ram}
Let $\fra_1, \dots, \fra_n\subset \O_X$ be $\frm$-primary ideals. Then
$$
- Z(\fra_1)\cdot ...\cdot Z(\fra_n) = e ( \fra_1, ..., \fra_n)
$$
where $e(\fra_1, ..., \fra_n)$ denotes the mixed multiplicity 
(see e.g.~\cite[p.~91]{Laz1} for a definition).
\end{prop}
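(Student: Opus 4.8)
The plan is to reduce the identity to a single birational model of $X$, on which it becomes the classical intersection-theoretic formula for mixed multiplicities.

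First I would choose a common determination $\p\colon X_\p\to X$ of the Cartier $b$-divisors $Z(\fra_1),\dots,Z(\fra_n)$; concretely, take $X_\p$ to dominate the normalized blow-up of $X$ along each $\fra_i$. Since every $\fra_i$ is $\frm$-primary, each of these blow-ups is an isomorphism over $X\setminus\{0\}$ (cf.\ Example~\ref{eg:ppal}), so $\p$ may, and will, be chosen to be an isomorphism away from $0$. On $X_\p$ the ideal $\fra_i\cdot\O_{X_\p}$ is then invertible, with cosupport contained in the fibre $\p^{-1}(0)$; hence $\fra_i\cdot\O_{X_\p}=\O_{X_\p}(-E_i)$ for a unique effective Cartier divisor $E_i$ supported on the exceptional locus of $\p$, and in particular each $E_i$ has compact support. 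By the very definition of $Z(\fra_i)$ this says exactly $Z(\fra_i)_\p=-E_i$.

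Next, using the definition of the intersection product of Cartier $b$-divisors over $0$ recalled just above (the $n$-fold intersection number being defined precisely because the $E_i$ are compactly supported),
$$
Z(\fra_1)\cdot ...\cdot Z(\fra_n)=Z(\fra_1)_\p\cdot ...\cdot Z(\fra_n)_\p=(-1)^n\,\bigl(E_1\cdot ...\cdot E_n\bigr).
$$
I would then invoke the formula of \cite[p.~92]{Laz1}, which computes the mixed multiplicity on exactly such a model as
$$
e(\fra_1,\dots,\fra_n)=(-1)^{n-1}\,\bigl(E_1\cdot ...\cdot E_n\bigr)
$$
(the diagonal case $\fra_1=\dots=\fra_n=\fra$ being the classical identity $e(\fra)=(-1)^{n-1}\int_{X_\p}E^n$, and the general statement following from it by multilinearity of both sides in the $\fra_i$). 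Comparing the two displays yields $-\,Z(\fra_1)\cdot ...\cdot Z(\fra_n)=e(\fra_1,\dots,\fra_n)$, as desired.

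There is essentially no genuine obstacle in this argument; the only points that need a word of care are that $\frm$-primariness of the $\fra_i$ is used twice --- once to ensure that $\p$ can be taken to be an isomorphism over $X\setminus\{0\}$, and once to guarantee that the $E_i$ are $\p$-exceptional, hence compactly supported so that the intersection number makes sense --- and that the sign convention in the definition of $Z(\cdot)$, under which a genuine ideal sheaf gives an anti-effective $b$-divisor, is exactly what accounts for the single minus sign appearing in the statement.
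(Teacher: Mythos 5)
Your proof is correct and fills in, faithfully, exactly what the paper itself asserts in one line (that the proposition is ``a direct consequence of the definition of $Z(\fra_i)$ and the formula displayed in \cite[p.~92]{Laz1}''): you pass to a common determination that is an isomorphism off $0$, identify $Z(\fra_i)_\p=-E_i$ with $E_i$ effective exceptional, and match signs against the mixed-multiplicity formula. The sign bookkeeping ($-(-1)^n=(-1)^{n-1}$) and the two uses of $\frm$-primariness are exactly the points worth spelling out, and you handle them correctly.
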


The intersection numbers of \emph{nef} $\R$-Cartier $b$-divisors $C_1,...,C_n$, $C'_1,...,C'_n$ over $0$ satisfy the monotonicity property: 
$$
C_1\cdot ...\cdot C_n\le C'_1\cdot ...\cdot  C'_n
$$
if $C_i\le C'_i$ for each $i$.

\begin{defi}\label{defi:inter} If $W_1,...,W_n$ are arbitrary nef $\R$-Weil $b$-divisors over $0$ we set
$$
W_1\cdot ...\cdot W_n:= \inf_{C_i\ge W_i}(C_1\cdot ...\cdot C_n) \in [-\infty, +\infty[
$$
where the infimum is taken over all nef $\R$-Cartier $b$-divisors $C_i$ over $0$ such that $C_i\ge W_i$ for each $i$. 
\end{defi}
Note that $(W_1\cdot...\cdot W_n)$ is finite when all $W_i$ are bounded below. This is for instance the case if each $W_i$ is the nef envelope of a Cartier $b$-divisor by Lemma \ref{lem:bounded}. 

The next theorem summarizes the main properties of the intersection product. The non-trivial part of the assertion is additivity, which requires the approximation theorem. 

\begin{thm}\label{thm:inter}
The intersection product $(W_1,\dots,W_n) \mapsto W_1\cdot ...\cdot W_n$
of nef $\R$-Weil $b$-divisors over $0$ is 
symmetric, upper semi-continuous, and continuous along monotonic families (for the topology of coefficient-wise convergence). 

It is also homogeneous, additive, and non-decreasing in each variable.
Furthermore, $W_1\cdot ...\cdot W_n < 0$ if $W_i\neq 0$ for each $i$. \end{thm}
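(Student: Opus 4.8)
The plan is to obtain the formal properties (symmetry, homogeneity, monotonicity, finiteness, strict negativity) almost for free from Definition~\ref{defi:inter}, to deduce the topological properties (upper semicontinuity, continuity along monotonic families) from the classical facts about nef Cartier $b$-divisors by an approximation argument, and then to prove additivity, which is the only delicate point. Throughout, recall that by the negativity lemma (Proposition~\ref{prop:neg}) every nef $\R$-Weil $b$-divisor $W$ over $0$ satisfies $W\le 0$, so the zero $b$-divisor is always an admissible competitor and the infimum defining the product runs over a non-empty set.

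Symmetry follows at once from the symmetry of the intersection product of nef Cartier $b$-divisors over $0$; homogeneity follows from its multilinearity together with the fact that, for $t>0$, multiplication by $t$ is a bijection between the nef Cartier competitors for $W_i$ and those for $tW_i$; and monotonicity holds because enlarging $W_i$ shrinks the admissible family of $C_i$. For finiteness, a lower bound $W_i\ge c_iZ(\frm)$ forces $C_1\cdots C_n\ge (c_1Z(\frm))\cdots(c_nZ(\frm))>-\infty$ for every admissible tuple, by monotonicity of the Cartier product; and for the strict inequality $W_1\cdots W_n<0$ when all $W_i\ne 0$, Corollary~\ref{cor:izumi} provides $\ep>0$ with $W_i\le \ep Z(\frm)$ for all $i$, whence, using monotonicity, homogeneity and the identity $Z(\frm)^n=-e(\frm)$ from Proposition~\ref{prop:ram},
$$
W_1\cdots W_n\ \le\ (\ep Z(\frm))^n\ =\ -\,\ep^n\, e(\frm)\ <\ 0 .
$$

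For the topological properties I would follow \cite[\S4]{BFJ} closely, the only substitute being that the analytic approximation used there is replaced by Theorem~\ref{thm:approxisol}: every bounded-below nef $\R$-Weil $b$-divisor $W$ over $0$ is a coefficient-wise limit of nef Cartier $b$-divisors over $0$ lying above it. Indeed $\Env_\p(W_\p)\downarrow W$ over the cofinal family of smooth (hence $\Q$-factorial) models $\p$ by Proposition~\ref{prop:largest}, each $\Env_\p(W_\p)$ is the nef envelope $\Env_\X(C)$ of an $\R$-Cartier $b$-divisor $C$ over $0$, and Theorem~\ref{thm:approxisol} writes $\Env_\X(C)$ as a limit from above of nef Cartier $b$-divisors over $0$. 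Feeding these approximations into the intersection product of nef Cartier $b$-divisors, which is multilinear and continuous on the finite-dimensional spaces $N^1(X_\p/X)$, and using the monotonicity already established to control the limits from one side, one obtains upper semicontinuity for coefficient-wise convergence and continuity along monotonic families.

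The heart of the matter is additivity, say in the first variable: $(W_1+W_1')\cdot W_2\cdots W_n = W_1\cdot W_2\cdots W_n + W_1'\cdot W_2\cdots W_n$, where $W_1+W_1'$ is again nef over $0$. The inequality ``$\le$'' is immediate: if $C_1\ge W_1$ and $C_1'\ge W_1'$ are nef Cartier over $0$, then $C_1+C_1'\ge W_1+W_1'$ is again nef Cartier over $0$ and the Cartier product is additive. The reverse inequality is the delicate one, and this is where Theorem~\ref{thm:approxisol} is indispensable: one cannot in general split a nef Cartier $b$-divisor dominating $W_1+W_1'$ as a sum of nef Cartier $b$-divisors dominating $W_1$ and $W_1'$ separately --- this is a relative Zariski-decomposition obstruction, invisible on surfaces but genuine in higher dimension. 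The remedy is to approximate $W_1$, $W_1'$ \emph{and} the competitors $W_2,\dots,W_n$ from above by nef Cartier $b$-divisors over $0$ as in Theorem~\ref{thm:approxisol}, say $E^{(k)}\to W_1$, $E'^{(k)}\to W_1'$ and $D_i^{(k)}\to W_i$ ($2\le i\le n$) coefficient-wise and from above. For each $k$ all factors below are nef Cartier $b$-divisors over $0$, so multilinearity of the Cartier product gives
$$
\bigl(E^{(k)}+E'^{(k)}\bigr)\cdot D_2^{(k)}\cdots D_n^{(k)} \;=\; E^{(k)}\cdot D_2^{(k)}\cdots D_n^{(k)} \;+\; E'^{(k)}\cdot D_2^{(k)}\cdots D_n^{(k)} .
$$
Letting $k\to\infty$ and combining upper semicontinuity with the trivial lower bound coming from monotonicity (each approximant dominates its limit), the left-hand side converges to $(W_1+W_1')\cdot W_2\cdots W_n$ while the two terms on the right converge to $W_1\cdot W_2\cdots W_n$ and $W_1'\cdot W_2\cdots W_n$; this is the desired equality. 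The reduction to the case $W_i=\Env_\X(C_i)$ with $C_i\in\CDiv(\X,0)$, where Theorem~\ref{thm:approxisol} applies directly, is harmless --- the general bounded-below case follows by the continuity along monotonic families just established, and the unbounded case makes both sides $-\infty$. I expect precisely this step --- interchanging the defining infimum with the limit, legitimised by the approximation theorem together with upper semicontinuity --- to be the main obstacle; once additivity is available, the full multilinearity follows formally.
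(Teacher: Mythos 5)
Your proposal follows the paper's proof quite closely in structure: symmetry, homogeneity, monotonicity, finiteness and strict negativity are handled as formal consequences of the definition, Corollary~\ref{cor:izumi} and Proposition~\ref{prop:ram}; upper semicontinuity comes from the infimum definition; and additivity is proven first for envelopes of Cartier $b$-divisors using Theorem~\ref{thm:approxisol} and then extended by the $\Env_\p(W_{j,\p})\downarrow W_j$ approximation and continuity along decreasing nets. This is the paper's route.

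There is, however, a real gap in the claim ``one obtains \dots continuity along monotonic families.'' Your approximation argument and upper semicontinuity only give continuity along \emph{non-increasing} families: if $W_{j,k}\ge W_j$ and $W_{j,k}\to W_j$, then monotonicity bounds the products from below and upper semicontinuity bounds the limsup from above. For \emph{non-decreasing} families the same argument only gives the bound you don't need (the products are below the limit by monotonicity), and some form of lower semicontinuity along increasing sequences is required. This is not a formal consequence of the definition nor of Theorem~\ref{thm:approxisol}: the paper explicitly defers this direction to Theorem~\ref{thm:increasing} in the Appendix, whose proof rests on a Hodge-index/Cauchy--Schwarz estimate (Lemma~\ref{L101} and Lemma~\ref{C101}) that controls $W_1\cdots W_n - Z_1\cdots Z_n$ in terms of diagonal differences $((W_r-Z_r)\cdot Z_r^{n-1})^{1/2^{n-1}}$. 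Without such a quantitative estimate, approximation from above cannot capture what happens when the $W_{j,k}$ approach $W_j$ from below. You should either invoke Theorem~\ref{thm:increasing} for the increasing case, or supply the Hodge index argument directly.

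A smaller point: your upper semicontinuity argument is phrased as if it needed Theorem~\ref{thm:approxisol}, but it does not. Since the product is defined as an infimum over Cartier competitors $C_i\ge W_i$, one picks $C_i$ with $C_1\cdots C_n < t$, shrinks to $(1-\ep)C_i$ so that the strict inequality $C_i > W_i$ holds everywhere (possible because $W_i\ne 0$), and then $\{W'_i : W'_i\le C_i\}$ is a coefficient-wise neighborhood of $W_i$ on which the product stays $< t$. No approximation of $W_i$ is needed there; Theorem~\ref{thm:approxisol} is reserved for the additivity step, as you correctly identify.
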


\begin{proof}
We follow the same lines as~\cite[Proposition~4.4]{BFJ}. Symmetry, homogeneity and monotonicity are clear. If $W_i \neq 0$ for all $i$ then there exists $\ep>0$ such that 
$W_i \le \ep Z(\frm)$ for all $i$ by Corollary~\ref{cor:izumi}, hence 
$$
W_1\cdot ...\cdot W_n  \le \ep^n Z(\frm)^n=-\ep^n e(\frm)<0
$$ 
where $e(\frm)$ is the Samuel multiplicity of $\frm$.

Let us prove the semi-continuity. Suppose that $W_i\neq0$ for all $i$, and pick $t\in\R$ such that
$W_1\cdot ...\cdot W_n  < t$. By definition there exist nef $\R$-Cartier $b$-divisors $C_i$ over $0$ such that $W_i\le C_i$ and $C_1\cdot ...\cdot  C_n   < t$. 
Replacing each $C_i$ by $(1-\ep)C_i$
we may assume $C_i \ne W_i$ while still preserving the previous conditions. 
Now consider the set $U_i$ of all nef $b$-divisors $W'_i$ such that
$W'_i \le C_i$. This is a neighborhood of $W_i$ in the topology of coefficient-wise convergence and $( W'_1\cdot ...\cdot W'_n )  < t$
for all $W'_i \in U_i$. This proves the upper semi-continuity. 

As a consequence we get the following continuity property: for all families $W_{j,k}$ such that 
\begin{itemize}
\item $W_{j,k}\ge W_j$ for all $j,k$ and
\item $\lim_k W_{j,k}=W_j$ for all $j$ 
\end{itemize}
we have $\lim_k W_{1,k}\cdot...\cdot W_{n,k}=W_1\cdot...\cdot W_n$. Indeed $W_{1,k}\cdot...\cdot W_{n,k}\ge W_1\cdot...\cdot W_n$ holds by monotonicity and the claim follows by upper semi-continuity.

We now turn to additivity. Assume first that $W'$, $W_1,W_2,...,W_n$ are nef envelopes of Cartier $b$-divisors over $0$. By Theorem \ref{thm:approxisol} there exist two sequences $C'_k$ and $C_{j,k}$ of nef Cartier divisors above $0$ such that $C_{j,k}\ge W_j$ and $C_{j,k}\to W_j$ as $k\to\infty$, and similarly for $C'_k$ and $W'$. Since $C_{1,k}+C'_k\ge W_1+W'$ also converges to $W_1+W'$ the above remark yields
$$
(C_{1,k}+C'_k)\cdot C_{2,k}\cdot ...\cdot C_{n,k}\to(W_1+W')\cdot W_2\cdot ...\cdot W_n
$$
On the other hand we have 
$$
(C_{1,k}+C'_k)\cdot C_{2,k}\cdot ...\cdot C_{n,k}=(C_{1,k}\cdot C_{2,k}\cdot ...\cdot C_{n,k})+(C'_k\cdot C_{2,k}\cdot ...\cdot C_{n,k})
$$
where 
$$
(C_{1,k}\cdot C_{2,k}\cdot ...\cdot C_{n,k})\to (W_1\cdot W_2\cdot ...\cdot W_n) \text{ and }(C_{1,k}\cdot C_{2,k}\cdot ...\cdot C_{n,k})\to (W'\cdot W_2\cdot ...\cdot W_n)
$$ 
so we get additivity for nef envelopes. 

In the general case let $W',W_1,W_2,...,W_n$ be arbitrary nef $b$-divisors over $0$. We then have $\Env_\p(W_{j,\p})\ge W_j$ and $\Env_\p(W_{j,\p})$ is a non-increasing net converging to $W_j$ by Remark~\ref{rmk:Env-comparison}. The additivity then follows from the previous case and the continuity along decreasing nets.

Finally, the continuity along non-decreasing sequences is the content
of Theorem~\ref{thm:increasing}, which is proven in the Appendix 
and will appear in a more general setting in \cite{BFJ11}.
\end{proof}

The expected local Khovanskii-Teissier inequality holds: 

\begin{thm}\label{thm:kt}
For all nef $\R$-Weil $b$-divisors $W_1,...,W_n$ over $0$ we have 
\begin{equation}\label{eq:minkowski}
|W_1\cdot ...\cdot W_n |\le
|W_1 ^n|^{1/n}\dots |W_n^n|^{1/n}. 
\end{equation}
In particular we have 
$$
|(W_1+W_2)^n|^{1/n}  \le |W_1^n|^{1/n}  +  |W_2^n|^{1/n} ~.
$$
\end{thm}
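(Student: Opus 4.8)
The plan is to reduce the Khovanskii--Teissier inequality \eqref{eq:minkowski} to the classical log-concavity of mixed multiplicities of $\frm$-primary ideals, and then to deduce the Minkowski inequality from \eqref{eq:minkowski} by a binomial expansion. We may assume $W_i\neq 0$ for every $i$, since otherwise both sides of \eqref{eq:minkowski} vanish.

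First I would reduce the list of competitors to nef $\R$-Cartier $b$-divisors over $0$. A nef $\R$-Weil $b$-divisor over $0$ satisfies $W\le 0$ by the negativity lemma, and by Theorem~\ref{thm:inter} every mixed intersection number of nef $\R$-Cartier $b$-divisors over $0$ is $\le 0$; hence the infimum in Definition~\ref{defi:inter} can be rewritten as
\[ |W_1\cdot\dots\cdot W_n|=\sup\bigl\{\,|C_1\cdot\dots\cdot C_n|\ :\ C_i\text{ a nef }\R\text{-Cartier }b\text{-divisor over }0,\ C_i\ge W_i\,\bigr\}. \]
Moreover, for any such $C_i$, monotonicity of the product gives $|C_i^n|\le|W_i^n|$. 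Thus, once \eqref{eq:minkowski} is known for nef $\R$-Cartier $b$-divisors over $0$, every admissible tuple satisfies $|C_1\cdot\dots\cdot C_n|\le\prod_i|C_i^n|^{1/n}\le\prod_i|W_i^n|^{1/n}$, and taking the supremum yields \eqref{eq:minkowski} for the $W_i$ (all quantities being read in $[0,+\infty]$, which also covers the case of $W_i$ that are not bounded below).

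Next, fix nef $\R$-Cartier $b$-divisors $C_1,\dots,C_n$ over $0$. By Corollary~\ref{cor:env} and Remark~\ref{rmk:Env-comparison} we have $C_i=\Env_\X(C_i)$, so Theorem~\ref{thm:approxisol} provides $\frm$-primary ideals $\frb_{i,k}$ and positive rationals $c_{i,k}$ with $c_{i,k}Z(\frb_{i,k})\ge C_i$ and $c_{i,k}Z(\frb_{i,k})\to C_i$ coefficient-wise. By the continuity of the intersection product along families $W'_j\ge W_j$ with $W'_j\to W_j$ coefficient-wise---established inside the proof of Theorem~\ref{thm:inter}---all the mixed numbers $c_{1,k}Z(\frb_{1,k})\cdot\dots\cdot c_{n,k}Z(\frb_{n,k})$ and the self-intersections $(c_{i,k}Z(\frb_{i,k}))^n$ converge to the corresponding numbers attached to $C_1,\dots,C_n$. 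Since the intersection product is homogeneous, the scalars $c_{i,k}$ play no role, and it remains to prove \eqref{eq:minkowski} when $W_i=Z(\fra_i)$ with $\fra_i\subset\O_X$ an $\frm$-primary ideal. In that case Proposition~\ref{prop:ram} turns \eqref{eq:minkowski} into
\[ e(\fra_1,\dots,\fra_n)\le e(\fra_1,\dots,\fra_1)^{1/n}\cdots e(\fra_n,\dots,\fra_n)^{1/n}, \]
which is the classical Teissier--Rees--Sharp inequality for mixed multiplicities of $\frm$-primary ideals in the local ring $\O_{X,0}$ (a form of the log-concavity of mixed multiplicities; see~\cite{Laz1} and the references therein); I would simply quote it.

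For the Minkowski inequality, if $W_1=0$ or $W_2=0$ there is nothing to prove, so assume $W_1,W_2\neq 0$. Then $W_1+W_2$ is again a nef $\R$-Weil $b$-divisor over $0$ (the relative nef cone is convex, and the property of lying over $0$ is stable under addition), and by Theorem~\ref{thm:inter} the mixed number $W_1^{k}\cdot W_2^{n-k}$ obtained by taking $W_1$ with multiplicity $k$ and $W_2$ with multiplicity $n-k$ is $\le 0$ for each $k$. Using additivity and symmetry of the intersection product and passing to absolute values,
\[ |(W_1+W_2)^n|=\sum_{k=0}^{n}\binom{n}{k}\,|W_1^{k}\cdot W_2^{n-k}|\le\sum_{k=0}^{n}\binom{n}{k}\,|W_1^n|^{k/n}\,|W_2^n|^{(n-k)/n}=\bigl(|W_1^n|^{1/n}+|W_2^n|^{1/n}\bigr)^{n}, \]
the middle inequality being \eqref{eq:minkowski} applied to the list made of $k$ copies of $W_1$ and $n-k$ copies of $W_2$; extracting $n$-th roots gives the claim. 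The main obstacle here is purely organizational: one must arrange the two successive approximations---first replacing the $W_i$ by nef $\R$-Cartier majorants over $0$, then approximating those by $\frm$-primary ideal $b$-divisors---so that the monotonicity and continuity statements of Theorem~\ref{thm:inter} and the approximation of Theorem~\ref{thm:approxisol} apply cleanly, and so that possibly infinite intersection numbers cause no trouble; the only genuinely non-formal ingredient, the log-concavity of mixed multiplicities, is classical and is merely cited.
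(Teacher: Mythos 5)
Your proof is correct and follows the same route as the paper: the paper's proof likewise invokes Theorem~\ref{thm:approxisol} to reduce to the case $W_i=Z(\fra_i)$ with $\fra_i$ $\frm$-primary (``arguing as in the proof of Theorem~\ref{thm:inter}''), and then cites Proposition~\ref{prop:ram} together with the local Khovanskii--Teissier inequality from~\cite[Theorem~1.6.7~(iii)]{Laz1}. Your write-up simply makes explicit the two approximation steps that the paper compresses into one sentence, and spells out the binomial-expansion deduction of the Minkowski inequality, which the paper leaves as an unstated ``in particular.''
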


\begin{proof} Arguing as in the proof of Theorem \ref{thm:inter} we may use Theorem \ref{thm:approxisol} to reduce to the case where $W_i=Z(\fra_i)$ for some $\frm$-primary ideals $\fra_i$. In that case the result follows from Proposition \ref{prop:ram} and the local Khovanskii-Teissier inequality (cf.~\cite[Theorem~1.6.7 (iii)]{Laz1}). 
\end{proof}

\begin{prop}\label{prop:transform-volume}
Suppose $\f\colon (X,0) \to (Y,0)$ is a finite map of degree $e(\f)$.
Then for all nef $\R$-Weil $b$-divisors $W_1, \dots, W_n$ over $0\in Y$ we have:
\begin{equation}\label{eq:pull-back-int}
(\f^*W_1)\cdot...\cdot(\f^*W_n) = e(\f)\, W_1\cdot...\cdot W_n .
\end{equation}
\end{prop}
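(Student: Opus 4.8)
The plan is to reduce the statement to the case where the $W_i$ are Cartier $b$-divisors over $0$, and there to invoke the projection formula together with $\f_*\f^* = e(\f)$ from Proposition~\ref{prop:push-cartier}. First I would observe that both sides of \eqref{eq:pull-back-int} are defined through infima over nef Cartier $b$-divisors dominating the $W_i$: on the right by Definition~\ref{defi:inter} directly, and on the left using that $\f^*$ preserves nefness (pull-back preserves nefness of Cartier $b$-divisors, and $\f^*$ is order-preserving and continuous for coefficient-wise convergence). One must be slightly careful that a Cartier $b$-divisor over $0\in Y$ pulls back to a Cartier $b$-divisor over $0\in X$ (if $\psi$ is a determination of $C$ over $0\in Y$, then a model $X'$ over $X$ dominating the base change of $\psi$ provides a determination of $\f^*C$ that is an isomorphism away from $0$; and $\f^*C$ lies over $0$ since $\f^{-1}(0)=0$ set-theoretically). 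So $\f^*$ maps nef Cartier $b$-divisors over $0\in Y$ to nef Cartier $b$-divisors over $0\in X$.

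Next I would reduce to the Cartier case. By Theorem~\ref{thm:approxisol} (or, for general nef $W_i$, by Remark~\ref{rmk:Env-comparison} which writes an arbitrary nef $b$-divisor as a decreasing limit of nef envelopes of Cartier $b$-divisors, combined with Theorem~\ref{thm:approxisol} applied to those), each $W_i$ over $0\in Y$ is the limit of a monotone family of nef Cartier $b$-divisors $C_{i,k}$ over $0$ with $C_{i,k}\ge W_i$ and $C_{i,k}\to W_i$ coefficient-wise. By the continuity along monotone families in Theorem~\ref{thm:inter}, the right-hand side of \eqref{eq:pull-back-int} is $\lim_k C_{1,k}\cdot\ldots\cdot C_{n,k}$. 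Applying $\f^*$, the family $\f^*C_{i,k}$ is again monotone, consists of nef Cartier $b$-divisors over $0$, dominates $\f^*W_i$, and converges to $\f^*W_i$ coefficient-wise (coefficient-wise continuity of $\f^*$ is immediate from the formula $\ord_E(\f^*W)=(\f_*\ord_E)(W)$). Hence the left-hand side equals $\lim_k (\f^*C_{1,k})\cdot\ldots\cdot(\f^*C_{n,k})$, again by Theorem~\ref{thm:inter}. So it suffices to prove
$$
(\f^*C_1)\cdot\ldots\cdot(\f^*C_n) = e(\f)\, C_1\cdot\ldots\cdot C_n
$$
for nef Cartier $b$-divisors $C_i$ over $0\in Y$.

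For this, choose a common determination $\psi\colon Y_\psi\to Y$ of $C_1,\ldots,C_n$ that is an isomorphism away from $0$, and a model $\pi\colon X_\pi\to X$ dominating $X$ and fitting into a morphism $\f'\colon X_\pi\to Y_\psi$ lifting $\f$, with $X_\pi\to X$ an isomorphism away from $0$; then $\pi$ is a common determination of $\f^*C_1,\ldots,\f^*C_n$, and $(\f^*C_i)_\pi = (\f')^*(C_{i,\psi})$. Since $\f'$ is finite of degree $e(\f')=e(\f)$, the projection formula for finite morphisms of varieties gives
$$
(\f')^*(C_{1,\psi})\cdot\ldots\cdot(\f')^*(C_{n,\psi}) = e(\f)\, C_{1,\psi}\cdot\ldots\cdot C_{n,\psi},
$$
which is exactly the asserted identity of intersection numbers. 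I expect the main obstacle to be bookkeeping rather than anything deep: namely, checking carefully that $\f^*$ interacts well with the defining infimum (equivalently, that every nef Cartier $b$-divisor over $0\in X$ dominating $\f^*W_i$ can be, up to the limiting procedure, replaced by one of the form $\f^*C_i$) — this is what forces the passage through the monotone approximation of Theorem~\ref{thm:approxisol} rather than a naive term-by-term comparison of the two infima. Once the reduction to Cartier $b$-divisors with a common lifted determination is in place, the projection formula finishes the argument immediately.
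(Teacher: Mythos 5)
Your proof is correct and follows the paper's approach exactly: reduce to the case of nef $\R$-Cartier $b$-divisors over $0$ via Theorem~\ref{thm:approxisol} together with continuity of the intersection product along monotone families (Theorem~\ref{thm:inter}), then conclude by applying the projection formula to a lift $\f'\colon X'\to Y'$ of $\f$ between common determinations that are isomorphisms away from $0$, using $\f^{-1}(0)=\{0\}$ and $e(\f')=e(\f)$. The only difference is that you spell out details the paper leaves implicit, such as the check that $\f^*$ maps $\CDiv(\Y,0)$ into $\CDiv(\X,0)$ and is coefficient-wise continuous.
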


\begin{proof} Arguing as in the proof of Theorem \ref{thm:inter} by successive approximation relying on Theorem \ref{thm:approxisol}, we reduce to the case where each $W_j$ is $\R$-Cartier over $0$. Let $\p\colon Y'\to Y$ be a common determination of the $W_j$ which is an isomorphism away from $0$. Since $\phi^{-1}(0)=0$ there exists a birational morphism $\mu\colon X'\to X$ which is an isomorphism away from $0$ such that $\phi$ lifts as a morphism $\f'\colon X'\to Y'$, whose degree is still equal to $e(\phi)$ and the result follows. 
\end{proof}

\begin{rmk}\label{rmk:volume} 
For every graded sequence $\fra_\bullet$ of $\frm$-primary ideals we have 
$$
-Z(\fra_\bullet)^n= \lim_{k \to \infty} \frac{\dim_\C(\O_X/ \fra_k)}{k^n/n!}.
$$
Indeed it was shown by Lazarsfeld and Musta\c{t}\v{a}  \cite[Theorem 3.8]{LM} that the right-hand side limit exists and coincides with $\lim_{k\to\infty} e(\fra_k)/k^n$ (which corresponds to a local version of the Fujita approximation theorem). On the other hand $Z(\fra_\bullet)$ is the non-decreasing limit of $\tfrac{1}{k!} Z(\fra_{k!})$ hence 
$Z(\fra_\bullet)^n=\lim_{k\to\infty}Z(\fra_k)^n/k^n$ by using the continuity of intersection numbers along non-decreasing sequence 
%(cf. Remark \ref{rem:bfj}) 
and the claim follows in view of Proposition \ref{prop:ram}. 
\end{rmk}

\subsection{The volume of an isolated singularity}\label{sec:defvol} 
By Proposition \ref{prop:bounded} the log-discrepancy divisor $A_{\X/X}$ is always bounded below. Its nef envelope $\Env_\X(A_{\X/X})$ is therefore well-defined and bounded below as well, and we may introduce:
\begin{defi}
The {\it volume} of a normal isolated singularity $(X,0)$ is defined as
$$
\Vol(X,0) :=-\Env_\X\left(A_{\X/X}\right)^n~.
$$
\end{defi}

We have the following characterization of singularities with zero volume:
\begin{prop}\label{prop:volzero} $\Vol(X,0)=0$ iff $A_{\X/X}\ge 0$. When $X$
 is $\Q$-Gorenstein,  $\Vol(X,0)=0$ iff  it has log-canonical singularities. 
\end{prop}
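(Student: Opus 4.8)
The plan is to write $P:=\Env_\X(A_{\X/X})$ for the $b$-divisor appearing in the definition of the volume. By Proposition~\ref{prop:bounded} and the discussion of \S\ref{sec:defvol}, $P$ is a well-defined nef $\R$-Weil $b$-divisor lying over $0$, so $P^n$ is finite and $\Vol(X,0)=-P^n$. The whole statement will follow from the chain
$$\Vol(X,0)=0\iff P=0\iff A_{\X/X}\ge 0,$$
together with the identification of the coefficients of $A_{\X/X}$ with log-discrepancies in the $\Q$-Gorenstein case. First I would record that $P\le 0$: since $A_{\X/X}$ is exceptional over $X$ (its trace on $X$ vanishes by Proposition~\ref{prop:envx}), the trace $P_X$ satisfies $P_X\le (A_{\X/X})_X=0$, and the negativity lemma (Proposition~\ref{prop:neg}) applied on the identity model together with monotonicity of $\Env_X$ gives $P\le\Env_X(P_X)\le\Env_X(0)=0$.

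For the equivalence $\Vol(X,0)=0\iff P=0$: if $P=0$ then $\Vol(X,0)=0$ trivially. Conversely, if $P\ne 0$, then $P$ lies over $0$, so $P^0\ne 0$; since $P\le 0$, Corollary~\ref{cor:izumi} yields $\ep>0$ with $P\le \ep Z(\frm)$, and monotonicity of the intersection product (Theorem~\ref{thm:inter}) together with Proposition~\ref{prop:ram} gives
$$P^n\le \ep^n Z(\frm)^n=-\ep^n\, e(\frm)<0,$$
so $\Vol(X,0)=-P^n>0$.

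For the equivalence $P=0\iff A_{\X/X}\ge 0$: if $A_{\X/X}\ge 0$, then the zero $b$-divisor is nef and satisfies $0\le A_{\X/X}$, so $P=\Env_\X(A_{\X/X})\ge 0$; combined with $P\le 0$ this forces $P=0$. Conversely, $P\le A_{\X/X}$ by construction, so if $A_{\X/X}$ had a strictly negative coefficient along some prime divisor $E$ over $X$ we would get $\ord_E P\le \ord_E A_{\X/X}<0$ and hence $P\ne 0$; contrapositively $P=0$ forces $A_{\X/X}\ge 0$.

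Finally, for the $\Q$-Gorenstein case: there $K_X$ is $\Q$-Cartier, so $-K_X$ is $\R$-Cartier and $X$-nef, whence $\Env_X(-K_X)=\overline{-K_X}$ by Corollary~\ref{cor:env} and $A_{\X/X}=(K_\X-\overline{K_X})+1_{\X/X}$. Thus for a prime divisor $E$ over $X$ the coefficient $\ord_E A_{\X/X}$ is the discrepancy $a(E;X)$ plus $1$ when $E$ is exceptional and $0$ otherwise, i.e.\ the usual log-discrepancy; so $A_{\X/X}\ge 0$ is exactly the statement that all log-discrepancies of $X$ are $\ge 0$, i.e.\ that $X$ is log-canonical. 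Together with the previous equivalences this completes the proof. I do not expect a genuine obstacle: the real work has been absorbed into the intersection theory (Theorem~\ref{thm:inter}), Izumi-type estimate (Corollary~\ref{cor:izumi}) and the multiplicity formula (Proposition~\ref{prop:ram}); the only point needing care is the bookkeeping at the interface — verifying that the $P$ in the definition of $\Vol$ is exactly the nef $b$-divisor over $0$ to which that intersection theory applies, which is what Proposition~\ref{prop:bounded} and the set-up of \S\ref{sec:defvol} provide.
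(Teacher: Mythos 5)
Your proof is correct and follows essentially the same route as the paper's: the paper cites Theorem~\ref{thm:inter} (whose strict-negativity clause you re-derive directly from Corollary~\ref{cor:izumi} and Proposition~\ref{prop:ram}) to get $\Vol(X,0)=0\iff\Env_\X(A_{\X/X})=0$, then uses the negativity lemma exactly as you do to get $\Env_\X(A_{\X/X})\le 0$ and conclude the equivalence with $A_{\X/X}\ge 0$. For the $\Q$-Gorenstein case the paper passes through $A_{\X/X}=A_{m,\X/X}$ rather than invoking $\Env_X(-K_X)=\overline{-K_X}$ via Corollary~\ref{cor:env}, but these are the same observation.
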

\begin{proof} By Theorem \ref{thm:inter} we have $\Vol(X,0)=0$ iff $\Env_\X(A_{\X/X})=0$, which is equivalent to $A_{\X/X}\ge 0$ since every $X$-nef $b$-divisor over $0$ is antieffective by the negativity lemma. 

When $X$ is $\Q$-Gorenstein, then $A_{\X/X} = A_{m,\X/X}$ for any integer $m$
such that $mK_X$ is Cartier. We conclude recalling that 
$X$ is log-canonical if the trace of the
log-discrepancy divisor $A_{m,\X/X}$ in one (or equivalently any) log-resolution of $X$ is effective. 
\end{proof}

The volume satisfies the following basic monotonicity property:

\begin{thm}\label{thm:volume} Let $\phi:(X,0)\to(Y,0)$ be a finite morphism between normal isolated singularities. Then we have 
$$
\Vol(X,0)\ge e(\phi)\Vol(Y,0),
$$
with equality if $\phi$ is \'etale in codimension $1$. 
\end{thm}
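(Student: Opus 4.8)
The plan is to combine the comparison of log-discrepancy $b$-divisors from Corollary~\ref{cor:jacrel} with the two transformation laws under finite morphisms established above: the commutation of $\Env$ with pull-back (Proposition~\ref{prop:pull-env}) and the multiplicativity of intersection numbers (Proposition~\ref{prop:transform-volume}).

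First I would record that $A_{\X/X}\le\f^{*}A_{\Y/Y}$. Indeed, the ramification divisor $R_{\f}$ is effective, hence $\Env_X(R_{\f})\ge 0$, and Corollary~\ref{cor:jacrel} gives $\f^{*}A_{\Y/Y}-A_{\X/X}\ge\Env_X(R_{\f})\ge 0$. Next I apply $\Env_\X$ to this inequality. The envelope operation is monotone wherever it is defined: if $Z$ is an $X$-nef $\R$-Weil $b$-divisor with $Z\le W\le W'$, then $Z\le\Env_\X(W')$ by Proposition~\ref{prop:largest}, so $\Env_\X(W)\le\Env_\X(W')$. Since $\Env_\Y(A_{\Y/Y})$ is well-defined and bounded below (Proposition~\ref{prop:bounded}), Proposition~\ref{prop:pull-env} shows that $\Env_\X(\f^{*}A_{\Y/Y})$ is well-defined and equals $\f^{*}\Env_\Y(A_{\Y/Y})$. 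Setting $W:=\Env_\X(A_{\X/X})$ and $W':=\f^{*}\Env_\Y(A_{\Y/Y})$, we thus obtain two $X$-nef $\R$-Weil $b$-divisors over $0$, both bounded below, with $W\le W'$.

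Now I invoke the monotonicity of the intersection product of nef $b$-divisors over $0$ (Theorem~\ref{thm:inter}): replacing the factors of $W^{n}$ by $W'$ one at a time gives
\[
W^{n}=\underbrace{W\cdot W\cdots W}_{n}\ \le\ W'\cdot W\cdots W\ \le\ \cdots\ \le\ (W')^{n},
\]
so $-W^{n}\ge-(W')^{n}$. By Proposition~\ref{prop:transform-volume}, $(W')^{n}=\bigl(\f^{*}\Env_\Y(A_{\Y/Y})\bigr)^{n}=e(\f)\,\Env_\Y(A_{\Y/Y})^{n}$, whence $-(W')^{n}=e(\f)\Vol(Y,0)$ and therefore $\Vol(X,0)=-W^{n}\ge e(\f)\Vol(Y,0)$.

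For the equality statement, suppose $\f$ is \'etale in codimension one, so $R_{\f}=0$. Then Corollary~\ref{cor:jacrel} degenerates to $0\le\f^{*}A_{\Y/Y}-A_{\X/X}\le 0$, i.e.\ $A_{\X/X}=\f^{*}A_{\Y/Y}$; applying $\Env_\X$ and Proposition~\ref{prop:pull-env} gives $W=W'$, so every inequality above is an equality and $\Vol(X,0)=e(\f)\Vol(Y,0)$. The only points that require a little care --- and the mildest of obstacles here --- are the bookkeeping of ``over $0$'' versus ``over $X$'' (so that Proposition~\ref{prop:transform-volume} genuinely applies to $\f^{*}\Env_\Y(A_{\Y/Y})$) and the finiteness of all intersection numbers involved, which holds because $A_{\X/X}$ and $\f^{*}A_{\Y/Y}$ are bounded below over $0$; the remainder is a formal concatenation of results already in place.
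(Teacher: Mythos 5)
Your proof is correct and follows essentially the same route as the paper's: the inequality $A_{\X/X}\le\f^{*}A_{\Y/Y}$ from Corollary~\ref{cor:jacrel}, then Proposition~\ref{prop:pull-env} and Proposition~\ref{prop:transform-volume}. The paper states this in two sentences; your write-up merely makes explicit the monotonicity of $\Env_\X$ and of the intersection product (Theorem~\ref{thm:inter}) and the finiteness bookkeeping, all of which are correct and implicit in the paper's ``the result follows immediately.''
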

\begin{proof} 
We have $A_{\X/X}\le\phi^*A_{\Y/Y}$ by Corollary \ref{cor:jacrel}, and equality holds if and only if $R_\phi=0$, i.e.~iff $\phi$ is  \'etale in codimension $1$. The result follows immediately using Theorem~\ref{prop:pull-env} and Proposition~\ref{prop:transform-volume}. 
\end{proof}

\subsection{The volume of a cone singularity}
In the case of a cone singularity, the volume relates to the positivity of the anticanonical divisor of the exceptional divisor in the following way. 

\begin{prop}\label{prop:conevol} 
Let $0\in X$ be the affine cone over a polarized smooth variety $(V,L)$ as in Example \ref{eg:cone}. We assume in particular that $X$ is normal.
\begin{enumerate}
\item
If $|-mK_V|$ contains a smooth element for some $m \ge 1$, then $\Vol(X,0)=0$.
\item
Conversely, if $\Vol(X,0)=0$ then $-K_V$ is pseudoeffective. 
\end{enumerate}
\end{prop}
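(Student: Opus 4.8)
The plan is to realise both implications on the single model $\p\colon Y\to X$ obtained by blowing up the vertex $0$, which by Example~\ref{eg:cone} is the total space of the dual bundle $L^*$ over $V$, with exceptional divisor $E\simeq V$ and $\O_Y(E)|_E\cong L^*$, i.e.\ $E|_E\equiv -L$ on $E\simeq V$. I fix the canonical divisor $K_X:=C(K_V)$; then $\widehat{K_X}|_E\sim K_V$ on $E\simeq V$ (cf.\ Example~\ref{eg:cone} and the proof of Lemma~\ref{lem:simple}), and writing $K_Y=\widehat{K_X}+cE$ the adjunction formula $(K_Y+E)|_E\sim K_V$, together with $E|_E\equiv -L$, forces $(c+1)L\sim 0$, hence $c=-1$ (as $L$ is not torsion). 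Throughout I will use the criterion $\Vol(X,0)=0\iff A_{\X/X}\ge 0$ of Proposition~\ref{prop:volzero}, that $A_{\X/X}$ is exceptional over $X$, and that it is the increasing coefficient-wise limit of $A_{m,\X/X}=K_\X+\tfrac1m Z(\O_X(-mK_X))+1_{\X/X}$.

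For part~(1): given a smooth $H\in|-mK_V|$, under the isomorphism $\Cl(X)\simeq\Pic(V)/\Z L$ one has $-mK_X\sim C(H)$, an \emph{effective} Weil divisor, so $\D:=\tfrac1m C(H)$ is an $m$-boundary, and since $-mK_V\sim H$ already on $V$ one can write $m(K_X+\D)=\dv_X(h)$ with $h$ pulled back from $V$, in particular $\ord_E h=0$. The first claim is that $(X,\D)$ is log canonical: because $H$ is smooth, $\widehat{C(H)}$ (the total space of $L^*|_H$) is smooth and meets $E$ transversally along $H\subset V$, so $\p$ is a log resolution of $(X,\D)$; and because $c=-1$ and $\ord_E h=0$ one computes $K_Y+\widehat\D+E=\p^*(K_X+\D)$, so $(Y,\widehat\D+E)$ is log-crepant over $(X,\D)$ with boundary coefficients $\tfrac1m\le 1$ and $1$, whence lc. The second claim is the $b$-divisor identity
\[
K_{m,\X/X}=K_{\X/(X,\D)}+\tfrac1m Z(\O_X(C(H))),
\]
which follows by writing $\O_X(-mK_X)=h^{-1}\cdot\O_X(C(H))$ and invoking Lemma~\ref{lem:Zfrac}. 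Since $C(H)\ge 0$ we have $Z(\O_X(C(H)))\ge 0$, so on every prime divisor $F$ over $0$ the coefficient $\ord_F A_{m,\X/X}$ is at least the log discrepancy of $(X,\D)$ along $F$, hence $\ge 0$; and the trace of $A_{m,\X/X}$ on $X$ vanishes. Therefore $A_{\X/X}\ge A_{m,\X/X}\ge 0$ and $\Vol(X,0)=0$.

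For part~(2): assume $A_{\X/X}\ge 0$. Reading off the coefficient along $E$ and using $c=-1$ gives $\alpha:=\ord_E A_{\X/X}=\ord_E\Env_X(-K_X)\ge 0$, so $\Env_X(-K_X)_Y=-\widehat{K_X}+\alpha E$. Since $\Env_X(-K_X)$ is $X$-nef, Lemma~\ref{lem:mov} says that its trace on the smooth model $Y$ restricts to an $X$-pseudoeffective class on the prime divisor $E$; as $E\to X$ is constant, $N^1(E/X)=N^1(E)$ and this just means $\Env_X(-K_X)_Y|_E$ is pseudoeffective on $V\simeq E$. But $\Env_X(-K_X)_Y|_E\equiv -\widehat{K_X}|_E+\alpha(E|_E)\equiv -K_V-\alpha L$, so $-K_V-\alpha L$ is pseudoeffective; since $\alpha\ge 0$ and $L$ is ample, $-K_V=(-K_V-\alpha L)+\alpha L$ is pseudoeffective, as claimed.

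The main obstacle is the two facts flagged as ``claims'' in part~(1): that $Y$ is a log resolution of $(X,\D)$ with $(Y,\widehat\D+E)$ log-crepant over it — a direct adjunction computation on the cone in which the smoothness of $H$ is exactly what makes $\widehat{C(H)}+E$ simple normal crossing — and the $b$-divisor identity $K_{m,\X/X}=K_{\X/(X,\D)}+\tfrac1m Z(\O_X(C(H)))$. Everything else is bookkeeping with the relations between $\widehat{K_X}|_E$, $E|_E$ and $K_V$ supplied by Example~\ref{eg:cone} and Lemma~\ref{lem:simple}, together with the formal properties of nef envelopes and of $A_{\X/X}$ established earlier.
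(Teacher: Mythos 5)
Your proof is correct and follows the same route as the paper's: part~(1) constructs the $m$-boundary $\D=\tfrac1m C(H)$ and verifies that $(X,\D)$ is log canonical on the blow-up of the vertex, giving $A_{m,\X/X}\ge 0$ and hence $\Vol(X,0)=0$, while part~(2) restricts $\Env_X(-K_X)_Y$ to $E\simeq V$ and uses Lemma~\ref{lem:mov} to conclude that $-K_V-\alpha L$ (and hence $-K_V$) is pseudoeffective. You merely spell out what the paper subsumes in the adjunction computation and Lemma~\ref{lem:compat}, namely the discrepancy $c=-1$ along $E$ and the identity $K_{m,\X/X}=K_{\X/(X,\D)}+\tfrac1m Z(\O_X(C(H)))$ (where, incidentally, $\O_X(-mK_X)=h\cdot\O_X(C(H))$ rather than $h^{-1}\cdot\O_X(C(H))$, though this sign slip does not affect the resulting $b$-divisor identity).
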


\begin{proof}
Denote by $\p\colon  X_\p\to X$ the blow-up at $0$, with exceptional divisor $E\simeq V$. 
If $D \in |-mK_V|$ is a smooth element, then we consider the pair $(X,\D)$ where
$\D$ is the cone over $D$ divided by $m$. Note that $\p$
gives a log resolution of $(X,\D)$ and $K_{X_\p} + E - \p^*(K_X + \D)$ has order one along 
$E$, by adjunction. Therefore $(X,\D)$ is log canonical, hence $A_{m,\X/X}\ge 0$. This implies that $A_{\X/X} \ge 0$, 
and thus $\Vol(X,0)=0$ by Proposition~\ref{prop:volzero}.

Conversely, assume that $\Vol(X,0) = 0$. 
We then have $a=\ord_E(A_{\X/X})\ge 0$ by Proposition \ref{prop:volzero} and
$$
K_{X_\p}+ E + \Env_X(-K_X)_\p =aE
$$
since $E$ is the only $\p$-exceptional divisor. Now $\Env_X(-K_X)_\p$ restricts to a pseudoeffective class in $N^1(E)$ by Lemma \ref{lem:mov}. The pseudoeffectivity of $-K_E$ follows by adjunction (one can also see that $-K_E$ is big if the `generalized log-discrepancy' $a$ is positive). 
\end{proof}

In~\cite[Chapter~2, Example~55]{Kol11} Koll\'ar gives an example
of a family of singular threefolds where the central fiber admits a boundary
which makes it into a  log canonical pair while the nearby fibers do not.
The same kind of example can be used to show that the volume defined above is not a topological invariant of the \emph{link} of the singularity in general, in contrast with the $2$-dimensional case. We are grateful to J\'anos Koll\'ar for bringing this example to our attention. 

Recall first that a link $M$ of an isolated singularity $0\in X$ is a compact real-analytic hypersurface of $X\setminus\{0\}$ with the property that $X$ is homeomorphic to the (real) cone over $M$. It can be constructed as follows (cf. for example \cite[Section 2A]{Loo}). Let $r\colon X\to\R_+$ be a real analytic function defined in a neighborhood of $0$ such that $r^{-1}(0)=\{0\}$ (for instance the restriction to $X$ of $\|z\|^2$ in a local analytic embedding in $\C^N$). Upon shrinking $X$ we may assume that $r$ has no criticial point on $X\setminus\{0\}$, and $M$ can then be taken to be any level set $r^{-1}(\ep)$ for $0<\ep\ll 1$. 

If $0\in X$ is the affine cone over a polarized variety $(V,L)$ then its link $M$ is diffeomorphic to the (unit) circle bundle of any Hermitian metric on $L^*$. Indeed we may take the function $r$ to be given by $r(v)=\sum_j|\langle s_j,v^m\rangle|^{2/m}$ where $(s_j)$ is a basis of sections of $mL$ for $m\gg 1$. As a consequence, the links of the cone singularities $X_t$ induced by any smooth family of polarized varieties $(V_t,L_t)_{t\in T}$ are all diffeomorphic - as follows by applying the Ehresmann-Feldbau theorem to the family of circle bundles with respect to a Hermitian metric on $L$ over the total space of the family $V_t$. 

We will use the following result.
\begin{lem}\label{lem:psef} Let $S_r$ be the blow-up of $\P^2$ at $r$ very general points. Then $-K_{S_r}$ is not pseudo-effective if (and only if) $r\ge 10$. 
\end{lem}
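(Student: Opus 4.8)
The plan is to treat the two implications separately and to reduce the nontrivial one to the single case $r=10$.

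For ``$r\le 9\Rightarrow -K_{S_r}$ pseudoeffective'', I would argue classically. If $r\le 8$ the (very general, hence general-position) points make $S_r$ a del~Pezzo surface, so $-K_{S_r}$ is ample. If $r=9$ the nine very general points lie on a unique plane cubic, which is smooth and irreducible; its strict transform $\Gamma$ lies in $|-K_{S_9}|$ and has $\Gamma^2=(-K_{S_9})^2=0$. An irreducible curve $C$ with $(-K_{S_9})\cdot C<0$ would satisfy $\Gamma\cdot C<0$, forcing $C=\Gamma$ and the absurdity $\Gamma^2<0$; hence $-K_{S_9}$ is nef, in particular pseudoeffective.

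For ``$r\ge 10\Rightarrow -K_{S_r}$ not pseudoeffective'', I would first reduce to $r=10$: a very general $r$-tuple of points may be chosen with its first ten entries also very general, and then the contraction $\rho\colon S_r\to S_{10}$ of the last $r-10$ exceptional curves satisfies $-K_{S_r}=\rho^*(-K_{S_{10}})-\sum_{i=11}^{r}E_i$, whence $\rho_*(-K_{S_r})=-K_{S_{10}}$. Because push-forward under a proper morphism sends pseudoeffective classes to pseudoeffective classes, it is enough to show $-K_{S_{10}}$ is not pseudoeffective for ten very general points. On a surface $\overline{\PEff}(S_{10})=\Nef(S_{10})^{\vee}$ for the intersection pairing, so it suffices to exhibit a nef class $A$ on $S_{10}$ with $A\cdot K_{S_{10}}>0$: then $(-K_{S_{10}})\cdot A<0$, whereas pseudoeffectiveness of $-K_{S_{10}}$ would give $(-K_{S_{10}})\cdot A\ge 0$.

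The production of such an $A$ will be the main obstacle, and the only point where ``very general'' is genuinely used. One candidate is $A=9H-3(E_1+\dots+E_8)-2(E_9+E_{10})$, with $H$ the pull-back of a line: the strict transform of a general plane curve of degree $9$ with eight triple points at $p_1,\dots,p_8$ and two double points at $p_9,p_{10}$ has class $A$, is irreducible with $A^2=81-72-8=1>0$ (hence nef, being an irreducible curve of nonnegative self-intersection), and has $A\cdot K_{S_{10}}=28-27=1>0$. What must be justified is the nonemptiness and irreducibility of this linear system at ten very general points — a known case of the Segre--Harbourne--Gimigliano--Hirschowitz conjecture, the expected dimension being $0$ and all multiplicities at most $3$. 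Equivalently one could invoke the Nagata-type nefness of a class $dH-m\sum_{i=1}^{10}E_i$ with $\sqrt{10}\le d/m<10/3$, for which $A\cdot K_{S_{10}}=10m-3d>0$. Apart from this single positivity input on $S_{10}$, the argument is entirely formal.
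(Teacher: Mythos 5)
Your structure is sound — reducing to $r=10$ via push-forward and then using the cone duality $\overline{\PEff}(S_{10})=\Nef(S_{10})^{\vee}$ is a legitimate line of attack — but it is not the paper's route, and the place where you flag the ``main obstacle'' is in fact a genuine gap you have not closed. To exhibit a nef class $A$ on a ten-point very general blow-up with $A\cdot K_{S_{10}}>0$, you need $A=dH-\sum m_iE_i$ with $3d<\sum m_i$ and $A$ nef, which forces $\sum m_i^2\le d^2$ and hence $3<\sum m_i/d\le\sqrt{10}$. You are thus pinned into the window where such statements amount to establishing a nontrivial piece of the nef cone of $S_{10}$. Your proposed class $9H-3(E_1+\dots+E_8)-2(E_9+E_{10})$ is plausible, but you yourself observe that its effectivity and irreducibility is ``a known case of SHGH'' without proving it or pointing to a result that actually covers it; and your alternative remark about invoking nefness of $dH-m\sum_{i=1}^{10}E_i$ with $d/m\ge\sqrt{10}$ is exactly the Nagata conjecture for ten points, which is \emph{open} (Nagata proved it only for perfect squares). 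So as written, the crucial positivity input is asserted rather than established.

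The paper takes a different and self-contained path that avoids any cone computation on $S_{10}$. It first proves a lemma of Sakai: for a rational surface $S$, $-K_S$ is pseudoeffective iff $h^0(-mK_S)>0$ for some $m\ge1$ (a short Zariski-decomposition plus Riemann--Roch argument). Combined with semicontinuity of $h^0$, this reduces the problem to producing \emph{one} configuration of ten points with $h^0(-mK_{S_{10}})=0$ for all $m$, rather than reasoning about very general points directly. The paper then places nine points $p_1,\dots,p_9$ on a smooth plane cubic $C$ so that $9p-\sum p_i$ is non-torsion in $\Pic^0(C)$ (here $p$ is an inflection point); restriction to $C$ then shows $mC$ is the only curve of degree $3m$ with multiplicity $\ge m$ at each $p_i$, i.e.\ $h^0(-mK_{S_9})=1$. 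Adding a tenth point $p_{10}\notin C$ kills this last section for every $m$. This uses only elementary facts (adjunction, restriction to the cubic, torsion in $\Pic^0$), and in particular never needs to identify a nef class or control the infinitely many $(-1)$-curves on the ten-point blow-up. If you want to retain your dual-cone strategy, you must either supply a proof of nefness/irreducibility for a concrete class (with an explicit reference that genuinely covers ten very general points), or — more efficiently — switch to a specialization argument of the paper's type.
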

This fact is certainly well-known to experts, but we provide a proof for the convenience of the reader. 
\begin{proof} By semicontinuity it is enough to show that $-K_{S_r}$ is not pseudoeffective for the blow-up $S_r$ of $\P^2$ at some family of $r\ge 10$ points. We may also reduce to the case $r=10$ since the anticanonical bundle only becomes less effective when we keep blowing-up points. 

First, by \cite[Lemma 3.1]{Sak84}, for any rational surface $S$ we have $-K_S$ pseudoeffective iff $h^0(-mK_S)>0$ for some positive integer $m$. The short proof goes as follows. The non-trivial case is when $-K_S$ is pseudoeffective but not big. Let $-K_S=P+N$ be the Zariski decomposition, which satisfies $P^2=P\cdot K_S=0$. By Riemann--Roch it follows that $\chi(mP)=\chi(\O_S)=1$ for any $m$ such that $mP$ is Cartier. But $h^2(mP)=h^0(K_S-mP)=0$ because $K_S$ is not pseudoeffective, hence $h^0(mP)\ge\chi(mP)$, and the result follows. 

Second, let $S_9$ be the blow-up of $\P^2$ at 9 very general points $p_i$ of a given smooth cubic curve $C$ with inflection point $p$. We then have $h^0(-mK_{S_9})=1$ for all positive integers $m$, otherwise we would get by restriction to the strict transform of $C$  $H^0\left(\O_C(3m)(-m\sum_i p_i)\right)\neq 0$, and $9p-\sum_i p_i$ would be $m$-torsion in $\Pic^0(C)\simeq C$. In other words, we see that $mC$ is the only degree $3m$ curve in $\P^2$ passing through each $p_i$ with multiplicity at least $m$. If we let $p_{10}$ be any point outside $C$ it follows of course that no degree $3m$ curve passes through $p_1,...,p_{10}$ with multiplicity at least $m$. But this means that the blow-up $S_{10}$ of $\P^2$ at $p_1,...,p_{10}$ has $h^0(-mK_{S_{10}})=0$ for all $m$, so that $-K_{S_{10}}$ is not pseudoeffective by Sakai's lemma. 
\end{proof}

We are now in a position to state our example. 
\begin{eg}\label{eg:not-top-inv}
Let $T$ be the parameter space of all sets of $r$ distinct points $\Sigma_t\subset \P^2$, and for each $t\in T$ let $V_t$ be the blow-up of $\P^2$ at $\Sigma_t$. Let $L$ be a polarization of the smooth projective family $(V_t)_{t\in T}$ and let $X_t$ be the associated family of cone singularities, whose links are all diffeomorphic according to the above discussion. 
After possibly replacing $L$ by a multiple, we can assume that each $X_t$ is normal.

If for a given $t\in T$ the points $\Sigma_t$ all lie on a smooth cubic curve then the anticanonical system $|-K_{V_t}|$ contains the strict transform of that curve, and we thus have $\Vol(X_t,0)=0$ for such values of $t$ by Proposition \ref{prop:conevol}. On the other hand Proposition \ref{prop:conevol} and Lemma \ref{lem:psef}Ê show that $\Vol(X_t,0)>0$ for $t\in T$ very general. 
\end{eg}

%
%%%%%%%%%%%%%%%%%%%%%%
%

\section{Comparison with other invariants of isolated singularities}\label{sec:compare}
\subsection{Wahl's characteristic number}
As recalled in the introduction, Wahl defined in~\cite{Wahl} the {\it characteristic number} of a normal surface singularity $(X,0)$ as $-P^2$ of the nef part $P$ in the Zariski decomposition of $K_{X_\p} + E$, where $\p\colon X_\p\to X$ is any 
log-resolution of $(X,0)$ and $E$ is the reduced exceptional divisor of $\p$. 
The following result proves that the volume defined above extends 
Wahl's invariant to all isolated normal singularities. 

\begin{prop}\label{prop:wahl}
If $(X,0)$ is a normal surface singularity then 
$\Vol(X,0)$ coincides with Wahl's characteristic number. 
\end{prop}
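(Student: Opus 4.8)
The plan is to identify $\Env_\X(A_{\X/X})$ with the Cartier $b$-divisor determined, on a log-resolution $\p\colon X_\p\to X$ of $(X,0)$, by the nef part of Wahl's relative Zariski decomposition, and then to compute the self-intersection on that determination. Fix such a $\p$ and let $E$ be its reduced exceptional divisor, which has SNC support. By Theorem~\ref{thm:mumford}(ii) the trace on $X_\p$ of $\Env_X(-K_X)$ equals $-\p^*K_X$, Mumford's numerical pull-back, so that the trace of the log-discrepancy $b$-divisor is
$$
(A_{\X/X})_\p = K_{X_\p} + E - \p^*K_X =: A_{X_\p/X},
$$
which is exactly Wahl's log-discrepancy divisor; it is $\p$-exceptional because $\p_*(A_{\X/X})_\p = (A_{\X/X})_X = 0$ by Proposition~\ref{prop:envx}. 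Write $A_{X_\p/X} = P + N$ for its relative Zariski decomposition with respect to $\p$, so that Wahl's characteristic number is $-P^2$. Theorem~\ref{thm:mumford}(i) then says that $P = \Env_\p(A_{X_\p/X})_\p$ and that $\Env_\p(A_{X_\p/X})$ equals the $\R$-Cartier $b$-divisor $\ov P$ determined on $X_\p$ by $P$; since $P$ is $\p$-exceptional, hence supported on $\p^{-1}(0)$, and $\p$ is an isomorphism away from $0$, we have $\ov P\in\CDiv(\X,0)$.

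The key point is the identity $\Env_\X(A_{\X/X}) = \ov P$. One inequality is immediate from Remark~\ref{rmk:Env-comparison}:
$$
\Env_\X(A_{\X/X}) = \inf_{\p'}\Env_{\p'}\bigl((A_{\X/X})_{\p'}\bigr) \le \Env_\p\bigl((A_{\X/X})_\p\bigr) = \ov P.
$$
For the reverse one, $\ov P$ being an $X$-nef $\R$-Cartier $b$-divisor (its trace $P$ on $X_\p$ is $\p$-nef), Proposition~\ref{prop:largest} reduces us to checking $\ov P \le A_{\X/X}$. As push-forward of $b$-divisors preserves effectivity, it is enough to verify $(\ov P)_{\p'} \le (A_{\X/X})_{\p'}$ for models $\p' = \p\circ\m$ dominating $\p$. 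There $(\ov P)_{\p'} = \m^*P = \m^*A_{X_\p/X} - \m^*N$ with $\m^*N \ge 0$, while, using that $\p'^*K_X = \m^*\p^*K_X$ for Mumford's pull-back, one gets
$$
(A_{\X/X})_{\p'} - \m^*A_{X_\p/X} = \bigl(K_{X_{\p'}} - \m^*K_{X_\p}\bigr) + \bigl(1_{\X/X,\p'} - \m^*E\bigr).
$$
This $b$-divisor is supported on the $\m$-exceptional primes, and on such a prime $F$ its coefficient equals $\ord_F(K_{X_{\p'}} - \m^*K_{X_\p}) + 1 - \ord_F(\m^*E)$, which is $\ge 0$ by Lemma~\ref{lem:increase} applied to the smooth surface $X_\p$ and the reduced SNC divisor $E$. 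Hence $(A_{\X/X})_{\p'} \ge \m^*A_{X_\p/X} \ge \m^*P = (\ov P)_{\p'}$, so $\ov P \le A_{\X/X}$ and $\Env_\X(A_{\X/X}) = \ov P$.

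Finally, $\ov P\in\CDiv(\X,0)$ is determined over $0$ by $\p$, so its self-intersection in the sense of Definition~\ref{defi:inter} is computed on that determination, giving $\ov P^2 = P^2$ (the intersection taken on the surface $X_\p$). Therefore $\Vol(X,0) = -\Env_\X(A_{\X/X})^2 = -\ov P^2 = -P^2$, which is Wahl's characteristic number. The only non-formal ingredient is the inequality $\ov P \le A_{\X/X}$: it expresses the monotonicity of log-discrepancies of the smooth surface $X_\p$ under further blow-ups and is exactly Lemma~\ref{lem:increase}; everything else is a transcription of the surface statements in Theorem~\ref{thm:mumford} into the language of nef envelopes of $b$-divisors.
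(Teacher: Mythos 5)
Your argument is correct and follows essentially the same route as the paper's: use Theorem~\ref{thm:mumford} to identify $\Env_\p(A_{X_\p/X})$ with the Cartier $b$-divisor determined by the nef part of the relative Zariski decomposition on a log-resolution, then use Lemma~\ref{lem:increase} to promote the equality to $\Env_\X(A_{\X/X})$. The only cosmetic difference is that you verify $\ov P\le A_{\X/X}$ model-by-model and invoke Proposition~\ref{prop:largest}, whereas the paper establishes $A_{\X/X}\ge\overline{A_{X_\p/X}}$ and takes envelopes of both sides — two phrasings of the same comparison via Lemma~\ref{lem:increase}.
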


\begin{proof} Let $\p\colon X_\p\to X$ be log-resolution of $(X,0)$ and let $E$ be its reduced exceptional divisor. By Theorem \ref{thm:mumford} we see that $\Env_\p(A_{X_\p/X})$ coincides with the nef part of $K_{X_\p}+E-\p^*K_X$. Since the latter is $\p$-numerically equivalent to $K_{X_\p}+E$ it follows that $\Env_\p(A_{X_\p/X})$ is $\p$-numerically equivalent to the nef part $P$ of $K_{X_\p}+E$, so that 
$$
-P^2=-\Env_\p(A_{X_\p/X})^2.
$$
On the other hand we claim that $\Env_\p(A_{X_\p/X})=\Env_\X(A_{\X/X})$, which will conclude the proof. Indeed on the one hand we have
$$
\Env_\X(A_{\X/X})\le\Env_\p(A_{X_\p/X})
$$
as for any Weil $b$-divisor. On the other hand Lemma \ref{lem:increase} implies that 
$$
K_\X+1_\X\ge\overline{K_{X_\p}+E}
$$
over $0$, hence $A_{\X/X}\ge\overline{A_{X_\p/X}}$, and we infer 
$\Env_\X(A_{\X/X})\ge\Env_\p(A_{X_\p/X})$ as desired.  
\end{proof}

\begin{proof}[Proof of Theorem~A]
The definition of the volume is given in \S\ref{sec:defvol}.
Theorem~A (i) is precisely Theorem~\ref{thm:volume}.
Statement (ii) is Proposition~\ref{prop:wahl}.
Statement (iii) is Proposition~\ref{prop:volzero}.
\end{proof}

%%%%

\subsection{Plurigenera and Fulger's volume}\label{sec:plurigenera}
Let $0\in X$ be (a germ of) an isolated singularity and let $\p\colon X_\p\to X$ be a log-resolution with reduced exceptional SNC divisor $E$. One may then consider the following plurigenera (see \cite{Ish2} for a review).
\begin{itemize}
\item Kn\"oller's plurigenera \cite{Kno}, defined by
$$
\gamma_m(X,0):=\dim H^0(X_\p \!\setminus\! E,mK_{X_\p})/H^0(X_\p,mK_{X_\p}).
$$
\item Watanabe's $L^2$-plurigenera \cite{Wat1}, defined by
$$
\delta_m(X,0):=\dim H^0(X_\p \!\setminus\! E,mK_{X_\p})/H^0(X_\p,mK_{X_\p}+(m-1)E).
$$
\item Morales' log-plurigenera \cite[Definition 0.5.4]{Mora}, defined by 
$$
\lambda_m(X,0):=\dim H^0(X_\p \!\setminus\! E,mK_{X_\p})/H^0(X_\p,m(K_{X_\p}+E)).
$$
\end{itemize}
These numbers do not depend on the choice of log-resolution. 
They satisfy 
$$
\lambda_m(X,0)\le\delta_m(X,0)\le\gamma_m(X,0) = O(m^n),
$$ 
and one may use them to define various notions of Kodaira dimension of an isolated 
singularity.

In a recent work, Fulger~\cite{Fulg} has explored in more detail the growth of these numbers.
His framework is the following.
Given a Cartier divisor $D$ on $X_\p$,  consider the local cohomological dimension 
$$
h^1_{\{0\}}(D)=\dim H^0(X_\p\!\setminus\! E,D)/H^0(X_\p,D)=\dim\O_X(\p_*D)/\O_X(D).
$$
Observe that $\gamma_m(X,0)=h^1_{\{0\}}(mK_{X_\p})$ and $\lambda_m(X,0)=h^1_{\{0\}}(m(K_{X_\p}+E))$. Fulger proves that $h^1_{\{0\}}(mD)=O(m^n)$ and defines
the \emph{local volume} of $D$ by setting 
$$
\vol_{\loc}(D):=\limsup_{m\to\infty}\frac{n!}{m^n}h^1_{\{0\}}(mD)~.
$$
When the Cartier divisor $D$ lies over $0$ one has:

\begin{prop}\label{prop:compare}
Suppose $D$ is a Cartier divisor in $X_\pi$ lying over $0$. 
Then 
$$
\vol_{\loc}(D)=-\Env_\X(\overline D)^n~.
$$
\end{prop}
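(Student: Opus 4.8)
The plan is to relate the local cohomological dimensions $h^1_{\{0\}}(mD)$ to colengths of $\frm$-primary ideals, so that Remark~\ref{rmk:volume} applies. First I would recall that for a Cartier divisor $D$ on $X_\p$ lying over $0$, the sheaf $\O_X(\overline D) = \p_*\O_{X_\p}(D)$ is a coherent fractional ideal sheaf of $X$ that agrees with $\O_X(D_X)$ away from $0$; since $D$ lies over $0$ we have $D_X = 0$, so in fact $\O_X(\overline D)$ agrees with $\O_X$ outside $0$ and the quotient $\O_X(\p_*D)/\O_X(D)$ in Fulger's formula (after clearing the fixed denominator $D_X$) is supported at $0$ and has finite length equal to $h^1_{\{0\}}(D)$. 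Consider the graded sequence of fractional ideals $\fra_m := \O_X(mD) = \p_*\O_{X_\p}(mD)$; after twisting by a suitable fixed Cartier divisor of $X$ supported away from $0$, or by clearing denominators as in the definition of nef envelope, one reduces to the case where the $\fra_m$ are honest $\frm$-primary ideals (they are $\frm$-primary precisely because $D$ lies over $0$, so $Z(\fra_\bullet) = \Env_X(D)_X$ has an $\frm$-primary associated ideal structure near $0$ — compare Example~\ref{eg:ppal}). The key identity to establish is then
$$
h^1_{\{0\}}(mD) = \dim_\C\bigl(\O_X / \fra_m\bigr) + O(m^{n-1}),
$$
or more precisely that the two sides have the same leading-order growth.

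Granting this, the proof concludes quickly: by Remark~\ref{rmk:volume} applied to the graded sequence $\fra_\bullet$ of $\frm$-primary ideals,
$$
\lim_{k\to\infty}\frac{\dim_\C(\O_X/\fra_k)}{k^n/n!} = -Z(\fra_\bullet)^n,
$$
and $Z(\fra_\bullet) = \Env_X(D)$ by definition of the nef envelope of $D$ (Definition preceding Theorem~\ref{thm:mumford}, i.e.\ $\Env_\p$ with $\p$ the identity — here one must be slightly careful since $D$ lives on $X_\p$, so what appears is $\Env_\p(D)$; but $\Env_\X(\overline D) = \Env_\p(D)$ by Remark~\ref{rmk:Env-comparison} since $\overline D$ is $\R$-Cartier determined by $\p$). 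Combining, $\vol_{\loc}(D) = \limsup_m \frac{n!}{m^n}h^1_{\{0\}}(mD) = -\Env_\p(D)^n = -\Env_\X(\overline D)^n$, where the intersection number is the one defined in Definition~\ref{defi:inter}, finite by Lemma~\ref{lem:bounded}. The fact that the $\limsup$ is a genuine limit also follows, since the right-hand side is a bona fide limit.

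The main obstacle is the comparison identity $h^1_{\{0\}}(mD) = \dim_\C(\O_X/\fra_m) + o(m^n)$. The subtlety is that $h^1_{\{0\}}(mD) = \dim \O_X(\p_* mD)/\O_X(mD)$ measures a quotient of two fractional ideals, the larger being $\p_*\O_{X_\p}(mD) = \fra_m$ and the smaller being $\O_{X_\p}(mD)$ pushed all the way to $X$ — i.e.\ $H^0(X_\p, \O_{X_\p}(mD))$ viewed inside the function field. When $D$ lies over $0$, the global sections $H^0(X_\p, \O_{X_\p}(mD))$ coincide with $H^0(X, \O_X) = \O_X$ up to finite colength (sections extend across the exceptional locus by normality, and the obstruction is again concentrated at $0$); making this precise, and checking that the resulting error is $O(m^{n-1})$ rather than $O(m^n)$, is where one invokes that $X$ has an \emph{isolated} singularity and uses Serre vanishing / the theorem on formal functions to bound $H^1_{\{0\}}$ and higher local cohomology. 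I would handle this by writing the exact sequence relating $H^0(X_\p\setminus E, mK)$, $H^0(X_\p, mK)$ and local cohomology along $E$, and then comparing with the analogous sequence for $\fra_m$; alternatively one can cite Fulger's own normalization, since he works with $\O_X(\p_*D)/\O_X(D)$ directly and the passage to $\fra_m = \O_X(\p_* mD)$ is immediate once one observes $\O_X(\p_* mD) = \p_*\O_{X_\p}(mD)$ by definition. The residual point is then purely that $\dim \O_X/\O_X(mD) - \dim \O_X/\fra_m$, both finite and supported at $0$, grows subleadingly, which follows from $\overline{\fra_m}$ and $\O_X(mD)$ having the same integral closure up to the fixed correction captured by $Z(\fra_\bullet)$ being nef (Lemma~\ref{lem:env}) together with the local Fujita-type statement of \cite[Theorem 3.8]{LM}.
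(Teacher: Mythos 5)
Your overall strategy is the same as the paper's: express the local volume through the colengths of the $\frm$-primary ideals $\fra_m := \p_*\O_{X_\p}(mD)$, recognize the envelope $\Env_\X(\overline D)=\Env_\p(D)$ as $Z(\fra_\bullet)$, and conclude by Remark~\ref{rmk:volume}. However, you have manufactured a difficulty where none exists, and in the process you have introduced an error.

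The ``key identity'' you flag as the main obstacle is in fact exact, and the reason is precisely the hypothesis that $D$ lies over $0$: this means $\p_*D=0$, hence $\O_X(\p_*mD)=\O_X$ for every $m$. Thus Fulger's formula reads
$$
h^1_{\{0\}}(mD)=\dim \O_X(\p_*mD)/\O_X(mD)=\dim\O_X\big/\p_*\O_{X_\p}(mD)=\dim\O_X/\fra_m
$$
with no $O(m^{n-1})$ error at all. There is nothing to bound by Serre vanishing or formal functions; the ``residual point'' in your last paragraph compares $\dim\O_X/\O_X(mD)$ with $\dim\O_X/\fra_m$, but those are two names for the same number under your own definition $\fra_m:=\O_X(mD)$, so it is vacuous. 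Along the way you also assert $\O_X(\p_*mD)=\p_*\O_{X_\p}(mD)$, which is wrong: the left-hand side is the reflexive sheaf attached to the Weil divisor $\p_*mD=0$, i.e.\ $\O_X$, while the right-hand side is the ideal $\fra_m$. The numerator and denominator of Fulger's quotient are distinct, and the whole identity only collapses because the numerator is $\O_X$.

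Two smaller points. First, the reduction to $\frm$-primary ideals can be done just by observing that, for $D$ exceptional lying over $0$, any $f\in\fra_m$ has $\divisor_X(f)=\p_*(\divisor(f)+mD)\ge 0$, so $\fra_m\subset\O_X$ and is trivial off $0$; the paper's shorter route is simply to say ``we may assume $D\le0$.'' Second, once the identity is seen to be exact, the $\limsup$ is automatically a limit by Remark~\ref{rmk:volume}, as you note. So the skeleton of your argument is correct, but the middle is built around a nonexistent obstacle and contains the sign confusion above; it would read much cleaner if you simply wrote down $\p_*D=0\Rightarrow\O_X(\p_*mD)=\O_X$ and applied Remark~\ref{rmk:volume} directly.
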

\begin{proof}
We may assume $D\le 0$. The envelope of $D$ is the $b$-divisor associated to the graded sequence of $\mathfrak{m}$-primary ideals $\O_X(-mD)$. The result follows from  Remark~\ref{rmk:volume}.
\end{proof}

Fulger \cite{Fulg} then introduces an alternative notion of volume of an isolated singularity by setting:
$$
\Vol_F(X,0) := \vol_{\loc} (K_{X_\p} + E).
$$ 

\begin{prop}\label{p:vol-Q-gor}
$\Vol(X,0)=\Vol_F(X,0)$ if $X$ is $\Q$-Gorenstein.
\end{prop}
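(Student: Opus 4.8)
The plan is to recognize $\Vol_F(X,0)$ as the self-intersection of precisely the nef $b$-divisor computing $\Vol(X,0)$, using that, once $K_X$ is $\Q$-Cartier, a single log-resolution of $(X,0)$ already carries all the relevant information. Fix a log-resolution $\p\colon X_\p\to X$ of $(X,0)$ which is an isomorphism over $X\setminus\{0\}$, write $E$ for its reduced exceptional divisor, and let $m_0\ge 1$ be the Cartier index of $K_X$. Since $\O_X(-m_0K_X)$ is then invertible, we have $\tfrac1{m_0}Z(\O_X(-m_0K_X))=-\overline{K_X}$ as a $\Q$-Cartier $b$-divisor, so (as already noted in the proof of Proposition~\ref{prop:volzero})
$$
A_{\X/X}=A_{m_0,\X/X}=K_\X-\overline{K_X}+1_{\X/X},
$$
and consequently its trace on $X_\p$ is the honest $\Q$-divisor $A_{X_\p/X}:=K_{X_\p}+E-\p^*K_X$, which is supported on the $\p$-exceptional divisors and hence lies over $0$.

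The first step is to prove $\Env_\X(A_{\X/X})=\Env_\p(A_{X_\p/X})$, exactly as in the proof of Proposition~\ref{prop:wahl}. On one hand $\Env_\X(A_{\X/X})\le\Env_\p(A_{X_\p/X})$, since $\Env_\X(W)=\inf_{\p'}\Env_{\p'}(W_{\p'})$ (Remark~\ref{rmk:Env-comparison}) and $(A_{\X/X})_\p=A_{X_\p/X}$. On the other hand, Lemma~\ref{lem:increase} applied to the smooth variety $X_\p$ and the reduced SNC divisor $E$ gives $\overline{K_{X_\p}+E}\le K_\X+1_\X$ over $0$; subtracting $\overline{K_X}$ and using that $1_\X$ and $1_{\X/X}$ agree on divisors over $0$ yields $\overline{A_{X_\p/X}}\le A_{\X/X}$, whence $\Env_\X(\overline{A_{X_\p/X}})\le\Env_\X(A_{\X/X})$ by monotonicity of $\Env_\X$. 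Since $\overline{A_{X_\p/X}}$ is a $\Q$-Cartier $b$-divisor determined on $X_\p$, we have $\Env_\X(\overline{A_{X_\p/X}})=\Env_\p(A_{X_\p/X})$ (Remark~\ref{rmk:Env-comparison} again), so the two inequalities combine to the claimed identity. In particular $\Vol(X,0)=-\Env_\p(A_{X_\p/X})^n=-\Env_\X(\overline{A_{X_\p/X}})^n$.

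It then remains to check $\Vol_F(X,0)=-\Env_\X(\overline{A_{X_\p/X}})^n$. First, $K_{X_\p}+E$ and $A_{X_\p/X}$ differ by $\p^*K_X$, the pullback of a $\Q$-Cartier divisor from $X$: using the projection formula together with the fact that tensoring a finite-length sheaf supported at $0$ by an invertible sheaf preserves its length, one gets $h^1_{\{0\}}(m(K_{X_\p}+E))=h^1_{\{0\}}(mA_{X_\p/X})$ for every $m$ divisible by $m_0$, hence $\Vol_F(X,0)=\vol_{\loc}(K_{X_\p}+E)=\vol_{\loc}(A_{X_\p/X})$. Then, applying Proposition~\ref{prop:compare} to the Cartier divisor $m_0A_{X_\p/X}$, which lies over $0$, and dividing by $m_0^n$ using that $\vol_{\loc}$, $\Env_\X$ and the intersection product (Theorem~\ref{thm:inter}) are all homogeneous of degree $n$, one obtains $\vol_{\loc}(A_{X_\p/X})=-\Env_\X(\overline{A_{X_\p/X}})^n$. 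Combining with the previous paragraph closes the loop: $\Vol_F(X,0)=-\Env_\X(\overline{A_{X_\p/X}})^n=\Vol(X,0)$.

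All of this amounts to short reductions to earlier results; the only genuinely computational point is the invariance of $\vol_{\loc}$ under adding the pullback of a $\Q$-Cartier divisor from $X$ (together with the routine extension of Proposition~\ref{prop:compare} to $\Q$-Cartier $b$-divisors), and the only place where the $\Q$-Gorenstein hypothesis is used is in writing $A_{\X/X}=K_\X-\overline{K_X}+1_{\X/X}$, which is what makes $A_{X_\p/X}$ an honest $\Q$-divisor and allows the Proposition~\ref{prop:wahl} argument to go through verbatim.
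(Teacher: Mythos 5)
Your argument is correct and takes the same route as the paper: using $\Q$-Gorensteinness to write $A_{\X/X}=A_{m_0,\X/X}$, deducing $\Env_\X(A_{\X/X})=\Env_\p(A_{X_\p/X})=\Env_\X(\overline{A_{X_\p/X}})$ from Lemma~\ref{lem:increase} together with Remark~\ref{rmk:Env-comparison}, and then invoking Proposition~\ref{prop:compare}. You usefully make explicit the step $\vol_{\loc}(K_{X_\p}+E)=\vol_{\loc}(A_{X_\p/X})$ (projection formula plus invariance of length of a finite-length sheaf under tensoring by an invertible sheaf) and the homogeneity argument needed to apply Proposition~\ref{prop:compare} to the $\Q$-Cartier divisor $A_{X_\p/X}$, both of which the paper's terse ``we conclude by Proposition~\ref{prop:compare}'' leaves implicit.
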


\begin{proof}
For any  integer $m$ such that $mK_X$ is Cartier, one has
$A_{\X/X} = A_{m,\X/X}$. Pick any log-resolution $\p\colon  X_\p \to X$.
Then Lemma~\ref{lem:increase}  applied to $X_\p$ shows that $\ov{A_{X_\p/X}} \le A_{\X/X}$. In particular, these $b$-divisors share the same envelope. 
We conclude by Proposition~\ref{prop:compare} above. 
\end{proof}

In general, Fulger proves that there is always an inequality
$$
\Vol(X,0) \ge \Vol_F(X,0).
$$
We know by \cite{Wahl} that in dimension two these volumes always coincide. 
In higher dimension these two invariants may however differ, as shown by the following example. 

\begin{eg} \label{eg:counterfulger}
Let $V$ be any smooth projective variety such that neither $K_V$ nor $-K_V$ are pseudoeffective, for instance $V=C\times\P^1$ where $C$ is a curve of genus at least $2$. Pick any ample line bundle $L$ on $V$ such that the the affine cone $0\in X$ over $(V,L)$ is normal. We claim that 
$$
\Vol(X,0)>0=\Vol_F(X,0).
$$
Indeed, Proposition~\ref{prop:conevol} and the fact that $-K_V$ is not pseudoeffective show that $\Vol(X,0)>0$. On the other hand, the fact that $K_V$ is not pseudoeffective implies that $\delta_m(X,0)=0$ for al $m$, hence  $\Vol_F(X,0)=0$. To see this, let $\p\colon X_\p\to X$ be the blow-up of $0$, with exceptional divisor $E\simeq V$. Since $L$ is ample, $mK_V-(p-m)L$ is not pseudoeffective for any $p\ge m$, hence
$$
H^0(E,mK_E+(p-m)E|_E)\simeq H^0(V,mK_V-(p-m)L)=0
$$
Now $(K_{X_\p}+E)|_E=K_E$ by adjunction, and the restriction morphism
$$
H^0(X_\p,mK_{X_\p}+pE)/H^0(X_\p,mK_{X_\p}+(p-1)E)\to H^0(E,mK_E+(p-m)E|_E)
$$
is injective.  We have thus shown 
$H^0(X_\p,mK_{X_\p}+(m-1)E)=H^0(X_\p,mK_{X_\p}+pE)$ for all $p\ge m$, hence 
$H^0(X_\p,mK_{X_\p}+(m-1)E)=H^0(X_\p\setminus E, mK_{X_\p})$, i.e.~$\delta_m(X,0)=0$. 
\end{eg}

%
%%%%%%%%%%%%%%%%%%%%%%
%

\section{Endomorphisms}\label{sec:endo}
We apply the previous analysis to the study of normal isolated singularities admitting endomorphisms.

\subsection{Proofs of Theorems~B and~C}
We start by  proving the following result. 
\begin{thm}\label{thm:klt} Assume that $X$ is numerically Gorenstein and let $\phi:(X,0)\to (X,0)$ is a finite endomorphism of degree $e(\f)\ge 2$ such that $R_\phi\neq 0$. Then there exists $\ep>0$ such that $A_{\X/X}\ge -\ep Z(\frm)$.
\end{thm}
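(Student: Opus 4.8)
The plan is to use the endomorphism to spread out the ramification, converting it into a lower bound for $A_{\X/X}$, and then to feed in the uniform Izumi-type estimate of Theorem~\ref{thm:uniform}.

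First I would note that, $\phi$ being non-invertible, $A_{\X/X}$ is effective: applying Theorem~\ref{thm:volume} to $\phi\colon(X,0)\to(X,0)$ gives $\Vol(X,0)\ge e(\phi)\Vol(X,0)$, and since $e(\phi)\ge 2$ and $\Vol(X,0)\ge 0$ this forces $\Vol(X,0)=0$, hence $A_{\X/X}\ge 0$ by Proposition~\ref{prop:volzero}. Next, since $X$ is numerically Gorenstein, Corollary~\ref{cor:jacrel} applied to $\phi$ yields the two-sided equality
$$
\phi^*A_{\X/X}-A_{\X/X}=\Env_X(R_\phi)=-\Env_X(-R_\phi).
$$
Set $P:=\Env_X(R_\phi)\ge 0$. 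Applying $\phi_*$ to $\phi^*A_{\X/X}=A_{\X/X}+P$ and using $\phi_*\phi^*=e(\phi)\cdot\id$ on $b$-divisors (Proposition~\ref{prop:push-cartier}), I get
$$
e(\phi)\,A_{\X/X}=\phi_*A_{\X/X}+\phi_*P,
$$
and since $\phi_*A_{\X/X}$ is again effective this gives $A_{\X/X}\ge\tfrac1{e(\phi)}\phi_*P$. So the whole problem reduces to bounding $\phi_*P$ from below by a positive multiple of $-Z(\frm)$.

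To do this, observe that $R_\phi$ is a nonzero effective integral Weil divisor through $0$ — any component avoiding $0$ could be deleted by shrinking $X$ around $0$, which would contradict $R_\phi\neq 0$. Theorem~\ref{thm:uniform} then produces $\ep_0>0$, depending only on $X$, with $\Env_X(-R_\phi)\le\ep_0 Z(\frm)$, that is, $P=-\Env_X(-R_\phi)\ge-\ep_0 Z(\frm)$. Pushing this forward — $\phi_*$ is $\R$-linear and order-preserving on $b$-divisors, the latter being immediate from its defining formula, whose coefficients are nonnegative — and using $\phi_*Z(\frm)=Z(N_{X/X}(\frm))\le Z(\frm)$ (the norm identity~\eqref{equ:norm} combined with $N_{X/X}(\frm)\subset\frm$, which holds because the norm of a function vanishing at $0$ vanishes at $0$ since $\phi^{-1}(0)=\{0\}$), I obtain $\phi_*P\ge-\ep_0\,\phi_*Z(\frm)\ge-\ep_0 Z(\frm)$. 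Combined with $A_{\X/X}\ge\tfrac1{e(\phi)}\phi_*P$ this gives $A_{\X/X}\ge-\tfrac{\ep_0}{e(\phi)}Z(\frm)$, so $\ep:=\ep_0/e(\phi)$ works.

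I expect the only delicate points to be bookkeeping: keeping track of the decomposition $W=W^0+W^{X\setminus 0}$ and of the sign convention $Z(\frm)\le 0$ throughout, checking that $\phi_*$ preserves effectivity and commutes with the $0$/$(X\setminus 0)$ splitting (which rests on $\phi^{-1}(0)=\{0\}$), and verifying that $R_\phi$ genuinely meets $0$ so that Theorem~\ref{thm:uniform} is not applied vacuously. Beyond that, no real obstacle should arise, since the substantive inputs — the intersection calculus of nef $b$-divisors, the uniform Izumi bound, and the pull–push formula — are already in hand.
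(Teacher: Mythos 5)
Your argument is correct, and it takes a genuinely different route from the paper's. The paper iterates the endomorphism: using the chain rule $R_{\phi^k}=\sum_{j=0}^{k-1}(\phi^j)^*R_\phi$ together with Proposition~\ref{prop:bounded} and Theorem~\ref{thm:uniform}, it bounds $(\phi^k)^*A_{\X/X}$ from below by $-(kc_2-c_1)Z(\frm)$, and then transfers this back to $A_{\X/X}$ via the surjectivity of $(\phi^k)_*$ on divisorial valuations centered at $0$ (Lemma~\ref{lem:surjdiv}) and Lemma~\ref{lem:bounded}; the bound actually improves as $k\to\infty$. You instead apply $\phi_*$ once: you first establish $A_{\X/X}\ge 0$ via the volume argument ($\Vol(X,0)=0$ by Theorem~\ref{thm:volume} plus Proposition~\ref{prop:volzero}, a fact the paper only extracts later in the proof of Theorem~B), then push forward $\phi^*A_{\X/X}=A_{\X/X}+P$ using $\phi_*\phi^*=e(\phi)\id$ from Proposition~\ref{prop:push-cartier} to get $A_{\X/X}\ge\tfrac1{e(\phi)}\phi_*P$, and bound $\phi_*P$ from below using Theorem~\ref{thm:uniform} together with the order-preservation of $\phi_*$ and the inclusion $N_{X/X}(\frm)\subset\frm$. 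Your approach is shorter and yields a cleaner explicit constant $\ep=\ep_0/e(\phi)$, at the price of invoking the intersection theory of nef $b$-divisors over $0$ (needed to define $\Vol$ and to know it is non-negative) to get the auxiliary effectivity $A_{\X/X}\ge 0$; the paper's iterative argument avoids this input entirely, using only Proposition~\ref{prop:bounded} (which is more elementary). Both arguments correctly use the numerical Gorenstein hypothesis to turn Corollary~\ref{cor:jacrel} into an equality, and your observation that $R_\phi$ must pass through $0$ is exactly the point needed to make Theorem~\ref{thm:uniform} applicable, which the paper uses implicitly.
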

\begin{rmk} \label{rem:klt}
When $X$ is $\Q$-Gorenstein or $\dim X=2$, the condition  $A_{\X/X}\ge -\ep Z(\frm)$ for some $\ep>0$ is equivalent to $A_{m,\X/X}>0$ for some $m$. By Corollary \ref{cor:classical} the latter condition means in turn that $X$ has klt singularities in the sense  that there exists a $\Q$-boundary $\D$ such that $(X,\D)$ is klt. 
It is possible to prove this result unconditionnally; we shall return to this problem
in a later work.
\end{rmk}

\begin{rmk}
Tsuchihashi's cusp singularities (see below) show that the assumption $R_\phi\neq 0$ is essential even when $K_X$ is Cartier. 
\end{rmk}

\begin{proof} Since $X$ is numerically Gorenstein $R_{\phi^k}=K_X-(\phi^k)^*K_X$ is numerically Cartier for each $k$ and Corollary \ref{cor:jacrel} yields 
$$
(\phi^k)^*A_{\X/X}=A_{\X/X}+\Env_X(R_{\phi^k}). 
$$
On the other hand observe that 
$R_{\phi^k}=\sum_{j=0}^{k-1}(\phi^j)^*R_\phi$ by the chain-rule. Each $(\phi^j)^*R_\phi$ is numerically Cartier as well, so that 
$$
\Env_X(R_{\phi^k})=\sum_{j=0}^{k-1}(\phi^j)^*\Env_X(R_\phi)
$$
by Lemma \ref{lem:additive} and Proposition \ref{prop:pull-env}. Using Proposition \ref{prop:bounded} and Theorem \ref{thm:uniform} we thus obtain $c_1,c_2>0$ such that 
$$
(\phi^k)^*(A_{\X/X})\ge c_1Z(\frm)-c_2\sum_{j=0}^{k-1}(\phi^j)^*Z(\frm)
$$
for all divisorial valuations $\nu$ centered at $0$ and all $k$. Since we have $(\phi^j)^*\frm\subset\frm$ it follows that 
$$
(\phi^k)^*A_{\X/X}\ge -Z(\frm)(kc_2-c_1).
$$
But the action of $\phi^k$ on divisorial valuations centered at $0$ is surjective by Lemma \ref{lem:surjdiv}. We furthermore have $\nu\left((\phi^k)^*A_{\X/X}\right)=\nu\left((\phi^k)^*\frm\right)\nu\left(A_{\X/X}\right)$ for each divisorial valuation $\nu$ centered at $0$ and there exists $c_k>0$ such that $\nu((\phi^k)^*\frm)\le c_k\nu(\frm)$ for all $\nu$ by Lemma \ref{lem:bounded}. We thus get
$A_{\X/X}\ge -\ep_k Z(\frm)$ with 
$$
\ep_k:=\frac{kc_2-c_1}{c_k}>0
$$
as soon as $k>c_1/c_2$. 
\end{proof}

\begin{proof}[Proof of Theorem~B]
If $\phi \colon  X \to X$ is a finite endomorphism with $e(\phi) \ge 2$, then Theorem~A implies
$\Vol(X,0) \ge 2 \Vol(X,0)$ hence $\Vol(X,0) =0$. 
When $X$ is $\Q$-Gorenstein and $\f$ is not \'etale in codimension $1$, then $X$ is klt by the previous theorem and Remark~\ref{rem:klt}.
\end{proof}

\begin{proof} [Proof of Theorem~C]
By assumption, there exists an endomorphism $\f\colon  V \to V$ and an ample line bundle $L$ such that  $\f^* L\simeq dL$ for some $d \ge 2$. 
The composite map
$$
H^0(V,mL) \mathop{\to}\limits^{\f^*} H^0(V,m\f^*L)\simeq H^0(V, dmL)
$$
induces an endomorphism of the finitely generated algebra $\bigoplus_{m\ge 0}H^0(V,mL)$ (which does not preserve the grading). Since the spectrum of this algebra is equal to $X=C(V)$, we get an induced endomorphism $C(\f)$ on $C(V)$. It is clear that $C(\f)$ is finite, fixes the vertex $0\in X$, and  is not an automorphism. We conclude that $\Vol(X,0)=0$, which implies that $-K_V$ is pseudoeffective by Proposition \ref{prop:conevol}. 
\end{proof}

\subsection{Simple examples of endomorphisms.}\label{sec:simple}
A quotient singularity is locally isomorphic to $(\C^n /G, 0)$ where $G$ is a finite group acting linearly on $\C^n$. Let $\pi\colon  \C^n \to \C^n/G$ be the natural projection. For any holomorphic maps $h_1, ..., h_n\colon  \C^n/G \to \C$ such that $\cap h_i^{-1}(0) = (0)$,  the composite map $ \pi \circ (h_1, ... h_n) \colon  (\C^n/G,0) \to (\C^n/G,0)$
is a finite endomorphism  of degree $\ge 2$ if the singularity is non trivial. Note also
that any toric singularity admits finite endomorphisms of degree $\ge2$  (induced by the multiplication by an integer $\ge2$ on its associated fan). 

We saw above examples of endomorphisms
on cone singularities. One can modify this construction to get examples on other kind of
simple singularities. 

Consider a smooth projective morphism $f\colon Z\to C$ to a smooth pointed curve $0\in C$ and suppose given a non-invertible endomorphism $\phi$ such that $f\circ\phi=f$. Note that $\phi$ is automatically finite since the injective endomorphism $\phi^*$ of $N^1(Z/C)$ has to be bijective.

Assume that $D\subset Z_0$ is a smooth irreducible ample divisor of the fiber $Z_0$ over $0$ that does not intersect the ramification locus of $\phi$ and such that $\phi(D)\subset D$. Denote  by $Y\to Z$ be the blow-up of $Z$ along $D$. Then $\phi$ lifts to a rational self-map of $Y$ over $C$, and the fact that $\phi$ is \'etale around $D$ implies that the indeterminacy locus of this rational lift is contained in $\mu^{-1}(\phi^{-1}(D)\setminus D)$ hence in the strict transform $E$ of $Z_0$ on $Y$. 

Since the conormal bundle of $E$ in $Y$ is ample,  $E$ contracts to a simple singularity $0\in X$ by \cite{Gra} (we are therefore dealing with an analytic germ $0\in X$ in that case). The above discussion shows that $\phi$ induces a finite endomorphism of $(X,0)$, which is furthermore not invertible since $\phi$ was assumed not to be an automorphism. 

Basic examples of this construction include deformations of abelian varieties
having a section, with $\phi$ the multiplication by a positive integer.

\subsection{Endomorphisms of cusp singularities}\label{sec:cusp}

Our basic references are~\cite{oda,tsu}. Let $C\subset\R^n$ be an open convex cone that is strongly convex (i.e.~its closure contains no line) and let $\Gamma\subset\mathrm{SL}(n,\Z)$ be a subgroup leaving $C$ invariant, whose action on $C/\R^*_+$ is
properly discontinuous without fixed point, and has compact quotient. Denote by 
$$M:=\Gamma\backslash C/\R_+^*$$ 
the corresponding $(n-1)$-dimensional orientable manifold. 

Consider the convex envelope $\Theta$ of $C\cap\Z^n$. It is proved in \cite{tsu} that the faces of $\overline\Theta$ are convex polytopes contained in $C$ and with integral vertices. Since $\Theta$ is $\Gamma$-invariant the cones over the faces of $\Theta$ therefore give rise to a $\Gamma$-invariant rational fan $\Sigma$ of $\R^n$ with $|\Sigma|=C\cup\{0\}$. This fan is infinite but is finite modulo $\Gamma$ since $M$ is compact. 

The (infinite type) toric variety $X(\Sigma)$ comes with a $\Gamma$-action which preserves the toric divisor $D:=X(\Sigma)\setminus (\C^*)^n$ as well the inverse image of $C$ by the map $\mathrm{Log}:(\C^*)^n\to\R^n$ defined by 
$$
\mathrm{Log}(z_1,...,z_n)=(\log|z_1|,...,\log|z_n|).
$$
The $\Gamma$-invariant set $U:=\mathrm{Log}^{-1}(C)\cup D$ is open in $X(\Sigma)$ and the action of $\Gamma$ is properly discontinuous and without fixed point on $U$. One then shows that the divisor $E:=D/\Gamma\subset U/\Gamma=:Y$, which is compact since $\Sigma$ is a finite fan modulo $\Gamma$, admits a strictly pseudoconvex neighbourhood in $Y$, so that it can be contracted to a normal singularity $0\in X$, which is furthermore isolated since $Y-E$ is smooth. Note that $Y$, though possibly not smooth along $E$, has at most rational singularities since $U$ does, being an open subset of a toric variety. The isolated normal singularity $(X,0)$ is called the \emph{cusp singularity} attached to $(C,\Gamma)$. It is shown in \cite{tsu} that $(C,\Gamma)$ is determined up to conjugation in $\mathrm{GL}(n,\Z)$ by the (analytic) isomorphism type of the germ $(X,0)$. 

\begin{lem}
The canonical divisor $K_X$ is Cartier, $X$ is lc but not klt. 
\end{lem}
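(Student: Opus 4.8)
The plan is to use the toric description of the cusp singularity $0 \in X$ from \cite{tsu} together with the combinatorial formula for the canonical class of a toric variety. Recall that the fan $\Sigma$ has support $C \cup \{0\}$, its rays $\Sigma(1)$ are exactly the primitive integral vectors lying on the boundary of the convex hull $\Theta$ of $C \cap \Z^n$, and each top-dimensional cone of $\Sigma$ is the cone over a face of $\partial\Theta$. The key point is that, by construction of $\Theta$ as a convex hull of \emph{all} lattice points of $C$, every one of these boundary faces is a lattice polytope whose vertices are precisely the primitive generators of the rays spanning the corresponding cone, and moreover all these vertices lie on a common affine hyperplane — this is the lattice-distance-one property proved in \cite{tsu}. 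Equivalently, for each top-dimensional cone $\sigma \in \Sigma$ there is a linear functional $m_\sigma$ with $\langle m_\sigma, v \rangle = 1$ for every generator $v$ of a ray of $\sigma$.

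\textbf{Step 1: $K_X$ is Cartier.} On the toric variety $U/\Gamma$ (near $E = D/\Gamma$), the canonical divisor is $K = -\sum_{v \in \Sigma(1)} D_v$. The functionals $m_\sigma$ above, as $\sigma$ ranges over the top-dimensional cones, satisfy $\langle m_\sigma, v\rangle = 1 = \langle m_{\sigma'}, v\rangle$ for any ray $v$ shared by two adjacent cones $\sigma, \sigma'$, so they patch into a single continuous piecewise-linear function $\psi$ on $C$ with $\psi(v) = 1$ for all $v \in \Sigma(1)$. This shows that $-K$ is the Cartier divisor with support function $\psi$, and since $\Gamma$ acts by elements of $\SL(n,\Z)$ it preserves this data, so $K_X$ descends to a Cartier divisor on $X$. (One must check $\Gamma$-equivariance of $\psi$, which follows because $\Gamma \subset \SL(n,\Z)$ permutes the cones of $\Sigma$ and preserves $\Theta$.)

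\textbf{Step 2: $X$ is lc but not klt.} Since $\pi\colon Y \to X$ is the contraction of $E$ and $Y$ has only rational (toric, hence klt) singularities, it suffices to compute discrepancies along the exceptional divisors over $0$ in a toric resolution refining $\Sigma$. A divisorial valuation $\ord_w$ corresponding to a primitive $w \in C$ has log-discrepancy $\langle \psi^\vee, w\rangle$ where $\psi^\vee$ is the support function of $K_{\X/X} + 1_{\X/X}$; concretely, on the cone $\sigma$ containing $w$ this log-discrepancy equals $\langle m_\sigma, w\rangle$. Because each $\psi|_\sigma = \langle m_\sigma, -\rangle$ is the affine function taking value $1$ on the boundary face of $\Theta$ and $\Theta$ is the convex hull of \emph{all} lattice points of $C$, we have $\langle m_\sigma, w \rangle \ge 1 > 0$ for every $w \in C$, with equality exactly when $w$ lies on a boundary face — in particular for $w \in \Sigma(1)$, i.e.\ along $E$ itself the log-discrepancy is $0$. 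Hence $A_{\X/X} \ge 0$ (so $X$ is lc) but $A_{\X/X}$ vanishes along $E$ (so $X$ is not klt). Alternatively, by Step 1, $K_X$ is Cartier, so $A_{\X/X} = A_{m,\X/X}$ and one may invoke Proposition~\ref{prop:volzero}: the trace of $A_{1,\X/X}$ on a toric log-resolution is effective by the convexity just described, giving lc, while its coefficient along $E$ is zero, ruling out klt.

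\textbf{Main obstacle.} The delicate part is the combinatorial claim that $\langle m_\sigma, w \rangle \ge 1$ for all $w \in C$ (not merely for $w$ in the closed cone $\sigma$), equivalently that the convex hull $\Theta$ of $C \cap \Z^n$ lies on the far side of each of its supporting hyperplanes through a boundary face — i.e.\ that the piecewise-linear function $\psi$ is \emph{concave}. This is exactly the content of Tsuchihashi's analysis of the fan $\Sigma$ in \cite{tsu}, and once it is quoted the rest is a routine application of the toric discrepancy formula. I would state this concavity as the one nontrivial input, cite \cite{tsu} (or \cite{oda}) for it, and present Steps 1 and 2 as short consequences.
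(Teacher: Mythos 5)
Your approach differs genuinely from the paper's. The paper trivializes $\omega_X$ directly: the torus-invariant form $\Omega = \frac{dz_1}{z_1}\wedge\cdots\wedge\frac{dz_n}{z_n}$ has simple poles exactly along $D$, is $\Gamma$-invariant since $\Gamma\subset\SL(n,\Z)$, and descends to $Y=U/\Gamma$ trivializing $\omega_Y(E)$; pushing forward, $K_X = \pi_*(K_Y+E) \sim 0$ is principal, hence Cartier, and crepantness plus the standard fact that $(X(\Sigma),D)$ is lc but not klt for any toric variety finishes the proof. You instead try to make $-K_Y$ itself Cartier on $Y$ via a piecewise-linear support function $\psi$ with $\psi(v)=1$ on rays, and then ``descend'' to $X$.

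There are two real problems with your Step 1. First, the ``lattice-distance-one property'' you attribute to Tsuchihashi is not what he proves: Tsuchihashi shows that the faces of $\overline\Theta$ are compact polytopes with integral vertices, but he does not show that each face sits on a primitive integral hyperplane at lattice distance $1$ from the origin, and that statement is false in general once $n\ge 3$ (it is exactly the condition that $X(\Sigma)$, and hence $Y$, be Gorenstein, which the cones over faces of $\Theta$ need not satisfy; the construction only guarantees that the vertices of each face lie on \emph{some} common affine hyperplane, not one at lattice distance $1$). Second, even granting it, you would have shown $K_Y$ is Cartier on $Y$, which is not what the lemma asserts; the object of interest is $K_X$ on the contracted space $X$, and Cartier-ness does not pass through a contraction $\pi\colon Y\to X$. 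What actually works is the weaker and always-true statement that $K_Y+E\sim 0$ as a \emph{Weil} divisor class (since $K_{X(\Sigma)}+D=\dv(\Omega)$ descends), from which $K_X=\pi_*(K_Y+E)\sim 0$ is principal, hence Cartier. This is precisely what the paper's use of the explicit form $\Omega$ delivers at once, without any hypothesis on the combinatorics of $\Sigma$; I would recommend replacing your Step 1 by this observation, after which your Step 2 (or the paper's appeal to crepantness and toric lc-ness) becomes valid.
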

\begin{rmk}
Cusp singularities are however not Cohen-Macaulay in general, hence not Gorenstein.
\end{rmk}
\begin{proof}
The $n$-form $\Om= \frac{dz_1}{z_1} \wedge ... \wedge \frac{dz_n}{z_n}$
on the torus $(\C^*)^n$ extends to $X(\Sigma)$ with poles of order one along $D$. It is $\Gamma$-invariant since $\Gamma$ is a subgroup of $\mathrm{SL}(n,\Z)$ thus it descends to a meromorphic form on $U/\Gamma$ with order one poles along $D/\Gamma$. We conclude $K_X$ is zero and that $X$ is lc but not klt since $\pi\colon (Y,E)\to X$ is crepant and $(X(\Sigma),D)$ is lc but not klt as for any toric variety. 
\end{proof}

Now let $A\in\mathrm{GL}(n,\R)$ with integer coefficient which preserves $C$ and commutes with $\Gamma$ (e.g.~~a homothety). 
Then $Z$ induces a regular map on $U$ that descends to the quotient
$Y$ and preserves the divisors $E$ and we get a finite endomorphism
$\phi: (X,0) \to (X,0)$ whose topological degree is equal to $|\det A|$.

\begin{eg}[Hilbert modular cusp singularities]
Let $K$ be a totally real number field of degree $n$ over $\Q$ and let $N$ be a free $\Z$-submodule of $K$ of rank $n$ (for instance $N=\O_K$). Using the $n$ distinct embeddings of $K$ into $\R$ we get a canonical identification $K\otimes_\Q \R=\R^n$ and we may view $N$ as a lattice in $\R^n$. Now set $C:=(\R^*_+)^n\subset N_\R$ and consider the group $\Gamma^+_N$ of totally positive units of $u\in\O_K^*$ such that $uN =N$, where $u$ is said to be totally positive if its image under any embedding of $K$ in $\R$ is positive. By Dirichlet's unit theorem, $\Gamma^+_N$ is isomorphic to $\Z^{n-1}$, and there is a canonical
injective homomorphism $\Gamma^+_N \hookrightarrow \mathrm{SL}(N)$.
For any subgroup $\Gamma\subset \Gamma^+_N$ of finite index, the triple $(N,C,\Gamma)$ then satisfies the requirements of the definition of a cusp singularities. The singularities obtained by this construction are called Hilbert modular cusp singularities.
\end{eg}

\appendix
\section{Continuity of intersection products along non-decreasing nets}

In this appendix, we fix an isolated normal singularity $0\in X$ as in Section~\ref{sec:isolated}.
The following theorem is taken from \cite{BFJ11}, where the result will appear in
a more general form. We are very grateful to Mattias Jonsson for allowing us
to include a proof here.

\begin{thm}[Increasing limits]\label{thm:increasing}
  For $1\le r\le n$,  let $\{ W_{r,i}\}_{i\in I}$ 
  be a net of nef $\R$-Weil $b$-divisors over $0$ increasing to $W_r$. Assume there exists some constant
  $C>0$ such that  $W_{r,i} \ge C Z(\frm)$ for all $r,i$.
  Then we have
  $$
  W_{1,i} \cdot ... \cdot W_{n,i} \to 
  W_{1} \cdot ... \cdot W_{n} ~. 
  $$
 \end{thm}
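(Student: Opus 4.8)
The plan is to reduce the general statement to the already-established semicontinuity along decreasing nets (from Theorem~\ref{thm:inter}) by an interlacing/approximation argument. The point is that upper semi-continuity already gives $\limsup_i W_{1,i}\cdot\ldots\cdot W_{n,i}\le W_1\cdot\ldots\cdot W_n$ in fact one even has, by monotonicity in each variable, $W_{1,i}\cdot\ldots\cdot W_{n,i}\le W_1\cdot\ldots\cdot W_n$ for all $i$. So the only thing to prove is the reverse inequality for the liminf. I would first handle the case where each $W_r$ is the nef envelope $\Env_\X(C_r)$ of a Cartier $b$-divisor over $0$, and more specifically where $W_r=Z(\fra_{r,\bullet})$ for a graded sequence of $\frm$-primary ideals. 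Here one can invoke the volume interpretation from Remark~\ref{rmk:volume}: $-W_1^n=\lim_k\dim_\C(\O_X/\fra_k)/(k^n/n!)$, together with the polarization identity that recovers mixed intersection numbers from self-intersections, so that monotonicity and the explicit dimension formula force continuity.

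\textbf{Key steps.} First I would fix $\varepsilon>0$ and, using Theorem~\ref{thm:approxisol}, choose for each $r$ a nef Cartier $b$-divisor $C_r$ over $0$ with $C_r\ge W_r$ and $C_1\cdot\ldots\cdot C_n\le W_1\cdot\ldots\cdot W_n+\varepsilon$; shrinking slightly (replacing $C_r$ by $(1-\delta)C_r$) I may assume $C_r\gneq W_r$ strictly, i.e.\ $C_r-W_r$ has a negative coefficient along every divisor relevant to a fixed model. Next, because $W_{r,i}$ increases to $W_r$ coefficient-wise and all these $b$-divisors lie above $0$ with the uniform lower bound $W_{r,i}\ge CZ(\frm)$, an Izumi-type argument (Theorem~\ref{thm:izumi}, Corollary~\ref{cor:izumi}) shows that the difference $C_r-W_{r,i}$ stays in the positive region: more precisely I claim that for $i$ large enough $C_r\ge W_{r,i}$ need not hold on all models, so instead I would work on a \emph{single fixed model} $\p$ on which $C_r$ is determined, where coefficient-wise convergence $W_{r,i,\p}\to W_{r,\p}$ together with finite-dimensionality of $N^1(X_\p/X)$ gives, for $i$ large, $W_{r,i}\le\Env_\p(W_{r,i,\p})$ close to $\Env_\p(W_{r,\p})\le C_r$. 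Then, for such large $i$, $C_r$ is an admissible competitor in the infimum defining $W_{1,i}\cdot\ldots\cdot W_{n,i}$, giving $W_{1,i}\cdot\ldots\cdot W_{n,i}\le C_1\cdot\ldots\cdot C_n\le W_1\cdot\ldots\cdot W_n+\varepsilon$ — but this is the wrong direction again. The correct move is the reverse: build Cartier \emph{lower} approximants. For each $i$ pick nef Cartier $b$-divisors $C_{r,i}\ge W_{r,i}$ realizing the infimum up to $\varepsilon$; since $W_{r,i}\le W_r$, replace $C_{r,i}$ by $\min\{C_{r,i},C_r\}$ — but minima of nef Cartier $b$-divisors need not be nef, so instead I would use that $W_{r,i}$ is itself sandwiched, $W_{r,i}=\Env_\X(W_{r,i})$ when $W_{r,i}$ is already a nef envelope, reducing to comparing $\Env_\X$ of increasing arguments, handled by the same monotone-limit machinery used for Proposition~\ref{prop:continuity} but in the increasing direction, where the dimension formula of Remark~\ref{rmk:volume} provides the missing compactness.

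\textbf{Main obstacle.} The genuinely hard part is precisely the failure of upper semi-continuity to give the bound in the needed direction together with the non-existence of a general monotone-convergence principle for \emph{increasing} nets of nef $b$-divisors — decreasing nets are easy because $\Env_\X$ commutes with them (Proposition~\ref{prop:continuity}), but an increasing limit of nef classes can jump (cf.\ the $(-1)$-curves example in \S\ref{sec:bdiv}). I expect the resolution, following \cite{BFJ11}, to go through the identity of Remark~\ref{rmk:volume} expressing $-W^n$ as a limit of colengths $\dim_\C(\O_X/\fra_k)$: one first proves the theorem for families of the form $W_{r,i}=Z(\fra_{r,\bullet}^{(i)})$ with $\fra^{(i)}$ increasing graded sequences of $\frm$-primary ideals, where the colength formula is manifestly monotone and continuous, then extends to general nef $b$-divisors over $0$ by the approximation Theorem~\ref{thm:approxisol} and the already-known continuity along monotone families from Theorem~\ref{thm:inter}, polarizing to pass from self-intersections to mixed ones. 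The uniform lower bound $W_{r,i}\ge CZ(\frm)$ is exactly what is needed to keep all the $\fra_k^{(i)}$ inside a fixed power of $\frm$, hence to keep the colengths bounded and the limits uniform.
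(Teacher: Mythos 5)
The key mechanism the paper uses is entirely absent from your proposal, and the route you suggest is in fact circular. The paper's proof (Appendix) proceeds by an induction on the number of variables allowed to vary, and the heart of the matter is a Hodge-index / Cauchy–Schwarz estimate (Lemmas~\ref{hodge}, \ref{L101}, \ref{C101}) which bounds
\[
0 \le W_1\cdot\ldots\cdot W_n - W_{1,i}\cdot W_2\cdot\ldots\cdot W_n \le (W_1-W_{1,i})\cdot C_2\cdot\ldots\cdot C_n + \sum_{r\ge 2}\bigl((C_r-W_r)\cdot W_r^{\,n-1}\bigr)^{1/2^{n-1}}
\]
for any nef Cartier $C_r\ge W_r$. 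The first term is a pairing against a \emph{Cartier} $b$-divisor, hence genuinely continuous under coefficient-wise convergence $W_{1,i}\to W_1$; the second is made small by choosing the $C_r$ close to $W_r$ in the sense of the defining infimum. This is exactly the device that converts monotone convergence of the $W_{1,i}$ into convergence of intersection numbers, and there is nothing resembling it in your plan.

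Your proposed route through Remark~\ref{rmk:volume} is unavailable: that remark explicitly invokes "continuity of intersection numbers along non-decreasing sequences," i.e. Theorem~\ref{thm:increasing} itself, to pass from $\lim_k e(\fra_k)/k^n$ to $-Z(\fra_\bullet)^n$. Using it here would be circular. Even setting that aside, the polarization identity does not dispose of the difficulty: to conclude that $W_{1,i}\cdot\ldots\cdot W_{n,i}\to W_1\cdot\ldots\cdot W_n$ from colength asymptotics you would still need continuity of the \emph{self}-intersection along the increasing nets $\sum_{r\in S}W_{r,i}$, which is an instance of the statement being proved, not an independently known fact. Your proposal also repeatedly runs into (and acknowledges) dead ends — the inequality from semicontinuity is in the wrong direction, minima of nef Cartier $b$-divisors need not be nef, and the closing paragraph defers to \cite{BFJ11} for "the missing compactness" rather than supplying an argument. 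You have correctly identified what makes this hard (increasing limits of nef classes can jump, $\Env_\X$ only commutes with decreasing nets), but the resolution — reduce to one variable at a time and use the negative-semidefinite Hodge form to compare $W_{1,i}\cdot W_2\cdots W_n$ with $W_1\cdot W_2\cdots W_n$ — is missing.
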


\begin{proof}
  After rescaling, we may assume $W_{r,i}\ge Z(\frm)$ for all $r,i$.
  We will prove the statement by induction 
  on $p=0,\dots,n-1$ under the assumption that
  $W_{r,i}=W_r$ for all $i$ and all $r>p$.
  
  The case $p=0$ is trivial, so first suppose $p=1$.
  Let $C_2,\dots,C_n$ be nef $\R$-Cartier $b$-divisors such that $C_r \ge W_r$ for
  $2 \le r \le n$. It follows from Lemma~\ref{C101} that
  \begin{align*}
    0
    \le {}& W_1\cdot ... \cdot W_n -W_{1,i}\cdot W_2 \cdot ... \cdot W_n\\
    = {}&-(W_1\cdot C_2 \cdot ... \cdot C_n- W_1 \cdot W_2\cdot ... \cdot W_n)\\
    &+ (W_1-W_{1,i})\cdot C_2\cdot ... \cdot C_n\\
    &+W_{1,i}\cdot C_2\cdot ... \cdot C_n- W_{1,i}\cdot W_2\cdot ... \cdot W_n\\
    \le {}&(W_1-W_{1,i}) \cdot C_2\cdot ... \cdot C_n
    +\sum_{r=2}^n \left( (C_r-W_r) \cdot W_r\cdot ... \cdot W_r\right)^{\frac1{2^{n-1}}}.
  \end{align*}
  Fix $\ep>0$. We can assume that the $b$-divisors $C_r$ are chosen such that
  $0\le (C_r-W_r) \cdot W_r\cdot ... \cdot W_r\le\epsilon$.
  On the other hand, since $C_r$ are $\R$-Cartier $b$-divisors and  $W_{1,i} \to W_1$, 
  we have $(W_1-W_{1,i}) \cdot C_2\cdot ... \cdot C_n \le\epsilon$
  for $i$ large enough. 

  Now assume $1<p<n$ and that the statement is true for $p-1$.
  Write 
  \begin{equation*}
    a_i= W_{1,i}\cdot ... \cdot W_{p,i} \cdot W_{p+1}\cdot W_n~.
  \end{equation*}
  Clearly $a_i$ is increasing in $i$ and we must show that 
  $\sup_i a_i= W_1\cdot ... \cdot W_n$.
  If $j\le i$, then $W_{p,j}\le W_{p,i}\le W_p$, and so 
  \begin{equation*}
    W_{1,i}\cdot ... \cdot W_{p-1,i}\cdot W_{p,j}\cdot W_{p+1}\cdot ... \cdot W_n
    \le a_i
\le     W_{1,i}\cdot ... \cdot W_{p-1,i}\cdot W_{p}\cdot W_{p+1}\cdot ... \cdot W_n.
  \end{equation*}
  Taking the supremum over all $i$, we get by the inductive assumption
  that 
  \begin{equation*}
    W_{1}\cdot ... \cdot W_{p-1}\cdot W_{p,j}\cdot W_{p+1}\cdot ... \cdot W_n
    \le \sup_i a_i \le
   W_1\cdot ... \cdot W_n.
  \end{equation*}
  The inductive assumption implies that the supremum over $j$
  of the first term equals $W_1\cdot ... \cdot W_n$.
  Thus $\sup_ia_i=W_1\cdot ... \cdot W_n$, which completes the proof.
\end{proof}

\begin{lem}[Hodge Index Theorem]\label{hodge}
 Let $Z_3, \cdots, Z_n$ be nef $\R$-Cartier $b$-divisors over $0$.
 Then
 $$
(Z,W) := Z \cdot W \cdot Z_3\cdot ... \cdot Z_n 
 $$
 defines a bilinear form on the space of Cartier $b$-divisors over $0$
 that is negative semidefinite.
\end{lem}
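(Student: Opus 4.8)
The plan is to reduce to the classical negative definiteness of the intersection matrix of exceptional curves on a smooth surface. Bilinearity of $(Z,W)\mapsto Z\cdot W\cdot Z_3\cdot ...\cdot Z_n$ is immediate, since on a common determination the intersection product of Cartier $b$-divisors is multilinear; and a symmetric bilinear form is negative semidefinite precisely when it is nonpositive on the diagonal, so it is enough to show $Z\cdot Z\cdot Z_3\cdot ...\cdot Z_n\le 0$ for every $Z\in\CDiv(\X,0)$.

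First I would pick, by Hironaka, a common determination $\p\colon X_\p\to X$ of $Z,Z_3,\dots,Z_n$ with $X_\p$ smooth and $\p$ an isomorphism over $X\setminus\{0\}$. Then $Z=\overline D$ for a $\p$-exceptional $\R$-divisor $D$ on $X_\p$ (because $Z$ lies over $0$), $Z_j=\overline{N_j}$ for $\p$-nef $\R$-divisors $N_j$, and by the definition of the intersection of Cartier $b$-divisors over $0$ (the divisor $D$ being supported on the proper fiber $\p^{-1}(0)$) one has $Z\cdot Z\cdot Z_3\cdot ...\cdot Z_n = D\cdot D\cdot N_3\cdot ...\cdot N_n$ computed on $X_\p$. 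So it suffices to prove that $D\cdot D\cdot N_3\cdot ...\cdot N_n\le 0$ for a $\p$-exceptional $\R$-divisor $D$ and $\p$-nef $\R$-divisors $N_3,\dots,N_n$ on the smooth model $X_\p$.

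Next comes a reduction to surfaces. The left-hand side is continuous, multilinear and homogeneous in the relative numerical classes $[N_j]\in N^1(X_\p/X)$; since the $\p$-nef cone is the closure of the $\p$-ample cone, a limiting argument lets me assume each $N_j$ is $\p$-very ample (the non-projectivity of $X_\p$ over the affine base $X$ is harmless, since all intersection numbers in sight only involve cycles supported on the projective scheme $\p^{-1}(0)$, so one may also pass to a projective compactification of $X_\p$). Choosing general members $H_j\in|N_j|$, Bertini gives a smooth irreducible surface $S:=H_3\cap\dots\cap H_n$ meeting $\p^{-1}(0)$ and $\Supp D$ properly, with $D\cdot D\cdot N_3\cdot ...\cdot N_n=(D|_S)^2$ on $S$ (for $n=2$ this step is empty and $S=X_\p$). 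The restriction $\p|_S\colon S\to\p(S)$ is a proper birational morphism of surfaces, an isomorphism over $X\setminus\{0\}$, contracting the curve $S\cap\p^{-1}(0)$ to the point $0$; after composing with the normalization of $\p(S)$, the $\R$-divisor $D|_S$ is supported on curves contracted by a proper birational morphism from a smooth surface to a normal one, whence $(D|_S)^2\le 0$ by the classical negative definiteness of such exceptional configurations (Mumford; compare Theorem~\ref{thm:mumford}). If $S\cap\p^{-1}(0)$ is empty then $D|_S=0$ and the inequality is trivial.

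The step I expect to be the main obstacle is the dimension reduction: one has to verify carefully that a general complete intersection surface $S$ cut out by the $|N_j|$ still maps \emph{birationally} onto its image in $X$ with $S\cap\p^{-1}(0)$ contracted, so that $D|_S$ is an honest exceptional divisor on $S$, and to organize this together with the bookkeeping around $X_\p$ being only quasi-projective over $X$. Bertini irreducibility of $S$ requires $n\ge 3$, which is exactly the range where the reduction is needed, since $n=2$ is the classical case applied to $\p$ directly.
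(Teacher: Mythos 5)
Your argument is correct and follows essentially the same route as the paper's proof: pass to a common determination on which $Z$ is exceptional over $0$, perturb the $Z_j$ to be relatively (very) ample, slice by Bertini down to a surface, and invoke the classical negative definiteness of the exceptional intersection form for a birational morphism of surfaces. The paper's own version is very terse and does not even spell out the diagonal reduction or the issue of the slice mapping birationally onto a normal surface, so your elaboration fills in exactly the details the authors left implicit; none of the points you flag as potential obstacles are genuine problems, since $\p$ is an isomorphism away from $0$ and all the intersection numbers live on the projective fiber $\p^{-1}(0)$.
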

\begin{proof}
By choosing a common determination $Y \to X$, we are reduced to prove this statement
for exceptional  divisors lying in $Y$. We may perturb $Z_i$ and assume they are rational and ample over $0$. By intersecting by general elements of multiples of $Z_i$, we are then reduced to the two-dimensional case. 
Since the intersection form on the exceptional components of any birational surface map is negative definite, the result follows.
\end{proof}
\begin{lem}\label{L101}
 If $Z,W,Z_2,...,Z_n$ are nef $\R$-Weil $b$-divisors  over $0$  with 
 $Z(\frm) \le Z\le W \le 0$ and $Z(\frm) \le Z_j\le 0$ for $j\ge2$,
 then
 \begin{equation*}
   0\le  (W-Z)\cdot Z_2 \cdot ... \cdot Z_n
   \le \left( (W-Z) \cdot Z \cdot ... \cdot Z\right)^{\frac1{2^{n-1}}},
 \end{equation*}
\end{lem}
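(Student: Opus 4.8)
The plan is to prove the inequality
\[
0\le (W-Z)\cdot Z_2\cdot\ldots\cdot Z_n\le\left((W-Z)\cdot Z\cdot\ldots\cdot Z\right)^{1/2^{n-1}}
\]
by induction on $n$, using the Hodge-type bilinear form from Lemma \ref{hodge} at each step. The left inequality is just monotonicity of intersection numbers of nef $b$-divisors over $0$ (Theorem \ref{thm:inter}), since $W-Z\ge 0$ and each $Z_j$ is nef. For the right inequality, the base case $n=1$ reads $0\le W-Z\le W-Z$, which is trivial (here the empty product of $Z$'s is understood so that the exponent $1/2^0=1$). Actually the genuine base case to run is $n=2$: we must show $(W-Z)\cdot Z_2\le((W-Z)\cdot Z)^{1/2}$. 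First I would reduce, by continuity along decreasing nets (Theorem \ref{thm:inter}) together with Theorem \ref{thm:approxisol}, to the case where $W-Z$, $Z$, $Z_2,\dots,Z_n$ are all nef $\R$-Cartier $b$-divisors over $0$ with a common determination; equivalently, after Lemma \ref{hodge}'s reduction, to negative-semidefinite intersection forms on exceptional divisors of a fixed model.

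The core of the argument is the following two-term Cauchy--Schwarz step. Fix nef $\R$-Cartier $b$-divisors $Z_3,\dots,Z_n$ over $0$ and apply Lemma \ref{hodge} to the negative semidefinite bilinear form $(A,B)\mapsto A\cdot B\cdot Z_3\cdot\ldots\cdot Z_n$. Cauchy--Schwarz for a negative semidefinite form gives, for any $b$-divisors $A,B$,
\[
\bigl(A\cdot B\cdot Z_3\cdot\ldots\cdot Z_n\bigr)^2\le\bigl(A\cdot A\cdot Z_3\cdot\ldots\cdot Z_n\bigr)\bigl(B\cdot B\cdot Z_3\cdot\ldots\cdot Z_n\bigr).
\]
I would take $A=W-Z$ and $B=Z_2$. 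This yields
\[
\bigl((W-Z)\cdot Z_2\cdot Z_3\cdot\ldots\cdot Z_n\bigr)^2\le\bigl((W-Z)\cdot(W-Z)\cdot Z_3\cdot\ldots\cdot Z_n\bigr)\bigl(Z_2\cdot Z_2\cdot Z_3\cdot\ldots\cdot Z_n\bigr).
\]
The first factor on the right is $(W-Z)$ intersected with $(W-Z)$; since $0\le W-Z\le W$ and $W\le 0$ (hence $-(W-Z)\ge -W\ge 0$, and in fact $0\ge W-Z\ge\ldots$, more simply $W-Z\ge 0$ while $-(W-Z)\ge\ldots$), I use monotonicity to bound $(W-Z)\cdot(W-Z)\cdot Z_3\cdot\ldots\cdot Z_n$ by $(W-Z)\cdot Z\cdot Z_3\cdot\ldots\cdot Z_n$ — here one must be careful with signs: both $W-Z$ and $-Z$ are $\ge 0$ (since $Z\le 0$) and $W-Z\le -Z$ is equivalent to $W\le 0$, which holds; so $(W-Z)\cdot(W-Z)\cdot(\cdots)=|(W-Z)\cdot(W-Z)\cdot(\cdots)|\le|(W-Z)\cdot Z\cdot(\cdots)|$, i.e. $(W-Z)\cdot(W-Z)\cdot(\cdots)\le(W-Z)\cdot Z\cdot(\cdots)$ as negative numbers in absolute value — I will phrase this cleanly in terms of the absolute-value monotonicity $|A\cdot B\cdot(\cdots)|\le|A'\cdot B'\cdot(\cdots)|$ when $0\ge A\ge A'$, $0\ge B\ge B'$ and all entries are nef. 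The second factor $Z_2\cdot Z_2\cdot Z_3\cdot\ldots\cdot Z_n$ is similarly bounded, via $Z(\frm)\le Z_2\le 0$ and monotonicity, by $Z\cdot Z_2\cdot Z_3\cdot\ldots\cdot Z_n$ and then again by replacing further entries; iterating this inside, or more efficiently recognizing that $|Z_2\cdot Z_2\cdot Z_3\cdot\ldots\cdot Z_n|\le$ the same expression with $Z_2$'s replaced by $Z$ is exactly the inductive hypothesis applied in lower intersection-weight, I obtain a recursion.

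More precisely, the clean induction is on the number $k$ of factors among $Z_2,\dots,Z_n$ that have not yet been replaced by $Z$. Let $P_k:=(W-Z)\cdot Z\cdot\ldots\cdot Z\cdot Z_{i_1}\cdot\ldots\cdot Z_{i_k}$ denote the intersection number with $k$ of the $Z_j$'s and $n-1-k$ copies of $Z$. The Cauchy--Schwarz step above, applied to peel off one $Z_j$ at a time, gives $|P_k|\le|P_{k-1}|^{1/2}\cdot|Q|^{1/2}$ where $Q$ is an intersection number with one more copy of $Z_j$; bounding $|Q|$ by $|P_{k-1}|$ via absolute-value monotonicity ($Z_j\ge Z$... wait, that is false in general). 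The actual trick, which I expect to be the main obstacle, is to iterate Cauchy--Schwarz correctly so that the exponent $1/2^{n-1}$ emerges: peel off $Z_2$ to get the square-root of a product of $(W-Z)\cdot(W-Z)\cdot Z_3\cdots Z_n$ and $Z_2^2\cdot Z_3\cdots Z_n$; bound the first by $(W-Z)\cdot Z\cdot Z_3\cdots Z_n$ (monotonicity, valid as above); for the second, bound $|Z_2^2\cdot Z_3\cdots Z_n|\le|Z\cdot Z_2\cdot Z_3\cdots Z_n|$ using $Z(\frm)\le Z_2$, hmm — this requires $|Z|\ge|Z_2|$ coefficientwise which need not hold. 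Instead I will bound everything crudely: since all of $W-Z$, $Z$, $Z_2,\dots,Z_n$ lie between $Z(\frm)$ and $0$, every relevant intersection number has absolute value at most $|Z(\frm)^n|=e(\frm)$, BUT that reintroduces the constant. The correct route, and the one I will carry out, is the standard inductive proof of the generalized Khovanskii--Teissier/Hodge inequality: induct on $n$, and in the inductive step apply Cauchy--Schwarz to the form $(A,B)\mapsto A\cdot B\cdot Z_2\cdot\ldots\cdot Z_{n-1}$ (absorbing all but one auxiliary divisor), writing
\[
\bigl((W-Z)\cdot Z_n\cdot(Z_2\cdots Z_{n-1})\bigr)^2\le\bigl((W-Z)^2\cdot(Z_2\cdots Z_{n-1})\bigr)\cdot\bigl(Z_n^2\cdot(Z_2\cdots Z_{n-1})\bigr),
\]
bound $(W-Z)^2\cdot(Z_2\cdots Z_{n-1})\le(W-Z)\cdot Z\cdot(Z_2\cdots Z_{n-1})$ by monotonicity, bound $Z_n^2\cdot(Z_2\cdots Z_{n-1})\le(W-Z)\cdot Z\cdot(Z_2\cdots Z_{n-1})$ likewise — wait, this needs $Z_n^2\le(W-Z)\cdot Z$ which is not available. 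So the true obstacle is exactly this asymmetry between the $W-Z$ side and the $Z_j$ side. I expect the resolution to be: apply the inductive hypothesis (for $n-1$ factors) to the configuration with $Z_2\cdots Z_{n-1}$ removed one at a time, controlling $Z_n^2\cdot(Z_2\cdots Z_{n-1})$ in terms of $(W-Z)\cdot Z\cdot\ldots$ by a separate application of Lemma \ref{C101} / the $n-1$ case to the pair $(Z, Z_n)$ — noting $Z(\frm)\le Z\le Z_n$ may fail, so one instead uses $Z(\frm)\le Z_n\le 0$ and $Z(\frm)\le Z\le 0$ together with the fact that both $-Z$ and $-Z_n$ are dominated by $-Z(\frm)$, reducing to the already-handled lower-weight estimates. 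I will organize the induction so that at each stage the Cauchy--Schwarz is applied with $W-Z$ appearing once on each side, which after $n-1$ applications produces $(W-Z)\cdot Z^{n-1}$ to the power $2^{-(n-1)}$ and leaves all other terms contributing factors that telescope to $1$ by the inductive hypotheses — the precise bookkeeping of these telescoping exponents is the step I will need to check most carefully, but it is the same combinatorial pattern as in the classical proof that the mixed volume is log-concave, so no genuinely new idea is required beyond Lemma \ref{hodge} and absolute-value monotonicity of intersection numbers (Theorem \ref{thm:inter}), plus the approximation Theorem \ref{thm:approxisol} to reduce to the Cartier case where Lemma \ref{hodge} applies.
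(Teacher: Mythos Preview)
Your approach is the same as the paper's: reduce to the $\R$-Cartier case, apply Cauchy--Schwarz for the negative semidefinite form of Lemma~\ref{hodge}, then iterate. Where you go astray is in handling the factor $|Z_2^2\cdot Z_3\cdots Z_n|^{1/2}$ after Cauchy--Schwarz. The paper's move is exactly the one you write down and then reject: since $Z(\frm)\le Z_j\le 0$, monotonicity gives $|Z_2^2\cdot Z_3\cdots Z_n|\le |Z(\frm)^n|=e(\frm)$, and one simply drops this factor. You are correct that this introduces a constant, and indeed the paper's displayed chain tacitly treats it as $\le 1$; strictly speaking the right-hand side of the lemma should carry a factor $e(\frm)^{1-2^{-(n-1)}}$. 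This is harmless, however, since the lemma is used only in the proof of Theorem~\ref{thm:increasing} to show that certain differences tend to zero, and a fixed multiplicative constant is irrelevant there. So do not look for a telescoping cancellation among the $Z_j$-only factors --- there is none, and none is needed.

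For the other step, $|(W-Z)^2\cdot Z_3\cdots Z_n|\le |(W-Z)\cdot Z\cdot Z_3\cdots Z_n|$, note that $W-Z$ is not nef, so ``monotonicity'' does not apply directly. With $q(A,B):=A\cdot B\cdot Z_3\cdots Z_n$ one has $q(W-Z,W-Z)\le 0$ by Hodge and $q(W-Z,Z)\ge 0$ by monotonicity applied to the nef divisors $W\ge Z$; the desired inequality $-q(W-Z,W-Z)\le q(W-Z,Z)$ then rearranges to $q(W-Z,W)\ge 0$, which is again monotonicity. Having established
\[
(W-Z)\cdot Z_2\cdots Z_n \le \bigl((W-Z)\cdot Z\cdot Z_3\cdots Z_n\bigr)^{1/2},
\]
repeat the same step on the quantity inside the root (now with $Z_3$ in the role of $Z_2$ and the extra $Z$ absorbed among the fixed factors), and so on $n-1$ times total. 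The exponents multiply to $2^{-(n-1)}$ and you are done. All of your alternative inductive schemes either require inequalities like $Z_j\ge Z$ or $|Z_n^2\cdots|\le |(W-Z)\cdot Z\cdots|$ that are simply unavailable, which is why they stalled.
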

\begin{proof}
  We may assume that all the $b$-divisors involved are $\R$-Cartier. 
  By Lemma~\ref{hodge}, the bilinear form 
  $(Z,W) \mapsto Z\cdot W\cdot Z_3\cdot ... \cdot Z_n$ is negative semidefinite.
  Hence 
  \begin{multline*}
  0
  \le (W-Z)\cdot Z_2\cdot... \cdot Z_n
  \le |Z_2\cdot Z_2\cdot Z_3\cdot ... \cdot Z_n)|^{1/2}
  \. |(W-Z)\cdot (W-Z)\cdot Z_3 \cdot ... \cdot Z_n|^{1/2}\\
  \le |(W-Z)\cdot (W-Z)\cdot Z_3 \cdot ... \cdot Z_n|^{1/2}
   \le |(W-Z)\cdot Z\cdot Z_3 \cdot ... \cdot Z_n|^{1/2}.
\end{multline*}
Repeating this procedure $n-2$ times, we conclude the proof.
\end{proof}
\begin{lem}\label{C101}
 If $Z_r, W_r$ are nef $\R$-Weil $b$-divisors with 
 $Z(\frm)\le Z_r\le W_r\le 0$ for $1\le r\le n$, then
\begin{equation*}
   0
   \le W_1\cdot ... \cdot W_n  - Z_1 \cdot ... \cdot Z_n
   \le\sum_{r=1}^n \left( (W_r-Z_r)\cdot Z_r\cdot... \cdot Z_r \right)^{\frac1{2^{n-1}}}.
\end{equation*}
\end{lem}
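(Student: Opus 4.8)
The plan is to deduce Lemma~\ref{C101} from Lemma~\ref{L101} by a telescoping argument. Set $a_0:=Z_1\cdots Z_n$, $a_r:=W_1\cdots W_r\cdot Z_{r+1}\cdots Z_n$ for $1\le r\le n$, so that $a_n=W_1\cdots W_n$ and
\begin{multline*}
W_1\cdots W_n-Z_1\cdots Z_n=\sum_{r=1}^n(a_r-a_{r-1})\\
=\sum_{r=1}^n\bigl(W_1\cdots W_{r-1}\cdot W_r\cdot Z_{r+1}\cdots Z_n-W_1\cdots W_{r-1}\cdot Z_r\cdot Z_{r+1}\cdots Z_n\bigr).
\end{multline*}
Since every $W_s$ and $Z_s$ is nef over $0$ and $W_r\ge Z_r$, monotonicity of the intersection product in each variable (Theorem~\ref{thm:inter}) gives $a_r\ge a_{r-1}$ for every $r$; summing yields the left-hand inequality $W_1\cdots W_n\ge Z_1\cdots Z_n$ (which in any case follows directly from monotonicity in each variable).

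For the right-hand inequality I would estimate each telescoped term $a_r-a_{r-1}$ by applying Lemma~\ref{L101} with $Z:=Z_r$, $W:=W_r$, and with the $n-1$ auxiliary divisors $Z_2,\dots,Z_n$ of that lemma taken to be $W_1,\dots,W_{r-1},Z_{r+1},\dots,Z_n$. The hypotheses of Lemma~\ref{L101} hold: $Z(\frm)\le Z_r\le W_r\le 0$ by assumption, and for every $s$ one has $Z(\frm)\le Z_s\le W_s\le 0$, so each auxiliary divisor lies between $Z(\frm)$ and $0$. Lemma~\ref{L101} then gives
$$
0\le a_r-a_{r-1}\le\bigl((W_r-Z_r)\cdot Z_r\cdots Z_r\bigr)^{1/2^{n-1}},
$$
with $Z_r$ repeated $n-1$ times in the intersection on the right. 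Summing these estimates over $r=1,\dots,n$ produces exactly the claimed bound.

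The argument is essentially routine once Lemma~\ref{L101} is available, so I do not expect a genuine obstacle; the only points requiring care are (i) reading the difference $(W-Z)\cdot Z_2\cdots Z_n$ in Lemma~\ref{L101} as $W\cdot Z_2\cdots Z_n-Z\cdot Z_2\cdots Z_n$, which matches the telescoped terms above, and (ii) checking that all divisors occurring as arguments when Lemma~\ref{L101} is invoked still satisfy the two-sided bounds $Z(\frm)\le\,\cdot\,\le 0$. As in the proofs of Lemmas~\ref{hodge} and~\ref{L101}, one may first pass to a common determination and reduce to the case where all the $b$-divisors are $\R$-Cartier and exceptional on a fixed resolution, so that every intersection number that appears is an honest intersection number there.
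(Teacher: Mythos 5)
Your proof is correct and takes essentially the same route as the paper: a telescoping decomposition of $W_1\cdots W_n-Z_1\cdots Z_n$ into $n$ terms each of the form (product of some $W$'s and $Z$'s)$\,\cdot\,(W_r-Z_r)$, followed by a term-by-term application of Lemma~\ref{L101}. The only cosmetic difference is the orientation of the telescope: the paper writes the $r$-th term as $Z_1\cdots Z_{r-1}\cdot(W_r-Z_r)\cdot W_{r+1}\cdots W_n$, whereas you write it as $W_1\cdots W_{r-1}\cdot(W_r-Z_r)\cdot Z_{r+1}\cdots Z_n$; both choices satisfy the hypotheses of Lemma~\ref{L101} and give the identical bound. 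Your closing remark about passing to a common determination is harmless but redundant here: the reduction to the Cartier case was already done inside the proof of Lemma~\ref{L101}, and once that lemma is available the present argument is a purely formal rearrangement.
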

\begin{proof}
It follows from Lemma~\ref{L101} by writing
\begin{multline*}
W_1\cdot ... \cdot W_n  - Z_1 \cdot ... \cdot Z_n
=
(W_1 -Z_1) \cdot W_2 \cdot... \cdot W_n
+\\
Z_1 \cdot (W_2 - Z_2) \cdot W_3 \cdot ... \cdot W_n
+ ...
+ 
Z_1\cdot ... \cdot Z_{n-1} \cdot (Z_n-W_n) \qedhere
\end{multline*}
\end{proof}

\end{document}